\documentclass[12pt, requno]{amsart}
\usepackage{amsmath}
\usepackage{amssymb}
\usepackage{epsfig}
\usepackage{graphicx}
\usepackage{color}
\usepackage{fullpage}
\definecolor{shadecolor}{gray}{0.875}
\usepackage{amscd}
\usepackage{comment}
\usepackage{hyperref}
\hypersetup{
    colorlinks=true,
    linkcolor=blue,
    filecolor=magenta,      
    urlcolor=cyan,
    }
\usepackage{tikz-cd}

\numberwithin{equation}{section}

\input xy
\xyoption{all}

\calclayout
\allowdisplaybreaks[3]

\theoremstyle{plain}
\newtheorem{prop}{Proposition}[section]

\newtheorem{theo}[prop]{Theorem}
\newtheorem{coro}[prop]{Corollary}

\newtheorem{lemm}[prop]{Lemma}

\theoremstyle{definition}
\newtheorem{defi}[prop]{Definition}
\newtheorem{notation}[prop]{Notation}

\newtheorem{rema}[prop]{Remark}

\newtheorem{exam}[prop]{Example}

\newtheorem{class}[prop]{Classification}

\def\rH{{\mathrm H}}

\def\Pic{\mathrm{Pic}}

\def\Nef{\mathrm{Nef}}

\def\Spec{\mathrm{Spec}}

\def\Pic{\mathrm{Pic}}

\def\Sec{\mathrm{Sec}}

\makeatother
\makeatletter

\author{Sho Tanimoto}
\address{Graduate School of Mathematics, Nagoya University, Furocho Chikusa-ku, Nagoya, 464-8602, Japan}
\email{sho.tanimoto@math.nagoya-u.ac.jp}

\title[Manin's conjecture for quartic del Pezzo fibrations]{On Manin's conjecture \\for quartic del Pezzo fibrations}

\begin{document}

\maketitle

\begin{abstract}
We generalize the homological sieve method, developed by Das, Lehmann, Tosteson, and the author, to study certain quartic del Pezzo fibrations, and we prove a version of Manin's conjecture over global function fields in these cases. Our proofs combine the $3$-dimensional positive-characteristic minimal model program, the geometry of the space of sections and the Abel--Jacobi mapping, and the homological sieve method.
Our quartic del Pezzo surfaces have Picard rank $2$, and admit two birational morphisms to non-split quadric surfaces. In particular, they do not possess any conic fibrations.
\end{abstract}

\tableofcontents

\section{Introduction}

One of the central themes in diophantine geometry is Manin's conjecture, which predicts the asymptotic formula for the counting function of rational points of bounded height on a smooth Fano variety defined over a number field. This conjecture was originally proposed and developed by Victor Batyrev, Yuri Manin, Emmanuel Peyre, and Yuri Tschinkel, and was subsequently revised by many other mathematicians (\cite{FMT, BM, Peyre, BT, Peyre03, Peyre17, LST18,  LS24}). See \cite{BGandManin} and references therein for a modern formulation of Manin's conjecture.

One may also consider Manin's conjecture over global function fields, and there is an influential heuristic in this setting by Batyrev \cite{Bat88}. 
The idea is that Manin's conjecture over global function fields can be approached through the geometry of moduli spaces of curves/sections on a Fano variety/Fano fibration because, by the valuative criterion, rational points on a smooth Fano variety over a global function field correspond to curves/sections on those varieties defined over the base finite field. Furthermore, Ellenberg and Venkatesh strengthened this heuristic argument by suggesting that homological stability of the moduli space combined with the Grothendieck--Lefschetz trace formula would lead to a proof of the asymptotic formula (\cite{EV05, EVW}). However, such a topological proof of Manin's conjecture was not available until \cite{DLTT25}.

In \cite{DLTT25}, Ronno Das, Brian Lehmann, Philip Tosteson, and the author developed a method we call the homological sieve, and in this method, we combined several ingredients to prove a version of Manin's conjecture for rational curves on smooth split quartic del Pezzo surfaces defined over finite fields. The ingredients are: (i) algebraic geometry (birational geometry of the moduli space of rational curves on quartic del Pezzo surfaces), (ii) arithmetic geometry (simplicial schemes and the Grothendieck--Lefschetz trace formula), (iii) algebraic topology (the inclusion-exclusion principle and Vassiliev type bar complexes), and (iv) elementary analytic number theory. 

The main goal of this paper is to establish the homological sieve in a genuinely relative setting and use it to prove a version of Manin's conjecture for nontrivial quartic del Pezzo fibrations. In particular, we show that for the quartic del Pezzo fibrations considered in this paper, and over finite fields of sufficiently large cardinality relative to the truncation parameter, the number of sections of bounded anticanonical height satisfies the asymptotic predicted by Manin's conjecture, including Peyre's leading constant.

The main novelty of the paper is that the homological sieve can be made to work for nontrivial Fano fibrations. Unlike the split surfaces treated in \cite{DLTT25}, the quartic del Pezzo surfaces considered here have Picard rank $2$, possess no conic bundle structure, and arise as generic fibers of nontrivial fibrations. Their geometry is instead controlled by two birational contractions to non-split quadric surfaces. Combining these contractions with the relative minimal model program and the Abel--Jacobi mapping of sections allows us to reduce the arithmetic counting problem to a new form of the homological sieve.
Although the general framework of the homological sieve was established in \cite{DLTT25}, substantial new ingredients are required in the relative setting. In particular, the poscheme governing the inclusion-exclusion argument is different, and the most delicate point is the definition and analysis of its saturated elements.

\subsection{The main results}
Let $k = \mathbb F_q$ be a finite field of characteristic $\geq 7$. Let $B$ be a smooth geometrically integral projective curve defined over $k$. Let $K(B)$ be the function field of $B$. In this paper, we consider the following quartic del Pezzo surfaces: let $Q$ be a non-split smooth quadric surface defined over $K(B)$ such that its base change $Q_{\overline{k}}$ has Picard rank $1$. Let $Z \in Q$ be a degree $4$ integral closed point on $Q$ such that the base change $Z_{\overline{k}}$ remains integral.
Let $\phi : X \to Q$ be the blow up of $Q$ along $Z$ and we assume that $X$ is a smooth quartic del Pezzo surface defined over $K(B)$. This del Pezzo surface has Picard rank $2$ over $K(B)$, and we consider Manin's conjecture for $X$. To this end, we consider the following model of $X$:

\begin{defi}
Let $\pi : \mathcal X \to B$ be a flat projective morphism with geometrically connected fibers whose generic fiber is isomorphic to $X$. We say $\pi : \mathcal X \to B$ is a smooth wonderful model of $X$ if the following properties hold:
\begin{itemize}
\item $\mathcal X$ is smooth;
\item $-K_{\mathcal X/B}$ is relatively big and nef; and
\item every geometric fiber of $\pi$ is integral and has at worst canonical singularities.
\end{itemize}

\end{defi}
In Theorem~\ref{theo:constructionofwonderful} and Corollary~\ref{coro:examplesofwonderful}, we exhibit plenty of examples of $X$ with smooth wonderful models.

Suppose that $X$ admits a smooth wonderful model $\pi : \mathcal X \to B$.
Let $\Nef_1(\mathcal X) \subset N_1(\mathcal X)$ be the nef cone of curves on $\mathcal X$. 
We also view $N_1(\mathcal X_\eta) \subset N_1(\mathcal X)$ as a subspace
and $\Nef_1(\mathcal X_\eta) \subset \Nef_1(\mathcal X)$ as a face.
We define
\[
\Nef_1(\mathcal X)_{\mathrm{sec}} = \{ \alpha \in \Nef_1(\mathcal X) \cap N_1(\mathcal X) \, |\, \mathcal X_b.\alpha = 1\},
\]
where $\mathcal X_b$ is the class of a general geometric fiber. 
Let
\[
\ell : \Nef_1(\mathcal X) \to \mathbb R_{\geq 0}
\]
be a piecewise linear, continuous, homogeneous function.
Let $\epsilon$ be a small positive rational number.
We define
\[
\Nef_1(\mathcal X)_{\mathrm{sec}, \epsilon} = \{ \alpha \in \Nef_1(\mathcal X)_{\mathrm{sec}} \, | \, \ell(\alpha) \geq -\epsilon K_{\mathcal X/B}.\alpha\},
\]
and 
\[
\Nef_1(\mathcal X_\eta)_{\epsilon} = \{ \beta \in \Nef_1(\mathcal X_\eta) \, | \, \ell(\beta) \geq -\epsilon K_{\mathcal X/B}.\beta\},
\]
We also define
\[
r(\pi) = \min\{-K_{\mathcal X/B}.\alpha >0\, | \alpha \in N_1(\mathcal X)_{\mathbb Z}, \, \mathcal X_b.\alpha = 0\},
\]
and
\[
m(\pi) = \min\{ \deg(-\sigma^*K_{\mathcal X/B}) \, | \, \sigma : B \to \mathcal X \text{ a nef section }\}.
\]
Let $\alpha \in \Nef_1(\mathcal X)_{\mathrm{sec}, \mathbb Z}$ be a nef class and let
\[
\Sec(\mathcal X/B, \alpha)
\]
be the space of sections of class $\alpha$ on $\mathcal X$.
Then we define the counting function as
\[
\mathsf N(\mathbb F_{q}, B, \mathcal X, \ell, \epsilon, d) = \sum_{\substack{\alpha \in \Nef_1(\mathcal X)_{\mathrm{sec}, \epsilon, \mathbb Z}, \\-K_{\mathcal X/B}.\alpha \leq m(\pi) + r(\pi)d}} \#\Sec(\mathcal X/B, \alpha)(\mathbb F_{q}).
\]
Here is our main theorem which is a version of Manin's conjecture for $X$:
\begin{theo}
\label{theo:main}
Assume that the characteristic of the ground field is $>5$ and $\pi : \mathcal X \to B$ is a smooth wonderful model of the generic fiber $\mathcal X_\eta \cong X$.
There exist a constant $\mathsf C > 0$ depending only on $B_{\overline{k}}$ and $\mathcal X_{\overline{k}}$ and a piecewise linear, continuous, homogeneous function $\ell : \Nef_1(\mathcal X) \to \mathbb R_{\geq 0}$, which is uniform in $\mathcal X$ in the sense of Remark~\ref{rema:independence_intro} and is positive on a dense open cone $\mathsf U \subset \Nef_1(\mathcal X_\eta)$, such that assuming $q^\epsilon > \mathsf C$, we have
\[
\mathsf N(\mathbb F_{q}, B, \mathcal X, \ell, \epsilon, d) \sim(1-q^{-1})^{-1} \alpha(\mathcal X_\eta, \Nef_1(\mathcal X_\eta)_\epsilon) \tau_{-K_{\mathcal X/B}}(\mathcal X)q^{m(\pi) + r(\pi)d}(r(\pi)d),
\]
as $d \to \infty$. Here $\alpha(\mathcal X_\eta, \Nef_1(\mathcal X_\eta)_\epsilon)$ is the alpha constant for $\Nef_1(\mathcal X_\eta)_\epsilon$ and $\tau_{-K_{\mathcal X/B}}(\mathcal X)$ is the Tamagawa number for $X$ both introduced in \cite{Peyre}. See Section~\ref{subsec:heightzeta} for their definitions.
\end{theo}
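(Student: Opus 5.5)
The proof will follow the strategy of \cite{DLTT25}: it reduces the count of $\mathbb F_q$-points on each $\Sec(\mathcal X/B,\alpha)$ to a point count on an explicit open subscheme of a fibration over a Picard-type variety. That count is then evaluated via the homological sieve and the Grothendieck--Lefschetz trace formula.

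First I will use the two birational contractions $\phi_i : X \to Q_i$ to non-split quadrics. On the model $\mathcal X$, I will run the $3$-dimensional relative MMP in characteristic $>5$ to produce relative models $\mathcal X \dashrightarrow \mathcal Q_i \to B$, which are quadric surface fibrations, each with an exceptional divisor lying over a degree-$4$ multisection $\mathcal Z_i$. A section of class $\alpha$ on $\mathcal X$ then corresponds to a section of $\mathcal Q_i$ together with incidence data along $\mathcal Z_i$. Since $Q_{\overline{k}}$ has Picard rank $1$, sections of $\mathcal Q_i$ can be analysed through an Abel--Jacobi map to a Picard variety of the base, or of the double cover of $B$ that parametrizes the rulings. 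For $\alpha$ in the $\epsilon$-region defined by the function $\ell$, I will show that the fibres of this map are open subsets of projective spaces, cut out by the condition that a linear-algebraic datum (a map of vector bundles) is nondegenerate at every point of $B$. The function $\ell$ will be chosen so that $\ell(\alpha)\ge -\epsilon K_{\mathcal X/B}.\alpha$ forces all relevant degrees to be large enough for Riemann--Roch and vanishing to apply. The dimension of $\Sec(\mathcal X/B,\alpha)$ will then be the expected $-K_{\mathcal X/B}.\alpha+\chi$.

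Second, I will write the complement of the open locus as a union over effective divisors $D$ on $B$ where degeneracy occurs. This gives a poscheme indexed by effective divisors, decorated with the local type of degeneration: passing through the blown-up locus, or hitting singular points of fibres. I will then build the associated Vassiliev-type bar complex and prove a homological stability statement: up to degree roughly $\epsilon$ times the anticanonical degree, the compactly supported cohomology is that of an Euler-product-type formula. The step I expect to be hardest is the definition of the saturated elements, namely those $D$ for which the degeneracy conditions stop imposing independent linear conditions. The plan here is to bound their contribution by a dimension estimate, showing codimension at least proportional to $\deg D$, using the two contractions symmetrically together with the integrality of fibres. It is this bound that requires $q^\epsilon>\mathsf C$.

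Third, applying the trace formula, the main terms give $q^{\dim}$ times a convergent Euler product. I will identify this product, via the local densities at each $b\in B$ (computed from the canonical singular fibres), with $\tau_{-K_{\mathcal X/B}}(\mathcal X)$ and the factor $(1-q^{-1})^{-1}$. The error terms are controlled by Deligne weight bounds on the unstable range. Finally, summing over $\alpha$ in the slab $-K_{\mathcal X/B}.\alpha\le m(\pi)+r(\pi)d$ within $\Nef_1(\mathcal X)_{\mathrm{sec},\epsilon}$ amounts to counting lattice points. This yields the volume $\alpha(\mathcal X_\eta,\Nef_1(\mathcal X_\eta)_\epsilon)$ times the linear factor $r(\pi)d$, since the Picard rank is $2$. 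The components indexed by the Picard group contribute the remaining class-number and $\#\mathrm{Pic}^0$ factors, which are absorbed into $\tau$.
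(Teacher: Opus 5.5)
Your overall architecture matches the paper's: flops to a blow-up $\mathcal X_i\to\mathcal Q_i$ along $\mathcal Z_i$, an Abel--Jacobi map to $\Pic(C)$ of the double cover, a poscheme of degeneracy data, a bar complex, the trace formula, an Euler product, and a lattice count. But the step you call hardest is misconceived.

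\textbf{Saturated elements.} These are not the strata where conditions fail to be independent, to be bounded away by a codimension estimate. They are the elements $x=(x_B,x_{\mathcal Z},x_C)$ that are closed under implied vanishing, so that $\mathsf Z_{w<x,L}=\mathsf Z_{w<\mathrm{sat}(x),L}$ (Lemma~\ref{lemm:propforstratification}). The inclusion--exclusion runs over the poset $\mathfrak H^\circ$, and the saturated elements carry the main term $\sum\mu_k(w,x)q^{-\gamma(x)}$.

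The delicate point is that $\iota:\mathcal Z\times_B C\to F_1(\mathcal Q)$ fails to be a closed embedding over the finitely many $b$ where a line of $\mathcal Q_b$ cuts out a length-$2$ subscheme of $\mathcal Z_b$. This can happen because $-K_{\mathcal X/B}$ is only relatively big and nef. Together with ramification of $C\to B$ and $\mathcal Z\to B$, it changes the local posets of essential pairs at those $b$. This is a fixed feature of the model, not a locus of codimension proportional to $\deg D$. It must be computed exactly so that each local factor equals $\lambda_{p,b}^{-1}\#\mathcal X_b^{\mathrm{sm}}(k(b))/q^{2|b|}$. Failure of expected codimension is a separate issue, handled by deleting a closed $F\subset U_{k(\alpha)}$ of codimension $\ge 3I-3$.

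\textbf{The two contractions.} They are not used symmetrically inside one estimate. The analysis over $\mathcal Q_1$ needs $h(\alpha)-k(\alpha)$ and $k(\alpha)$ large. In the coordinates $(h,k)$, $\Nef_1(\mathcal X_\eta)$ is $\{0\le k\le 2h\}$, and this requirement covers only the subcone $0<k<h$. The complementary region is counted on the flopped model $\mathcal X_2$, where $h-k_2=k-h+m'$. One then sandwiches the count on $\mathcal X$, using that flops preserve heights and $\tau_{-K_{\mathcal X/B}}$ and shift $k$ by a bounded amount (Lemmas~\ref{lemm:flopsTamagawa} and \ref{lemm:flopsdifference}). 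Without this split, $\ell$ cannot be positive on a dense open subcone.

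\textbf{The error term.} Deligne's weight bounds alone do not control it. The sieve identifies compactly supported cohomology only in degrees within about $I=\frac{1}{10}(h(\alpha)-k(\alpha)-\mathsf c_3(\mathcal F))$ of the top. For the remaining degrees you need two further inputs:
\begin{itemize}
\item an a priori bound on total Betti numbers, $\sum_i\dim\rH^i_{\textnormal{\'et},c}(\Sec(\mathcal X/B,\le e))\le\mathsf C^{e+1}$ (Corollary~\ref{coro:FengHaseLiu}, from Feng--Hase-Liu);
\item the exponential $\mathsf L^1$-trace bound on the bar complex, namely the factor $(4E)^{k(\alpha)}$ in Proposition~\ref{prop:keyestimate}.
\end{itemize}
Together these give an error $O(q^{h(\alpha)+\mathsf c_*}(q/16)^{-I/2}\mathsf C_*^{h(\alpha)})$ with $I$ of order $\epsilon h(\alpha)$. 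This, and not a codimension bound on saturated elements, is exactly why $q^\epsilon>\mathsf C$ is assumed. Your proposal has no substitute for the Betti bound.
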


\begin{rema}
\label{rema:independence_intro}
The $\mathbb R$-vector space $N_1(\mathcal X)$ admits a natural basis, and $\ell$ is given in a uniform way in terms of this basis. In this sense, $\ell$ is independent of $\mathcal X$.
\end{rema}

We prove Theorem~\ref{theo:main} by proving a version of Peyre's all height approach of Manin's conjecture. In particular, we prove:
\begin{theo}
\label{theo:Peyre_intro}
We have
\[
\lim_{\ell(\alpha)  \to \infty} \frac{\#\Sec(\mathcal X/B, \alpha)(\mathbb F_{q})}{q^{-K_{\mathcal X/B}.\alpha}} = \tau_{-K_{\mathcal X/B}}(\mathcal X),
\]
under the assumption that $\mathcal X$ is a blow-up model.
\end{theo}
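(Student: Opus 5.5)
This is Peyre's all-heights limit. My plan is the homological sieve of \cite{DLTT25}, run relative to $B$: write $\#\Sec(\mathcal X/B,\alpha)(\mathbb F_q)$ as an inclusion--exclusion over the blow-up structure, and control each term by homological stability together with the Grothendieck--Lefschetz trace formula.

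\emph{Geometric reduction.} Since $\mathcal X$ is a blow-up model, there is a birational morphism $\Phi:\mathcal X\to\mathcal Q$ over $B$. Here $\mathcal Q\to B$ is a quadric surface fibration with generic fiber $Q$, and $\Phi$ blows up a curve $\mathcal Z\subset\mathcal Q$ that is a multisection of degree $4$ whose generic fiber is $Z$. The curve $\mathcal Z$ is defined over $k$; it is geometrically integral and has genus $g(\mathcal Z)$. A section $\sigma$ of $\mathcal X$ of class $\alpha$ is then the same as a section $s=\Phi\circ\sigma$ of $\mathcal Q$ together with prescribed intersection data with $\mathcal Z$. The class $\alpha$ determines the class $\Phi_*\alpha$ and the intersection number $n=E.\alpha$ with the exceptional divisor $E$.

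So $\Sec(\mathcal X/B,\alpha)$ is identified with a locally closed subset of $\Sec(\mathcal Q/B,\Phi_*\alpha)$: the sections whose scheme-theoretic intersection with $\mathcal Z$ has length exactly $n$, counted with the strict-transform correction. I would first establish the needed results for $\mathcal Q$. Using the relative MMP and Abel--Jacobi results for sections, the spaces $\Sec(\mathcal Q/B,\beta)$ are irreducible of the expected dimension once $\ell(\beta)\gg0$. Their point counts come from a fibration over $\Pic$ or $\mathrm{Jac}(B)$ with fibers that are open subsets of projective spaces or affine bundles. This gives $\#\Sec(\mathcal Q/B,\beta)(\mathbb F_q)\approx \#\mathrm{Jac}(B)(\mathbb F_q)\,q^{-K.\beta+\dim}$, with an error term controlled uniformly via the Betti numbers.

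\emph{Sieve.} Consider the evaluation map from sections of $\mathcal Q$ to $\mathcal Z$. The condition that a section meets $\mathcal Z$ in a prescribed divisor $D$ of degree $n$ on $\mathcal Z$ is handled by a poscheme indexed by effective divisors on $\mathcal Z$ (equivalently $\mathrm{Sym}^n\mathcal Z$), mirroring \cite{DLTT25}. The new point is to define \emph{saturated} elements. These are divisors $D$ for which imposing $D$ still yields codimension exactly $\deg D$, i.e.\ points of $\mathcal Z$ impose independent conditions. Non-saturated strata, such as several points of $D$ lying in one fiber of $\mathcal Q\to B$, must be shown to have small dimension and hence contribute negligibly.

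Inclusion--exclusion then gives
\[
\#\Sec(\mathcal X/B,\alpha)(\mathbb F_q)=\sum_{D}\mu(D)\,\#\Sec(\mathcal Q/B,\Phi_*\alpha;\supset D)(\mathbb F_q).
\]
I would bound the truncated tail using Vassiliev-type bar complexes and the homological stability of the stratum spaces, together with the assumption $q^\epsilon>\mathsf C$. The main terms then assemble into an Euler product over the closed points of $B$. This product equals a local density: its factor at $b$ is $\#\mathcal X_b(\mathbb F_{q_b})/q_b^{2}$, corrected at the finitely many bad fibers. Combined with the global factor $\#\mathrm{Jac}(B)(\mathbb F_q)\,q^{(1-g)\dim}$ and the convergence factor $L(1,\mathrm{Pic}\,\overline X)$-residue, this yields exactly Peyre's $\tau_{-K_{\mathcal X/B}}(\mathcal X)$.

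\emph{Main obstacle.} I expect the hardest step to be the saturated-element analysis. It requires showing that for $\ell(\alpha)\to\infty$, all strata of non-generic intersection with $\mathcal Z$ have codimension growing linearly in $\ell(\alpha)$, uniformly enough that the Betti-number bounds beat the alternating sum. I would first try to prove an expected-dimension statement of the form $h^1(N_s(-D))=0$ for the relevant sections and divisors, by degenerating $s$ to combs via the MMP.
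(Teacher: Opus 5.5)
Your outline has the right overall shape (reduce to sections of $\mathcal Q$ with incidence along $\mathcal Z$, run the homological sieve, assemble an Euler product). However, it is missing the step that makes the sieve run at all, and the substitute you propose would fail.

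\textbf{Missing linearization.} The method of \cite{DLTT25} needs a $\mathbb G_m$-torsor over $\Sec(\mathcal X/B,\alpha)$ to be open in a vector bundle over a base with known cohomology. The complement must be stratified by \emph{linear} subspaces of computable codimension; only then is the cohomology of each stratum a Tate twist of that of the indexing poscheme. For a quadric bundle of geometric relative Picard rank one, there is no such linear parametrization of sections over $\mathrm{Jac}(B)$.

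The paper uses the Hassett--Tschinkel correspondence \cite{HT12}:
\begin{itemize}
\item the Stein factorization $F_1(\mathcal Q)\to C\to B$ of the relative Fano scheme of lines is a double cover branched along the discriminant;
\item $F_1(\mathcal Q)\cong\mathbb P_C(\mathcal F)$ and $\Sec(\mathcal Q/B)\cong\Sec(\mathbb P_C(\mathcal F)/C)$;
\item jets along $\mathcal Z$ become jets on $F_1(\mathcal Q)$ via $\iota:\mathcal Z\times_BC\to F_1(\mathcal Q)$.
\end{itemize}
Sections of $\mathcal Q$ inducing the jets of $w$ then correspond, up to scalars, to nowhere-vanishing elements of $\rH^0(C,\mathcal F^\vee(-\Sigma_C)\otimes L)$ with $L\in\Pic^d(C)$. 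So the global factor is $\#\Pic^0(C)(k)$, not $\#\mathrm{Jac}(B)(\mathbb F_q)$. Together with the $C$-part of the sieve, this is exactly what produces $L_*(1,\mathsf M_p\otimes\mathbb Q)=\#\Pic^0(C)(k)/(q^{g(C)}(1-q^{-1}))$ and the factors $\lambda_{p,b}^{-1}$ in Peyre's constant. With $\mathrm{Jac}(B)$ and a sieve over divisors on $\mathcal Z$ alone, the constant cannot come out as $\tau_{-K_{\mathcal X/B}}(\mathcal X)$.

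Moreover, even sections of $\mathcal Q$ through $D$ form only an \emph{open} subset of such a linear space: a section of $\mathcal F^\vee\otimes L$ vanishing at $c\in C$ gives a section plus a fiber. So you cannot take $\#\Sec(\mathcal Q/B,\Phi_*\alpha;\supset D)(\mathbb F_q)$ as known input and then sieve over $D$. Those strata are not linear, their point counts are not powers of $q$, and there is no stratum-wise homological stability to appeal to. The paper sieves both defects at once with the poscheme $\mathfrak H\subset\mathrm{Hilb}^{\mathrm{fin}}(B)\times\mathrm{Hilb}^{\mathrm{fin}}(\mathcal Z)\times\mathrm{Hilb}^{\mathrm{fin}}(C)$.

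\textbf{Saturation.} Your notion of saturation conflates two separate issues. In the paper, saturation is a closure operation on $\mathfrak H$ ensuring $\mathsf Z_{w<x,L}=\mathsf Z_{w<\mathrm{sat}(x),L}$ and that joins correspond to intersections (Lemma~\ref{lemm:propforstratification}). It is forced for two reasons: vanishing at $c$ subsumes every incidence condition over $c$, and $\iota$ fails to be a closed embedding when a ruling line of $\mathcal Q_b$ cuts out a length-$2$ subscheme of $\mathcal Z_b$. For example, incidence at two points of $\mathcal Z_b$ over a split $b$, with no ruling line through them, saturates to vanishing at $b$. Such configurations are not negligible. They enter the main term through the M\"obius function of $\mathfrak H^\circ(k)$ and build the local factors, e.g.\ $1+rq^{-|b|}-2(1+r)q^{-2|b|}+rq^{-3|b|}+q^{-4|b|}$ at a split good $b$.

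Independence of conditions (unobstructedness) is a different matter. The paper handles it by discarding a closed $F\subset U_{k(\alpha)}$ of codimension at least $3I-3$, via slope and Clifford-type estimates on $C$ (Lemmas~\ref{lemm:slope} and \ref{lemm:dimensionestimates}). Neither combs nor the MMP are needed for this. $N_\tau(-\Sigma_C)$ is a line bundle on $C$ of degree $2d-\deg\mathcal F-\deg\Sigma_C$, so $h^1$-vanishing is automatic once $h(\alpha)-k(\alpha)$ is large. The MMP only enters in producing a blow-up model, which the statement already assumes.
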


 See Theorem~\ref{theo:allhegiht} for more details.

\begin{rema}
Our proofs of Theorem~\ref{theo:main} also apply to the following quintic del Pezzo surfaces: let $Q$ be a non-split smooth quadric surface over $K(B)$ such that $Q_{\overline{k}}$ has Picard rank $1$. Let $Z \in Q$ be a closed point of degree $3$ such that $Z_{\overline{k}}$ remains integral. Let $\phi : X \to Q$ be the blow-up of $Q$ along $Z$, and assume that $X$ is a smooth quintic del Pezzo surface over $K(B)$. Our proofs imply a similar statement. Since the proofs are similar to those of Theorem~\ref{theo:main}, we will not include a full proof of this statement in this paper.
\end{rema}

\subsection{The method of proof}

Our proofs combine the following ingredients: 
\begin{enumerate}
\item the $3$-dimensional positive-characteristic minimal model program;
\item the geometry of the space of sections and the Abel--Jacobi mapping; and
\item the homological sieve method. 
\end{enumerate}

\subsubsection*{The $3$-dimensional positive-characteristic minimal model program}

Our quartic del Pezzo surface $X$ has Picard rank $2$ and admits a birational morphism $\phi : X \to Q$ to a non-split quadric surface. The $2$-ray game gives the second birational morphism $\phi' : X \to Q'$ to another non-split quadric surface. Our counting argument exploits these two birational morphisms. To this end, we need to understand how to extend these birational morphisms to our integral model $\pi : \mathcal X \to B$. We assume that $\mathcal X$ is a smooth wonderful model of $X$. Under this assumption, we apply a relative $3$-dimensional minimal model program to $\mathcal X$ (\cite{HX15, BirkarASENS, HNT}): 
let $D$ be the flat closure of a hyperplane section from $Q$. Then, after applying a sequence of $D$-flops over $B$,
\[
\xymatrix{
 \mathcal X \ar@{-->}[rr]^\psi \ar[rd]_\pi & & \widetilde{\mathcal X} \ar[dl]^{\widetilde{\pi}} \\
 & B &
}
\]
the strict transform $\widetilde{D}$ of $D$ on $\widetilde{\mathcal X}$ becomes relatively $\widetilde{\pi}$-big and $\widetilde{\pi}$-nef and $\widetilde{\pi} : \widetilde{\mathcal X} \to B$ is a smooth wonderful model of $X$ (Proposition~\ref{prop:Dflops}). 
Note that $\psi$ is the composition of a sequence of $D$-flops, so the height functions associated to $-K_{\mathcal X/B}$ and $-K_{\widetilde{\mathcal X}/B}$ are the same and the Tamagawa numbers for $\mathcal X$ and $\widetilde{\mathcal X}$ are also the same (Lemma~\ref{lemm:flopsTamagawa}). Thus a counting problem on $\mathcal X$ is equivalent to a counting problem on $\widetilde{\mathcal X}$, so we can work with $\widetilde{\mathcal X}$ instead.

Next, in Proposition~\ref{prop:blowupquadric} and Corollary~\ref{coro:blowupofquadric}, we show that assuming $D$ is relatively $\pi$-big and $\pi$-nef, it defines a $B$-morphism
\[
\phi : \mathcal X \to \mathcal Q,
\]
where the target is a quadric surface bundle $\pi_{\mathcal Q} : \mathcal Q \to B$ whose generic fiber is $Q$ such that $\mathcal Q$ is smooth and every geometric fiber is integral. Moreover, $\phi : \mathcal X \to \mathcal Q$ is a blow-up along a geometrically integral smooth curve $\mathcal Z \subset \mathcal Q$ which is the closure of $Z \subset Q$.
Thus, it suffices to work with a blow-up model $\phi : \mathcal X \to \mathcal Q$ along a smooth geometrically integral multisection $\mathcal Z \subset \mathcal Q$ of degree $4$.

\subsubsection*{The geometry of the space of sections and the Abel--Jacobi mapping}

We need to understand the space of sections on a blow-up model $\phi : \mathcal X \to \mathcal Q$. The key to this study is the Abel--Jacobi mapping for the space of sections on $\mathcal Q$ developed in \cite{HT12} and \cite{TT25}.
Let $C$ be the discriminant curve of $\mathcal Q$. Then the intermediate Jacobian of $\mathcal Q$ is $\Pic^0(C)$, and we have the Abel--Jacobi mapping:
\[
\mathrm{AJ}: \Sec(\mathcal Q/B, \phi_*\alpha) \to \Pic^d(C),
\]
for some $d$. When the height of $\phi_*\alpha$ is sufficiently large, \cite{HT12} shows that this mapping is a Zariski open subset of a projective bundle over the image.

On the other hand, let $k(\alpha) = E.\alpha$ where $E$ is the exceptional divisor of $\phi$. Then we have the following morphism
\[
\Phi_\alpha : \Sec(\mathcal X/B, \alpha) \to \mathrm{Hilb}^{[k(\alpha)]}(\mathcal Z), \quad [\sigma: B \to \mathcal X] \mapsto \phi_*([\sigma(B)\cap E]).
\]
Hence, we obtain the morphism:
\[
\Phi_\alpha \times \mathrm{AJ} : \Sec(\mathcal X/B, \alpha) \to \mathrm{Hilb}^{[k(\alpha)]}(\mathcal Z) \times \Pic^d(C).
\]
Regarding this morphism, in Proposition~\ref{prop:dominantsmooth} and Theorem~\ref{theo:spaceofsections_structure}, we show that $\Phi_\alpha \times \mathrm{AJ}$ is a smooth morphism and the structure morphism
\[
\Phi_\alpha \times \mathrm{AJ} : \mathrm{Sec}(\mathcal X/B, \alpha) \to \mathrm{Hilb}^{[k(\alpha)]}(\mathcal Z) \times \Pic^d(C)
\]
 is a Zariski open subset of a projective bundle over the image.
 In particular, over $\mathbb C$, the composition
 \[
 \mathrm{Sec}(\mathcal X/B, \alpha) \to \mathrm{Hilb}^{[k(\alpha)]}(\mathcal Z) \times \Pic^d(C)
 \to \Pic^{k(\alpha)} (\mathcal Z) \times \Pic^d(C)
 \]
 coincides with the Abel--Jacobi mapping to the intermediate Jacobian of $\mathcal X$, and it is a MRC fibration (Corollary~\ref{coro:MRC}).
Thus, a key to our counting problem is to understand the number of $\mathbb F_q$-points on this open subset and we can appeal to the inclusion-exclusion principle and the homological sieve method to achieve such a goal.

\subsubsection*{Homological sieve}

Our geometric understanding of the space of sections as above enables us to apply the homological sieve method, a formalized inclusion-exclusion principle developed in \cite{DLTT25}.
We define the Zariski open subset
\[
U_{k(\alpha)} = \{ [w] \in \mathrm{Hilb}^{[k(\alpha)]}(\mathcal Z) \, |\, \text{ $w$ is admissible for $\mathcal Q$} \} \subset \mathrm{Hilb}^{[k(\alpha)]}(\mathcal Z).
\]
Note that we realize
 \[
 \mathrm{Sec}(\mathcal X/B, \alpha) \to U_{k(\alpha)} \times \Pic^d(C),
 \]
 as a Zariski open subset of a (generically) projective bundle
 \[
 \mathbb P(E) \to U_{k(\alpha)} \times \Pic^d(C).
 \]
 Then we consider the corresponding $\mathbb G_m$-torsor above $ \mathrm{Sec}(\mathcal X/B, \alpha)$ which is a Zariski open subset
 \[
 \widetilde{M}_\alpha \subset E.
 \]
 To count $\mathbb F_q$-points on $\widetilde{M}_\alpha$, we analyze its complement $E \setminus  \widetilde{M}_\alpha$.
 To this end we define the poscheme (a scheme with a poset structure)
\[
\mathfrak H =\{x = (x_B, x_{\mathcal Z}, x_C, \hat{1}) \in \mathrm{Hilb}^{\mathrm{fin}}(B) \times \mathrm{Hilb}^{\mathrm{fin}}(\mathcal Z)\times \mathrm{Hilb}^{\mathrm{fin}}(C) \times\{ \hat{1}\} \, | \, \text{$x$ satisfies $(*)$}\},   
\]
where $(*)$ means that 
\[
p_{\mathcal Z \to B}^*x_B \subset x_\mathcal Z, \quad p_{C \to B}^*x_B \subset x_C.
\]
Here the poset structure is given by the inclusion of subschemes component-wise.
Then $E \setminus  \widetilde{M}_\alpha$ is stratified by a stratification
\[
\mathsf Z \subset (U_{k(\alpha)} < \mathfrak H)\times_{U_{k(\alpha)}} E,
\]
where for $(w< x, L) \in (U_{k(\alpha)} < \mathfrak H)(\overline{k})\times \Pic^d(C)(\overline{k})$, the fiber $\mathsf Z_{w < x, L}$ is a linear subspace of $E_{w, L}$.
We then follow the strategy of \cite{DLTT25}. 
We consider a truncation $P\subset (U_{k(\alpha)} < \mathfrak H)$
and the bar complex $B(P, \mathsf Z_{P})$, which is a certain simplicial scheme, i.e., a contravariant functor from $\Delta$ to the category of separated schemes, and it assigns $[n] = \{ 0, \cdots, n\}$ to the scheme
\[
\{ (w, L), w < x_0 \leq \cdots \leq x_n \in \mathfrak H, z \in \mathsf Z_{w < x_n, L} \, | \, (w < x_i) \in P\}.
\]
\cite[Theorem 7.5]{DLTT25} shows that \'etale cohomology of $E \setminus  \widetilde{M}_\alpha$ is isomorphic to \'etale cohomology of the bar complex $B(P, \mathsf Z_{P})$ in a certain range.
Then, using the spectral sequence associated to a stratification of $B(P, \mathsf Z_{P})$ and the Grothendieck--Lefschetz trace formula for simplicial schemes, we conclude that up to an error term, we have
\begin{align*}
\#\widetilde{M}_\alpha(k) \sim 
\#\Pic^0(C)(k)q^{h_{\mathcal Q}(\alpha) +3-2g(B)-g(C)} \sum_{(w \leq x) \in (U_{k(\alpha)}(k) \leq \mathfrak H^\circ(k))} \mu_k(w, x)q^{-\gamma(x)},
\end{align*}
as $\ell(\alpha)  \to \infty$,
where $\mu_k$ is the M\"obius function for the poset $\mathfrak H^\circ(k) \subset \mathfrak H(k)$ which is the set of saturated elements.
Then, we consider the following virtual height zeta function:
\[
\mathsf Z(t) = \sum_{\mathsf k = 0}^\infty q^{\mathsf k}\left(\sum_{(w \leq x) \in (U_{\mathsf k}(k) \leq \mathfrak H^\circ(k))} \mu_k(w, x)q^{-\gamma(x)}\right)t^{\mathsf k}.
\]
Since the M\"obius function is multiplicative, the above zeta function becomes the Euler product.
We compute this Euler product explicitly and we confirm
\[
\frac{\#\Pic^0(C)(k)q^{\mathsf k+2-2g(B)-g(C)}}{1-q^{-1}}\left(\sum_{(w \leq x) \in (U_{\mathsf k}(k) \leq \mathfrak H^\circ(k))} \mu_k(w, x)q^{-\gamma(x)}\right) \sim  \tau_{-K_{\mathcal X/B}}(\mathcal X),
\]
as $\mathsf k \to \infty$. Altogether we conclude
Theorem~\ref{theo:Peyre_intro}.

\subsection{Related work}

\subsubsection*{Manin's conjecture for del Pezzo surfaces}

Manin's conjecture for smooth del Pezzo surfaces defined over number fields has been extensively studied. When the degree of a del Pezzo surface is $\geq 6$, the conjecture follows from \cite{BTtoric96, BTtoric98, CLT01} because such a surface is a toric surface. Manin's conjecture for split quintic del Pezzo surfaces was proved over $\mathbb Q$ in \cite{delaBre} and over arbitrary number fields in \cite{BD24}. Some non-split quintic del Pezzo surfaces with a conic bundle structure are handled in \cite{HBL25}. For del Pezzo surfaces of degree $\leq 4$, the only proved case is the example treated in \cite{DBB11} which settles one example of a non-split quartic del Pezzo surface over $\mathbb Q$. It has Picard rank $5$, is $\mathbb Q$-rational, and it comes with a conic bundle structure. This conic bundle structure was a key to their counting argument. We should also note that \cite{BS19} establishes upper and lower bounds of the correct magnitude for quartic del Pezzo surfaces with conic bundle structures.
In particular, our result is new in the sense that our quartic del Pezzo surface has Picard rank $2$ and it does not come with any conic bundle structure. Instead, it admits two birational morphisms to non-split quadric surfaces. It would be interesting to explore whether analytic methods can be applied to such surfaces.

\subsubsection*{Manin's conjecture over global function fields}

There are fewer studies on Manin's conjecture over global function fields. As mentioned before, there is an influential heuristic for this conjecture by Batyrev and Ellenberg--Venkatesh (\cite{Bat88} and \cite{EV05, EVW}). The study of Manin's conjecture over global function fields was further developed by David Bourqui (e.g., \cite{Bou03, Bou09, Bou16}). 

The first implementation of this strategy of Batyrev--Ellenberg--Venkatesh was obtained in \cite{DLTT25} for quartic del Pezzo surfaces over finite fields, building on the study of the Cohen--Jones--Segal conjecture in \cite{DT24}.
Moreover, \cite{FHL26} generalized \cite{DLTT25} to higher genus curves by using the homological sieve method developed in \cite{DLTT25} combined with Betti number bounds for the space of curves, generalizing \cite[Theorem B.1]{DLTT25} by Sawin--Shusterman. These Betti number bounds play a crucial role in the current paper when we deal with the error term.

The follow-up \cite{Tani26} to \cite{DLTT25} applied the homological sieve method to quintic del Pezzo surfaces. In particular, this paper settles a version of Peyre's all height approach to Manin's conjecture for rational curves on split quintic del Pezzo surfaces. A further strengthened theorem for quintic del Pezzo surfaces was obtained in \cite{BFG26}. Upper bounds were obtained in \cite{GH24, Glas25} for low degree del Pezzo surfaces. For a modern formulation of Manin's conjecture over global function fields, see \cite{LT26}.

\subsubsection*{Sections of del Pezzo fibrations}

The geometry of the space of sections on a Fano fibration is a fascinating subject. One of the pioneering papers on this is \cite{HT14} which explored the geometry of the spaces of sections of quartic del Pezzo fibrations and its interaction with the Abel--Jacobi mapping. Our method also relies on the Abel--Jacobi mapping and \cite{HT14} is one source of inspiration for our work. The case of quadric surface bundles was handled in \cite{HT12}, and our result is based on the geometric description of the space of sections of a quadric surface bundle. Geometric Manin's conjecture for del Pezzo fibrations, which predicts properties of the space of sections such as dimension and irreducibility, was first explored in \cite{LT19} and \cite{LT21}. This was extended in part to higher dimension in \cite{LRT25}. Homological stability for the space of sections was first explored in \cite{TT25}, and a geometric construction in \cite{TT25} is also key to our understanding of the space of sections of quartic del Pezzo fibrations.

\bigskip

\noindent
{\bf Acknowledgements:}
The author thanks Hiroto Akaike, Enhao Feng, Matthew Hase-Liu, Brendan Hassett, Natsume Kitagawa, Brian Lehmann, Yusuke Nakamura, and Hiromu Tanaka for answering the author's questions.
In particular, the author would like to thank Enhao, Matthew, Brendan, and Brian for various discussions. The author also thanks Tim Browning, Enhao Feng, Jakob Glas, Matthew Hase-Liu, Natsume Kitagawa, Dan Loughran, and Hiromu Tanaka for comments on an early draft. In particular, Enhao and Matthew pointed out some issues with our previous proofs in Section~\ref{sec:geometryofthespace}.

The author was partially supported by JST FOREST program Grant number JPMJFR212Z and by JSPS KAKENHI Grant-in-Aid (B) 23K25764.

ChatGPT was used to proofread this paper and to revise the introduction.

\section{Preliminaries}
\label{sec:preliminaries}

First, we record some preliminaries for this paper.

\noindent
{\bf Notation:}
We assume that our ground field $k$ is a field of arbitrary characteristic.
Let $X$ be a scheme over $k$ and $k \subset k'$ be an extension. Then we denote its base change by $X_{k'}$. We denote linear equivalence by $\sim$ and $\mathbb Q$-linear equivalence by $\sim_{\mathbb Q}$. Let $f : X \to Y$ be a $k$-projective morphism. We denote the relative real N\'eron-Severi space by $N^1(X/Y)$ and its dual space of $1$-cycles by $N_1(X/Y)$.
When $Y = \Spec (k)$ we denote them by $N^1(X)$ and $N_1(X)$ respectively.
We denote the pseudo-effective cones of divisors and curves by
\[
\overline{\mathrm{Eff}}^1(X), \quad \overline{\mathrm{Eff}}_1(X),
\]
respectively, and we also denote the nef cones of divisors and curves by
\[
\mathrm{Nef}^1(X), \quad \mathrm{Nef}_1(X),
\]
respectively. We denote the lattice generated by integral curves by $N_1(X)_{\mathbb Z} \subset N_1(X)$ and for any subset $\mathsf C \subset N_1(X)$, we define $$\mathsf C_{\mathbb Z} := \mathsf C\cap N_1(X)_{\mathbb Z}.$$

For any smooth geometrically integral projective curve $B$ over $k$, we denote the set of closed points of $B$ by $|B|$. For any $b \in |B|$, we set $|b| = [k(b):k]$.

\bigskip

\subsection{Preliminaries on singularities}

The following lemma is well-known to experts. We record it for completeness:

\begin{lemm}
\label{lemm:bignefterminal}
We assume that $k$ is an algebraically closed field.
Let $X$ be a normal projective variety, with only terminal singularities, defined over $k$. When the characteristic of $k$ is positive, we may assume that the dimension of $X$ is $\leq 3$. Let $D$ be a big and nef $\mathbb Q$-Cartier $\mathbb Q$-divisor on $X$. Then there exists an effective $\mathbb Q$-divisor $0 \leq D'\sim_{\mathbb Q} D$ such that $(X, D')$ is a terminal pair.
\end{lemm}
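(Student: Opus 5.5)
We will reduce to the case where the non-ample part of $D$ can be pushed into a divisor that avoids the centers of all exceptional divisors with discrepancy close to zero.

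\textbf{Step 1 (decomposition).} Since $D$ is big and nef, Kodaira's lemma gives an ample $\mathbb Q$-divisor $A$ and an effective $\mathbb Q$-divisor $N$ with $D \sim_{\mathbb Q} A + N$. For every rational $0<t\le 1$ we then have
\[
D \sim_{\mathbb Q} (1-t)D + tA + tN,
\]
and $(1-t)D + tA$ is ample. Kodaira's lemma is characteristic free.

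\textbf{Step 2 (a log resolution and a fixed bound).} Take a log resolution $f : Y \to X$ of $(X, \mathrm{Supp}\, N)$. This exists in characteristic zero by Hironaka, and in positive characteristic for $\dim X \le 3$ by Cutkosky and Cossart--Piltant. This is exactly where the dimension hypothesis enters. Write
\[
K_Y = f^*K_X + \sum a_i E_i \quad\text{and}\quad f^*N = \widetilde N + \sum n_i E_i .
\]
Since $X$ is terminal, $a_i > 0$ for every exceptional $E_i$. Choose $t$ so small that
\begin{itemize}
\item $t n_i < a_i$ for all $i$;
\item the coefficients of $tN$ are $<1$; and
\item $a_i - t n_i \ge \delta$ for a fixed $\delta > 0$.
\end{itemize}
Then $(X, tN)$ has discrepancy $>0$ along every $f$-exceptional divisor. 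Because $(Y, \widetilde N + \sum E_i)$ is snc with the coefficients of $f^{-1}_*(tN)$ less than $1$, the usual computation on a log resolution shows that every exceptional divisor over $Y$ also has positive discrepancy for $(X, tN)$. Its discrepancy is at least $\min(1, \text{positive terms})$, by blowing up strata of an snc divisor with small coefficients. Hence $(X, tN)$ is terminal.

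\textbf{Step 3 (the ample part).} Set $H = (1-t)D + tA$, which is ample. Choose $m$ with $mH$ very ample and with $f^*(mH)$ base point free, so that $f^*|mH|$ is base point free on $Y$. Take a general member $G \in |mH|$ and put $D' = tN + \tfrac{1}{m}G$. In characteristic zero Bertini makes $f^*G$ smooth, transverse to the snc divisor, and non-exceptional. Adding $\tfrac1m f^*G$ with small coefficient then lowers discrepancies by at most $\tfrac1m$ times the multiplicity, which is zero along the $E_i$ by generality, and the result stays snc. Taking $m$ large keeps the pair terminal. In positive characteristic, Bertini for smoothness can fail for the linear system $|mH|$. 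Instead one uses that the base point free system $f^*|mH|$ on $Y$ is, after passing to an embedding, the pullback of a very ample system. A general hyperplane section is then smooth, and meets each stratum of the snc divisor transversally, by the classical Bertini theorem for very ample systems, which holds in all characteristics (Jouanolou). So the same argument works.

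\textbf{Main obstacle.} The main obstacle is Step 2's discrepancy bound for divisors over $Y$ and the positive-characteristic Bertini input in Step 3. I would handle the former by the standard lemma that for an snc pair $(Y, \sum b_j F_j)$ with all $b_j < 1$, the minimal discrepancy over $Y$ is positive, and I would take care to keep the exceptional coefficients $-(a_i - t n_i)$ negative, i.e.\ with positive discrepancy.
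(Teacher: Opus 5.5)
Your Steps 1 and 2, and the characteristic-zero half of Step 3, are correct and follow the same route as the paper. That route is: make $D-tN$ ample via Kodaira/Wilson, pass to a log resolution of $(X,N)$, add a general member of the ample part, and compute discrepancies on an snc model.

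\textbf{The gap: positive-characteristic Bertini in Step 3.}
\begin{itemize}
\item The system $f^*|mH|$ is base point free but not very ample on $Y$. The line bundle $f^*\mathcal O_X(mH)$ is trivial on curves contracted by $f$, so no embedding of $Y$ realizes $f^*|mH|$ as its hyperplane system.
\item In characteristic $p$, the smoothness part of Jouanolou's Bertini theorem requires the morphism to be unramified. That fails along the exceptional locus.
\item The failure is real, not just a gap in the citation. If $G \not\supset f(E_i)$, then $f^*G|_{E_i} = (f|_{E_i})^*(G|_{f(E_i)})$. So if $E_i \to f(E_i)$ has non-reduced fibres, $f^*G\cap E_i$ is non-reduced for every such $G$, and $f^*G+E_i$ is never snc.
\item This happens in honest log resolutions of threefolds. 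Blow up a smooth curve $Z_1$. Then blow up a smooth curve $Z_2$ in the exceptional divisor that is purely inseparable of degree $p$ over $Z_1$ (locally $u^p=s$ in $Z_1\times\mathbb P^1$). The new exceptional divisor maps to $Z_1$ with every fibre of multiplicity $p$.
\end{itemize}
So the snc discrepancy computation in your Step 3 cannot be carried out on $Y$.

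\textbf{How the paper avoids this.} It makes the moving part ample on $Y$ itself.
\begin{itemize}
\item Choose an effective $f$-exceptional divisor $E''=\sum e''_iE_i$ with $-E''$ $f$-ample. Then $f^*H - t'E''$ is ample for small rational $t'>0$.
\item For $m$ large, a general $A'\in|m(f^*H-t'E'')|$ is a general hyperplane section of $Y$ under an embedding. The classical Bertini theorem then makes $A'+\widetilde N+\sum E_i$ snc in every characteristic.
\item Put $D' = tN + \frac{1}{m}f_*A'$. Then $D'\sim_{\mathbb Q} D$ and $f^*D' = tf^*N + \frac1m A' + t'E''$.
\item The discrepancy of $E_i$ becomes $a_i - tn_i - t'e''_i$, which stays positive once $t'$ is small.
\end{itemize}
With this choice of $D'$, the rest of your argument goes through unchanged.
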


\begin{proof}
This is well-known in characteristic $0$. For a reference, see the proof of \cite[Theorem 2.3]{LTT14} or \cite[Remark 3.4.3]{BGandManin}.
Thus we may assume that the characteristic of $k$ is positive and the dimension of $X$ is at most $3$.
By Wilson's theorem, there exists an effective Cartier divisor $N \geq 0$ such that for a sufficiently small rational number $t > 0$, $D-tN$ is ample. For Wilson's theorem, see \cite[Lemma 3.4.2]{BGandManin} which is stated in characteristic $0$, but its proof works in positive characteristic.
Let $\rho : \widetilde{X} \to X$ be a log resolution of the pair $(X, N)$. Its existence in dimension $3$ is guaranteed by \cite[Propositions 4.1 and 4.2]{ResolutionI} and \cite[Main theorem]{ResolutionII}.
Since $X$ has only terminal singularities, there exists a $\rho$-exceptional effective $\mathbb Q$-divisor $E \geq 0$ such that
\[
K_{\widetilde{X}} = \rho^*K_{X} + E,
\]
and the support of $E$ coincides with the exceptional locus of $\rho$.
Write $A_t = D - tN$ which is an ample $\mathbb Q$-divisor.
We can write
\[
\rho^*D = \rho^*A_t + t\rho^*N = \rho^*A_t + t\rho^{-1}_*N + tE',
\]
where $\rho^{-1}_*N$ is the strict transform of $N$ on $\widetilde{X}$ and $E'\geq 0$ is an effective and $\rho$-exceptional divisor on $\widetilde{X}$.
Let $E''$ be an effective and $\rho$-exceptional divisor such that $-E''$ is $\rho$-ample.
Let $t'$ be a sufficiently small positive rational number. Then $\rho^*A_t -t'E''$ is ample on $\widetilde{X}$. Let $m$ be a sufficiently large positive integer such that $m(\rho^*A_t -t'E'')$ is very ample.
Then, by Bertini's theorem, there exists $0 \leq A'  \sim m(\rho^*A_t -t'E'')$ such that $A' + \rho^{-1}_*N + E$ is log smooth. 
Put $$0 \leq D' = \rho_*\left(\frac{1}{m}A' + t\rho^{-1}_*N + tE' + t'E''\right) \sim_{\mathbb Q} D.$$
We claim that $(X, D')$ is a terminal pair.
Indeed, since we have $$\rho^*D' = \rho^{-1}_*D' + tE' + t'E'',$$ by the negativity lemma (\cite[Section 2.3]{BirkarASENS}), we obtain
\[
K_{\widetilde{X}} + \rho^{-1}_*D' = \rho^*(K_X + D') -tE' - t'E'' + E.
\]
Since $t, t'$ are sufficiently small and $m$ is also sufficiently large, our assertion follows from \cite[Corollary 2.11]{Kollar13}.
\end{proof}

Next we record the following lemma:

\begin{lemm}
\label{lemm:factorialsing}
Let $k$ be an algebraically closed field and $X$ be a Gorenstein $\mathbb Q$-factorial terminal threefold defined over $k$.
Then $X$ has only factorial singularities.
\end{lemm}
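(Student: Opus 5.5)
Let $p \in X$ be a closed point and $D$ a Weil divisor on $X$. By $\mathbb Q$-factoriality some multiple $mD$ is Cartier; we need to show $D$ itself is Cartier at $p$, i.e.\ that the local class group $\mathrm{Cl}(\mathcal O_{X,p})$ is trivial. Since $X$ is $\mathbb Q$-factorial this group is torsion. So the plan is to show it has no nontrivial torsion at a Gorenstein terminal threefold point.

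First, terminal threefold singularities are isolated, so we may assume $p$ is an isolated singular point. In characteristic zero, Gorenstein terminal threefold singularities are exactly the isolated compound Du Val (cDV) hypersurface singularities (Reid). In positive characteristic one still has that a Gorenstein terminal threefold singularity is a hypersurface singularity of multiplicity $2$, and in particular a local complete intersection of dimension $3$. I would cite the known classification (Reid's argument: a general hyperplane section through $p$ is a rational double point, which uses only that $K_X$ is Cartier and the discrepancy is $>0$; this part is characteristic free, or one cites Kollár's book on singularities of the MMP). So $\widehat{\mathcal O}_{X,p}$ is an isolated hypersurface singularity of dimension $3$.

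Next, apply Grothendieck's theorem (SGA2, Exp.\ XI, Cor.\ 3.14), the resolution of Samuel's conjecture: a complete intersection local ring that is factorial in codimension $\le 3$ is factorial. Our ring has dimension $3$ and is regular in codimension $\le 2$ (isolated singularity), hence locally factorial in codimension $\le 3$ away from the closed point. Since Grothendieck's statement is exactly that a local complete intersection which is regular (or parafactorial) off a closed subset of codimension $\ge 4$ is factorial, this applies directly only in dimension $\ge 4$. In dimension $3$ the conclusion can fail, as the ordinary double point $xy=zw$ shows. So this route alone does not work, and $\mathbb Q$-factoriality must enter.

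The correct route is to show that the torsion of the local class group vanishes for an isolated hypersurface singularity of dimension $3$. If $mD$ is Cartier near $p$ with $D$ not Cartier, take the index-one cover associated to $D$ of minimal order $m$. This is étale in codimension one over the punctured neighbourhood, and since $X\setminus\{p\}$ is smooth near $p$, it is étale over the punctured spectrum. By purity of the branch locus for complete intersections of dimension $\ge 3$ (Grothendieck, SGA2 Exp.\ X, Thm.\ 3.4), the local fundamental group of the punctured spectrum of a complete intersection local ring of dimension $\ge 3$ is trivial. Hence the cover is trivial, which contradicts $m>1$. For $p$ dividing $m$ the cover is not étale but a $\mu_{p}$-torsor. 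Here I would instead use that the torsion of the class group injects into $\mathrm{Pic}$ of the punctured spectrum, and that for a complete intersection of depth $\ge 3$ Grothendieck's result gives $\mathrm{Pic}(U)_{\mathrm{tors}}$ controlled by $H^1(U,\mathcal O_U)=H^2_{\mathfrak m}(\mathcal O)=0$ (depth $3$). Combined with the Lefschetz comparison for the completion, this yields that the $p$-torsion also vanishes. The main obstacle is this last step: making the wild torsion argument precise. The cleanest reference is SGA2 Exp.\ XI, Thm.\ 3.13, which states that for a complete intersection of dimension $\ge 3$ the map $\mathrm{Pic}(U)\to\mathrm{Pic}$ of the punctured formal neighbourhood has torsion-free cokernel. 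This gives torsion-freeness of $\mathrm{Cl}$, and together with $\mathbb Q$-factoriality, $\mathrm{Cl}(\mathcal O_{X,p})=0$.
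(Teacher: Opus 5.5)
Your skeleton is the same as the paper's: isolated singularity, hypersurface completion, torsion-free local class group, and then $\mathbb Q$-factoriality forces $\mathrm{Cl}=0$. In characteristic $0$ your argument is complete, since all torsion is prime to the characteristic. In positive characteristic, however, both key inputs are missing.

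\textbf{The hypersurface step.} Reid's general-hyperplane-section argument is not characteristic free. It needs the strict transform on a resolution of a general hyperplane section through $p$ to be smooth, which is Bertini/generic smoothness for a base-point-free linear system, and that fails in characteristic $p$. The standard references (Reid, Koll\'ar--Mori) work in characteristic $0$. The paper gets that $\widehat{\mathcal O}_{X,x}$ is a hypersurface from a recent theorem, \cite[Theorem 6.4]{Tanaka25}. You need an input of that strength.

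\textbf{The $p$-torsion step.} This is the more serious gap: your argument does not control $p$-torsion in $\mathrm{Cl}$.
\begin{itemize}
\item The vanishing $H^1(U,\mathcal O_U)=H^2_{\mathfrak m}(\mathcal O)=0$ only gives uniqueness of infinitesimal lifts of line bundles. It does not kill $p$-torsion in $\mathrm{Pic}(U)$.
\item The statement you attribute to SGA2 XI 3.13 would not help as you formulate it. A torsion-free cokernel for a map \emph{out of} $\mathrm{Pic}(U)$ says nothing about torsion \emph{in} $\mathrm{Pic}(U)$.
\item Suppose you run the natural Lefschetz argument instead. Write $\widehat{\mathcal O}_{X,p}=R/(f)$ with $R$ regular of dimension $4$, and lift a line bundle $L$ on the punctured spectrum $V$ of $R/(f)$ through the rings $R/(f^{n})$. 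The obstructions lie in $H^2(V,\mathcal O_V)$, which is killed by $p$. So from ``$L^{m}$ lifts'' you can deduce ``$L$ lifts'' only when $p\nmid m$, which is exactly the case your purity argument already covers.
\end{itemize}

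The purity argument itself needs a small repair, because $\mathcal O_{X,p}$ is not henselian, so you cannot simply say ``the cover is trivial.'' Instead, argue as follows. The index-one cover $\bigoplus_{i<m}\mathcal O(iD)$ is a reflexive algebra, so it is determined by its restriction to the punctured spectrum. By purity it is therefore finite \'etale over $\mathcal O_{X,p}$, hence locally free, so $\mathcal O(D)$ is invertible.

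\textbf{How the paper closes this.} It cites a characteristic-free result that the class group of a three-dimensional isolated hypersurface singularity is torsion free, \cite[Corollary 2.10]{DLM10}, applied to $\widehat{\mathcal O}_{X,x}$. It then descends via the injection $\mathrm{Cl}(\mathcal O_{X,x})\hookrightarrow\mathrm{Cl}(\widehat{\mathcal O}_{X,x})$. You need that injection too, since in positive characteristic the hypersurface property is only established for the completion.
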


\begin{proof}
In characteristic $0$, this follows from a result of Reid and Ue. (See \cite[Lemma 5.1]{Kawamata88}.) In positive characteristic, it follows from \cite[Corollary 2.30 (1)]{Kollar13} that $X$ has only isolated singularities. Moreover \cite[Theorem 6.4]{Tanaka25} implies that for any singularity $x \in X$, the completion $\widehat{\mathcal O}_{X, x}$ of the local ring $\mathcal O_{X, x}$ is a hypersurface singularity. By \cite[Corollary 2.10]{DLM10}, the class group $\mathrm{Cl}(\widehat{\mathcal O}_{X, x})$ is torsion free. Since we have a natural injection $\mathrm{Cl}(\mathcal O_{X, x})\hookrightarrow \mathrm{Cl}(\widehat{\mathcal O}_{X, x})$, our assertion follows.

\end{proof}

\subsection{$\mathsf L^1$-traces}

First we recall the following definition:

\begin{defi}[{\cite[Definition 2.2]{DLTT25}}]
Assume that $K$ is a field equipped with a norm $|\cdot| : \overline{K} \to \mathbb R$.
Let $V$ be a finite dimensional $K$-vector space with an endomorphism $F$.
The $\mathsf L^1$-trace of $F$ on $V$ is
\[
|F, V| = \sum_{\lambda \in \overline{K}} |\lambda| \dim (V\otimes \overline{K})_\lambda,
\]
where $(V\otimes \overline{K})_\lambda$ is the generalized eigenspace of $F$ with an eigenvalue $\lambda$.

\end{defi}
Note that we have $|\mathrm{Tr}(F\curvearrowright V)| \leq |F, V|$ and if $W$ is a $F$-subquotient of $V$, then we have $|F, W|\leq |F, V|$. See \cite[Section 2.1]{DLTT25} for more properties of $\mathsf L^1$-traces.

\subsubsection*{Cohomology of configuration spaces}

Let $P$ be a finite poset with the bottom element $\hat{0}$ and the top element $\hat{1}$.
We let $\rH^*(P)$ be the reduced cohomology with $\mathbb Q_\ell$-coefficients of the nerve of the open interval $(\hat{0}, \hat{1})$ in $P$. Let $\Pi$ be a finite set and let $\mathrm P(\Pi)$ be the poset of set partitions of $\Pi$. This admits the bottom and the top elements. The following theorem follows from \cite[Theorem 3.3 (ii)]{Petersen}. The normalization of homological degrees used here is based on \cite[Theorem 2.7]{DLTT25}:

\begin{theo}[{\cite[Theorem 3.3 (ii)]{Petersen}}]
\label{theo:Petersen}
Let $Y$ be a quasi-projective variety over a field $k$ and $\mathcal F$ be an $\ell$-adic constructible sheaf on $Y$ in the \'etale topology. Then there is a Galois and $\mathfrak S_m$-equivariant spectral sequence converging to 
\[
\rH_{\textnormal{\'et}, c}^*(\mathrm{Conf}_m(Y)_{\overline{k}}, \mathcal F_{\overline{k}}^{\boxtimes m}),
\]
with the total $E_1$-page given by
\[
\bigoplus_{\substack{\textnormal{$p$: a set partition} \\ \textnormal{ of degree $m$}}} \bigotimes_{\lambda \in p} \rH_{\textnormal{\'et}, c}^*(Y_{\overline{k}}, \mathcal F_{\overline{k}})\otimes \rH^{|\lambda|-3}(\mathrm P(\lambda))[-|\lambda| + 1].
\]

\end{theo}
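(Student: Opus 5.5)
The plan is to realize $\mathrm{Conf}_m(Y)$ as the open complement $j:\mathrm{Conf}_m(Y)\hookrightarrow Y^m$ of the big diagonal and to resolve $j_!$ combinatorially by the closed diagonal strata indexed by set partitions. For a set partition $p$ of $\{1,\dots,m\}$ let $i_p:\Delta_p\hookrightarrow Y^m$ be the closed diagonal where $y_a=y_b$ whenever $a,b$ lie in the same block of $p$. Then $\Delta_p\cong Y^{p}$, and $\Delta_p\cap\Delta_{p'}=\Delta_{p\vee p'}$, so the strata form a poset isomorphic to $\mathrm P(\{1,\dots,m\})$ with bottom element $\hat 0$ (the discrete partition, $\Delta_{\hat 0}=Y^m$). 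Write $\mathcal G=\mathcal F^{\boxtimes m}$.

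\textbf{Step 1 (resolution).} I would construct, in $D^b_c(Y^m_{\overline k})$, the inclusion--exclusion (Vassiliev/Goresky--MacPherson type) resolution
\[
j_!j^*\mathcal G\;\simeq\;\Bigl[\;\mathcal G\to\bigoplus_{\mathrm{rk}\,p=1}i_{p*}i_p^*\mathcal G\otimes \widetilde{C}_\bullet(\hat 0,p)\to\cdots\Bigr],
\]
where $\widetilde C_\bullet(\hat 0,p)$ is the reduced chain complex of the order complex of the open interval $(\hat 0,p)$. Concretely, I would take the simplicial object whose $n$-simplices are chains $\hat 0<p_0<\cdots<p_n$ with value $i_{p_n*}i_{p_n}^*\mathcal G$. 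Exactness is checked stalkwise: at a point $y$ whose coincidence partition is $p_y$, the stalk complex computes the reduced cohomology of the cone on $(\hat 0,p_y]$ when $p_y\neq\hat 0$, and it is therefore acyclic. Naturality of this construction gives both Galois and $\mathfrak S_m$-equivariance, since $\mathfrak S_m$ permutes the partitions and the strata compatibly.

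\textbf{Step 2 (spectral sequence).} Filtering by the rank of $p$ and applying $R\Gamma_c$ gives a spectral sequence with
\[
E_1=\bigoplus_p \rH^*_c(\Delta_p,i_p^*\mathcal G)\otimes \rH^*(\hat 0,p).
\]
Using $[\hat 0,p]\cong\prod_{\lambda\in p}\mathrm P(\lambda)$, the order complex of $(\hat 0,p)$ is, up to suspension, the join of the order complexes of the intervals $(\hat0,\hat1)$ in the $\mathrm P(\lambda)$. Partition lattices are Cohen--Macaulay, so each $\mathrm P(\lambda)$ has reduced cohomology only in degree $|\lambda|-3$. The join Künneth formula then gives $\bigotimes_\lambda \rH^{|\lambda|-3}(\mathrm P(\lambda))$, concentrated in a single degree.

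Künneth for $\Delta_p\cong\prod_{\lambda}Y$ identifies $\rH^*_c(\Delta_p,i_p^*\mathcal G)$ with $\bigotimes_\lambda \rH^*_c(Y,\mathcal F^{\otimes|\lambda|})$. This equals the displayed $\rH^*_c(Y,\mathcal F)$ factor in the cases of interest, for instance $\mathcal F=\mathbb Q_\ell$ or rank one with trivial tensor powers. I would follow Petersen's convention here and note that in general the factor should read $\mathcal F^{\otimes|\lambda|}$. Combining the two computations gives the stated $E_1$-page.

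\textbf{Step 3 (grading; main obstacle).} The main obstacle I expect is the degree bookkeeping. One must check that the total shift, namely the simplicial degree of the stratum together with the suspensions coming from the join, equals $\sum_{\lambda}(-|\lambda|+1)$, i.e. $[-|\lambda|+1]$ per block, with the normalization of \cite[Theorem 2.7]{DLTT25}. I would verify this first for $m=2$, where the sequence is just $\mathrm{Conf}_2\to Y^2\leftarrow\Delta$, and then argue by multiplicativity over blocks. A secondary point is the sign and orientation twist in the $\mathfrak S_m$-action on the top cohomology of the partition lattices. This is absorbed by letting $\mathfrak S_m$ act on $\rH^{|\lambda|-3}(\mathrm P(\lambda))$ through its natural action on the order complex, so that equivariance is automatic from the construction.
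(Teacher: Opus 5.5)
Your argument is correct, and it is the standard argument behind the cited result; the paper itself gives no proof, only the reference to Petersen. That spectral sequence is exactly the filtration by $p$ of your resolution of $j_!j^*\mathcal F^{\boxtimes m}$ by the diagonal strata $\Delta_p$, weighted by the augmented cochains of $(\hat 0,p)$. Your Step~3 is not really an obstacle. The $p$-piece is $\rH^{\mathrm{rk}(p)-2}(\hat 0,p)$ placed two resolution degrees higher, so its total shift is $\mathrm{rk}(p)=\sum_{\lambda\in p}(|\lambda|-1)$, which is exactly the combined shift coming from the factors $[-|\lambda|+1]$.

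Your caveat is also correct and is not cosmetic. Since $i_p^*\mathcal F^{\boxtimes m}\cong\boxtimes_{\lambda}\mathcal F^{\otimes|\lambda|}$, the block factor must be $\rH^*_{\textnormal{\'et},c}(Y_{\overline k},\mathcal F_{\overline k}^{\otimes|\lambda|})$. The statement as printed therefore fails for general $\mathcal F$. For example, take $Y$ a point, $m=2$ and $\mathcal F=\mathbb Q_\ell^{\oplus 2}$: the displayed $E_1$-page has Euler characteristic $4-2\neq 0$, although $\mathrm{Conf}_2(Y)=\emptyset$. The printed version is valid only for sheaves such as $\mathbb Q_\ell$ with $\mathcal F^{\otimes k}\cong\mathcal F$.
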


Here is a key estimate for our proofs:

\begin{prop}
\label{prop:keyestimate}
Assume that our ground field $k$ is $\mathbb F_q$. Let $\mathfrak L = \oplus_{n, i} \mathfrak L_{n, i}$ be a bigraded sheaf of $\ell$-adic constructible sheaves on $Y$ with $\mathbb Q_\ell$-coefficients in the \'etale topology. Suppose that there is also a bigraded vector space $\mathfrak L' = \oplus_{n, i} \mathfrak L'_{n, i}$ with $\mathbb Q_\ell$-coefficients such that each $\mathfrak L'_{n, i}$ is finite dimensional.
Assume that both are concentrated in cohomological degrees $i \leq -2$ and in the first grading $n \geq 1$. 
Suppose also that there is a constant $E \geq 1$ such that 
\begin{itemize}
\item we have
\[
|\mathrm{Frob}, \mathfrak L'_{n, i}| \leq Eq^{i/2}.
\]
\item we have
\[
|\mathrm{Frob}, \rH_{\textnormal{\'et}, c}^j(Y_{\overline{k}},  \mathfrak L_{\overline{k}})_{n, i})| \leq Eq^{j/2+i/2} .
\]
\end{itemize}
We fix integers $I, \mathsf c_1>0$, $\mathsf k\geq0$.
Then let $W_{n, i, m}$ be 
\[
\bigoplus_{j_1 + j_2 = i}\left(\rH^{j_1}_{\textnormal{\'et}, c}(\mathrm{Conf}_m(Y)_{\overline{k}}, \mathfrak L_{\overline{k}}^{\boxtimes m} )\otimes \mathfrak L'\right)_{n, j_2}.
\]
Assume that $q > 2^{22}E^2$. Then we have
\[
\left|\mathrm{Frob},\bigoplus_{m \geq 0} \bigoplus_{m \leq n \leq m+\mathsf k} \bigoplus_{ i \leq -I - m + \mathsf c_1} (W_{n, i, m})_{\mathfrak S_m}\right|  = O((\mathsf k + 1)(4E)^{\mathsf k}4^{I - \mathsf c_1}q^{-I/2 + \mathsf c_1/2})
\]
where the implied constant is independent of $q, I, \mathsf k, \mathsf c_1$.

\end{prop}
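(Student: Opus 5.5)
We will bound the $\mathsf L^1$-trace on each $\mathfrak S_m$-coinvariant piece using the Petersen spectral sequence (Theorem~\ref{theo:Petersen}), and then sum the resulting bounds over $m$, $n$ and $i$. Three facts about $\mathsf L^1$-traces are used throughout: they are subadditive in short exact sequences, they pass to subquotients (hence to the abutment of a spectral sequence and to coinvariants), and they are multiplicative on tensor products. So it suffices to bound the $\mathsf L^1$-trace of the $E_1$-page of Theorem~\ref{theo:Petersen}, tensored with $\mathfrak L'$, in the relevant multidegrees.

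Fix $m$ and a set partition $p$ of $\{1,\dots,m\}$ into blocks $\lambda$. The block $\lambda$ contributes
\[
\rH^{a_\lambda}_{\textnormal{\'et},c}\big(Y_{\overline k},(\mathfrak L_{\overline k})_{n_\lambda,i_\lambda}\big)\otimes \rH^{|\lambda|-3}(\mathrm P(\lambda)),
\]
placed in total degree $a_\lambda+i_\lambda+|\lambda|-1$. We use three inputs:
\begin{itemize}
\item The Frobenius acts on $\rH^{|\lambda|-3}(\mathrm P(\lambda))$ with dimension $(|\lambda|-1)!$, up to a Tate twist. By the degree normalization of \cite[Theorem 2.7]{DLTT25}, this twist is $q^{-(|\lambda|-1)}$ in absolute value.
\item The hypothesis bounds the cohomology factor by $Eq^{a_\lambda/2+i_\lambda/2}$.
\item Since $\mathfrak L$ lives in $n\geq 1$ and $i\leq -2$, we have $n_\lambda\geq 1$ and $i_\lambda\leq -2$.
\end{itemize}
In total degree $j_\lambda=a_\lambda+i_\lambda+|\lambda|-1$ the block therefore has $\mathsf L^1$-trace at most $E(|\lambda|-1)!\,q^{j_\lambda/2-(|\lambda|-1)/2}$. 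Multiplying over the blocks and by the factor $Eq^{j_2/2}$ coming from $\mathfrak L'$ (where $n'\geq 1$), a term of total cohomological degree $i$ has $\mathsf L^1$-trace at most
\[
E^{|p|+1}\prod_\lambda(|\lambda|-1)!\; q^{i/2-(m-|p|)/2}.
\]

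Next we account for the gradings. The first grading is $n=\sum_\lambda n_\lambda+n'$. The constraint $n\leq m+\mathsf k$, together with $n_\lambda\geq 1$ and $n'\geq 1$, leaves an excess $n-|p|-1\leq \mathsf k+(m-|p|)$ to distribute. The excess coming from $\mathsf k$ is where the factor $(\mathsf k+1)(4E)^{\mathsf k}$ should enter. To make this work we need the per-unit-of-$n$ bounds to be uniform, so that distributing the excess costs only a factor of order $E$ per unit, via a stars-and-bars count bounded by $2^{\cdot}$. The other contribution, from the surplus $m-|p|$, is absorbed by the factor $q^{-(m-|p|)/2}$.

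We then pass to $\mathfrak S_m$-coinvariants. The coinvariants of the sum over partitions reduce to a sum over partition types. Equivalently, we divide $\prod_\lambda(|\lambda|-1)!$ by the size of the stabilizer and sum over all partitions, using $\#\{p\}\cdot\prod(|\lambda|-1)!/m!\leq 1$ per shape. This leaves a sum over integer partitions of $m$ of $E^{|p|+1}q^{-(m-|p|)/2}$. The number of integer partitions is at most $2^m$, so with $q>2^{22}E^2$ each non-singleton block is paid for by $q^{-1/2}$.

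Finally we sum over $i\leq -I-m+\mathsf c_1$. Each summand carries $q^{i/2}$, so this is a geometric series with sum $O(q^{(-I-m+\mathsf c_1)/2})$. The resulting factor $q^{-m/2}$ kills the $E^{m}4^{m}$ growth from the partition count as long as $q>2^{22}E^2$, and the sum over $m$ then converges to a constant. The factor $4^{I-\mathsf c_1}$ can be introduced where needed, for example when counting the splittings $j_1+j_2=i$ over the possible degrees, which is bounded since the relevant degrees are bounded by a multiple of $I$.

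The main obstacle is the bookkeeping of this counting. The number of ways to split the excess grading, the cohomological degrees, and the blocks has to be bounded purely exponentially with the stated bases, so that the combinatorial growth never outpaces $q^{-1/2}$ per unit. In particular we must check that the degree shifts $[-|\lambda|+1]$ and the weight of $\rH^{|\lambda|-3}(\mathrm P(\lambda))$ together give exactly the saving $q^{-(|\lambda|-1)/2}$ per block. For this we would follow the analogous estimate in \cite{DLTT25} and \cite{FHL26} closely.
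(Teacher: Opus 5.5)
Your overall route matches the paper's: Petersen's spectral sequence, reduction of the $\mathfrak S_m$-coinvariants to a sum over integer partitions of $m$, per-block $\mathsf L^1$-bounds, then geometric sums in $i$ and $m$. However, two of the bookkeeping steps fail as written.

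\textbf{The coinvariant step.} You bound the $\mathsf L^1$-trace of $V_G$ by $|G|^{-1}$ times that of $V$ (``divide by the size of the stabilizer''). This is not valid: it fails whenever the action has large stabilizers, e.g.\ for the trivial action, where $V_G=V$.

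What actually removes the factor $(|\lambda|-1)!$ is the structure of $\rH^{|\lambda|-3}(\mathrm P(\lambda))$ as an $\mathfrak S_{|\lambda|}$-module. It is $\mathrm{sgn}\otimes\mathrm{Lie}_{|\lambda|}$, hence induced from a one-dimensional character $\chi$ of a cyclic subgroup $C_{|\lambda|}$. Therefore $(A\otimes \rH^{|\lambda|-3}(\mathrm P(\lambda)))_{\mathfrak S_{|\lambda|}}\cong (A\otimes\chi)_{C_{|\lambda|}}$ is a quotient of $A$. Since $\prod_\lambda\mathfrak S_\lambda\subset\mathrm{Stab}(p)$, the coinvariants of each shape are then a quotient of $\bigotimes_\lambda A_\lambda$.

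The paper obtains this from \cite[Proposition 2.8]{DLTT25}. That is what lets it replace the coinvariants by $\bigoplus_{p'}\bigl(\bigotimes_{\lambda'\in p'}\rH^*_{\textnormal{\'et},c}(Y_{\overline k},\mathfrak L_{\overline k})[-\lambda'+1]\bigr)\otimes\mathfrak L'$ with no partition-lattice factor, paying only $p(m)$ for the number of shapes. Without such an input, a single block of size $m$ contributes $(m-1)!$, which no power $q^{-m/2}$ can absorb.

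\textbf{The sum over $i$.} The hypotheses give no lower bound on the internal degrees of $\mathfrak L$ and $\mathfrak L'$. So in a fixed total degree $i$, the number of ways to distribute the internal degree among the $|p|+1$ tensor factors (each of degree $\le -2$) grows exponentially in $|i|$. The summands are therefore not uniformly $O(q^{i/2})$.

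Your claim that the $i$-sum is $O(q^{(-I-m+\mathsf c_1)/2})$ is already false for $m=1$. Suppose $\mathfrak L_{1,a}$ and $\mathfrak L'_{1,a}$ are nonzero with the maximal allowed weights for every even $a\le -2$. Then the degree-$i$ part of $W_{2,i,1}$ has about $|i|/2$ pieces of size $q^{i/2}$, so the sum over $i\le -I-1+\mathsf c_1$ is of order $I\,q^{-(I+1-\mathsf c_1)/2}$.

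This count is precisely the source of $4^{I-\mathsf c_1}$:
\begin{itemize}
\item The paper bounds each piece by $2^{7m}(4E)^n(4/\sqrt q)^{-i}$. The factor $2^{-j_2}$, doubled when $\mathfrak L'$ is split off, pays for the distribution of internal degrees.
\item It sums the geometric series in $i$ using $\sqrt q>4$, obtaining $(4/\sqrt q)^{I+m-\mathsf c_1}$.
\item The resulting ratio $2^{11}E/\sqrt q$ in the $m$-sum is what requires $q>2^{22}E^2$.
\end{itemize}
Your attribution of $4^{I-\mathsf c_1}$ to the splittings $j_1+j_2=i$, with degrees ``bounded by a multiple of $I$'', does not work. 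The degree $i$ is unbounded below, while there are at most $2m+1$ such splittings, since $0\le j_1\le 2m$.

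(Minor: Frobenius acts trivially on $\rH^{|\lambda|-3}(\mathrm P(\lambda))$. The per-block saving $q^{-(|\lambda|-1)/2}$ in your display comes from the shift $[-|\lambda|+1]$, not from a twist of size $q^{-(|\lambda|-1)}$.)
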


\begin{proof}
By Theorem~\ref{theo:Petersen} and \cite[Proposition 2.8]{DLTT25}, it suffices to bound
\begin{equation}
\label{equation:L1trace}
\left|\mathrm{Frob}, \bigoplus_m \bigoplus_{m \leq n \leq m+\mathsf k}\bigoplus_{ i \leq -I -m + \mathsf c_1}\bigoplus_{j_1 + j_2 = i}\bigoplus_{p': \text{partition of $m$} }\left (\left( \bigotimes_{\lambda' \in p'} \rH_{\textnormal{\'et}, c}^{*}(Y_{\overline{k}}, \mathfrak L_{\overline{k}})[-\lambda'+1]\right) \otimes \mathfrak L'\right)_{n, j_1, j_2} \right|.
\end{equation}
Note that $j_1$ must satisfy $0 \leq j_1 - m + |p'|\leq 2|p'|$, so we conclude that $0 \leq j_1 \leq 2m$.
Using the proofs of \cite[Lemma 2.4 and Lemma 2.5]{DLTT25}, the $\mathsf L^1$-trace of 
\[
\left(\bigotimes_{\lambda' \in p'} \rH_{\textnormal{\'et}, c}^{*}(Y_{\overline{k}}, \mathfrak L_{\overline{k}})[-\lambda'+1]\right)_{n, j_1, j_2}
\]
is bounded by 
\[
(2E)^n2^{j_1 + |p'|}q^{j_1/2} (2/\sqrt{q})^{-j_2} \leq (2E)^n2^{3m}q^{j_1/2} (2/\sqrt{q})^{-j_2}.
\]
Thus the $\mathsf L^1$-trace of 
\[
\left(\left( \bigotimes_{\lambda' \in p'} \rH_{\textnormal{\'et}, c}^{*}(Y_{\overline{k}}, \mathfrak L_{\overline{k}})[-\lambda'+1]\right) \otimes \mathfrak L'\right)_{n, j_1, j_2} 
\]
is bounded by
\[
2^{3m}(4E)^{n}q^{j_1/2}(4/\sqrt{q})^{-j_2} \leq 2^{7m}(4E)^{n}(4/\sqrt{q})^{-i}.
\]
This implies that (\ref{equation:L1trace}) is bounded by
\[
O\left(\sum_{m}\sum_{i \leq -I-m+\mathsf c_1}\mathsf (\mathsf k+1)(2m+1)2^{7m}(4E)^{m + \mathsf k}p(m)(4/\sqrt{q})^{-i} \right),
\]
where $p(m)$ is the number of partitions of $m$. 
This is bounded by
\[
O\left(\sum_{m}\mathsf (\mathsf k+1)(4E)^{m+\mathsf k}(2m+1)2^{7m}p(m)(4/\sqrt{q})^{I +m- \mathsf c_1} \right).
\]
Using the asymptotic formula for $p(m)$, one can deduce that this is bounded by
\[
O((\mathsf k+1)(4E)^{\mathsf k}4^{I - \mathsf c_1}q^{-I/2 + \mathsf c_1/2}).
\]
Thus our assertion follows.

\end{proof}

\subsection{Betti number bounds for the space of sections}

First, we recall the following theorem which is a generalization of \cite[Theorem B.1]{DLTT25} by Sawin--Shusterman:

\begin{theo}[Feng--Hase-Liu, {\cite[Theorem A]{FHL26}}]
\label{theo:FengHaseLiu}
Let $k$ be a separably  closed field and $B$ be a smooth projective curve over $k$.
Let $X$ be a projective variety over $k$ and $H$ be an ample divisor on $X$.
Then there exists a constant $\mathsf C > 0$ depending only on $B, X, H$ such that for any $e  \in \mathbb Z_{>0}$, the morphism scheme
\[
\mathrm{Mor}(B, X, \leq e)
\]
parametrizing morphisms $s : B \to X$ of $H$-degree $\leq e$ satisfies 
\[
\sum_i \dim \rH^i_{\textnormal{\'et}, c}(\mathrm{Mor}(B, X, \leq e), \mathbb Q_\ell) \leq \mathsf C^{e + 1}.
\]
\end{theo}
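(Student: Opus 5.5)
The plan is to reduce to the Katz bound for the sum of compactly supported Betti numbers of a variety cut out by polynomial equations. That bound says an affine scheme in $\mathbb A^n$ defined by $r$ equations of degree $\leq d$ has total compactly supported Betti number at most $3(d+2)^{n+r}$. To use it, it suffices to present $\mathrm{Mor}(B, X, \leq e)$, up to a bounded number of pieces, as such an affine scheme with $n, r = O(e+1)$ and $d$ bounded independently of $e$.

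First, I replace $H$ by a fixed very ample multiple $mH$ embedding $X \subset \mathbb P^N$. Then $X$ is cut out by finitely many forms $F_1, \dots, F_s$ of degree $\leq d_0$. The $H$-degree $\leq e$ condition becomes $\mathcal O(1)$-degree $\leq me$. Writing $\mathrm{Mor}(B, X, \leq e) = \bigsqcup_{e' \leq me} \mathrm{Mor}(B, X, e')$, the union has $O(e+1)$ pieces, so it suffices to bound each $\mathrm{Mor}(B, X, e')$ by $\mathsf C^{e'+1}$. The finitely many $e' < 2g(B) - 1 + \text{const}$ contribute only a constant, since each is a fixed finite-type scheme. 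For $e' \geq 2g-1$, a morphism $B \to \mathbb P^N$ of degree $e'$ is a line bundle $L \in \Pic^{e'}(B)$ together with $(s_0, \dots, s_N) \in H^0(L)^{N+1}$ without common zero, modulo $\mathbb G_m$. The condition of landing in $X$ is $F_j(s_0, \dots, s_N) = 0$ in $H^0(L^{\deg F_j})$. Expanded in bases, these are $O(e')$ equations of degree $\leq d_0$ in $(N+1)(e'+1-g)$ variables.

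The second step is to make this uniform over $\Pic^{e'}(B)$. I would fix a point $p \in B$ and identify $\Pic^{e'} \cong \Pic^0$ via $L \mapsto L(-e'p)$. I then cover $\Pic^0$ by a fixed finite collection of affine opens $V_1, \dots, V_t$, independent of $e'$, over which I choose trivializations. Concretely, I would realize sections of $L$ as sections of $L_0(e'p)$ with $L_0 \in V_i$, using a fixed universal family over $V_i$ and a fixed local coordinate at $p$. The multiplication maps $H^0(L)^{\otimes k} \to H^0(L^k)$ are then given by polynomials whose degree in the $V_i$-coordinates stays bounded. After embedding $V_i$ into a fixed affine space, each piece is an affine scheme in $O(e')$ variables defined by $O(e')$ equations of bounded degree. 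The base-point-free condition is open; I would handle it by the Rabinowitsch trick, or by excision, since the closed complement is again of the same shape (a product over a divisor of smaller degree), so the long exact sequence bounds the total Betti number by the sum. The $\mathbb G_m$-quotient I would handle by a Leray/Künneth argument for the free $\mathbb G_m$-torsor, which changes total Betti numbers by at most a factor $2$. Mayer–Vietoris over the cover $\{V_i\}$ costs a factor $2^t$. Altogether this yields $\mathsf C^{e+1}$ with $\mathsf C$ depending only on $B, X, H$.

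I expect the main obstacle to be the second step: obtaining an algebraic presentation of $H^0(L)$, uniformly in $L$ and in $e'$, in which the equations $F_j(s) = 0$ have degree bounded independently of $e'$ and the number of variables and equations is linear in $e'$. The first thing I would try is a relative Riemann–Roch description via a universal Poincaré bundle on $B \times V_i$ twisted by $e'p$. I would verify linearity in $e'$ by computing in the completed local ring at $p$ and using that $H^0(L_0(e'p)) \hookrightarrow L_0(e'p)|_{(e'+c)p}$ is cut out by $O(1)$ linear conditions depending polynomially on $V_i$. If this fails in positive characteristic for small $p$-adic reasons, I would fall back on the Sawin–Shusterman approach of \cite[Theorem B.1]{DLTT25}, with $B$ in place of $\mathbb P^1$.
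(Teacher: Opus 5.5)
The paper does not prove this statement; it quotes it from \cite[Theorem A]{FHL26}, so your argument has to stand on its own. Your overall architecture is the natural higher-genus version of Sawin--Shusterman. However, the step you yourself flag is where all the content lies, and as written it is a genuine gap.

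Katz's bound is exponential in $n+r$ only when the degree $d$ stays bounded. If the degrees of your equations grow linearly in $e'$, you only get $e^{O(e)}$, and your jet description does not control these degrees. The image of $H^0(L_0(e'p))$ in $L_0(e'p)|_{(e'+c)p}$ is cut out by residue pairings of the tail $\sum_{j\geq -e'}a_jt^j$ against a basis $\eta_v$ of $H^0(\omega_B\otimes L_0^{-1}(cp))$. So the coefficient of $a_j$ is the $(-1-j)$-th Laurent coefficient of $\eta_v$ at $p$, and you need these up to order about $e'$. Nothing in your construction bounds their degree as functions on $V_i$, and in general it grows with the order: changing the trivialization by a unit such as $(1-a(v)t)^{-1}$ already produces coefficients $a(v)^n$.

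Moreover, jets of length $e'+c$ do not see products. To test $F_j(s)=0$ in $H^0(L^{d_j})$ through jets at $p$, you need the expansions of the $s_i$ up to order about $d_je'$. That is, you need the linear map extending the jet of a global section to a much longer jet, and in general its entries again have degree growing with $e'$ in the $V_i$-coordinates. So the claim that multiplication is given by polynomials of bounded degree in the $V_i$-coordinates is unjustified, and false for naive choices. Your fallback to Sawin--Shusterman does not help by itself, because their argument uses exactly that $\Pic(\mathbb P^1)=\mathbb Z$, so that sections are tuples of binary forms.

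What is missing is a model in which $L$ enters only through boundedly many coefficients. For instance, write $L\cong\mathcal O((e'+g)p-D)$ with $D\geq 0$ of degree $g=g(B)$. Then $H^0(L)$ is the space of functions in $k[B\setminus p]$ of pole order at most $e'+g$ vanishing on $D$. In a fixed basis of $k[B\setminus p]$ the equations $F_j(s)=0$ have constant coefficients, and only the conditions that the $s_i$ vanish on $D$ depend on $D$; these can be made of bounded degree with $O(e)$ auxiliary variables. The fibres of $\mathrm{Sym}^gB\to\Pic^g$ are projective spaces over the strata of a fixed stratification by $h^0$. This costs nothing, since the total compactly supported Betti number $\sigma_c$ of a $\mathbb P^r$-bundle is $r+1$ times that of its base.

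Two smaller points. First, in the $\mathbb G_m$-step you must bound the quotient $Y$ by the torsor $T$, but the Gysin sequence only gives $\sigma_c(T)\leq 2\sigma_c(Y)$. In the direction you need, you can lose a factor up to $\dim Y+1$: $\sigma_c(T)$ is twice the number of Jordan blocks of $\cup\, c_1$ on $\rH^*_c(Y)$, while $\sigma_c(Y)$ is the sum of their lengths (compare $\mathbb A^{n+1}\setminus 0\to\mathbb P^n$). Since $\dim Y=O(e)$ this is harmless, but the factor $2$ is wrong.

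Second, the Rabinowitsch trick is not available with bounded degree. Base-point-freeness is a resultant-type condition of degree growing with $e'$, and for $N\geq 2$ its complement is not even a hypersurface. Your excision alternative is the right one, since the locus with base divisor of degree $\delta$ is $\mathrm{Sym}^\delta B\times\mathrm{Mor}(B,X,e'-\delta)$. It must, however, be run as an induction on $e'$ over all $\delta\leq e'$. That recursion closes with an exponential bound because $\sum_i\dim\rH^i(\mathrm{Sym}^\delta B)\leq 2^{2g}(\delta+1)$.
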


As a corollary, we have the following statement:

\begin{coro}
\label{coro:FengHaseLiu}
Let $k$ be a separably  closed field and $B$ be a smooth projective curve over $k$.
Let $\pi : \mathcal X \to B$ be a flat projective morphism from a 
projective variety $\mathcal X$.
Let $H$ be a Cartier divisor on $\mathcal X$ whose restriction to the generic fiber $\mathcal X_\eta$ is ample.
Then there exists a constant $\mathsf C$ depending only on $B, \mathcal X, H$ such that for any $e \in \mathbb Z_{>0}$, the space of sections
\[
\Sec(\mathcal X/B, \leq e)
\]
parametrizing sections of $H$-degree $\leq e$ satisfies
\[
\sum_i \dim \rH^i_{\textnormal{\'et}, c}(\mathrm{Sec}(\mathcal X/B, \leq e), \mathbb Q_\ell) \leq \mathsf C^{e + 1}.
\]
\end{coro}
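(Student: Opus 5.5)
The plan is to reduce Corollary~\ref{coro:FengHaseLiu} to Theorem~\ref{theo:FengHaseLiu} by realizing the space of sections as an open and closed piece of a morphism scheme.

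A section $\sigma : B \to \mathcal X$ is the same as a morphism $s : B \to \mathcal X$ with $\pi \circ s = \mathrm{id}_B$. The condition that $\pi\circ s$ equals the identity singles out a union of connected components of $\mathrm{Mor}(B,\mathcal X)$. Indeed, the map $\mathrm{Mor}(B,\mathcal X)\to \mathrm{Mor}(B,B)$ given by $s\mapsto \pi\circ s$ has the identity as an isolated reduced point of the degree-one locus, since $\mathrm{Mor}(B,B)_{\deg 1}=\mathrm{Aut}(B)$. When $g(B)\le 1$, $\mathrm{Aut}(B)$ is positive dimensional, so instead I would use the fiber of $\mathrm{Mor}(B,\mathcal X)\to\mathrm{Mor}(B,B)$ over $[\mathrm{id}]$. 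An equivalent and cleaner route is the graph trick: sections of $\pi$ correspond to morphisms $B\to\mathcal X$ over $B$, that is, to sections of $\mathcal X\times_B B$. So I will realize $\Sec(\mathcal X/B)$ as a locally closed subscheme of $\mathrm{Mor}(B,\mathcal X)$. Compactly supported cohomology of a locally closed subscheme is not bounded directly by that of the ambient space, so I need the subscheme to be a union of components of something Theorem~\ref{theo:FengHaseLiu} controls.

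To arrange this, choose an ample divisor $A$ on $B$ and replace $H$ by $H' = H + N\pi^*A$ with $N\gg0$, so that $H'$ is ample on $\mathcal X$. This is possible because $H$ is $\pi$-ample on the generic fiber, hence relatively ample over a dense open subset of $B$. If $H$ is not $\pi$-ample over all of $B$, I would first pass to a $\pi$-ample $H''$ with $H''-H$ supported on fibers; fiber classes have constant degree $1$ on sections, so degrees shift by a bounded constant. For a section, the $H'$-degree equals the $H$-degree plus $N\deg A$, so $\Sec(\mathcal X/B,\le e)$ sits inside $\mathrm{Mor}(B,\mathcal X,\le e+c)$ for a fixed constant $c$.

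Next, $\mathrm{Mor}(B,\mathcal X,\le e')$ maps to $\mathrm{Mor}(B,B)$, and the sections form the fiber over $\mathrm{id}$. To avoid fibers, I would use the isomorphism $\mathrm{Mor}_B(B,\mathcal X)\times \mathrm{Aut}^0$-type decompositions. More simply, $\mathrm{Mor}(B,\mathcal X)_{\pi_*[s]=[B]}$ is isomorphic to $\Sec(\mathcal X/B)\times \mathrm{Aut}(B)$ via $(\sigma,g)\mapsto \sigma\circ g$. This locus is open and closed in $\mathrm{Mor}(B,\mathcal X)$ because the class $\pi_*[s(B)]$ is locally constant. Applying Künneth then gives
\[
\sum_i \dim \rH^i_c(\Sec)\cdot \sum_i\dim \rH^i_c(\mathrm{Aut}(B)) \le \mathsf C^{e+c+1}.
\]
Since $\rH^*_c(\mathrm{Aut}(B))\neq 0$, and in fact its total dimension is at least $1$ because $\mathrm{Aut}(B)$ is a nonempty finite-type scheme, the bound for $\Sec$ follows after enlarging $\mathsf C$.

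The main obstacle is the positivity of $H$. The hypothesis only gives ampleness on $\mathcal X_\eta$, so the upgrade to an ample divisor on $\mathcal X$ with degree on sections differing by a constant is the step I would check most carefully. I expect to handle it by noting that vertical divisors meet every section with bounded intersection numbers, using $\sigma^*\mathcal O(F)$ for components $F$ of fibers, which is bounded by the fiber multiplicity. Along the way I also need a uniform inequality $H\text{-}\deg \le H'\text{-}\deg + c$, so that the degree truncations match up to a constant.
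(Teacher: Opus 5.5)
\textbf{Where the argument breaks.} Your reduction to Theorem~\ref{theo:FengHaseLiu} is the paper's: the open-and-closed decomposition $\mathrm{Mor}(B,\mathcal X,\alpha)\cong \Sec(\mathcal X/B,\alpha)\times\mathrm{Aut}(B)$ followed by K\"unneth. That part is fine. The gap is in the positivity step you flagged. The fallback of passing to a $\pi$-ample $H''$ with $H''-H$ supported on fibers is impossible in general.

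Suppose some fiber $\mathcal X_b$ is integral and contains a curve $\Gamma$ with $H.\Gamma\le 0$. A vertical divisor supported on other fibers is disjoint from $\Gamma$. One supported on $\mathcal X_b$ is a multiple of the fiber class, which is numerically trivial on $\Gamma$. So $(H+V).\Gamma=H.\Gamma\le 0$ for every vertical $V$. No vertical modification of $H$ is $\pi$-ample, and adding $N\pi^*A$ cannot produce an ample divisor.

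This happens in exactly the case the paper needs. Take $H=-K_{\mathcal X/B}$ on a smooth wonderful model with a fiber where a line of $\mathcal Q_b$ passes through two points of $\mathcal Z_b$. By Proposition~\ref{prop:whenweakdPI}, that fiber is a weak del Pezzo surface, and the strict transform of the line is a $(-2)$-curve on which $-K_{\mathcal X/B}$ has degree $0$. This is the divisor to which the corollary is applied in (\ref{equation:error1}). So your argument only covers divisors that become $\pi$-ample after a vertical correction.

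\textbf{The missing idea.} You do not need $H''-H$ to be vertical. You only need a one-sided bound on section classes, $A.\alpha\le m\,H.\alpha+\mathsf c$ for a fixed ample $A$ on $\mathcal X$. This is the opposite direction to the inequality you say you need, since the point is to embed the $H$-truncation into an $A$-truncation.

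The paper obtains this from relative bigness rather than relative ampleness: $A\le mH+N$ with $N$ vertical and $N.\alpha$ bounded on sections. To see that $(mH+N-A).\alpha\ge 0$ for every section, argue as follows.
\begin{enumerate}
\item Choose $m\gg 0$ so that $mH-A$ is globally generated on $\mathcal X_\eta$.
\item Take $N=\pi^*D_B$ with $D_B$ ample enough that $\pi_*\mathcal O(mH-A)\otimes\mathcal O_B(D_B)$ is globally generated.
\item Then $|mH-A+N|$ is nonempty and its base locus lies in finitely many fibers.
\item Hence some member does not contain $\sigma(B)$, so $\deg\sigma^*(mH-A+N)\ge 0$, that is, $A.\alpha\le m\,H.\alpha+\deg D_B$.
\end{enumerate}

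With this bound, $\Sec(\mathcal X/B,\le e)\times\mathrm{Aut}(B)$ is a union of connected components of $\mathrm{Mor}(B,\mathcal X,\le me+\deg D_B)$, with degrees measured by $A$. Your product decomposition and K\"unneth argument then go through unchanged.
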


\begin{proof}
Since $H$ is relatively big, one can find a positive integer $m$, a vertical Cartier divisor $N$, and an ample divisor $A$ on $\mathcal X$ such that
\[
A \leq mH + N.
\]
Since $N$ is vertical, there exists a constant $\mathsf c \geq 1$ such that for any class $\alpha$ of sections, we have
\[
 N.\alpha \leq \mathsf c.
\]
Thus we have
\[
H.\alpha \leq e \implies A.\alpha  \leq me + \mathsf c.
\]
We apply Theorem~\ref{theo:FengHaseLiu} to $(X, A)$ and we obtain a constant $\mathsf C_A$. 
Now note that for any class $\alpha$ of sections, we have
\[
\mathrm{Mor}(B, \mathcal X, \alpha) \cong \Sec(\mathcal X/B, \alpha) \times \mathrm{Aut}(B).
\]
Using the K\"unneth formula as in \cite[Chapter VI Corollary 8.23]{Milne}, $\mathsf C = \mathsf C_A^{\max\{m, \mathsf c\}}$ witnesses our assertion.
\end{proof}

\section{Quartic del Pezzo fibrations}
\label{sec:geometry}

In this section, we study the geometry of quartic del Pezzo fibrations. Our ground field $k$ is a perfect field of arbitrary characteristic.

\subsection{Models of quartic del Pezzo fibrations}

 The main protagonist of this paper is the following:

\begin{defi}
Let $X$ be a smooth geometrically integral projective surface defined over a field $K$. We say $X$ is a {\it del Pezzo surface} if the anticanonical divisor $-K_X$ is ample. The {\it degree} of $X$ is given by $(-K_X)^2$. We say $X$ is a {\it quartic del Pezzo surface} if $X$ is a del Pezzo surface such that $(-K_X)^2 = 4$.
\end{defi}

Let $B$ be a smooth geometrically integral projective curve defined over a perfect field $k$ and $K = k(B)$ be its function field.
Next we discuss models of a smooth del Pezzo surface $X$ defined over $K$:

\begin{defi}
Let $X$ be a smooth del Pezzo surface defined over $K$. An {\it integral model of $X$} is a flat projective $k$-morphism $\pi : \mathcal X \to B$ with $\mathcal X$ normal, geometrically integral, and projective such that the generic fiber $\mathcal X_\eta$ is isomorphic to $X$.
Such an integral model always exists by taking the flat closure in an integral model of projective space and applying the normalization.

We say $\pi$ is a {\it smooth model} if $\mathcal X$ is regular. As $k$ is a perfect field, a smooth model also exists by applying resolution of singularities which was established in dimension $3$ by \cite{ResolutionI, ResolutionII}.
\end{defi}

\begin{defi}
Let $X$ be a quartic del Pezzo surface defined over $K$. An integral model $\pi : \mathcal X \to B$ of $X$ is called a {\it standard model} if the following properties hold:
\begin{itemize}
\item $\mathcal X$ has only Gorenstein terminal singularities;
\item every fiber of $\pi_{\overline{k}}$ is integral; and
\item $-K_{\mathcal X}$ is relatively $\pi$-ample.
\end{itemize}
\end{defi}

Such a model always exists when $k$ is an algebraically closed and the characteristic of $k$ is not equal to $2$:
\begin{theo}[{\cite[1. 10 Theorem]{Corti96} and \cite[Theorem 1.5]{Kitagawa}}]
\label{theo:standardmodel}
Assume that $k$ is an algebraically closed field and the characteristic of $k$ is not equal to $2$.
Let $B$ be a smooth integral projective curve defined over $k$ and $K = k(B)$ be its function field. Let $X$ be a quartic del Pezzo surface defined over $K$. Then a standard model of $X$ always exists.
\end{theo}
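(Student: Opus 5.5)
The plan is to build the standard model by running a relative minimal model program over $B$, starting from a smooth model, and then to show that what comes out has integral fibers. This is the strategy of \cite{Corti96} in characteristic $0$. In characteristic $p>2$ we use the $3$-dimensional MMP of \cite{HX15, BirkarASENS, HNT}, together with the analysis of \cite{Kitagawa}.

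\textbf{Step 1: a Mori fibre space over $B$.} Take a smooth model $\pi_0:\mathcal X_0\to B$, which exists by \cite{ResolutionI, ResolutionII}. Since $k$ is algebraically closed and hence perfect, we may run a $K_{\mathcal X_0}$-MMP over $B$; this is available for threefolds in characteristic $>5$, and in small characteristic we would appeal to \cite{Kitagawa}. The generic fiber $X$ is a del Pezzo surface, so $K$ is not relatively pseudo-effective, and the MMP terminates in a Mori fibre space $\mathcal X_1\to T$ over $B$. We need $T=B$, that is, we must avoid ending with a conic bundle over a surface. The generic fiber is unchanged by the MMP, because every step is an isomorphism over the generic point. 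So if $X$ has Picard rank $1$ over $K$, we get $T=B$, and $\mathcal X_1$ is $\mathbb Q$-factorial terminal with $-K$ relatively ample and relative Picard number $1$. If $X$ has higher Picard rank, we first run a $2$-ray game or MMP on $X$ over $K$. The theorem as stated asks for a model of this $X$, so instead we run a $K$-MMP with scaling of a relatively ample divisor pulled back from $X$. Equivalently, we perform a $(K+\epsilon H)$-MMP in which all contractions are forced to be vertical, where $H$ is the closure of an ample divisor on $X$. This terminates with $-K$ relatively ample over $B$ and $\rho(\mathcal X_1/B)=\rho(X)$.

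\textbf{Step 2: fibers are integral.} Each fiber $F=\pi_1^*b$ is a Cartier divisor. If $F$ were reducible or non-reduced, then in the relative Picard rank $1$ case the components would be $\mathbb Q$-Cartier, because terminal threefolds are $\mathbb Q$-factorial. Each component would then be a multiple of $F$, and hence equal to all of $F$, which is a contradiction. In higher rank we would use Corti's \emph{fiber-integralizing} procedure: contract components by further vertical steps, or perform Sarkisov-type elementary links over $B$, until every fiber is integral. In characteristic $p$, the key inputs are the existence of flips and divisorial contractions over $B$.

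\textbf{Step 3: Gorenstein.} This is the main obstacle. Terminal threefold singularities may be non-Gorenstein, and removing the index $>1$ points is the technical heart of \cite{Corti96}. He performs explicit weighted blow-ups followed by relative MMPs, using the classification of terminal singularities to show that a suitable invariant (the sum of indices, or the degree of the discriminant) strictly decreases. In characteristic $\neq 2$ we would first try to replace the classification argument by the local description from \cite{Tanaka25}, which says that terminal points are isolated and hypersurface-like. We would then reproduce Corti's descent on the local index, relying on \cite{Kitagawa} for the positive-characteristic links. Since the characteristic is not $2$, quadric-cone and $cA/cD$ type arguments behave as in characteristic $0$. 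Once all points are Gorenstein, Lemma~\ref{lemm:factorialsing} gives factoriality. Relative ampleness of $-K$ is preserved at each step by ending every link with a relative anticanonical model, so the result is a standard model.
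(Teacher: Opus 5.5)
The paper does not prove this statement; it imports it from \cite[1.10 Theorem]{Corti96} over $\mathbb C$ and from \cite[Theorem 1.5]{Kitagawa} for algebraically closed fields of characteristic $\neq 2$. Your sketch is therefore not an independent substitute, for three reasons.

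First, Step 3 carries all the content: eliminating non-Gorenstein points while keeping integral fibers and $-K$ relatively ample. You only describe it as ``reproduce Corti's descent, relying on \cite{Kitagawa}''. Second, the threefold MMP you use in Steps 1--2 is run in this paper only for $p>5$ (Corollary~\ref{coro:facotorialmodel}, Proposition~\ref{prop:Dflops}). For smaller characteristics you explicitly defer to \cite{Kitagawa}, which is the result being proved. Third, the positive-characteristic input you propose for Step 3 is misapplied. \cite{Tanaka25} is used in Lemma~\ref{lemm:factorialsing} only for \emph{Gorenstein} terminal points. At a point of index $r$ with $p\mid r$, the index-one cover is a $\mu_r$-cover that is not \'etale, so the characteristic-$0$ classification of terminal singularities that your descent relies on is not available.

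The steps you do write out also contain concrete errors.

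\textbf{Step 2, relative Picard number one.} You only obtain irreducibility. From $\rho(\mathcal X_1/B)=1$ you get $F_1\equiv_B 0$ for a fiber component, hence $F=mF_1$, and nothing excludes $m>1$. Multiple fibers genuinely occur on terminal del Pezzo Mori fibre spaces. For example, over $\mathbb C$ let $\mu_3$ act on $\mathbb P^2\times\mathbb P^1$ by $(x:y:z)\mapsto(x:\zeta y:\zeta^2z)$ and $t\mapsto\zeta t$. The quotient has only $\frac13(1,2,1)$ and $\frac13(1,2,2)$ points and two triple fibers. Such fibers must pass through non-Gorenstein points. If $F_1$ were Cartier (e.g.\ on a $\mathbb Q$-factorial Gorenstein model, which is factorial by Lemma~\ref{lemm:factorialsing}), a section $\sigma$, which exists by \cite{GHS03, dejongstarr}, would give $1=F\cdot\sigma=m\,(F_1\cdot\sigma)$. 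So reducedness cannot be separated from Step 3.

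\textbf{Steps 1--2, higher rank.} This is the case the paper actually needs, since $\rho(X)=2$, and here the MMP you describe does not do what you claim.
\begin{enumerate}
\item For small $\epsilon$, $K_X+\epsilon H_X$ is still anti-ample on $X$. So a $(K+\epsilon H)$-MMP, or a $K$-MMP with scaling once the parameter reaches the nef threshold on $X$, contracts horizontal rays and changes the generic fiber.
\item If you instead use a divisor positive on $X$, so that only vertical rays are contracted, the output only makes \emph{that} divisor relatively nef and gives no control of $-K$. This is how the paper uses such an MMP in Proposition~\ref{prop:Dflops}, but there one starts from a model where $-K$ is already relatively big and nef.
\item The equality $\rho(\mathcal X_1/B)=\rho(X)$ is equivalent to every fiber being irreducible, which is what Step 2 was supposed to establish.
\item The ``fiber-integralizing procedure'' is never specified.
\end{enumerate}

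What is missing is the entire construction of a model of $X$ itself with $-K$ relatively ample, integral fibers, and Gorenstein terminal singularities. That construction is precisely the content of \cite{Corti96} and \cite{Kitagawa}.
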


\begin{proof}
Over $\mathbb C$, this was achieved in \cite[1. 10 Theorem]{Corti96}. Over $k$, this was established by \cite[Theorem 1.5]{Kitagawa}.
\end{proof}

For actual applications, it is useful to have the following models:
\begin{defi}
Let $X$ be a quartic del Pezzo surface defined over $K$. An integral model $\pi : \mathcal X \to B$ of $X$ is called a {\it factorial model} if the following properties hold:
\begin{itemize}
\item $\mathcal X$ has only factorial terminal singularities;
\item every fiber of $\pi_{\overline{k}}$ is integral; and
\item $-K_{\mathcal X}$ is relatively $\pi$-big and $\pi$-nef.
\end{itemize}
\end{defi}

As a corollary of Theorem~\ref{theo:standardmodel}, we have the following:

\begin{coro}
\label{coro:facotorialmodel}
Assume that $k$ is an algebraically closed field. When the characteristic of $k$ is positive, assume that $\mathrm{char}(k) = p > 5$. Let $X$ be a quartic del Pezzo surface defined over $K$.
Then a factorial model of $X$ exists.
\end{coro}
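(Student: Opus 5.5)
Starting from a standard model $\pi : \mathcal X \to B$ given by Theorem~\ref{theo:standardmodel}, I would make it $\mathbb Q$-factorial with a small modification, and then use Lemma~\ref{lemm:factorialsing} to upgrade $\mathbb Q$-factoriality to factoriality. The hypothesis $p > 5$ is exactly what is needed for the three-dimensional MMP in positive characteristic \cite{HX15, BirkarASENS, HNT}. Since $p>5$ forces $p \neq 2$, Theorem~\ref{theo:standardmodel} applies. So $\mathcal X$ is Gorenstein and terminal, every geometric fiber is integral, and $-K_{\mathcal X}$ is $\pi$-ample.

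The key step is to take a small $\mathbb Q$-factorialization $f : \mathcal X' \to \mathcal X$. In dimension three and characteristic $p>5$ this exists by the MMP: take a log resolution, then run a relative MMP over $\mathcal X$ for $K$ plus the exceptional divisors, so that every exceptional divisor is contracted because discrepancies are positive. Alternatively, I would cite the known existence of $\mathbb Q$-factorializations for terminal threefolds in that setting, e.g.\ \cite{BirkarASENS}. Since $f$ is small and crepant, we have $K_{\mathcal X'} = f^*K_{\mathcal X}$. Hence $\mathcal X'$ is terminal and $K_{\mathcal X'}$ is Cartier, so $\mathcal X'$ is Gorenstein, $\mathbb Q$-factorial and terminal. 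By Lemma~\ref{lemm:factorialsing}, $\mathcal X'$ has only factorial singularities. Moreover $-K_{\mathcal X'} = f^*(-K_{\mathcal X})$ is the pullback of a $\pi$-ample divisor by a birational morphism, so it is $\pi'$-nef and $\pi'$-big for $\pi' = \pi\circ f$.

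It remains to check integrality of the fibers of $\pi'_{\overline{k}}$. The map $f$ is an isomorphism outside finitely many curves, and these curves lie in finitely many fibers. Each fiber $\mathcal X'_b$ is a Cartier divisor, pulled back from $b$, so it has no embedded points. Because $f$ is small, $\mathcal X'_b$ contains no $f$-exceptional divisor. Therefore its components are the strict transforms of the components of $\mathcal X_b$, and $f_*\mathcal X'_b = \mathcal X_b$ as cycles. Since $\mathcal X_b$ is integral, $\mathcal X'_b$ is irreducible and generically reduced. Being Cohen--Macaulay (a Cartier divisor in a CM threefold, since terminal Gorenstein singularities in dimension three are CM; I would cite \cite{Kollar13} or argue via hypersurface singularities as in \cite{Tanaka25}), it is reduced, hence integral. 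Flatness over $B$ and normality are clear, and the generic fiber is unchanged. So $\mathcal X'$ is a factorial model.

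I expect the main obstacle to be a precise reference for the existence of small $\mathbb Q$-factorializations in characteristic $p>5$. The Cohen--Macaulay property needed for reducedness is a minor second point.
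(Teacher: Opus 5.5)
Your proposal is correct and follows the paper's proof: start from a standard model given by Theorem~\ref{theo:standardmodel}, take a small (hence crepant) $\mathbb Q$-factorialization via \cite[Theorem 1.6]{BirkarASENS}, and apply Lemma~\ref{lemm:factorialsing} to get factoriality. Your argument for integrality of the fibers fills in a step the paper only asserts, and you do not need the Cohen--Macaulay input there: normality of $\mathcal X'$ (Serre's $S_2$) already makes each fiber, being a Cartier divisor, satisfy $S_1$, and $S_1$ together with generic reducedness gives reducedness.
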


\begin{proof}
By Theorem~\ref{theo:standardmodel}, a standard model $\pi : \mathcal X \to B$ exists and we fix one such model.
Let $\rho : \widetilde{\mathcal X} \to \mathcal X$ be a $\mathbb Q$-factorization, i.e., $\widetilde{\mathcal X}$ is $\mathbb Q$-factorial and $\rho$ is projective and isomorphic in codimension $1$. (This exists in characteristic $0$ by \cite[Corollary 4.5]{Kawamata88}. See also \cite[Theorem 6.25]{KM98}. In positive characteristic, this is a theorem of Birkar (\cite[Theorem 1.6]{BirkarASENS}) assuming that $\mathrm{char}(k) = p > 5$.)
Let $\widetilde{\pi} = \pi \circ \rho$. Since $\rho$ is crepant, we have
\begin{itemize}
\item $\widetilde{\mathcal X}$ has only Gorenstein $\mathbb Q$-factorial terminal singularities;
\item every fiber of $\widetilde{\pi}$ is integral; and
\item $-K_{\widetilde{\mathcal X}}$ is relatively $\pi$-big and $\pi$-nef.
\end{itemize}
Thus it suffices to show that $\widetilde{\mathcal X}$ has only factorial singularities.
This follows from Lemma~\ref{lemm:factorialsing}.
\end{proof}

\subsection{Auxiliary results on quartic del Pezzo surfaces}

Let $k$ be a field of arbitrary characteristic and $X$ be a quartic weak del Pezzo surface, with at worst canonical singularities geometrically, defined over $k$. In this section, we collect some auxiliary results on $X$. First we record the following lemma:

\begin{lemm}
\label{lemm:conic_nef}
Let $X$ be a quartic del Pezzo surface, with at worst canonical singularities geometrically, defined over $k$. Let $D$ be a Cartier divisor on $X$ such that $D^2 = 2$ and $-K_X.D = 4$. Then $D$ is a big and nef divisor.
\end{lemm}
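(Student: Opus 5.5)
We may pass to $\overline{k}$, since bigness and nefness are preserved and detected after base change; so assume $k$ is algebraically closed. Then $X$ is a Gorenstein quartic weak del Pezzo surface with canonical (ADE) singularities. Let $\rho:\widetilde X\to X$ be its minimal resolution, which is crepant, so $K_{\widetilde X}=\rho^*K_X$ and $-K_{\widetilde X}$ is big and nef with $K_{\widetilde X}^2=4$. The divisor $\widetilde D=\rho^*D$ satisfies $\widetilde D^2=2$ and $-K_{\widetilde X}.\widetilde D=4$. Since $\rho^*$ preserves and reflects bigness and nefness of Cartier divisors, it suffices to prove that $\widetilde D$ is big and nef on $\widetilde X$.

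\textbf{Bigness.} By Riemann--Roch,
\[
\chi(\widetilde D)=1+\tfrac12(\widetilde D^2-K_{\widetilde X}.\widetilde D)=1+\tfrac12(2+4)=4 .
\]
By Serre duality, $h^2(\widetilde D)=h^0(K_{\widetilde X}-\widetilde D)$. This vanishes, because $-K_{\widetilde X}$ is nef and big and
\[
-K_{\widetilde X}.(K_{\widetilde X}-\widetilde D)=-4-4<0,
\]
so $K_{\widetilde X}-\widetilde D$ cannot be effective. Hence $h^0(\widetilde D)\ge 4$, and $\widetilde D$ is effective. Writing $\widetilde D\equiv P+N$ for its Zariski decomposition, the positive part satisfies $P^2\ge \widetilde D^2=2>0$ as soon as $P\neq \widetilde D$ or $\widetilde D$ is nef. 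In either case $P$ is a nef class with positive self-intersection, so $\widetilde D$ is big.

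\textbf{Nefness (the main step).} Suppose some irreducible curve $\Gamma$ has $\widetilde D.\Gamma<0$. Then $\Gamma$ lies in the fixed part of $|\widetilde D|$ and $\Gamma^2<0$. On a weak del Pezzo surface the negative curves are either $(-2)$-curves, with $K.\Gamma=0$, or $(-1)$-curves, with $K.\Gamma=-1$.

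I would treat the two cases using the Hodge index theorem on the span of $\widetilde D$ and $K_{\widetilde X}$. The relevant quantity is
\[
\widetilde D^2K_{\widetilde X}^2-(\widetilde D.K_{\widetilde X})^2=8-16=-8<0,
\]
and one considers the classes $\widetilde D-\Gamma$, or $\widetilde D-m\Gamma$ where $m=\lceil -\widetilde D.\Gamma/(-\Gamma^2)\rceil$ (or the full multiplicity of $\Gamma$ in the fixed part). Subtracting $m\Gamma$ keeps the divisor effective. It increases the self-intersection, since $(\widetilde D-m\Gamma)^2=\widetilde D^2-2m\widetilde D.\Gamma+m^2\Gamma^2$, while $-K.(\widetilde D-m\Gamma)$ drops by $0$ or $m$.

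\begin{itemize}
\item If $\Gamma$ is a $(-2)$-curve, the new class $D'$ has $-K.D'=4$ and $D'^2\ge 4$. This contradicts the Hodge index inequality $D'^2\cdot K^2\le (K.D')^2$, that is $D'^2\le 4$, unless $D'\equiv -K$ numerically. I expect to rule out that boundary case by parity: it forces $\widetilde D=-K_{\widetilde X}+m\Gamma$, and then $\widetilde D^2=4-2m^2\ne 2$ for integral $m$.
\item If $\Gamma$ is a $(-1)$-curve with $\widetilde D.\Gamma=-a<0$, take $D'=\widetilde D-a\Gamma$. Then $D'^2=2+a^2$ and $-K.D'=4-a$. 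Hodge index gives
\[
4(2+a^2)\le (4-a)^2 ,
\]
i.e.\ $3a^2+8a-8\le 0$, which has no solution with $a\ge1$.
\end{itemize}

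The main obstacle is making this case analysis airtight. In particular, in the $(-2)$ case one must check that repeatedly peeling off negative curves terminates, and that the boundary equality case is handled correctly. I would organize this as an induction on $-K.\widetilde D$ together with the number of fixed components.
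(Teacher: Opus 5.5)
\textbf{Gap: the $(-2)$-curve case.} Your argument fails there, and no refinement of the Hodge index bookkeeping can rescue it. With $\Gamma^2=-2$ and $\widetilde D.\Gamma=-a$, the class $D'=\widetilde D-m\Gamma$ has $D'^2=2+2m(a-m)$. For $a=1$, $m=1$ this gives $D'^2=2$, so the claim $D'^2\ge 4$ is false. In the boundary case the parity check is also wrong: $(-K_{\widetilde X}+\Gamma)^2=4-2=2$.

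The underlying problem is that the statement you are effectively proving is false for an arbitrary weak del Pezzo surface. If $\Gamma$ is any $(-2)$-curve, the class $-K_{\widetilde X}+\Gamma$ has self-intersection $2$ and anticanonical degree $4$, yet meets $\Gamma$ with intersection number $-2$. Such classes are exactly why the paper needs Corollary~\ref{coro:floppingcurve} and the $D$-flops of Proposition~\ref{prop:Dflops}. You lost the information that excludes them when you replaced $X$ by ``a Gorenstein weak del Pezzo surface'' and treated $\widetilde D$ as an arbitrary class on $\widetilde X$. The proposed induction on fixed components cannot help either: for $a=1$, peeling off $\Gamma$ returns a class with the same invariants $(2,4)$.

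\textbf{The fix.} Keep the two hypotheses you discarded: $-K_X$ is ample, and $\widetilde D=\rho^*D$ with $D$ Cartier on $X$. For a $(-2)$-curve $\Gamma\subset\widetilde X$ the projection formula gives $-K_X.\rho_*\Gamma=-K_{\widetilde X}.\Gamma=0$. Ampleness of $-K_X$ then forces $\Gamma$ to be $\rho$-exceptional, so $\widetilde D.\Gamma=\rho^*D.\Gamma=0$. Hence the $(-2)$ case never occurs, and any curve with $\widetilde D.\Gamma<0$ is a $(-1)$-curve.

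Your $(-1)$-curve computation, $4(2+a^2)\le(4-a)^2$ with no solution for $a\ge1$, then finishes the proof. This is equivalent to the paper's argument, which takes the Gram determinant of $-K_{\widetilde X},\rho^*D,E$ and obtains $-4c^2+8c+6\ge 0$. Bigness follows at once from nefness and $D^2=2>0$; your Zariski-decomposition step is correct but superfluous.
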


\begin{proof}
We may assume that our ground field is algebraically closed.
Let $\rho : \widetilde{X} \to X$ be the minimal resolution.
If $D$ is nef, then $D$ is big because $D^2 >0$. So it suffices to show that $D$ is nef. Suppose not. 
First note that by the Riemann--Roch theorem, we may assume that $D$ is effective. Then there exists a $(-1)$-curve $E$ on $\widetilde{X}$ such that $\mathsf c = \rho^*D.E <0$.
Applying the Hodge index theorem to the intersection matrix involving $-K_{\widetilde{X}}, \rho^*D, E$, we conclude that
\[
-4\mathsf c^2 + 8\mathsf c + 6 \geq 0.
\]
However, this is impossible when $\mathsf c \leq -1$. Thus our assertion follows.
\end{proof}

\begin{coro}
\label{coro:floppingcurve}
Let $X$ be a quartic weak del Pezzo surface, with at worst canonical singularities geometrically, defined over $k$. Let $D$ be a Cartier divisor on $X$ such that $D^2 = 2$ and $-K_X.D = 4$. 
Suppose that there exists an integral curve $C$ such that $D.C < 0$. 
Then we have $-K_X.C = 0$.

\end{coro}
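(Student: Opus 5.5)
The plan is to reduce to Lemma~\ref{lemm:conic_nef} and the classification of curves on a weak del Pezzo surface. We may assume $k$ is algebraically closed and that $C$ is an integral curve with $D.C<0$. We must show $-K_X.C = 0$. Since $X$ has at worst canonical singularities and is Gorenstein, $-K_X$ is big and nef, so $-K_X.C \geq 0$. It therefore suffices to rule out $-K_X.C > 0$.

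First pass to the minimal resolution $\rho : \widetilde{X} \to X$. The surface $\widetilde{X}$ is a smooth weak del Pezzo surface of degree $4$ with $K_{\widetilde{X}} = \rho^*K_X$. Let $\widetilde{C}$ be the strict transform of $C$. Then $\rho^*D.\widetilde{C} = D.C < 0$ and $-K_{\widetilde{X}}.\widetilde{C} = -K_X.C$. By Riemann--Roch on $\widetilde{X}$ we have $\chi(\rho^*D) = 1 + \tfrac12(D^2 - K.D) = 4$, and $h^2(\rho^*D) = h^0(K_{\widetilde{X}} - \rho^*D) = 0$ because $-K_{\widetilde{X}}.(K_{\widetilde{X}}-\rho^*D) <0$ while $-K_{\widetilde{X}}$ is nef. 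Hence $\rho^*D$ is effective. A curve on which an effective divisor is negative must be a component of that divisor with negative self-intersection. On a smooth weak del Pezzo surface, the integral curves of negative self-intersection are exactly the $(-1)$-curves (with $-K.\widetilde{C}=1$) and the $(-2)$-curves (with $-K.\widetilde{C}=0$), by adjunction together with the nefness of $-K$. So it suffices to exclude the case where $\widetilde{C}$ is a $(-1)$-curve.

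For the $(-1)$-curve case we repeat the Hodge index computation from the proof of Lemma~\ref{lemm:conic_nef}. Set $\mathsf c = \rho^*D.\widetilde{C} \leq -1$. The intersection matrix of $-K_{\widetilde{X}}, \rho^*D, \widetilde{C}$ is
\[
\begin{pmatrix} 4 & 4 & 1 \\ 4 & 2 & \mathsf c \\ 1 & \mathsf c & -1 \end{pmatrix}.
\]
The Hodge index theorem forces this Gram matrix, which has signature at most $(1,\ast)$ with one positive direction, to have determinant $\geq 0$. That gives $-4\mathsf c^2 + 8\mathsf c + 6 \geq 0$, which fails for every $\mathsf c \leq -1$. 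So $\widetilde{C}$ must be a $(-2)$-curve, and therefore $-K_X.C = -K_{\widetilde{X}}.\widetilde{C} = 0$.

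The main obstacle is the justification of this step. Lemma~\ref{lemm:conic_nef} is stated only for del Pezzo surfaces, so it cannot be cited directly and the Hodge index step has to be rerun in the weak setting. There is also a degenerate case where $-K_{\widetilde{X}}, \rho^*D, \widetilde{C}$ are linearly dependent. In that case the determinant vanishes, which still contradicts the strict negativity of $-4\mathsf c^2+8\mathsf c+6$. So that case is harmless.
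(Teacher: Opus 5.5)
Your proof is correct and follows the paper's route. You pass to the minimal resolution, observe that $\widetilde{C}$ is a negative curve for the effective class $\rho^*D$ and hence a $(-1)$- or $(-2)$-curve, and rerun the Hodge index computation from Lemma~\ref{lemm:conic_nef} to exclude the $(-1)$ case. The paper compresses this into the single remark that the proof of Lemma~\ref{lemm:conic_nef} forces $\widetilde{C}$ to be a $(-2)$-curve, and your write-up supplies exactly those omitted details.
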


\begin{proof}
We may assume that our ground field is algebraically closed.
Let $\rho : \widetilde{X} \to X$ be the minimal resolution.
The proof of Lemma~\ref{lemm:conic_nef} shows that the strict transform $\widetilde{C}$ of $C$ must be a $(-2)$-curve. Thus our assertion follows.
\end{proof}

Next we record the following lemma:

\begin{lemm}
\label{lemm:birationaltoquadric}
Assume that the characteristic of $k$ is not equal to $2$.
Let $X$ be a quartic weak del Pezzo surface with at worst canonical singularities geometrically. Let $D$ be a nef Cartier divisor on $X$ such that $D^2 = 2$ and $-K_X.D = 4$. Then the complete linear system $|D|$ defines a birational morphism $\phi : X \to Q$ to a quadric surface in $\mathbb P^3$.
\end{lemm}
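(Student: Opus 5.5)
We work on the minimal resolution $\rho : \widetilde{X} \to X$ after base change to $\overline{k}$. Since $X$ has canonical singularities, $\widetilde X$ is a weak del Pezzo surface of degree $4$ with $K_{\widetilde X} = \rho^*K_X$. Put $\widetilde D = \rho^*D$; it is nef with $\widetilde D^2 = 2$ and $-K_{\widetilde X}.\widetilde D = 4$. The linear system $|D|$ is defined over $k$, so once the statements below hold geometrically they descend. We may therefore assume $k = \overline{k}$ throughout and argue on $\widetilde X$, where $H^i(\mathcal O)$ are computed identically.

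\textbf{Step 1: $h^0(D)=4$.} By Riemann--Roch on the rational surface $\widetilde X$,
\[
\chi(\widetilde D) = 1 + \tfrac12(\widetilde D^2 - K_{\widetilde X}.\widetilde D) = 1 + \tfrac12(2+4) = 4.
\]
Next, $\widetilde D - K_{\widetilde X}$ is nef and big, because $(\widetilde D - K)^2 = 2+8+4 > 0$. Kawamata--Viehweg vanishing for nef and big divisors holds on rational surfaces in every characteristic (alternatively, use Serre duality and the facts $h^2(\widetilde D) = h^0(K - \widetilde D) = 0$ and $h^1=0$ for nef divisors on weak del Pezzo surfaces). Hence $h^1(\widetilde D) = h^2(\widetilde D) = 0$, and $h^0 = 4$.

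\textbf{Step 2: $|\widetilde D|$ is base point free.} This is the main obstacle. I would first show that $|\widetilde D|$ has no fixed component, then rule out base points using the Reider-type/Saint-Donat analysis available on weak del Pezzo surfaces, in the version valid in any characteristic: a nef divisor $L$ with $-K.L \geq 2$ on a weak del Pezzo surface is base point free.

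For the fixed part, write $|\widetilde D| = |M| + F$. Then $M$ is nef with $h^0(M)=4$, and running Riemann--Roch on $M$ forces $M^2 + (-K.M) \geq 6$ as long as $F \neq 0$. Combining the Hodge index theorem (as in the proof of Lemma~\ref{lemm:conic_nef}) with $-K.F \geq 0$ and nefness of $\widetilde D$ should force $F$ to consist of $(-2)$-curves with $F.\widetilde D = 0$ and $F^2 < 0$. This contradicts $\widetilde D^2 = M^2 + 2M.F + F^2$ together with the equality $\chi(M)=4$.

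For isolated base points, take a general member $C \in |\widetilde D|$. It is a curve of arithmetic genus
\[
p_a = 1 + \tfrac12(\widetilde D^2 + K.\widetilde D) = 1 + \tfrac12(2-4) = 0,
\]
so $C$ is a connected tree of rational curves. The restriction sequence together with $h^1(\mathcal O)=0$ shows that $|\widetilde D|$ restricts to the complete system of a degree $2$ line bundle on $C$. Such a system is base point free on a tree of $\mathbb P^1$'s of total degree $2$ on which every component has nonnegative degree, and this gives freeness.

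\textbf{Step 3: the image is a quadric and the map is birational.} Let $\phi : X \to \mathbb P^3$ be the morphism given by $|D|$. Then
\[
\deg\phi \cdot \deg \phi(X) = \widetilde D^2 = 2.
\]
The image is nondegenerate, since the map is given by the complete system. So it is not a plane, and $\deg\phi(X) \geq 2$. Hence $\deg\phi = 1$ and $\phi(X)$ is a quadric surface $Q$. Birationality also rules out a purely inseparable map of degree $2$, which is the point where $\mathrm{char}\neq 2$ is relevant, though with the degree count it is automatic. If one wants $\phi$ to have connected fibers onto a normal $Q$, that follows from Zariski's main theorem once $Q$ is shown to be normal. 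It remains to check that $Q$ is not a cone over a singular conic; this follows because $Q$ is irreducible and the image of an irreducible surface.
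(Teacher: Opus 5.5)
Your proposal is correct and follows essentially the same route as the paper. You pass to the minimal resolution, get $h^0(\widetilde D)=4$ from Riemann--Roch and vanishing, obtain base-point-freeness of $|\widetilde D|$ from a known freeness result for nef divisors on weak del Pezzo surfaces, and conclude from $\widetilde D^2=2$ and nondegeneracy that the map is birational onto a quadric. The differences are only in the justifications: the paper cites \cite[Proposition 5.2.2.4]{ADHL} for freeness and derives Kawamata--Viehweg vanishing from global $F$-regularity, while you use vanishing on rational surfaces plus a Harbourne-type freeness statement or your direct sketch. In Step 3 you should also say explicitly that the image is a surface, because $\widetilde D^2 \neq 0$.
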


\begin{proof}
We may assume that our ground field is algebraically closed.
Let $\rho : \widetilde{X} \to X$ be the minimal resolution. Our assumptions imply that 
\[
-K_{\widetilde{X}} \sim - \rho^*K_X
\]
is a big and nef divisor on $\widetilde{X}$. Then $\widetilde{D} = \rho^*D$ is a nef Cartier divisor such that $\widetilde{D}^2 = 2$ and $-K_{\widetilde{X}}.\widetilde{D} = 4$. It follows from \cite[Proposition 5.2.2.4]{ADHL} that $|\widetilde{D}|$ is base-point free. Note that \cite[Proposition 5.2.2.4]{ADHL} is stated when the ground field has characteristic $0$, but the proof of this proposition works in every characteristic different from $2$. A key to this argument is the Kawamata--Viehweg vanishing theorem which is valid in our case. Indeed, under our assumptions, \cite[Theorem A]{KT25} shows that $\widetilde{X}_{\overline{k}}$ is globally $F$-regular. Thus the Kawamata--Viehweg vanishing theorem holds by \cite[Theorem 6.8]{SS10}.
Finally, the Riemann--Roch formula and the Kawamata--Viehweg vanishing theorem imply that $|\widetilde{D}|$ is $3$-dimensional. Thus our assertion follows from the fact that $\widetilde{D}^2 = 2$. 
\end{proof}

Let $Q \subset \mathbb P^3$ be a non-split smooth quadric surface defined over $k$, i.e., the Picard rank of $Q$ is $1$. Let $Z \subset Q$ be a closed point of degree $4$.
Let $\phi : X \to Q$ be the blow-up of $Q$ along $Z$. Let $H$ be the pullback of the hyperplane class on $Q$ and $E$ be the exceptional divisor of $\phi$.
Then $\Pic(X)$ is generated by $H$ and $E$. Moreover we have:
\begin{lemm}
\label{lemm:anotherbirational}
Assume that the characteristic of $k$ is not equal to $2$.
Assume that $X$ is a smooth quartic del Pezzo surface.
Then $H' = 3H - 2E$ is the pullback of the hyperplane class via a birational morphism $\phi' : X \to Q'$ to another smooth quadric surface $Q'$, of Picard rank $1$, defined over $k$.
\end{lemm}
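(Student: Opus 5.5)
The plan is to use Lemma~\ref{lemm:conic_nef} and Lemma~\ref{lemm:birationaltoquadric} for $D = H' = 3H-2E$, and then identify the target quadric and its Picard rank.

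\textbf{Numerics.} On $X$ we have $H^2 = 2$, $E^2 = -4$ (since $E$ is the exceptional divisor over a degree $4$ point), $H.E = 0$, and $-K_X = 2H - E$. It follows that
\[
(H')^2 = 9\cdot 2 + 4\cdot(-4) = 2, \qquad -K_X.H' = 3\cdot 2\cdot 2 - 2\cdot(-1)(-4) = 12-8 = 4 .
\]
Lemma~\ref{lemm:conic_nef} then shows $H'$ is big and nef. Because $X$ is a smooth del Pezzo surface, Lemma~\ref{lemm:birationaltoquadric} applies (characteristic $\neq 2$), so $|H'|$ gives a birational morphism $\phi' : X \to Q'$ onto a quadric surface $Q' \subset \mathbb P^3$, defined over $k$ since $H'$ is.

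\textbf{What $\phi'$ contracts and smoothness of $Q'$.} Over $\overline{k}$, let $E = E_1+\dots+E_4$ and let $h$ be the hyperplane class of $Q_{\overline{k}} \cong \mathbb P^1 \times \mathbb P^1$, with $h = f_1+f_2$. The candidate curves contracted by $\phi'$ are the strict transforms of the conics through three of the four points. Such a conic has class $h$ and passes through three of the $p_i$, so its strict transform has class $H - E_i - E_j - E_k$. This class satisfies
\[
H'.(H-E_{ijk}) = 6 - 6 = 0, \qquad (H-E_{ijk})^2 = 2-3 = -1 ,
\]
so these are four disjoint $(-1)$-curves, permuted transitively by Galois because $Z_{\overline{k}}$ is integral.

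I need to check that $H'$ has no other curve $C$ with $H'.C = 0$. On a del Pezzo surface every such curve is a $(-1)$-curve: the Hodge index theorem forces $C^2<0$, and since $-K_X$ is ample $C$ cannot be a $(-2)$-curve. I would enumerate the $(-1)$-curves on $\mathrm{Bl}_4(\mathbb P^1\times\mathbb P^1)$, which is the degree $4$ del Pezzo surface with $16$ lines. They are the $E_i$, the classes $f_a - E_i$ (these are the $8$ rulings through the points), and the four classes $h - E_{ijk}$. Against $H'$ these give $2$, $1$, and $0$ respectively. So exactly four disjoint curves are contracted, and $\phi'$ is the blow-up of a smooth surface $Q'$ at a degree $4$ point $Z'$. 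The check that no further curves are contracted is the main bookkeeping step and needs the most care, but it is finite.

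\textbf{Picard rank of $Q'$.} The surface $Q'$ is a smooth quadric, so $Q'_{\overline{k}} \cong \mathbb P^1\times\mathbb P^1$. Over $k$ we have $\rho(Q') = \rho(X) - 1 = 1$, because the four contracted curves form a single Galois orbit and $\rho(X) = 2$. Hence $Q'$ is a smooth quadric with Picard rank $1$, that is, non-split. One can confirm the claim about $\rho(X)$ directly: if the Galois action did not swap the two rulings of $Q'$, then the ruling classes $f'_a$ would be $k$-rational classes in $\Pic(X_{\overline{k}})^{G}$ outside the span of $H$ and $E$. This span is $\Pic(X_{\overline{k}})^{G}$ because $\Pic(X) = \langle H, E\rangle$, so no such classes exist.
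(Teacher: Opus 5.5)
Your proof is correct and follows the paper's own route: the numerics $H'^2=2$ and $-K_X.H'=4$, then Lemmas~\ref{lemm:conic_nef} and \ref{lemm:birationaltoquadric}, then the exceptional divisor $4H-3E$ as four disjoint $(-1)$-curves (which gives smoothness of $Q'$), then $\rho(Q')=\rho(X)-1=1$; you just make explicit, via the $16$ lines, what the paper asserts in one line. Two small fixes: Galois transitivity on the four curves comes from $Z$ being a single closed point (over the actual algebraic closure $Z$ splits into four points), and the ruling-class check should be phrased with $\mathbb{Q}$-coefficients, since $\Pic(X)\to\Pic(X_{\overline{k}})^{G}$ need only be an isomorphism after tensoring with $\mathbb{Q}$.
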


\begin{proof}
We have $-K_X.H' = 4$ and $H'^2 = 2$. By Lemma~\ref{lemm:conic_nef}, $H'$ is big and nef.
Hence it follows from Lemma~\ref{lemm:birationaltoquadric} that $H'$ is the pullback of the hyperplane class via a birational morphism $\phi' : X \to Q'$ to a quadric surface.
The exceptional divisor of $\phi'$ is linearly equivalent to $4H-3E$, and it is geometrically the disjoint union of four $(-1)$-curves because $-K_X$ is ample. Hence $Q'$ is smooth. Finally the Picard rank of $Q'$ is $1$ because the Picard rank of $X$ is $2$. Thus our assertion follows.
\end{proof}

\subsection{Wonderful models}
\label{subsec:wonderful}

Let $B$ be a geometrically integral smooth projective curve defined over a perfect field $k$,
and let $K = k(B)$ be its function field.
The following models are quite useful in practice:

\begin{defi}
Let $X$ be a quartic del Pezzo surface defined over $K$. An integral model $\pi : \mathcal X \to B$ of $X$ is called a {\it wonderful model} if the following properties hold:
\begin{itemize}
\item $\mathcal X$ has only geometrically factorial terminal singularities;
\item every fiber of $\pi_{\overline{k}}$ is integral and has only canonical singularities; and
\item $-K_{\mathcal X}$ is relatively $\pi$-big and $\pi$-nef.
\end{itemize}
\end{defi}

\noindent
{\bf Setup:}
Throughout this section, we consider the following quartic del Pezzo surfaces:
let $Q$ be a non-split smooth quadric surface defined over $K$ such that its base change $Q_{\overline{k}(B)}$ has Picard rank $1$.
Let $Z \subset Q$ be a closed point of degree $4$ such that its base change $Z_{\overline{k}(B)}$ is integral. Let $\phi : X \to Q$ be the blow-up of $Q$ along $Z$. Throughout this section, we assume that $X$ is a smooth quartic del Pezzo surface defined over $K$. 

The following proposition shows that the birational morphism $\phi$ extends to a wonderful model after applying a sequence of flops:

\begin{prop}
\label{prop:Dflops}
We assume that $k$ is an algebraically closed field. When the characteristic of $k$ is positive, we also assume that the characteristic of $k$ is $> 5$. Let $\pi : \mathcal X \to B$ be a wonderful model of $X$. Let $D$ be the flat closure of a hyperplane section from $Q$. Then, after applying a sequence of $D$-flops over $B$,
\[
\xymatrix{
 \mathcal X \ar@{-->}[rr]^\psi \ar[rd]_\pi & & \widetilde{\mathcal X} \ar[dl]^{\widetilde{\pi}} \\
 & B &
}
\]
the strict transform $\widetilde{D}$ of $D$ becomes relatively $\widetilde{\pi}$-big and $\widetilde{\pi}$-nef and $\widetilde{\pi} : \widetilde{\mathcal X} \to B$ is a wonderful model of $X$. Moreover, when $\mathcal X$ is smooth, $\widetilde{\mathcal X}$ is also smooth.
\end{prop}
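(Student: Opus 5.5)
We run a $D$-MMP over $B$ and show that every step is a flop. Since $\mathcal X$ has only factorial (hence $\mathbb Q$-factorial) terminal singularities and $-K_{\mathcal X}$ is $\pi$-big and $\pi$-nef, Lemma~\ref{lemm:bignefterminal} (applied over $B$, after twisting $-K_{\mathcal X}$ by the pullback of an ample divisor on $B$ so that it is globally big and nef) provides an effective $\Delta\sim_{\mathbb Q}-K_{\mathcal X}$, relative to $B$, with $(\mathcal X,\Delta)$ terminal. Then $K_{\mathcal X}+\Delta\equiv_B 0$, and for small $t>0$ the pair $(\mathcal X,\Delta+tD)$ is still klt, with $K_{\mathcal X}+\Delta+tD\equiv_B tD$. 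We therefore run the $(K_{\mathcal X}+\Delta+tD)$-MMP over $B$, i.e.\ a $D$-MMP. Its existence and termination for threefolds in characteristic $>5$ follow from \cite{HX15, BirkarASENS, HNT}.

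Each extremal ray $R$ contracted in this MMP satisfies $D.R<0$. Since $\pi$ has relative dimension $2$, a curve spanning $R$ is vertical, so it lies in a fibre $\mathcal X_b$, which is an integral quartic (weak) del Pezzo surface with canonical singularities. On the generic fibre, $D|_X=H$ satisfies $H^2=2$ and $-K_X.H=4$, and these numbers are preserved on every fibre by flatness. Corollary~\ref{coro:floppingcurve} then gives $-K_{\mathcal X}.R=0$ for every curve with $D.R<0$. Consequently no step can be a divisorial contraction or a Mori fibre space: the contracted locus is covered by $K$-trivial curves, $-K$ is relatively big and nef, and fibres are integral, which rules out contracting a divisor (a vertical divisor would have to be a whole fibre, but a fibre is not $K$-trivial). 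Each step is thus a small $K$-trivial contraction, and its $D$-flip is a flop. The process terminates by the general termination results, or more simply because the number of $K$-trivial curves with $D<0$ is bounded through the $(-2)$-curves on fibres. At the end, $\widetilde D$ is $\widetilde\pi$-nef. It is also $\widetilde\pi$-big, since $\widetilde D|_X=H$ is big on the generic fibre.

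It remains to check that $\widetilde{\mathcal X}$ is again a wonderful model. Flops preserve terminality and $\mathbb Q$-factoriality, and $K_{\widetilde{\mathcal X}}$ is the pushforward of $K_{\mathcal X}$, so it stays Cartier and Gorenstein. Lemma~\ref{lemm:factorialsing} then gives factoriality. Relative bigness and nefness of $-K$ are preserved because flops are crepant and $-K$ is trivial on flopped curves. Integrality of fibres holds because $\psi$ is an isomorphism in codimension one and fibres are prime divisors pulled back from $B$. The canonicity of fibre singularities I expect to be the main obstacle. I would argue that $\widetilde{\mathcal X}_b$ is a Cartier divisor with $K_{\widetilde{\mathcal X}_b}=(K_{\widetilde{\mathcal X}}+\widetilde{\mathcal X}_b)|$ anti-nef and big, then use inversion of adjunction: $(\mathcal X,\mathcal X_b)$ is plt, and plt-ness is preserved under the crepant flop since $\mathcal X_b\equiv_B 0$. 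This shows $\widetilde{\mathcal X}_b$ is normal and klt, and Gorenstein klt surface singularities are canonical; in positive characteristic I would cite the threefold adjunction results of \cite{HX15}. Finally, if $\mathcal X$ is smooth, then $\widetilde{\mathcal X}$ is smooth: a terminal threefold flop preserves the analytic types of singularities (a Koll\'ar-type result, available via the local description of flops as hypersurface $cA$-type neighbourhoods), or alternatively one can argue that the flopping curves are $(-1,-1)$ or $(0,-2)$ curves in a smooth threefold and the flop of these is explicitly smooth.
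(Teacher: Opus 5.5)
Your proposal is essentially the paper's proof: run a $D$-MMP over $B$ as a $(K_{\mathcal X}+\Delta+tD)$-MMP with $\Delta\sim_{\mathbb Q}-K_{\mathcal X}+mF$ terminal; use Corollary~\ref{coro:floppingcurve}, applied inductively to the intermediate fibres, to see that every step is a $D$-flop; recover factoriality from Lemma~\ref{lemm:factorialsing}; carry plt-ness of $(\mathcal X,\mathcal X_b)$ across the flops to get normal canonical fibres; and use invariance of singularities under flops for smoothness. The caveats are all in positive characteristic: there the paper gets plt-ness from global $F$-regularity of $\mathcal X_b$ via \cite[Theorem A]{Das15} and \cite[Theorem 3.3]{HW02}, gets normality from the standard coefficients of the different (\cite[Theorem 3.36]{Kollar13}) together with \cite[Theorem 3.1 and Proposition 4.1]{HX15}, and gets smoothness of $\widetilde{\mathcal X}$ from \cite[Theorem 1.1]{Tanaka25} (\cite[Theorem 2.4]{Kollar89} in characteristic $0$); your suggested shortcuts through ``$cA$-type'' neighbourhoods or flopping curves of type $(-1,-1)$/$(0,-2)$ are not valid, because flopping curves in smooth threefolds can have normal bundle $\mathcal O(1)\oplus\mathcal O(-3)$ and contract to $cD$ or $cE$ points.
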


\begin{proof}
Let $H$ be $-K_{\mathcal X}$, which is relatively $\pi$-big and $\pi$-nef.
Let $F$ be the class of a general fiber of $\pi$. 
By the relative version of Wilson's theorem, there exists an effective divisor $N$ such that for a sufficiently small rational $t >0$, $-K_{\mathcal X} - tN$ is relatively $\pi$-ample and $(\mathcal X, tN)$ is a terminal pair. 
Let $m > 0 $ be a sufficiently large integer such that $-K_{\mathcal X} - tN + mF$ is ample.
By arguing as in Lemma~\ref{lemm:bignefterminal}, we can find an effective $\mathbb Q$-divisor $H' \sim H+mF$ such that $(\mathcal X, H')$ is a terminal pair. Then for a sufficiently small rational $\epsilon > 0$, $(\mathcal X, \epsilon D + H')$ is also a terminal pair.
Thus by \cite[Corollary 1.4.2]{BCHM} and \cite[Theorem 1.1]{HNT}, we can run a relative $(\epsilon D + H' + K_{\mathcal X})$-MMP over $B$. Note that since we have $\epsilon D + H' + K_{\mathcal X} \sim_{\mathbb Q} \epsilon D + mF$, this MMP is a relative $D$-MMP over $B$. As stated above, let $\psi : \mathcal X \dashrightarrow \widetilde{\mathcal X}$ be the outcome of this MMP. Then the strict transform $\widetilde{D}$ of $D$ is relatively $\widetilde{\pi}$-big and $\widetilde{\pi}$-nef.

It follows from Lemma~\ref{lemm:conic_nef} that $D|_{\mathcal X_b}$ is nef whenever $\mathcal X_b$ is a (possibly singular) del Pezzo surface. This shows that this MMP does not modify the generic fiber. In particular, $\widetilde{\pi} : \widetilde{\mathcal X} \to B$ is an integral model of $X$.
Moreover, when $\mathcal X_b$ is a weak del Pezzo surface and $D|_{\mathcal X_b}$ is not nef, Corollary~\ref{coro:floppingcurve} shows that a negative curve in $\mathcal X_b$ is a $(-2)$-curve on the minimal resolution of $\mathcal X_b$. We inductively conclude that each step of this MMP is a $D$-flop. This implies that $\widetilde{\mathcal X}$ has only Gorenstein terminal singularities and $-K_{\widetilde{\mathcal X}}$ is relatively $\widetilde{\pi}$-big and $\widetilde{\pi}$-nef. Since $\widetilde{\mathcal X}$ is $\mathbb Q$-factorial, Lemma~\ref{lemm:factorialsing} shows that $\widetilde{\pi} : \widetilde{\mathcal X} \to B$ is a factorial model of $X$.

Thus the only thing we need to show is that every fiber of $\widetilde{\pi}$ is normal and has only canonical singularities when $\psi$ is a sequence of $D$-flops. First let us prove that every fiber is normal. We assume that the characteristic of $k$ is $0$.
Then by the inversion of adjunction (\cite[Theorem 5.50]{KM98}), $(\mathcal X, \mathcal X_b)$ is plt. Since we are applying $D$-flops over $B$, we can conclude that $(\widetilde{\mathcal X}, \widetilde{\mathcal X}_b)$ is also plt. Hence by \cite[Corollary 5.52]{KM98}, $ \widetilde{\mathcal X}_b$ is normal.

In positive characteristic, as we explained before, $\mathcal X_b$ is normal and globally $F$-regular, so by \cite[Theorem A]{Das15} $(\mathcal X, \mathcal X_b)$ is purely $F$-regular (or divisorial $F$-regular in other terminology) in a neighborhood of $\mathcal X_b$. Thus by \cite[Theorem 3.3]{HW02}, $(\mathcal X, \mathcal X_b)$ is plt. Then $(\widetilde{\mathcal X}, \widetilde{\mathcal X}_b)$ is also plt.
Let
\[
\xymatrix{
& Y\ar[ld]_\rho \ar[rd]^{\widetilde{\rho}}& \\
\mathcal X_b \ar@{-->}[rr]_\psi& & \widetilde{\mathcal X}_b
}
\]
be a common resolution. Then since $\psi$ is a sequence of $D$-flops, we have
\[
\rho^*K_{\mathcal X_b} \sim \widetilde{\rho}^*K_{\widetilde{\mathcal X}_b}.
\]
Let $\widetilde{\mathcal X}_b^n$ be the normalization of $\widetilde{\mathcal X}_b$
and let $(K_{\widetilde{\mathcal X}} + \widetilde{\mathcal X}_b)|_{\widetilde{\mathcal X}_b^n} = K_{\widetilde{\mathcal X}_b^n} + B^n$. Then we have
\[
\rho^*K_{\mathcal X_b} \sim \widetilde{\rho}^*K_{\widetilde{\mathcal X}_b} \sim \widetilde{\rho}_n^*(K_{\widetilde{\mathcal X}_b^n} + B^n),
\]
where $\widetilde{\rho}_n : Y \to \widetilde{\mathcal X}_b^n$ is the natural morphism.
In particular, $(\widetilde{\mathcal X}_b^n, B^n)$ is a weak del Pezzo pair with only canonical singularities. The computation in \cite[Theorem 3.36]{Kollar13} shows that $B^n$ has only standard coefficients. Thus by \cite[Theorem 3.1]{HX15}, $(\widetilde{\mathcal X}_b^n, B^n)$ is strongly $F$-regular. Therefore it follows from \cite[Proposition 4.1]{HX15} that $\widetilde{\mathcal X}_b$ is normal.
Now, since $\mathcal X_b$ has only canonical singularities, the same holds for $\widetilde{\mathcal X}_b$. 

Finally the last statement in characteristic $0$ follows from \cite[Theorem 2.4]{Kollar89}. In positive characteristic, this is established in \cite[Theorem 1.1]{Tanaka25}.
\end{proof}

\begin{coro}
\label{coro:Dflopsoverk}
Let $k$ be a perfect field. We assume that when $k$ has positive characteristic, it is $>5$.
Let $\pi : \mathcal X \to B$ be a wonderful model of $X$.
Let $D$ be the flat closure of a hyperplane section from $Q$. Let 
\[
\xymatrix{
 \mathcal X_{\overline{k}} \ar@{-->}[rr]^{\psi_{\overline{k}}} \ar[rd]_{\pi_{\overline{k}}} & & \widetilde{\mathcal X}_{\overline{k}} \ar[dl]^{\widetilde{\pi}_{\overline{k}}} \\
 & B &
}
\]
be a sequence of $D_{\overline{k}}$-flops obtained in Proposition~\ref{prop:Dflops}. Then this diagram descends to $k$.
\end{coro}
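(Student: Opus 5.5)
We will prove the descent by Galois descent, using the uniqueness of the outcome of the $D$-flops. Let $G = \mathrm{Gal}(\overline{k}/k)$, which is well-defined since $k$ is perfect. Since $\pi$, $D$ and the fibration are defined over $k$, $G$ acts on $\mathcal X_{\overline{k}}$ preserving $D_{\overline{k}}$ and the fibers over $B_{\overline{k}}$. The key observation is that $\widetilde{\mathcal X}_{\overline{k}}$ is canonically determined by $\mathcal X_{\overline{k}}$ and $D_{\overline{k}}$. Indeed, every step of the MMP in Proposition~\ref{prop:Dflops} is a $D$-flop over $B$, so $-K$ is trivial on all flopped curves. Consequently
\[
\widetilde{\mathcal X}_{\overline{k}} \cong \mathrm{Proj}_B \bigoplus_{n \geq 0} \pi_*\mathcal O_{\mathcal X_{\overline{k}}}\bigl(n(-K_{\mathcal X_{\overline{k}}/B} + \epsilon D_{\overline{k}})\bigr)\Big/ \text{(ample model)},
\]
that is, $\widetilde{\mathcal X}_{\overline{k}}$ is the unique $\mathbb Q$-factorial small modification of $\mathcal X_{\overline{k}}$ over $B$ on which the strict transform of $D$ is relatively nef. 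Concretely, I would take $L = -K_{\mathcal X/B} + \epsilon D + mF$ with $0<\epsilon \ll 1$. Then $\widetilde{\mathcal X}$ is the relative log canonical model for $(\mathcal X, \epsilon D + H')$ if $\widetilde D$ is relatively ample, and otherwise the ample model of $L$ with $\epsilon$ chosen generically.

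This uniqueness needs one justification. Because the induced map $\widetilde{\mathcal X}_{\overline{k}} \dashrightarrow \widetilde{\mathcal X}'_{\overline{k}}$ is small, any two small modifications $\widetilde{\mathcal X}_{\overline{k}}$ and $\widetilde{\mathcal X}'_{\overline{k}}$ on which $-K + \epsilon D$ is relatively nef and big are connected by flops that are trivial for $-K+\epsilon D$. The rational map is therefore an isomorphism if we choose $\epsilon$ so that $-K+\epsilon \widetilde D$ is relatively ample. This can be arranged because $\widetilde{D}$ is relatively big and nef and $-K$ is relatively big and nef, so a small perturbation lies in the interior of a chamber unless $\widetilde D$ is not relatively ample. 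In that case I would instead use the section ring $R = \bigoplus_n \pi_*\mathcal O(n(-K_{\mathcal X}+\epsilon D))$, which is finitely generated by Proposition~\ref{prop:Dflops}, since it equals the corresponding ring on $\widetilde{\mathcal X}$. The ring $R$ is defined over $k$ because $K_{\mathcal X}$, $D$ and $\pi$ are. Its relative Proj $\mathcal X^{\mathrm{can}}$ is then defined over $k$, and $\widetilde{\mathcal X}_{\overline{k}}$ is a $\mathbb Q$-factorialization of it, crepant for $-K+\epsilon D$.

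The main obstacle is this last case: a $\mathbb Q$-factorialization is not unique in general, and a different choice of $N$ or $H'$ might flop differently. To handle it, I would choose the ample divisor $N$ used in Proposition~\ref{prop:Dflops} over $k$, and take $H'$ to be $k$-rational up to $\mathbb Q$-linear equivalence. Then I would run the MMP with scaling of an ample $k$-divisor. The extremal rays contracted at each step are determined by numerical data of $k$-rational divisors, so the set of rays at each step is $G$-stable. In the non-extremal case, one contracts the whole $G$-orbit of rays simultaneously, which is a $G$-equivariant (i.e.\ $k$-) flop, following Kollár--Mori's MMP over non-closed fields. This produces a $k$-model $\widetilde{\mathcal X}$ whose base change is a sequence of $D_{\overline{k}}$-flops ending at a model with $\widetilde D$ relatively nef. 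Since every fibre property in Proposition~\ref{prop:Dflops} is geometric, this base change enjoys the same conclusions, and the diagram descends. Finally, I would note that Galois descent of the morphisms $\widetilde\pi$ and $\psi$ is formal once $\widetilde{\mathcal X}$ is constructed over $k$, using effectivity of descent for the projective schemes with $G$-invariant relatively ample line bundles $-K+\epsilon \widetilde D + mF$.
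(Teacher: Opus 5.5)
\textbf{There is a genuine gap.} Your opening idea is the right one, and it is essentially what the paper does: realize $\widetilde{\mathcal X}_{\overline k}$ as the relative Proj of a section ring of a divisor defined over $k$. The paper uses $D+\epsilon A$ with $A$ a general ample $k$-divisor, where you use $-K+\epsilon D$.

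\textbf{The missing input.} You never supply the fact that makes this work, and as a result you misdiagnose the situation. That fact is that $N^1(\mathcal X_{\overline k}/B_{\overline k})$ has rank $2$. This holds because $X_{\overline k(B)}$ has Picard rank $2$, all fibres are integral, and $0\to N^1(B)\to N^1(\mathcal X)\to N^1(\mathcal X/B)\to 0$ is exact; the small modification $\psi$ preserves it. Consequently:
\begin{itemize}
\item $\overline{NE}(\widetilde{\mathcal X}_{\overline k}/B_{\overline k})$ is spanned by exactly two rays.
\item Let $e$ be the class of an exceptional curve of $X\to Q$ in a general fibre, which $\psi$ does not touch. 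Then $\widetilde D\cdot e=0$. Since $\widetilde D$ is relatively nef and big, it vanishes on exactly the ray $\mathbb R_{\ge 0}e$ and is positive on the other ray.
\item Since $-K\cdot e=1$ and $A\cdot e>0$, Kleiman's criterion shows that $-K+\epsilon\widetilde D$ (any $\epsilon>0$) and $\widetilde D+\epsilon\widetilde A$ (for $0<\epsilon\ll 1$) are relatively ample.
\item Because $\psi$ is small, $\widetilde{\mathcal X}_{\overline k}=\mathrm{Proj}_B\bigoplus_n\pi_*\mathcal O(n(-K+\epsilon D))$. This algebra is defined over $k$, so the \emph{given} diagram descends. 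The same argument proves the uniqueness you assert without justification in your first paragraph.
\end{itemize}
Your sentence ``a small perturbation lies in the interior of a chamber unless $\widetilde D$ is not relatively ample'' therefore has it backwards. $\widetilde D$ is never relatively ample here, since it contracts the exceptional divisor, yet $-K+\epsilon\widetilde D$ always is. The case you treat as the main obstacle is the only case, and it is harmless. In particular $\widetilde{\mathcal X}_{\overline k}$ is the Proj itself, not a $\mathbb Q$-factorialization of it.

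\textbf{Why the fallback does not prove the statement.} Running a $G$-equivariant MMP with scaling over $k$ produces \emph{some} $k$-model whose base change is a small $D$-negative modification with $\widetilde D$ nef. The corollary, however, asserts that the outcome of Proposition~\ref{prop:Dflops} descends. Identifying your model with that outcome requires exactly the uniqueness you set aside.

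The fallback also relies on machinery not available from the paper's references:
\begin{itemize}
\item the MMP, flips and termination over a non-closed field of characteristic $p>5$ (the Koll\'ar--Mori treatment is in characteristic $0$);
\item geometric $\mathbb Q$-factoriality after contracting Galois orbits of rays simultaneously.
\end{itemize}
The second point is needed to apply Lemma~\ref{lemm:factorialsing} and the other conclusions of Proposition~\ref{prop:Dflops}, which were proved for a ray-by-ray MMP over $\overline k$.

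Replacing the second half of your argument with the rank-two ampleness observation closes the proof.
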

\begin{proof}
Recall that $\psi_{\overline{k}}$ is the outcome of a relative $D_{\overline{k}}$-MMP over $B_{\overline{k}}$.
Let $A$ be a general ample $\mathbb Q$-divisor on $\mathcal X$. 
Then for a sufficiently small rational $\epsilon > 0$, this MMP is still a relative $D_{\overline{k}} + \epsilon A_{\overline{k}}$-MMP over $B_{\overline{k}}$. Moreover since the base change $X_{\overline{k}(B)}$ has Picard rank $2$, all fibers of $\pi_{\overline{k}}$ are integral, and 
\[
0 \to N^1(B_{\overline{k}}) \to N^1(\mathcal X_{\overline{k}}) \to N^1(\mathcal X_{\overline{k}}/B_{\overline{k}}) \to 0,
\]
is exact,
we conclude that $N^1(\mathcal X_{\overline{k}}/B_{\overline{k}})$ has rank $2$. This implies that the strict transform $\widetilde{D}_{\overline{k}} + \epsilon \widetilde{A}_{\overline{k}}$ is $\widetilde{\pi}_{\overline{k}}$-ample for a sufficiently small rational $\epsilon > 0$.
Thus the $\mathcal O_{B_{\overline{k}}}$-algebra
\[
\oplus_m \pi_*\mathcal O(\lfloor m(D_{\overline{k}} + \epsilon A_{\overline{k}})\rfloor),
\]
is finitely generated, and by finite Galois descent, the $\mathcal O_B$-algebra
\[
\oplus_m \pi_*\mathcal O(\lfloor m(D + \epsilon A)\rfloor),
\]
is finitely generated too. Hence the proj construction
\[
\widetilde{\pi} : \widetilde{\mathcal X} = \mathrm{Proj}_B(\oplus_m \pi_*\mathcal O(\lfloor m(D + \epsilon A)\rfloor)) \to B ,
\]
descends $\widetilde{\pi}_{\overline{k}}$ to $k$. Thus our assertion follows.
\end{proof}

Finally, when $D$ is relatively big and nef, it defines a morphism to a quadric surface bundle:

\begin{prop}
\label{prop:blowupquadric}
Let $k$ be a perfect field of characteristic $\neq 2$.
Let $\pi : \mathcal X \to B$ be a wonderful model of $X$.
Let $D$ be the flat closure of a hyperplane section from $Q$.
Assume that $D$ is relatively $\pi$-big and $\pi$-nef.
Then the natural morphism
\[
\pi^*\pi_*\mathcal O(D) \to \mathcal O(D),
\]
is surjective, and $\pi_*\mathcal O(D)$ is a locally free sheaf of rank $4$.
In particular, it defines a $B$-morphism
\[
\phi : \mathcal X \to \mathbb P_B(\pi_*\mathcal O(D)),
\]
whose image is a quadric surface bundle $\pi_{\mathcal Q} : \mathcal Q \to B$ such that every geometric fiber is integral.

Moreover, $\phi : \mathcal X \to \mathcal Q$ is a geometrically $K_{\mathcal X}$-negative extremal birational contraction whose exceptional locus is a geometrically integral surface $E$ mapping to a geometrically integral curve $\mathcal Z \subset \mathcal Q$ which is the closure of $Z \subset Q$. Furthermore, $\mathcal Q$ has only geometrically factorial terminal singularities.
\end{prop}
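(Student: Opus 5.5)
We work fiberwise first and then globalize via cohomology and base change. We may pass to $\overline{k}$ for the geometric assertions, since base point freeness, local freeness and the properties of the image descend. For every closed point $b \in B$ the fiber $\mathcal X_b$ is an integral quartic weak del Pezzo surface with canonical singularities. Since $\mathcal X$ is geometrically factorial, $D$ is Cartier, so $D_b = D|_{\mathcal X_b}$ is a Cartier divisor. By flatness and constancy of intersection numbers,
\[
D_b^2 = 2, \qquad -K_{\mathcal X_b}.D_b = 4.
\]
Relative nefness of $D$ gives that $D_b$ is nef. Lemma~\ref{lemm:birationaltoquadric} then shows that $|D_b|$ is base point free, that $h^0(\mathcal X_b, D_b) = 4$, and that $|D_b|$ defines a birational morphism onto a quadric surface in $\mathbb P^3$. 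This quadric is integral, being the image of an integral surface.

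In the proof of Lemma~\ref{lemm:birationaltoquadric}, Kawamata--Viehweg vanishing (via global $F$-regularity) gives $h^1(D_b) = h^2(D_b) = 0$ for all $b$. The argument is: $D_b - K_{\mathcal X_b}$ is big and nef, and one pulls back to the minimal resolution. By Grauert's theorem and cohomology and base change, $\pi_*\mathcal O(D)$ is then locally free of rank $4$ and commutes with base change. So the surjectivity of $\pi^*\pi_*\mathcal O(D) \to \mathcal O(D)$ can be checked on fibers, where it is fiberwise base point freeness. This produces the $B$-morphism $\phi : \mathcal X \to \mathbb P_B(\pi_*\mathcal O(D))$. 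Its image $\mathcal Q$ is a flat family over $B$ whose fibers contain the quadric images above; by degree considerations they are exactly those integral quadrics. Hence $\mathcal Q$ is a quadric surface bundle with integral geometric fibers, and generically it is $Q$.

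For the contraction statements, note that $N^1(\mathcal X_{\overline{k}}/B)$ has rank $2$, as in Corollary~\ref{coro:Dflopsoverk}. The morphism $\phi$ contracts curves $\gamma$ with $D.\gamma = 0$, and these span a single ray $R$, so $\phi$ is the contraction of an extremal ray. $R$ is $K$-negative because the generic fiber contains the exceptional $(-1)$-curves over $Z$, on which $K$ is negative. Moreover, any vertical curve with $D.\gamma = 0$ and $K.\gamma \geq 0$ would force $R$ to be $K$-trivial, which is a contradiction.

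\begin{itemize}
\item \emph{Exceptional locus.} The exceptional locus contains the closure $E$ of the exceptional divisor of $X \to Q$. Since $\phi$ is extremal and birational and $E$ is a divisor, this contraction is divisorial with exceptional locus exactly $E$. This step needs the rank-$2$ argument: any other contracted curve lies in the same ray, hence meets $E$ negatively and lies in $E$.
\item \emph{Image of $E$.} $E$ is geometrically integral because $Z_{\overline{k}(B)}$ is integral. Its image is the closure $\mathcal Z$ of $Z$, which is therefore a geometrically integral curve.
\item \emph{Singularities of $\mathcal Q$.} Terminality of $\mathcal Q$ follows from the standard fact that a $K$-negative divisorial extremal contraction of a terminal variety has terminal target; this MMP fact is available in dimension $3$, characteristic $>5$. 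The target is $\mathbb Q$-factorial by the same MMP theory.
\end{itemize}

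The main obstacle is Gorensteinness, which is needed to get factoriality via Lemma~\ref{lemm:factorialsing}. Here we use that $\mathcal Q$ is a relative quadric in a $\mathbb P^3$-bundle over a smooth curve, so it is a Cartier divisor in a smooth fourfold, hence Gorenstein. Lemma~\ref{lemm:factorialsing} then gives geometric factoriality. A secondary delicate point is justifying that the fibers of $\mathcal Q$ equal the fiberwise images, which requires reducedness of the scheme-theoretic image fiberwise. We argue this by noting that both are degree-$2$ hypersurfaces in $\mathbb P^3_b$, with one containing the other.
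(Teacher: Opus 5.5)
Your proposal is correct and follows the paper's skeleton: Lemma~\ref{lemm:birationaltoquadric} on each geometric fiber plus Grauert for local freeness and surjectivity, the rank-$2$ relative N\'eron--Severi argument for extremality and for the exceptional locus being $E$, and Lemma~\ref{lemm:factorialsing} at the end. The one genuine divergence is how you get Gorensteinness, and it is a modest improvement.

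The paper reads off that $K_{\mathcal Q}$ is Cartier from $K_{\mathcal X}\sim\phi^*K_{\mathcal Q}+E$. That step uses two inputs: $K_{\mathcal Q}$ must already be $\mathbb Q$-Cartier, and the $\phi$-trivial Cartier divisor $K_{\mathcal X}-E$ must be the pullback of a Cartier divisor, which is contraction-theorem input. You instead observe that $\mathcal Q$ is an integral codimension-one subscheme of the regular fourfold $\mathbb P_B(\pi_*\mathcal O(D))$. Hence it is an effective Cartier divisor and therefore Gorenstein. This argument is elementary and characteristic-free.

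It gives you more than you use, and you should say so explicitly.
\begin{itemize}
\item \textbf{Normality.} $\mathcal Q$ is Cohen--Macaulay. It is also regular away from the finitely many vertices of cone fibers, since it is smooth over $B$ wherever the fiber is smooth. By Serre's criterion $\mathcal Q$ is normal. You need this: ``terminal'' and the identification $\phi_*\mathcal O_{\mathcal X}=\mathcal O_{\mathcal Q}$ (i.e.\ that $\phi$ is the extremal contraction) both presuppose it. Neither you nor the paper states it.
\item \textbf{Terminality.} Write $K_{\mathcal X}=\phi^*K_{\mathcal Q}+aE$. Then $a>0$, because $K_{\mathcal X}$ and $E$ are both negative on the contracted ray. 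Terminality of $\mathcal Q$ then follows from a direct discrepancy comparison with no MMP input.
\end{itemize}

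So on your route only $\mathbb Q$-factoriality of $\mathcal Q$ still requires threefold MMP, which you rightly restrict to characteristic $>5$. That is narrower than the stated hypothesis of characteristic $\neq 2$. However, the paper's proof relies on the same unspecified input, and the proposition is only applied in that range (via Proposition~\ref{prop:Dflops}).
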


\begin{proof}
Under our assumption, any geometric fiber $\mathcal X_{\overline{b}}$ is a weak del Pezzo surface with only canonical singularities.
It follows from Lemma~\ref{lemm:birationaltoquadric} that $|D|_{\mathcal X_{\overline{b}}}|$ is base-point free and it defines a birational morphism to an integral quadric surface $Q_{\overline{b}}$ in $\mathbb P^3$.
So, by Grauert's theorem, $\pi_*\mathcal O(D)$ is a locally free sheaf of rank $4$, and the natural morphism
\[
\pi^*\pi_*\mathcal O(D) \to \mathcal O(D),
\]
is surjective. Thus
 it defines a $B$-morphism
\[
\phi : \mathcal X \to \mathbb P_B(\pi_*\mathcal O(D)),
\]
whose image is a quadric surface bundle $\pi_{\mathcal Q} : \mathcal Q \to B$.

Since $N^1(\mathcal X_{\overline{k}}/B_{\overline{k}})$ has rank $2$, $\phi : \mathcal X \to \mathcal Q$ is a geometrically $K_{\mathcal X}$-negative extremal birational contraction. In particular, the exceptional locus of $\phi$ is a geometrically integral divisor $E$ contracting to a geometrically integral curve $\mathcal Z$ which is the flat closure of $Z$.
Finally $\mathcal Q$ has geometrically $\mathbb Q$-factorial terminal singularities. Since we have
\[
K_{\mathcal X} \sim \phi^*K_{\mathcal Q} + E,
\]
it follows that $K_{\mathcal Q}$ is Cartier. Thus Lemma~\ref{lemm:factorialsing} shows that $\mathcal Q$ has only geometrically factorial terminal singularities. Thus our assertion follows.
\end{proof}

\begin{coro}
\label{coro:blowupofquadric}
In the setting of Proposition~\ref{prop:blowupquadric}, let us assume that $\mathcal X$ is smooth. 
Then $\mathcal Q$ is smooth and $\phi : \mathcal X \to \mathcal Q$ is the blow-up of $\mathcal Q$ along a geometrically integral smooth curve $\mathcal Z$.
\end{coro}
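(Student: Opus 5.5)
We work over $\overline{k}$; every property involved (smoothness, being a blow-up along a smooth curve) descends along the perfect base field. The plan is to combine the structure from Proposition~\ref{prop:blowupquadric} with the classification of extremal divisorial contractions from smooth threefolds. By that proposition, $\phi : \mathcal X \to \mathcal Q$ is a $K_{\mathcal X}$-negative extremal birational contraction from the smooth threefold $\mathcal X$. Its exceptional locus is an integral divisor $E$ mapping onto the curve $\mathcal Z$. Mori's classification of divisor-to-curve extremal contractions of smooth threefolds (Mori in characteristic $0$; Kollár's extension to positive characteristic, ``Extremal rays on smooth threefolds'') then says that $\mathcal Q$ is smooth, $\mathcal Z$ is a smooth curve (locally complete intersection, and in fact smooth), and $\phi$ is the blow-up of $\mathcal Q$ along $\mathcal Z$. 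Geometric integrality of $\mathcal Z$ is already part of Proposition~\ref{prop:blowupquadric}.

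If one prefers not to quote the classification in positive characteristic, one can argue directly. First, every fiber of $E \to \mathcal Z$ is a curve $\ell$ with $-K_{\mathcal X}.\ell > 0$ and $E.\ell < 0$. Next, look at a general fiber $\mathcal X_b$: over the generic point, $\phi$ is the blow-up of $Q$ at the four geometric points of $Z$, so the general fiber $\ell$ is a $(-1)$-curve in a smooth surface. This gives $E.\ell = -1$ and $-K_{\mathcal X}.\ell = 1$. Since $\phi$ is extremal, all fibers of $E\to\mathcal Z$ are numerically proportional. With $\rho(\mathcal X/\mathcal Q)=1$ and the length computation, every fiber is therefore an integral curve of $E$-degree $-1$, i.e.\ a smooth rational curve with normal bundle of degree $-1$ in $E$. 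Hence $E \to \mathcal Z$ is a $\mathbb P^1$-bundle over the normalization, with $\mathcal O_E(-E)$ of degree $1$ on fibers.

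The main obstacle is then to show that $\mathcal Z$ is smooth and $\mathcal Q$ is smooth along $\mathcal Z$. For this I would apply the formal-functions argument of Mori/Kollár. Using $R^1\phi_*\mathcal O_{\mathcal X}(-nE)=0$, obtained by induction from $H^1(\ell,\mathcal O(n))=0$, the ideal $\phi_*\mathcal O(-E)$ defines $\mathcal Z$ with $\phi_*\mathcal O_E = \mathcal O_{\mathcal Z}$. This gives $\mathcal Z$ seminormal, hence normal, hence smooth. Comparing $I^n/I^{n+1}$ with $\phi_*\mathcal O_E(-nE)$, which is the $n$-th symmetric power of a rank-$2$ bundle, one then shows that $\widehat{\mathcal O}_{\mathcal Q,z}$ is regular. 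Finally, the universal property of blowing up applies: $\phi^{-1}I\cdot\mathcal O_{\mathcal X} = \mathcal O(-E)$ is invertible, so $\phi$ factors through $\mathrm{Bl}_{\mathcal Z}\mathcal Q$. The resulting map is a birational morphism between smooth threefolds with relative Picard number $0$, hence an isomorphism. A shortcut for the smoothness of $\mathcal Q$ is also available: $\mathcal Q$ is factorial terminal Gorenstein by Proposition~\ref{prop:blowupquadric}, and $K_{\mathcal X}=\phi^*K_{\mathcal Q}+E$ with discrepancy exactly $1$ along $E$ is consistent only with the smooth-curve blow-up, but I would rely on the classification argument above.
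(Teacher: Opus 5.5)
Your main argument matches the paper's proof: it applies Mori's classification, and Koll\'ar's version in arbitrary characteristic, to the $K_{\mathcal X}$-negative extremal divisor-to-curve contraction $\phi$ supplied by Proposition~\ref{prop:blowupquadric}, and that citation is exactly what the paper does. Your optional ``direct'' sketch is not needed and has a gap as written: ``seminormal, hence normal'' is false for curves (an ordinary node is seminormal), so smoothness of $\mathcal Z$ does not follow that way without Mori's finer analysis of the fibers and the formal neighborhood.
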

\begin{proof}
In characteristic $0$, this follows from a famous classification by Mori on extremal contractions on smooth threefolds (\cite[Theorem 3.3]{Mori82}). In arbitrary characteristic, such a classification was obtained by Koll\'ar (\cite[(1,1) Main theorem]{Kollar91}).
\end{proof}

\subsection{Cone theorem}

Let us work in the setup established in Section~\ref{subsec:wonderful}.
In particular, $k$ is a perfect field and $X$ is a blow-up of a non-split quadric surface over $K(B)$.
Let $\pi : \mathcal X \to B$ be a factorial model of $X$. 
We consider $N_1(\mathcal X)$ and we view $N_1(\mathcal X_\eta)$ as a subspace of $N_1(\mathcal X)$. Let $V = N_1(\mathcal X_\eta)$, but the lattice structure $V_{\mathbb Z}$ is given by $N_1(\mathcal X_\eta) \cap N_1(\mathcal X)_{\mathbb Z}$. Note that this contains $N_1(\mathcal X_\eta)_{\mathbb Z}$, but might be different.

The following proposition is a version of \cite[Theorem 5.7]{LRT25} in our setting:
\begin{prop}
\label{prop:conetheorem}
Assume that the characteristic of $k$ is not equal to $2$.
Then there exist finitely many $\alpha_1, \cdots, \alpha_t \in \Nef_1(\mathcal X)_{\mathrm{sec}, \mathbb Z}$ such that we have
\[
\Nef_1(\mathcal X)_{\mathrm{sec}, \mathbb Z} = \bigcup_{i = 1}^t (\alpha_i + \Nef_1(\mathcal X_\eta)\cap V_{\mathbb Z}).
\]
\end{prop}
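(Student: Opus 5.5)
The plan is to avoid proving that $\Nef_1(\mathcal X)$ is rational polyhedral. Instead I would sandwich the section slice between an explicit rational polyhedron and an action of a finitely generated monoid, and then conclude with a Noetherianity (Gordan/Dickson) argument.

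\emph{Step 1: the lattice and the face $V$.} Since every fiber of $\pi_{\overline{k}}$ is integral and $N^1(\mathcal X/B)\cong N^1(\mathcal X_\eta)$ has rank $2$, we have $\dim N^1(\mathcal X)=3$. The hyperplane $V=\{\mathcal X_b.\alpha=0\}$ pairs with $N^1(\mathcal X)/\mathbb R F\cong N^1(\mathcal X_\eta)$ through restriction to the generic fiber. The affine slice $A=\{\alpha\in N_1(\mathcal X)_{\mathbb Z}\mid \mathcal X_b.\alpha=1\}$ is a torsor under $V_{\mathbb Z}$; it is nonempty because sections exist. Let $\bar E$ and $\bar E'$ be the flat closures of $E$ and $E'=4H-3E$, the exceptional divisors of $\phi$ and $\phi'$ from Lemma~\ref{lemm:anotherbirational}. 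On $X$ we have $\overline{\mathrm{Eff}}^1(X)=\langle E,E'\rangle$: both are irreducible negative curves, there are exactly two of them, and the Picard rank is $2$. Dually, $\Nef_1(X)=\{v\mid E.v\geq 0,\ E'.v\geq 0\}$, which is rational polyhedral and full-dimensional in $V$.

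\emph{Step 2: two inclusions.} First I would check that $M:=\Nef_1(\mathcal X)\cap V_{\mathbb Z}$ equals $\Nef_1(\mathcal X_\eta)\cap V_{\mathbb Z}$. Any effective divisor $D$ on $\mathcal X$ decomposes as a horizontal part plus a vertical part. The vertical part is a nonnegative combination of fibers (fibers are integral), so it pairs to zero with $v\in V$. The horizontal part pairs with $v$ as $D_\eta.v\geq 0$. Hence $v\in V$ is nef on $\mathcal X$ iff it is nef on $\mathcal X_\eta$, and by the same computation $\Nef_1(\mathcal X)_{\mathrm{sec},\mathbb Z}+M\subset\Nef_1(\mathcal X)_{\mathrm{sec},\mathbb Z}$. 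Second, since $\bar E$ and $\bar E'$ are effective,
\[
S:=\Nef_1(\mathcal X)_{\mathrm{sec},\mathbb Z}\subset P:=\{\alpha\in A\mid \bar E.\alpha\geq 0,\ \bar E'.\alpha\geq 0\}.
\]
Here $P$ is the set of lattice points of a rational polyhedron whose recession cone is $\{v\in V\mid E.v\geq0,\ E'.v\geq0\}=\Nef_1(\mathcal X_\eta)$.

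\emph{Step 3: finiteness.} By Gordan's lemma, $M$ is a finitely generated monoid. The lattice points of a rational polyhedron form a finitely generated module over the monoid of lattice points of its recession cone, so $P=\bigcup_{j}(\beta_j+M)$ for finitely many $\beta_j$. Consequently $k[P]$ (the $k$-span of the points of $P$) is a finitely generated module over the Noetherian ring $k[M]$. The subset $S$ is stable under $+M$ by Step~2, so its span is a $k[M]$-submodule of $k[P]$, and it is therefore finitely generated. Taking a finite set of generators $\alpha_1,\dots,\alpha_t\in S$ gives $S=\bigcup_i(\alpha_i+M)$, which is the assertion. Equivalently one can argue directly: $S\cap(\beta_j+M)$ is an ideal-like subset of a translate of the monoid $M$, and by Dickson's lemma it has finitely many minimal elements.

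The step needing the most care is Step~2, specifically the claim that vertical effective divisors are nonnegative multiples of $F$ and pair trivially with $V$. This uses integrality of fibers over $\overline{k}$ together with Galois descent of the numerical classes over $k$. It is also where the lattice subtlety $V_{\mathbb Z}\supsetneq N_1(\mathcal X_\eta)_{\mathbb Z}$ enters, since we must work with $V_{\mathbb Z}$ throughout and not with $N_1(\mathcal X_\eta)_{\mathbb Z}$. Notice that this argument never needs a lower bound on vertical coefficients of horizontal divisors, nor polyhedrality of $\overline{\mathrm{Eff}}^1(\mathcal X)$. The hypothesis that the characteristic is not $2$ enters only through Lemma~\ref{lemm:anotherbirational}, which supplies the second contraction and hence $E'$.
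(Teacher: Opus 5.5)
Your proposal is correct and follows the paper's proof. The paper likewise traps $\Nef_1(\mathcal X)_{\mathrm{sec},\mathbb Z}$ inside $\alpha_0+\Nef_1(\mathcal X_\eta)$, where $\alpha_0$ is the point of the affine slice killed by the closures of the two exceptional divisors. The only difference is the finishing step: the paper cites \cite[Theorem 1.1(a)]{HW07}, whereas you prove the finiteness directly via Gordan's lemma and Noetherianity of $k[M]$, and you also spell out two facts the paper uses implicitly, namely $S+M\subset S$ and $\Nef_1(\mathcal X_\eta)=\{v \mid E.v\geq 0,\ E'.v\geq 0\}$.
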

\begin{proof}
Let $A$ be the affine space
\[
\{ \alpha \in N_1(\mathcal X) \, | \, \mathcal X_b.\alpha = 1 \}.
\]
Then one can view this as a translate of $N_1(\mathcal X_\eta)$.

Under our setup, we have a birational morphism $\phi_1 : X \to Q_1$ and we denote the closure of the exceptional divisor by $E_1 \subset \mathcal X$. By Lemma~\ref{lemm:anotherbirational}, we have another birational morphism $\phi_2 : X \to Q_2$ and we denote the closure of the exceptional divisor by $E_2 \subset \mathcal X$. Let $\alpha_0 \in A$ be a unique rational point defined by 
\[
E_i.\alpha_0 = 0 \, (i = 1, 2).
\]
Then we have
\[
\Nef_1(\mathcal X)_{\mathrm{sec}, \mathbb Z}  \subset (\alpha_0 + \Nef_1(\mathcal X_\eta)).
\]
Thus our assertion follows from \cite[Theorem 1.1(a)]{HW07}.
\end{proof}

\section{Blow-up models}

\subsection{Quadric surface bundles}
\label{subsec:quadric}

Let $k$ be a perfect field of characteristic not equal to $2$. In this section, we recall the geometry of quadric surface bundles over curves, which was developed in \cite[Sections 2 and 3]{HT12}.

\subsubsection*{Quadric surface bundles}

Let $B$ be a smooth, geometrically integral, projective curve defined over $k$.
Let $\mathcal E$ be a rank $4$ locally free sheaf over $B$.
We consider the associated projective bundle $\pi_{\mathcal E} : \mathbb P(\mathcal E) \to B$. In this paper, we employ the Grothendieck notation for projective bundles so that the fiber of $\pi_{\mathcal E}$ at $\overline{b} : \Spec (\overline{k}) \to B$ parametrizes codimension-one subspaces in $\mathcal E\otimes k(\overline{b})$.

A {\it quadric surface bundle over $B$} is a geometrically integral subvariety $\mathcal Q \subset \mathbb P(\mathcal E)$ with the natural flat projection $\pi_{\mathcal Q} : \mathcal Q \to B$ such that every geometric fiber of $\pi_{\mathcal Q}$ is a quadric surface in the corresponding geometric fiber of $\pi_{\mathcal E}$.
In this paper, we only consider bundles for which (i) every geometric fiber of $\pi_{\mathcal Q}$ is integral so that a singular geometric fiber has at most one $\mathbf A_1$-singularity and (ii) the relative Picard rank of $\pi_{\mathcal Q_{\overline{k}}} : \mathcal Q_{\overline{k}} \to B_{\overline{k}}$ is $1$. Such a quadric surface bundle is defined by a section
\[
q \in \rH^0(B, \mathrm{Sym}^2\mathcal E \otimes \mathcal I),
\]
where $\mathcal I$ is an invertible sheaf on $B$.
We normalize $\mathcal E$ so that we have
\[
\epsilon(\pi_{\mathcal Q}) := \deg (\mathcal I) = 0 \text{ or } 1.
\]
We may interpret $q$ as a homomorphism
\[
\mathcal E^\vee \to \mathcal E \otimes \mathcal I.
\]
Then the discriminant $\mathfrak d$ is an effective divisor on $B$ which records points where $q$ drops the rank, and its degree is given by
\[
\Delta := \deg(\mathfrak d) = 2 \deg (\mathcal E) + 4\deg (\mathcal I).
\]
So in particular, we have
\[
\Delta = 
\begin{cases}
2 \deg (\mathcal E) & \text{ if $\epsilon(\pi_{\mathcal Q}) = 0$,}\\
2 \deg (\mathcal E) + 4 & \text{ if $\epsilon(\pi_{\mathcal Q}) = 1$.}
\end{cases}
\]
A quadric surface bundle $\pi_{\mathcal Q} : \mathcal Q \to B$ is {\it square free} if $\mathfrak d$ is reduced. This is equivalent to $\mathcal Q$ being smooth. We will assume this condition from now on.

\subsubsection*{The relative Fano variety of lines}
Let $\pi_{\mathcal Q} : \mathcal Q \to B$ be a quadric surface bundle over $B$ such that the total space $\mathcal Q$ is smooth, every geometric fiber of $\pi_{\mathcal Q}$ is integral, and the geometric relative Picard rank is $1$.
We also assume that the generic fiber of $\pi_{\mathcal Q}$ is smooth so that a general fiber is smooth too.

Let $F_1(\mathcal Q)\to B$ be the relative Fano variety of lines, parametrizing lines in fibers of $\pi_{\mathcal Q}$. Let 
\[
F_1(\mathcal Q) \to C \to B,
\]
be the Stein factorization. Then $C \to B$ is a degree $2$ finite morphism ramified along $\mathfrak d$.
In particular, $C$ is a geometrically integral smooth projective curve of genus 
\[
g(C) = 2g(B)-1 + \frac{\Delta}{2}.
\]
Then the projection $\pi_{F_1(\mathcal Q)} : F_1(\mathcal Q) \to C$ is a $\mathbb P^1$-bundle.
When $k$ is an algebraically closed field, the Brauer group of $K$ is trivial. Thus this $\mathbb P^1$-bundle admits a section. When $k$ is a finite field, this bundle also admits a section due to Lang's theorem and the Hochschild--Serre spectral sequence.
So from now on we assume that there is a rank $2$ bundle $\mathcal F$ on $C$ such that we have a $C$-isomorphism:
\[
F_1(\mathcal Q) \cong \mathbb P_C(\mathcal F).
\]

Conversely, suppose that we have a degree $2$ finite morphism $g : C \to B$ from a smooth geometrically integral projective curve to $B$. Since we assume that the characteristic of $k$ is different from $2$, $g$ is separable, so it is tamely ramified along a reduced divisor $\mathfrak d \subset B$. Let $\mathcal F$ be a rank $2$ locally free sheaf over $C$, and consider the associated projective bundle
\[
\pi_{\mathbb P_C(\mathcal F)} : \mathbb P_C(\mathcal F) \to C.
\]
Then, we apply the Weil restriction of scalars:
\[
\pi_{\mathcal Q'} : \mathcal Q' := \mathrm{Res}_{C/B}(\mathbb P_C(\mathcal F)) \to B.
\]
For $b \in B(\overline{k}) \setminus \mathfrak d$, let $g^{-1}(b) = \{ c_1, c_2\}$. Then the fiber of $\pi_{\mathcal Q'}$ at $b$ is isomorphic to
\[
\mathbb P(\mathcal F\otimes k(c_1)) \times \mathbb P(\mathcal F\otimes k(c_2)) \cong \mathbb P^1 \times \mathbb P^1.
\]
When $b \in \mathfrak d(\overline{k})$ with the preimage $c \in C(\overline{k})$, the fiber of $\pi_{\mathcal Q'}$ is set-theoretically
\[
\mathrm{Sym}^2(\mathbb P(\mathcal F\otimes k(c))) \cong \mathbb P^2,
\]
but its multiplicity is $2$. The threefold $\mathcal Q'$ is singular along the diagonal of the reduced scheme of each multiple fiber.

If we blow up the diagonal of $\mathrm{Sym}^2(\mathbb P(\mathcal F\otimes k(c)))$ for each multiple fiber and contract all strict transforms of the reduced schemes of multiple fibers, then we obtain a quadric surface bundle over $B$
\[
\pi_{\mathcal Q} : \mathcal Q \to B,
\]
which is smooth, every geometric fiber is integral, and the geometric relative Picard rank is $1$. Note that this is defined over $k$ because we perform this operation for all singular fibers simultaneously.  This birational model fits into the diagram
\[
\xymatrix{
& \widetilde{\mathcal Q} \ar[dl] \ar[dr]& \\
\mathcal Q' \ar[dr]_{\pi_{\mathcal Q'} }& & \mathcal Q \ar[dl]^{\pi_{\mathcal Q}} \\
& B &
}
\]
where $\widetilde{\mathcal Q}$ is the blow-up of $\mathcal Q'$ mentioned above.
Moreover, one can prove that $\widetilde{\mathcal Q} \to \mathcal Q'$ is a crepant resolution.
In particular, $\mathcal Q'$ is Gorenstein and has only canonical singularities.

Assuming $k$ is an algebraically closed field or a finite field, this establishes the following one-to-one correspondence:
\[
\{ \pi_{\mathcal Q} : \mathcal Q \to B\} \leftrightarrow \{ g : C \to B, \, \pi_{\mathbb P_C(\mathcal F)} : \mathbb P_C(\mathcal F) \to C\}.
\]

\subsubsection*{Correspondence of sections}

Let $\pi_{\mathcal Q} : \mathcal Q \to B$ be a quadric surface bundle over $B$ such that the total space $\mathcal Q$ is smooth, the generic fiber is also smooth, every geometric fiber of $\pi_{\mathcal Q}$ is integral, and the geometric relative Picard rank is $1$ as in the previous section. Let
\[
F_1(\mathcal Q) \to C \to B,
\]
be the corresponding relative Fano variety of lines and its Stein factorization $g : C \to B$.
We assume that the $\mathbb P^1$-bundle $\pi_{F_1(\mathcal Q)} : F_1(\mathcal Q) \to C$ is the projectivization of a rank $2$ locally free sheaf $\mathcal F$ over $C$.
We denote $\mathcal O_{F_1(\mathcal Q)/C}(1)$ by $\xi$.

Let $\sigma : B \to \mathcal Q$ be a section of $\pi_{\mathcal Q}$. Its height is defined by
\[
h_{\mathcal Q} (\sigma) := \deg (-\sigma^*K_{\mathcal Q/B}),
\]
where $-K_{\mathcal Q/B}$ is the relative anticanonical divisor.
Our object of interest is the space of sections of height $h_{\mathcal Q}$ for $\pi_{\mathcal Q}$
\[
\mathrm{Sec}(\mathcal Q/B, h_{\mathcal Q}),
\]
which is a quasi-projective scheme over $k$.

On the other hand, we also consider a section $\tau : C \to F_1(\mathcal Q)$ of $\pi_{F_1(\mathcal Q)}$. We define its degree $d_{F_1(\mathcal Q)}$ by
\[
d_{F_1(\mathcal Q)}(\tau) = \deg(\tau^*\xi).
\]
We have the space of sections of degree $d_{F_1(\mathcal Q)}$ for $\pi_{F_1(\mathcal Q)}$:
\[
\mathrm{Sec}(F_1(\mathcal Q)/C, d_{F_1(\mathcal Q)}).
\]
We explain the relation between these two spaces following \cite[Section 3]{HT12} and \cite[Section 6]{TT25}.
Let 
\[
\mathcal U \subset \mathcal Q \times_B F_1(\mathcal Q),
\]
be the universal family of lines. Let $\sigma : B \to \mathcal Q$ be a section of height $h_{\mathcal Q}$.
We consider the base change:
\[
\xymatrix{
\mathcal U_\sigma \ar@{^{(}->}[r] \ar[d] & B \times_B F_1(\mathcal Q) \ar[d]^{\sigma \times \mathrm{id}}  \\
\mathcal U \ar@{^{(}->}[r]& \mathcal Q \times_B F_1(\mathcal Q)
}
\]
Then the map 
\[
\mathcal U_\sigma \to F_1(\mathcal Q) \to B,
\]
is a degree $2$ separable map, so we conclude that $\mathcal U_\sigma \to F_1(\mathcal Q) \to C$ is an isomorphism. Thus we obtain a section $\tau : C \to F_1(\mathcal Q)$.
Moreover this construction makes sense for a family of sections $\sigma : B \times T \to \mathcal Q$, so we obtain a morphism
\[
\mathrm{Sec}(\mathcal Q/B, h_{\mathcal Q}) \to \mathrm{Sec}(F_1(\mathcal Q)/C, d_{F_1(\mathcal Q)}),
\]
where the relation between $h_{\mathcal Q}$ and $d_{F_1(\mathcal Q)}$ is given by
\[
h_{\mathcal Q} = 2d_{F_1(\mathcal Q)} - \deg (\mathcal F) - \frac{\Delta}{2}.
\]
See \cite[After Proposition 2]{HT12} for an explanation of this formula. Note that in \cite{HT12}, the authors adopted a classical notation for projectivizations of locally free sheaves which explains the sign difference.

Conversely, suppose that we have a section $\tau : C \to F_1(\mathcal Q)$.
By the Weil restriction of scalars, this corresponds to a section $\sigma' : B \to \mathcal Q'$.
By taking the strict transforms, we obtain a section $\sigma : B \to \mathcal Q$. Note that since $\widetilde{\mathcal Q} \to \mathcal Q'$ is a crepant resolution, we have
\[
h_{\mathcal Q'}(\sigma') = \deg(-\sigma'^*K_{\mathcal Q'/B}) = \deg(-\widetilde{\sigma}^*K_{\widetilde{Q}/B}),
\]
where $\widetilde{\sigma} : B \to \widetilde{\mathcal Q}$ is the strict transform of $\sigma'$. Then since we have
\[
K_{\widetilde{\mathcal Q}} = \psi^*K_{\mathcal Q} + 2E,
\]
where $E$ is the exceptional divisor for a birational morphism $\psi : \widetilde{\mathcal Q} \to \mathcal Q$. Note that $E$ has multiplicity $2$ in the fiber of $\pi_{\widetilde{\mathcal Q}} : \widetilde{\mathcal Q} \to B$, and any section $\widetilde{\sigma} : B \to \widetilde{\mathcal Q}$ does not intersect with $E$. Hence we conclude that
\[
h_{\mathcal Q'}(\sigma') = h_{\mathcal Q}(\sigma).
\]
Thus this construction works in any family of sections $\tau : C \times T \to F_1(\mathcal Q)$ and we obtain the inverse map 
\[
\mathrm{Sec}(F_1(\mathcal Q)/C, d_{F_1(\mathcal Q)}) \to \mathrm{Sec}(\mathcal Q/B, h_{\mathcal Q}).
\]
The upshot is the following proposition:
\begin{prop}[{\cite{HT12}}]
We have an isomorphism
\[
\mathrm{Sec}(\mathcal Q/B, h_{\mathcal Q}) \cong \mathrm{Sec}(F_1(\mathcal Q)/C, d_{F_1(\mathcal Q)}),
\]
with 
\[
h_{\mathcal Q} = 2d_{F_1(\mathcal Q)} - \deg (\mathcal F) - \frac{\Delta}{2}.
\]
\end{prop}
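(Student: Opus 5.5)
The plan is to show that the two constructions described just before the statement are mutually inverse morphisms of moduli functors. Both are defined in families, so it suffices to work with $T$-points. For a family of sections $\sigma : B\times T \to \mathcal Q$, define
\[
\Psi(\sigma)=\tau ,
\]
where $\tau$ is obtained from the base change $\mathcal U_\sigma \to B\times T$ of the universal line. Conversely, for $\tau : C\times T\to F_1(\mathcal Q)$, define $\Theta(\tau)=\sigma$ by applying Weil restriction to get $\sigma':B\times T\to\mathcal Q'$ and then taking the strict transform to $\mathcal Q$ through $\widetilde{\mathcal Q}$. Both $\Psi$ and $\Theta$ are compatible with base change in $T$, so they define morphisms of the Sec schemes once we know they preserve the degree data.

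\textbf{Step 1: $\Psi$ is well defined in families.} Each point of $\mathcal Q_b$ lies on exactly two lines, counted with multiplicity, and these come from the two points of $C$ over $b$. Hence $\mathcal U_\sigma \to C\times T$ is finite flat of degree $1$, so it is an isomorphism. At a ramification point $b\in\mathfrak d$ the fiber is a quadric cone. If $\sigma(b)$ is not the vertex, there is a single line with multiplicity two, and the map is still an isomorphism scheme-theoretically. I need to check that a section never passes through the cone vertex. The vertex is a singular point of its fiber, and a section meets each fiber with multiplicity one, so it only meets fibers at smooth points of the fiber. This gives $\Psi$.

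\textbf{Step 2: $\Theta$ is well defined in families.} By the universal property of $\mathrm{Res}_{C/B}$, sections of $\mathbb P_C(\mathcal F)\to C$ correspond to sections of $\mathcal Q'\to B$. The image of $\sigma'$ lies in the smooth locus of the fibers, namely off the diagonal singular locus of the multiple fibers. This is again a multiplicity argument: the section restricted to the non-reduced fiber of multiplicity $2$ must meet it at a point where $\pi_{\mathcal Q'}$ is... I would instead argue directly in local coordinates for the Weil restriction near a ramification point. Because the section is off the blown-up diagonal, it lifts to $\widetilde{\mathcal Q}$, avoids the contracted divisors, and maps to $\mathcal Q$.

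\textbf{Step 3: the constructions are inverse and preserve degrees.} Both composites are identities on the dense open subset of $B$ away from $\mathfrak d$. There the statements are tautological: a point of $\mathbb P^1\times\mathbb P^1$ is the same as the pair of lines through it. Since $\mathcal Q$ and $F_1(\mathcal Q)$ are separated and $B\times T$, $C\times T$ are reduced in the relevant flat sense, the composites agree everywhere. For non-reduced $T$, I would use the fact that sections of a separated morphism are determined by their restriction to a scheme-theoretically dense open subset, which holds because $B\times T\to T$ is flat with $B\setminus\mathfrak d$ fiberwise dense.

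\textbf{Step 4: the degree relation.} The height computation $h_{\mathcal Q'}(\sigma')=h_{\mathcal Q}(\sigma)$ has already been done above, using crepancy and the fact that sections avoid $E$. It remains to compute $h_{\mathcal Q'}$ in terms of $\deg\tau^*\xi$. The relative anticanonical class of a Weil restriction is the norm of the relative anticanonical class, $-K_{\mathbb P_C(\mathcal F)/C}=2\xi-\pi^*\det\mathcal F$, with a correction of $\Delta/2$ coming from the ramification. This gives the formula
\[
h_{\mathcal Q}=2d-\deg\mathcal F-\Delta/2 ,
\]
as in \cite{HT12}.

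The main obstacle is the scheme-theoretic analysis at ramification points (Steps 1 and 2), that is, showing that sections avoid the singular loci. I would handle this by explicit local equations $x y - z^2 = t\,w^2$ near $b\in\mathfrak d$.
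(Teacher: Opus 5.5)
Your overall plan is the same as the paper's: define $\Psi$ through the base change $\mathcal U_\sigma$ of the universal line, define $\Theta$ by Weil restriction followed by strict transform through $\widetilde{\mathcal Q}$, and compare the two on $B\setminus\mathfrak d$. Your Step~1 observation is also correct: a section never meets a cone vertex, because $\pi_{\mathcal Q}$ must be smooth along $\sigma(B)$.

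The genuine gap is in Step~2. You assert that $\sigma'$ stays off the diagonal of the double fibers of $\mathcal Q'$. This is false: every section passes through that diagonal over every $b\in\mathfrak d$. If $g^{-1}(b)=\{c\}$ and $j$ is the covering involution of $C/B$, then $\sigma'(b)$ is the unordered pair $\{\tau(c),\tau(j(c))\}=\{\tau(c),\tau(c)\}$.

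Your own multiplicity heuristic forces this. The fiber of $\mathcal Q'$ over $b$ is $2\overline F$ with $\overline F\cong\mathbb P^2$, so $\pi_{\mathcal Q'}$ is smooth at no point of it. At any point where the total space $\mathcal Q'$ is smooth, $\overline F$ is Cartier, so $\sigma'^*\pi_{\mathcal Q'}^*b$ would have multiplicity at least $2$. Hence $\sigma'(b)\in\mathrm{Sing}(\mathcal Q')$, which is the diagonal.

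This matches the paper's picture. The fiber of $\widetilde{\mathcal Q}$ over $b$ is $G+2E$, where $G\cong\mathbb F_2$ is the exceptional divisor over the diagonal and $E\cong\mathbb P^2$ is the strict transform of $\overline F$. A section must cross the reduced component $G$ away from $E$. So the mechanism ``the section avoids the blown-up center, hence lifts'' is not available. For a family $\sigma'\colon B\times T\to\mathcal Q'$ over an arbitrary, possibly non-reduced $T$ (which you need for an isomorphism of schemes), the existence of the lift to the blow-up is exactly what must be proved.

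What you need is that $\sigma'^{-1}$ of the center is the Cartier divisor $\mathfrak d\times T$. Near a diagonal point, set up coordinates as follows:
\begin{itemize}
\item write $s^2=t$ on $C$, and let $u,v$ be fiber coordinates on $Y=\mathbb P_C(\mathcal F)\times_C j^*\mathbb P_C(\mathcal F)$;
\item then $\mathcal Q'=Y/j$ is locally $\{\delta^2=tD\}$, with $e_1=u+v$, $\delta=s(u-v)$, $D=(u-v)^2$;
\item the center is $(t,\delta,D)$.
\end{itemize}
Write the induced section of $Y$ as $u=a+sb$, $v=a-sb$, with $a,b$ functions on $B\times T$. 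This gives $e_1=2a$, $\delta=2tb$, $D=4tb^2$. So the pulled-back ideal is $(t)$, and the lift exists. It lands in the chart $\{\delta'^2=D'\}$, which is smooth with coordinates $(t,e_1,\delta'=2b)$ and misses $E$.

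Equivalently, one checks in these coordinates that the honest Weil restriction $\mathrm{Res}_{C/B}\mathbb P_C(\mathcal F)$ is smooth but not proper over $B$, and is identified with $\widetilde{\mathcal Q}\setminus E\cong\mathcal Q\setminus\{\text{vertices}\}$. Your Step~1 observation plus the universal property of Weil restriction then gives the isomorphism of functors directly.

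The local equation $xy-z^2=tw^2$ you propose lives on $\mathcal Q$. It is the right tool for Step~1, not Step~2. In Step~1, ``finite flat of degree $1$'' should come from miracle flatness of $\mathcal U\to\mathcal Q$ away from the vertices, plus a fiberwise closed-immersion check over $\mathfrak d$.

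Step~4 is at the same level of detail as the paper. The $\Delta/2$ is the Riemann--Hurwitz term from pulling back along $Y\to\mathcal Q'$, which is \'etale in codimension one: $2(2d-\deg\mathcal F)=2h_{\mathcal Q'}+\Delta$.
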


\subsection{Blow-ups of a quadric surface}

Assume that $k$ is an algebraically closed field. Let $Q$ be a possibly singular integral quadric surface defined over $k$. Let $\phi : X \to Q$ be the blow-up of $Q$ along a length $4$ subscheme $Z$ on $Q$. We study when $X$ is a weak del Pezzo surface with mild singularities.

\subsubsection*{Case 1: $Q$ is smooth and $Z$ consists of distinct $4$ points}

In this situation, $X$ is always smooth and we have the following proposition:

\begin{prop}
\label{prop:whenweakdPI}
Suppose that $Q$ is smooth and $Z$ consists of distinct $4$ points. Then $X$ is a smooth weak del Pezzo surface if and only if there is no line on $Q$ containing three points of $Z$.
Moreover, it is a smooth del Pezzo surface if and only if there is no line on $Q$ containing two points of $Z$ and there is no hyperplane section containing $Z$.
\end{prop}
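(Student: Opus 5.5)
We identify $Q\cong\mathbb P^1\times\mathbb P^1$ and use the standard description of a blow-up of points on a smooth surface. Let $X$ be the blow-up at the distinct points $p_1,\dots,p_4$. Then $\Pic(X)=\mathbb Z h_1\oplus\mathbb Z h_2\oplus\bigoplus_i\mathbb Z e_i$ with $-K_X=2h_1+2h_2-\sum e_i$, and $(-K_X)^2=8-4=4$.

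Since $X$ is smooth, $X$ is weak del Pezzo exactly when $-K_X$ is nef and big. Bigness is automatic once nefness holds, because $(-K_X)^2=4>0$. Similarly, $X$ is del Pezzo exactly when, in addition, no irreducible curve $\Gamma$ satisfies $-K_X.\Gamma=0$. For any irreducible curve $\Gamma\neq e_i$ with class $a h_1+b h_2-\sum m_i e_i$, where $a,b\ge 0$ and $m_i=\mathrm{mult}_{p_i}$ of the image curve, we have $-K_X.\Gamma=2a+2b-\sum m_i$. For the exceptional curves themselves, $-K_X.e_i=1>0$.

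\emph{Step 1: nefness.} We show that the only possible curves with $-K_X.\Gamma<0$ come from lines (rulings).
\begin{itemize}
\item If $\Gamma$ is a ruling line of class $h_1$ or $h_2$, then $-K_X.\Gamma=2-\#\{i:p_i\in\Gamma\}$.
\item Otherwise $a,b\ge 1$. If $\Gamma$ has bidegree $(1,1)$, it is a smooth hyperplane section, so each $m_i\le 1$ and $-K_X.\Gamma=4-\#(Z\cap\Gamma)\ge 0$.
\item For $a+b\ge 3$, we bound $\sum m_i$ using a ruling through $p_i$ (when the ruling is not a component of $\Gamma$): intersecting the image curve with the $h_2$-ruling through $p_i$ gives $m_i\le a$, and symmetrically $m_i\le b$. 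We also use the adjunction/genus bound $\sum m_i(m_i-1)\le 2(a-1)(b-1)$.
\end{itemize}
Combining these with Cauchy--Schwarz should yield $\sum m_i\le 2a+2b$, with equality only in degenerate configurations that are reducible. Hence $-K_X$ is nef if and only if no line contains three of the points. Conversely, if a line contains three of the points, its strict transform $\Gamma$ satisfies $-K_X.\Gamma=-1$, so $-K_X$ fails to be nef.

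\emph{Step 2: ampleness.} Assuming nefness, we find when a $(-2)$-curve with $-K_X.\Gamma=0$ exists. By the analysis in Step 1, equality $-K_X.\Gamma=0$ forces one of two cases: a line through exactly two points (class $h_j-e_i-e_k$), or a hyperplane section of bidegree $(1,1)$ through all four points (class $h_1+h_2-\sum e_i$). Both classes have self-intersection $-2$, and conversely each configuration produces such a curve. One should also check that a reducible hyperplane section through $Z$ is a union of two lines, at least one of which contains two points, so it is already covered by the first case. This gives the "moreover" statement.

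\emph{Main obstacle.} The main obstacle is the uniform inequality $\sum m_i\le 2a+2b$ for irreducible curves with $a+b\ge 3$. My first attempt is the following Hodge index/Riemann--Roch argument. If $-K_X.\Gamma<0$, then $\Gamma$ is in the base locus of $|-K_X|$, and hence $\Gamma^2<0$. Since $-K_X$ restricted to the blow-up behaves like a del Pezzo anticanonical class, $\Gamma$ would have to be a $(-1)$- or $(-2)$-curve, or a curve with $\Gamma^2<0$ and arithmetic genus $p_a=0$ whose classes can be enumerated. The negative curves on the blow-up of four points of $\mathbb P^1\times\mathbb P^1$ (equivalently, five points of $\mathbb P^2$) are finite in number and can be listed explicitly, which reduces everything to the line and conic cases above.
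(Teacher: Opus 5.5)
Your route is different from the paper's and it works, but the step you flag as the main obstacle is immediate from the bound you already wrote down. For an irreducible $\Gamma$ of class $ah_1+bh_2-\sum m_ie_i$ with $a,b\ge 1$, intersecting with the two rulings through $p_i$ gives $m_i\le\min\{a,b\}$. Hence
\[
\sum_{i=1}^4 m_i\;\le\; 4\min\{a,b\}\;\le\; 2a+2b ,
\]
so $-K_X.\Gamma\ge 0$, and no Cauchy--Schwarz is needed. Equality forces $a=b$ and $m_i=a$ for all $i$. Your genus bound then reads $4a(a-1)\le 2(a-1)^2$, which forces $a=1$, so $\Gamma$ is the strict transform of a smooth conic through all of $Z$. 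This settles Step 1 and the equality analysis in Step 2 at once.

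You should drop the fallback argument in your last paragraph. It is unnecessary, and as stated it is wrong: a curve with $-K_X.\Gamma<0$ need not be a $(-1)$- or $(-2)$-curve. For example, the strict transform of a line through three points of $Z$ has class $h_j-e_i-e_k-e_l$, which is a $(-3)$-curve. Also, listing the negative curves presupposes a classification you have not established for degenerate configurations.

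The paper never uses the two rulings. It applies the Hodge index theorem to the Gram matrix of $-K_X$, $H$, $C$, together with adjunction $C^2+K_X.C\ge -2$. This gives $4(H.C)^2\le 16+8(H.C-1)(-K_X.C)$. So any integral non-exceptional $C$ with $-K_X.C\le 0$ has $H.C\le 2$, i.e.\ it is the strict transform of a line or a conic, and these are then checked directly, exactly as in your bullets.

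Your argument is more explicit: it classifies all curves with $-K_X.\Gamma\le 0$ directly in coordinates on $\mathbb P^1\times\mathbb P^1$. The paper's argument is coordinate-free, and that is what lets it transfer unchanged to the minimal resolutions in the quadric-cone and non-reduced cases (Propositions~\ref{prop:whenweakdPII} and \ref{prop:whenweakdPIII}). There the Gram matrix of $-K$ and $H$ is the same, but there is only one ruling, so your bound $m_i\le\min\{a,b\}$ is not available in this form.
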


\begin{proof}
We prove the first statement. The second statement is similar.
When there is a line $\ell$ on $Q$ such that three points of $Z$ are contained in $\ell$, then its strict transform $\widetilde{\ell}$ of $\ell$ on $X$ satisfies 
\[
-K_X.\widetilde{\ell} \leq -1,
\]
showing that $-K_X$ is not nef.

Conversely, suppose that $-K_X$ is not nef. This means that there is an integral curve $C$ such that
\[
-K_X.C \leq -1.
\]
Let $H$ be the pullback of the hyperplane class on $Q$.
We define constants $\mathsf C_1, \mathsf C_2, \mathsf C_3$ by
\[
\mathsf C_1 = H.C, \quad \mathsf C_2 = -K_X.C, \quad \mathsf C_3 = C^2.
\]
The intersection matrix involving $-K_X, H, C$ is given by
\[
\begin{pmatrix}
4 & 4 & \mathsf C_2 \\
4 & 2 & \mathsf C_1 \\
\mathsf C_2 & \mathsf C_1 & \mathsf C_3
\end{pmatrix}.
\]
Thus, the Hodge index theorem tells us that
\[
 8 \mathsf C_1\mathsf C_2 - 4 \mathsf C_1^2 - 2\mathsf C_2^2 -8 \mathsf C_3 \geq 0.
\]
On the other hand, since $C$ is integral, the adjunction tells us that
\[
\mathsf C_3 - \mathsf C_2 \geq -2.
\]
Combining these results, we conclude that
\begin{align*}
4 \mathsf C_1^2 & \leq 16 + 8 (\mathsf C_1-1)\mathsf C_2,
\end{align*}
which implies that $\mathsf C_1 \leq 2$.
Thus $C$ is a line or a hyperplane section. However, the strict transform of a hyperplane section cannot be a negative curve, proving the claim.
\end{proof}

\subsubsection*{Case 2: $Q$ is singular and $Z$ consists of distinct $4$ points which are not the vertex}

In this situation, $Q$ is a quadric cone with one $\mathbf A_1$-singularity which is the vertex:
\begin{prop}
\label{prop:whenweakdPII}
Suppose that $Q$ is singular and $Z$ consists of distinct $4$ points which are not the vertex. 
Then $X$ is a weak del Pezzo surface with one $\mathbf A_1$-singularity if and only if there is no line on $Q$ containing three points of $Z$.
\end{prop}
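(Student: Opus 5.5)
We follow the proof of Proposition~\ref{prop:whenweakdPI}, working on $X$ directly; we may also pass to its minimal resolution. Let $v \in Q$ be the vertex. Since the points of $Z$ are smooth points of $Q$, the blow-up $\phi : X \to Q$ is an isomorphism near $v$. Hence $X$ has exactly one singular point, an $\mathbf A_1$-singularity over $v$, and is smooth elsewhere. It is Gorenstein, with
\[
K_X = \phi^*K_Q + E_1 + \cdots + E_4, \qquad -K_X \sim 2H - E,
\]
where $H$ is the pullback of the hyperplane class and $E = \sum E_i$. Thus $(-K_X)^2 = 8 - 4 = 4$, and the statement reduces to: $-K_X$ is nef if and only if no line (ruling of the cone) contains three points of $Z$. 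Once nefness holds, bigness is automatic from $(-K_X)^2 > 0$.

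For the easy direction, let $\ell$ be a ruling through three points of $Z$ and let $\widetilde{\ell}$ be its strict transform. Since $H.\ell = 1$, we get $-K_X.\widetilde{\ell} = 2 - 3 = -1 < 0$. Here we must justify $E.\widetilde{\ell} = 3$: each point of $Z$ lies on $\ell$ at a smooth point of $Q$, so $\widetilde{\ell}$ meets each corresponding $E_i$ with multiplicity one. Note also that $-K_X.\widetilde{\ell}$ makes sense because $-K_X$ is Cartier. Alternatively, we can do the computation on the minimal resolution $\rho : \widetilde{X} \to X$, where $\widetilde{X}$ is the blow-up of $\mathbb F_2$ at four points and $\rho^*K_X = K_{\widetilde{X}}$.

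For the converse, suppose $-K_X$ is not nef. Then there is an integral curve $C$ with $-K_X.C \le -1$. The cleanest route is to pass to $\widetilde{X}$, where intersection numbers are integral and adjunction holds. The exceptional $(-2)$-curve $\Gamma$ satisfies $-K_{\widetilde{X}}.\Gamma = 0$, so the negative curve $C$ is the strict transform of a curve on $X$ not equal to $\Gamma$. Set
\[
\mathsf C_1 = \rho^*H.C, \qquad \mathsf C_2 = -K.C, \qquad \mathsf C_3 = C^2.
\]
The Gram matrix of $-K, \rho^*H, C$ is the same as in Proposition~\ref{prop:whenweakdPI}, because $(\rho^*H)^2 = 2$ and $-K.\rho^*H = 4$. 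The Hodge index theorem then gives
\[
8\mathsf C_1\mathsf C_2 - 4\mathsf C_1^2 - 2\mathsf C_2^2 - 8\mathsf C_3 \ge 0,
\]
and adjunction gives $\mathsf C_3 - \mathsf C_2 \ge -2$. Together these yield
\[
4\mathsf C_1^2 \le 16 + 8(\mathsf C_1 - 1)\mathsf C_2,
\]
which with $\mathsf C_2 \le -1$ forces $\mathsf C_1 \le 2$.

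The remaining case analysis is the main point to check. If $\mathsf C_1 = 0$, then $C$ is contracted by $\phi\circ\rho$, so $C$ is $\Gamma$ or some $E_i$. Both have $-K.C \ge 0$, which is a contradiction. If $\mathsf C_1 = 1$, then $C$ maps to a curve of degree $1$ on the cone, i.e.\ a ruling $\ell$. The inequality $-K.\widetilde{\ell} \le -1$ gives $2 - \#(\ell\cap Z) \le -1$, so $\ell$ contains at least three points of $Z$, as desired. We should double-check that a ruling through the vertex, pulled back, splits as $\widetilde{\ell} + \tfrac12\Gamma$-type contributions; in any case $\rho^*H.\widetilde{\ell} = 1$ still holds, since $H$ is pulled back from $Q$ and $\ell$ has degree $1$. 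If $\mathsf C_1 = 2$, then $C$ is the strict transform of a conic, i.e.\ a hyperplane section of $Q$ (possibly through the vertex). The inequality then gives
\[
4 - \#(\text{points of } Z \text{ on it}) \le -1,
\]
requiring at least five points, which is impossible since $Z$ has only four points. Reducible hyperplane sections are excluded because $C$ is integral. This contradiction completes the proof.
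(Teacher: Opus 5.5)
Your proposal is correct and is essentially the paper's argument. The paper's proof just says to run the Hodge index/adjunction argument of Proposition~\ref{prop:whenweakdPI} on the minimal resolution, and you do exactly that with the Gram matrix of $-K_{\widetilde{X}}, \rho^*H, C$, while also filling in the case analysis on $\rho^*H.C$, including the contracted case $\rho^*H.C=0$ that the paper leaves implicit (and in fact, with $\mathsf C_2\le -1$, the inequality already forces $\mathsf C_1\le 1$, so your conic case is vacuous).
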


\begin{proof}
One may apply the discussion of Proposition~\ref{prop:whenweakdPI} to the minimal resolution of $X$.
\end{proof}

\subsubsection*{Case 3: $Q$ is smooth and the support of $Z$ consists of distinct $3$ points}

In this situation, there is a length two subscheme $Z'$ of $Z$ which is supported at a point.
Its blow-up $X$ admits one $\mathbf A_1$-singularity on the exceptional divisor $E$ such that the pullback of $Z'$ is $2E$. Let us denote other exceptional divisors by $E_1, E_2$.
We have the following equalities:
\[
K_X \sim \phi^*K_Q + E_1 + E_2 + 2E, \quad E^2 = -1/2, \quad K_X.E = -1,
\]
where $\phi : X \to Q$ is the blow-up map.
We denote the minimal resolution by $\rho : \widetilde{X} \to X$ with the unique exceptional divisor $E_3$ and we denote the strict transform of $E$ by $\widetilde{E}$.
The numerical pullback of $E$ is $\widetilde{E} + \frac{1}{2}E_3$ and we have
\[
K_{\widetilde{X}} \sim \rho^*K_X, \quad \widetilde{E}^2 = -1, \quad K_{\widetilde{X}}.\widetilde{E} = -1.
\]
Using these we prove the following proposition:

\begin{prop}
\label{prop:whenweakdPIII}
Suppose that $Q$ is smooth and the support of $Z$ consists of distinct $3$ points.
Then $X$ is a weak del Pezzo surface with one $\mathbf A_1$-singularity if and only if there is no line on $Q$ containing a length $3$ subscheme of $Z$.
\end{prop}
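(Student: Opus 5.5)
The plan is to mimic the proof of Proposition~\ref{prop:whenweakdPI}, working on the minimal resolution $\widetilde{X}$ of $X$. Since $K_{\widetilde{X}} \sim \rho^*K_X$, the singular surface $X$ is a weak del Pezzo surface with one $\mathbf A_1$-singularity exactly when $-K_{\widetilde{X}}$ is nef and big. Note that $(-K_{\widetilde{X}})^2 = 8 - 4 = 4 > 0$, so bigness is automatic once nefness holds. We also know $K_{\widetilde{X}}.E_3 = 0$ for the $(-2)$-curve $E_3$. Nefness of $-K_{\widetilde{X}}$ therefore reduces to checking integral curves $C \subset \widetilde{X}$ other than $E_3$, and we may further assume $C$ is not contained in any exceptional locus. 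The two remaining exceptional curves, $\widetilde{E}$ and the strict transforms of $E_1, E_2$, are all $(-1)$-curves with $-K_{\widetilde{X}}$-degree $1$.

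For the \emph{only if} direction, suppose a line $\ell \subset Q$ contains a length $3$ subscheme of $Z$. There are two possibilities: $\ell$ passes through the two reduced points and the support of $Z'$, or $\ell$ contains $Z'$ together with one further point. In either case I compute the class of the strict transform $\widetilde{\ell}$ on $\widetilde{X}$. In the second case, $\ell$ is tangent to the direction of $Z'$, so its strict transform passes through the exceptional curve in the way that subtracts the full multiplicity $2E$; numerically $\widetilde{\ell} = \phi^*\ell - (\text{contributions})$. I then expect $-K_{\widetilde{X}}.\widetilde{\ell} = 2 - 3 = -1 < 0$, which shows $-K$ is not nef. The bookkeeping uses $K_X \sim \phi^*K_Q + E_1 + E_2 + 2E$ together with $E^2 = -1/2$ and $K_X.E = -1$.

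For the \emph{if} direction, suppose $C$ is an integral curve with $-K_{\widetilde{X}}.C \le -1$. Set $H = (\phi\circ\rho)^*\mathcal O_Q(1)$, so that $H^2 = 2$ and $-K_{\widetilde{X}}.H = 4$. Let $\mathsf C_1 = H.C$, $\mathsf C_2 = -K_{\widetilde X}.C$, and $\mathsf C_3 = C^2$. The Hodge index theorem applied to the same intersection matrix, combined with the adjunction inequality $\mathsf C_3 - \mathsf C_2 \ge -2$, gives exactly the inequality from Proposition~\ref{prop:whenweakdPI}, namely $4\mathsf C_1^2 \le 16 + 8(\mathsf C_1 - 1)\mathsf C_2$. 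Since $\mathsf C_2 \le -1$, this forces $\mathsf C_1 \le 2$, and in fact $\mathsf C_1 \ge 1$ because $C$ is not exceptional. So $C$ is the strict transform of a line or of a conic (hyperplane) section of $Q$.

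A hyperplane section has $H$-degree $2$ and passes through a subscheme of $Z$ of length at most $4$. Its strict transform therefore has $-K$-degree at least $4 - 4 = 0$, which excludes it. A line has $H$-degree $1$ and $-K_Q$-degree $2$, so it is negative only if it meets $Z$ in length $\ge 3$, which the hypothesis excludes.

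The main obstacle is the careful local computation at the non-reduced point. I need to express the pullback multiplicities of a curve through $\operatorname{supp} Z'$ in terms of $\widetilde{E}$ and $E_3$, and check that the $-K_{\widetilde{X}}$-degree of the strict transform drops by exactly the length of $C \cap Z'$ (that is, by $1$ or $2$). I would do this by realizing $Z'$ as the blow-up of a point followed by the blow-up of the infinitely near point. Here $E_3$ is the strict transform of the first exceptional curve and $\widetilde{E}$ is the second. Then $-K_{\widetilde{X}} = \phi^*(-K_Q) - E_1 - E_2 - E_3 - 2\widetilde{E}$, with $E_3$ appearing via total transforms; I would verify the resulting drop in degree directly from the tangency condition.
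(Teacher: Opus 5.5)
Your proposal is correct and follows the paper's proof. The paper also works on the minimal resolution. It shows the strict transform of such a line is a smooth rational curve with $\widetilde{\ell}^2=-3$, or $-4$ if $\ell\supset Z$; this is equivalent to your $-K_{\widetilde X}.\widetilde\ell = 2-\mathrm{length}(\ell\cap Z)<0$. For the converse it simply says to argue as in Proposition~\ref{prop:whenweakdPI}, which is the Hodge index plus adjunction argument you carry out. Incidentally, once $\mathsf C_2\le -1$, your inequality already rules out $\mathsf C_1=2$, so the conic case is vacuous.
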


\begin{proof}
First, let us assume that we have a line $\ell$ on $Q$ such that a length $3$ subscheme $Z'$ of $Z$ is contained in $\ell$, but not $Z$ itself. Suppose that $Z'$ is reduced. In this case, we have 
\[
\rho^*\phi^*\ell = \widetilde{\ell} + E_1 + E_2 + \widetilde{E} + E_3,
\]
where $\widetilde{\ell}$ is the strict transform of $\ell$. This implies that we have $ \widetilde{\ell}^2 = -3$.

Next, suppose that $Z'$ is non-reduced. In this situation, we have
\[
\rho^*\phi^*\ell = \widetilde{\ell} + E_i + 2\widetilde{E} + E_3,
\]
for some $i = 1, 2$, which implies that $\widetilde{\ell}^2 = -3$.

Now assume that $\ell$ contains $Z$. Then we have
\[
\rho^*\phi^*\ell = \widetilde{\ell} + E_1 + E_2 + 2\widetilde{E} + E_3.
\]
This implies that $\widetilde{\ell}^2 = -4$. Thus in all cases, $-K_X$ cannot be nef.

For the converse, we may argue as in Proposition~\ref{prop:whenweakdPI}.
\end{proof}

Finally, we prove the following proposition which explains why certain fibers are not allowed in smooth wonderful models:

\begin{prop}
\label{prop:nonvertex}
Let $Q$ be a singular quadric cone and let $$Z = \Spec\, (k[t]/(t^{N+1})),$$ with $N \geq 1$ which is embedded into $Q$ by $\hat{\sigma} : Z \to Q$. If $Z$ is supported at the vertex of $Q$, then the blow-up $$\phi : X = \mathrm{Bl}_Z(Q) \to Q$$ is not a weak del Pezzo surface with canonical singularities.
\end{prop}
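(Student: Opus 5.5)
The plan is to argue on the minimal resolution and reduce to a local computation at the vertex. Write $\mu : \mathbb F_2 \to Q$ for the minimal resolution of the cone, with exceptional $(-2)$-curve $C_0$. Note that $K_{\mathbb F_2} = \mu^*K_Q$ and $K_{\mathbb F_2}.C_0 = 0$.

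Suppose, for contradiction, that $X = \mathrm{Bl}_Z(Q)$ is a weak del Pezzo surface with canonical singularities. In particular $X$ is normal. Let $\rho : Y \to X$ be its minimal resolution, so that $-K_Y = \rho^*(-K_X)$ is nef. The composite $Y \to X \to Q$ is a resolution of $Q$, so it factors through the minimal resolution: $\nu : Y \to \mathbb F_2$. Since $\phi$ is an isomorphism away from $\operatorname{Supp} Z = \{\text{vertex}\}$, the morphism $\nu$ is a composition of point blow-ups whose centres all lie over the vertex. In particular the first centre lies on $C_0$.

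\textbf{Case $\nu$ is not an isomorphism.} Let $p \in C_0$ be the first centre. The strict transform $C_0'$ of $C_0$ on $Y$ satisfies $C_0'^2 \le -3$. By adjunction $K_Y.C_0' = -2 - C_0'^2 \ge 1$, so $-K_Y.C_0' < 0$. This contradicts nefness of $-K_Y$. This is the same negative-curve mechanism as in the proofs of Proposition~\ref{prop:whenweakdPI} and Proposition~\ref{prop:whenweakdPIII}.

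\textbf{Case $Y = \mathbb F_2$.} Then $X$ is either $\mathbb F_2$ or the contraction of $C_0$, that is, $Q$ itself.
\begin{itemize}
\item $X = Q$ is impossible. The ideal $I_Z\mathcal O_X$ must be invertible, but $Z$ is zero-dimensional of length $N+1 \geq 2$ at a point of a surface, so it is not Cartier.
\item Hence $X = \mathbb F_2$ and $I_Z\mathcal O_{\mathbb F_2}$ is an invertible ideal cosupported on $C_0$, i.e. equal to $\mathcal O(-mC_0)$ for some $m \ge 1$.
\end{itemize}
To pin down $m$, work in local coordinates. Analytically the vertex is $xy = z^2$, and $\mu$ is given in a chart by $x = s$, $y = st^2$, $z = st$, with $C_0 = \{s = 0\}$. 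Since $Z \cong \operatorname{Spec} k[t]/(t^{N+1})$ is curvilinear, its Zariski tangent space is $1$-dimensional inside the $3$-dimensional tangent space of $Q$ at the vertex. Hence $I_Z$ contains two elements whose linear parts are independent. Their pullbacks vanish to order exactly $1$ along $C_0$ generically, which forces $m = 1$. Consequently $I_Z \subset \mu_*\mathcal O(-C_0) = \mathfrak m$, and $I_Z$ has the same integral closure as $\mathfrak m$. Since $N \ge 1$, $I_Z \ne \mathfrak m$, so $I_Z$ is a proper reduction of $\mathfrak m$.

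\textbf{Main obstacle.} The hardest step is to show that the blow-up of such a proper reduction $I_Z \subsetneq \mathfrak m$ is not normal, so it cannot equal $\mathbb F_2$. It might instead acquire an embedded or non-normal locus, or a worse singularity over the vertex.

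I would first try an explicit chart computation. After a linear change of coordinates, write $I_Z = (y - f,\, z - g,\, x^{N+1})$ with $f, g \in \mathfrak m^2$. Then compute the Rees algebra in the three standard charts, and check that the fibre over the vertex is non-reduced or contains a non-normal point. The model case to test is $(y, z, x^2)$, which comes from $N = 1$.

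If the chart computation becomes unwieldy, a fallback is an invariant argument. For normal $X$, one has $I_Z^n\mathcal O_X = \mathcal O(-nC_0)$, so $\overline{I_Z^n} = \mathfrak m^n$ for all $n$. The aim would then be to compare $h^0$ of $\mathcal O_X/I_Z^n\mathcal O_X$ with the colength of $I_Z^n$, using that $\operatorname{length}(\mathcal O_Q/I_Z) = N+1 > 1 = \operatorname{length}(\mathcal O_Q/\mathfrak m)$. The hope is that this forces a discrepancy, so that $X$ is not normal.

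Either way, the normal case collapses to a contradiction, which proves Proposition~\ref{prop:nonvertex}.
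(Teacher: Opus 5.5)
\paragraph{Gap: the contradiction in the case $X\cong\mathbb F_2$ is never derived.}
Everything up to $I_Z\mathcal O_{\mathbb F_2}=\mathcal O(-C_0)$ and $\overline{I_Z}=\mathfrak m$ is correct. Your case $\nu\neq\mathrm{id}$ is a clean version of the paper's final step: blowing up a point of the $(-2)$-curve produces a curve with $K_Y.C_0'>0$. The problem is the remaining case, $X\cong\mathbb F_2$ over $Q$. This is the only place where $N\ge 1$ is used, and you never reach a contradiction there. Both the chart computation and the colength comparison are only things you ``would try'', ending with ``the hope is that this forces a discrepancy''. So the proof is incomplete at exactly its crux.

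Your proposed test case $(y,z,x^2)$, with $Q$ given by $xy=z^2$, does not belong to this case. Its tangent line $y=z=0$ is a ruling. In your chart its pullback is $(st^2,st,s^2)=s\,(s,t)$, i.e.\ $\mathcal O(-C_0)\cdot\mathfrak m_p$ for a point $p\in C_0$. So it is not a reduction of $\mathfrak m$, and it is already excluded by your first case.

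\paragraph{How to close it.}
Suppose $X=\mathrm{Bl}_{I_Z}Q\cong\mathbb F_2$. Then $\mathcal O_X(1)=I_Z\mathcal O_X=\mathcal O(-C_0)=\mathfrak m\mathcal O_{\mathbb F_2}$. Work over an affine neighbourhood of the vertex. Since $X=\mathrm{Proj}\bigoplus_n I_Z^n$ and $\mathbb F_2=\mathrm{Bl}_{\mathfrak m}Q$, Serre's theorem says $A_n\to\Gamma(\mathrm{Proj}A,\mathcal O(n))$ is an isomorphism for $n\gg0$ when $A$ is generated in degree one. Applied to both blow-ups this gives
\[
I_Z^n=\Gamma(X,I_Z^n\mathcal O_X)=\Gamma(\mathbb F_2,\mathfrak m^n\mathcal O_{\mathbb F_2})=\mathfrak m^n \quad\text{for } n\gg0 .
\]
On the other hand, pick $f_1,f_2\in I_Z$ with independent linear parts $\bar f_1,\bar f_2$. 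These span the image of $I_Z$ in $\mathfrak m/\mathfrak m^2$, so $I_Z\subset(f_1,f_2)+\mathfrak m^2$. Hence $I_Z^n\subset(f_1,f_2)^n+\mathfrak m^{n+1}$. The image of this in $\mathfrak m^n/\mathfrak m^{n+1}$ is spanned by the $n+1$ classes $\bar f_1^{\,a}\bar f_2^{\,n-a}$. But $\dim_k\mathfrak m^n/\mathfrak m^{n+1}=2n+1$ for $\mathrm{gr}_{\mathfrak m}=k[x,y,z]/(xy-z^2)$, a contradiction.

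\paragraph{Equivalent route, matching the paper.}
The condition $I_Z\mathcal O_{\mathbb F_2}=\mathcal O(-C_0)$ forces $\bar f_1,\bar f_2$ to have no common zero on the conic $C_0$. So the tangent line of $Z$ is not a ruling. A curvilinear jet of length $\ge3$ at the vertex is always tangent to a ruling: the $t^2$-coefficient of $xy-z^2$ gives $a_1b_1=c_1^2$. Hence $N=1$ and $I_Z=(\bar f_1,\bar f_2)$, for example $(x,y)$. Its blow-up has the chart $x^2u=z^2$, which is singular along the whole exceptional curve and so not normal. This is exactly the case the paper dismisses with ``$X$ is not normal''.
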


\begin{proof}
Let $\rho : \widetilde{X} \to X$ be the minimal resolution. Our assertion follows if $\widetilde{X}$ is not a weak del Pezzo surface. 
Suppose that $Z$ is supported at the vertex. When $Z$ has length $2$ and the direction of $Z$ is not tangent to any line on $Q$, $X$ is not normal. When $Z$ has length $3$, the first order direction of $Z$ is tangent to a line on $Q$. So we may assume that $Z$ is tangent to a line.
Let $\psi : \mathbb F_2 \to Q$ be the blow-up of the vertex. Since the pullback of the vertex by $\phi \circ \rho : \widetilde{X} \to X \to Q$ is an honest Cartier divisor, $\phi\circ\rho$ has to factor through $\psi$. The pullback of $Z$ by $\psi$ comes with an embedded point, so to get $\widetilde{X}$, a further blow-up is needed, which means that we must blow up the $(-2)$-curve. So $\widetilde{X}$ cannot be a weak del Pezzo surface. Thus our assertion follows.
\end{proof}

\subsection{Blow-ups of quadric surface bundles}

Let $k$ be an algebraically closed field of characteristic not equal to $2$.
Let $B$ be a smooth projective curve defined over $k$.
Let $\pi_{\mathcal Q} : \mathcal Q \to B$ be a quadric surface bundle such that (i) the total space $\mathcal Q$ is smooth, (ii) every geometric fiber of $\pi_{\mathcal Q}$ is integral, (iii) the relative Picard rank of $\pi_{\mathcal Q}$ is $1$, and (iv) the generic fiber of $\pi_{\mathcal Q}$ is a smooth quadric surface.

In this section, we introduce the construction of a blow-up $\mathcal X$ of $\mathcal Q$ along a degree $4$ multisection $\mathcal Z \subset \mathcal Q$ such that $\mathcal X$ is a smooth wonderful model of the generic fiber $\mathcal X_\eta$ which is a smooth quartic del Pezzo surface over $K = k(B)$.

To this end, we fix a smooth integral projective curve $\mathcal Z$ of genus $g(\mathcal Z)$ and a degree $4$ map $g : \mathcal Z \to B$ such that 
\begin{itemize}
\item $g$ is unramified over the discriminant divisor $\mathfrak d \subset B$; and
\item $g$ is geometrically simply branched.
\end{itemize}
We consider the following base change diagram:
\[
\xymatrix{
g^*\mathcal Q \ar[r]^{\widetilde{g}} \ar[d]_{g^*\pi_{\mathcal Q}} & \mathcal Q \ar[d]^{\pi_{\mathcal Q}} \\
\mathcal Z \ar[r]_g & B
}
\]
Then $g^*\pi_{\mathcal Q} : g^*\mathcal Q \to \mathcal Z$ is a quadric surface bundle such that (i) the total space $g^*\mathcal Q$ is smooth, (ii) every geometric fiber of $g^*\pi_{\mathcal Q}$ is integral, (iii) the discriminant divisor is $g^*\mathfrak d$ which is reduced, and (iv) the generic fiber is smooth.
Since any smooth quadric surface is separably rationally connected, it follows from \cite{GHS03} (in characteristic $0$) and \cite{dejongstarr} (in positive characteristic) that $g^*\pi_{\mathcal Q}$ admits a section.

\begin{lemm}
\label{lemm:gluingarguments}
For any $m \geq 0$, there exists a section $\sigma : \mathcal Z \to g^*\mathcal Q$ such that its normal bundle
$
N_{\sigma}
$
satisfies 
\[
\mu^{\min}(N_{\sigma}) \geq m + 2g(\mathcal Z),
\]
where $\mu^{\min}(N_{\sigma})$ is the minimal slope of a locally free sheaf $N_\sigma$.
\end{lemm}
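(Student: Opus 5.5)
We start from a section $\sigma_0 : \mathcal Z \to g^*\mathcal Q$, which exists by \cite{GHS03} and \cite{dejongstarr} as explained above. We then use the standard comb-smoothing argument of Graber--Harris--Starr and Koll\'ar to raise the minimal slope of the normal bundle. The generic fiber of $g^*\pi_{\mathcal Q}$ is a smooth quadric surface, so it is separably rationally connected. Hence through a general point of a general smooth fiber $(g^*\mathcal Q)_z$ there are very free rational curves; we may take conics, or lines, in the fiber $\mathbb P^1\times\mathbb P^1$ with ample normal bundle in the fiber. Since these curves are vertical, their normal bundle in $g^*\mathcal Q$ is $N_{f/\text{fiber}} \oplus \mathcal O$. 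So they are free but not very free in the total space, which is the correct notion for teeth of a comb of sections (cf. \cite[IV.2]{Kollar-book} style arguments, or \cite{GHS03}).

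\textbf{Step 1.} Choose $N$ general points $z_1,\dots,z_N \in \mathcal Z$ with smooth fibers. At each point $\sigma_0(z_i)$, attach a free vertical rational curve $f_i$ in the fiber, for instance a general conic (hyperplane section) through $\sigma_0(z_i)$ in a general direction. The result is a comb $C_N = \sigma_0(\mathcal Z)\cup \bigcup f_i$.

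\textbf{Step 2.} Apply the comb-smoothing theorem (Koll\'ar's II.7 / GHS Lemma 2.6, valid in any characteristic since the teeth are free and the fibers are separably rationally connected). For $N$ large, a subcomb containing the handle and sufficiently many teeth deforms to a section $\sigma$. Moreover, the elementary-modification description shows that $N_\sigma$ is a general deformation of $N_{\sigma_0}$ modified positively at the attachment points in general directions.

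\textbf{Step 3.} This step controls the slope, and it is the main obstacle. The key lemma is that, for a vector bundle $V$ on $\mathcal Z$ and general points $z_i$ with general lines $\ell_i \subset V_{z_i}$, the positive elementary transforms eventually become arbitrarily positive. Concretely, we want
\[
\mu^{\min}(V') \geq \mu^{\min}(V) + \lfloor N/\mathrm{rk} V\rfloor - O(g(\mathcal Z)),
\]
as in Kollár's "Rational curves on algebraic varieties" II.7.10 or the argument in \cite{GHS03}. We would first try the standard trick of choosing the attachment directions in each fiber to vary over a spanning set of $N_{\sigma_0}|_{z_i}$. The directions are realised by varying the tangent directions of the conics $f_i$ at $\sigma_0(z_i)$, using that conics through a point of $\mathbb P^1\times \mathbb P^1$ realise all tangent directions. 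We would then iterate: apply the procedure repeatedly, each time increasing $\mu^{\min}$ by at least one, until it exceeds $m+2g(\mathcal Z)$.

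The characteristic-$p$ subtlety is generality of the smoothing, namely that the deformed section's normal bundle really is the elementary modification. This is handled because $H^1$ of the modified normal sheaf on the comb vanishes once enough teeth are attached, which makes the smoothing unobstructed.
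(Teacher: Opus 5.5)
Your Steps 1 and 2 are the construction the paper uses: start from a section, attach vertical curves that are very free in the fibres at general points, and smooth the comb via \cite[Lemma 2.6]{GHS03}. The gap is in Step 3, which is where the bound on $N_\sigma$ actually has to come from. There you do three things, none of which closes the argument.

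First, you assert a slope inequality for general elementary transforms without proof. Second, you propose an iteration over ``spanning'' attachment directions. This is unnecessary if the inequality holds for large $N$, and it sits uneasily with the generality of directions that the inequality needs.

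Third, and most seriously, you justify the passage to the smoothed section by saying its normal bundle ``really is the elementary modification.'' That is false. The elementary modification of $N_{\sigma_0}$ at the nodes is the restriction to the handle of the normal sheaf of the comb. $N_\sigma$ is the normal bundle of a different curve. The vanishing of $H^1$ on the comb gives unobstructedness, i.e.\ that the comb smooths to a section, but it says nothing about $\mu^{\min}(N_\sigma)$. To keep your slope formulation, you would also have to build a flat family of rank-$2$ bundles on $\mathcal Z$ whose general member is $N_{\sigma_t}$ and whose special member is that elementary modification, and then use lower semicontinuity of $\mu^{\min}$. Your proposal does none of this.

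The missing idea is to twist, and to use a positivity property of the normal sheaf of the whole comb that is open in families. Set $s = m + 2g(\mathcal Z)$ and fix general points $z_1,\dots,z_s \in \mathcal Z$ away from the attachment points. \cite[Lemma 2.6]{GHS03}, with these $z_j$ as the fixed points, produces a comb $f$ such that $N_f(-z_1-\cdots-z_s)$ is globally generated and has vanishing $H^1$.

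Both conditions persist to a general smoothing. By semicontinuity $H^1$ stays zero, so pushforward commutes with base change. Surjectivity of the evaluation map is then open by Nakayama and properness. Hence $N_\sigma(-z_1-\cdots-z_s)$ is globally generated. Every quotient of a globally generated bundle is globally generated, so it has nonnegative degree. This gives $\mu^{\min}(N_\sigma(-z_1-\cdots-z_s)) \geq 0$, as in \cite[Lemma 2.7]{LRT25}, i.e.\ $\mu^{\min}(N_\sigma) \geq m + 2g(\mathcal Z)$. This is the paper's argument; it needs no slope estimate for elementary transforms and no iteration.

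A minor point: a line in a smooth fibre $\mathbb P^1\times\mathbb P^1$ has trivial normal bundle there, so only the conics (class $(1,1)$) are very free in the fibre. Conics through a point also miss the two ruling directions, though general directions are all you need.
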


\begin{proof}
This is well-known to experts. We will briefly sketch a proof.
Let $s = m + 2g(\mathcal Z)$.
As explained above, we have a section $\sigma' : \mathcal Z \to g^*\mathcal Q$.
We fix a set of general points $z_1, \cdots, z_s$ on $\mathcal Z$.
We glue a number of very free vertical rational curves to $\mathcal Z$ and obtain a stable map $f : \mathcal Z' \to g^*\mathcal Q$. 
Then using \cite[Lemma 2.6]{GHS03} one can make the twisted normal sheaf $N_f(-z_1 - \cdots -z_s)$ globally generated. (Note that \cite{GHS03} assumes the ground field is $\mathbb C$, however, \cite[Lemma 2.6]{GHS03} works in positive characteristic too. See also \cite[Section 2.2]{BLRT21}.) Let $\sigma : \mathcal Z \to g^*\mathcal Q$ be a general smoothing of $\sigma'$. Then $N_\sigma(-z_1 - \cdots - z_s)$ is still globally generated. It follows from \cite[Lemma 2.7]{LRT25} that we have
\[
\mu^{\min}(N_{\sigma}(-z_1 - \cdots -z_s)) \geq 0.
\]
Thus our assertion follows.
\end{proof}

Here is the construction of wonderful models of smooth quartic del Pezzo fibrations:

\begin{theo}
\label{theo:constructionofwonderful}
Let $k, B, \pi_{\mathcal Q} : \mathcal Q \to B, g: \mathcal Z \to B$ as above.
Suppose that there is a section $\sigma' : \mathcal Z \to g^*\mathcal Q$ such that 
\[
\mu^{\min}(N_{\sigma'}) \geq 3 + 2g(\mathcal Z).
\]
For a general deformation $\sigma : \mathcal Z \to g^*\mathcal Q$ of $\sigma'$, the composition $\widetilde{g}\circ \sigma : \mathcal Z \to \mathcal Q$ is an embedding which is a degree $4$ multisection over $B$. Let $\mathcal X$ be the blow-up of $\mathcal Q$ along $\mathcal Z$. Then, $\mathcal X$ is a smooth wonderful model of a smooth quartic del Pezzo surface $\mathcal X_\eta$.
\end{theo}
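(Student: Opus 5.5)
The plan is to use the positivity of $N_{\sigma'}$ to control the position of $\mathcal Z$ in every fiber. Then the blow-up is checked fiber by fiber against Propositions~\ref{prop:whenweakdPI}, \ref{prop:whenweakdPII}, \ref{prop:whenweakdPIII} and \ref{prop:nonvertex}.

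\textbf{Step 1: embedding and smoothness.} The bound $\mu^{\min}(N_{\sigma'}) \geq 3 + 2g(\mathcal Z)$ implies that $N_{\sigma'}(-z_1-\cdots-z_r)$ is globally generated with vanishing $\rH^1$ for any $r \leq 3$ points. Hence sections through $\sigma'$ deform freely while prescribing up to three general points (with tangent data) in the fibers of $g^*\pi_{\mathcal Q}$. Consider a fiber $\mathcal Q_b$ and the four points $z_1,\dots,z_4 \in g^{-1}(b)$. The induced map from the deformation space of $\sigma$ to $\prod_i (g^*\mathcal Q)_{z_i} = \mathcal Q_b^4$ is then dominant (submersive) onto any three of the factors, at least after using the fourth condition as well. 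A dimension count over the one-dimensional family of $b \in B$ shows that for a general deformation:
\begin{itemize}
\item[(a)] distinct preimages never map to the same point of $\mathcal Q$, since this is a codimension $2$ condition per pair, over a one-parameter base;
\item[(b)] at the simple branch points, where $g$ has one double point, the tangent direction of $\widetilde g \circ\sigma$ is nonzero, because $\sigma$ can be deformed with prescribed first-order jet.
\end{itemize}
This makes $\widetilde g\circ\sigma$ an embedding onto a smooth curve $\mathcal Z \subset \mathcal Q$ of degree $4$ over $B$. Because $\mathcal Q$ and $\mathcal Z$ are smooth, $\mathcal X$ is smooth.

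\textbf{Step 2: fibers of $\mathcal X$.} I will show that every fiber $\mathcal X_b = \mathrm{Bl}_{\mathcal Z_b}\mathcal Q_b$ is integral with canonical singularities and nef anticanonical class, and that the generic fiber is del Pezzo. The bad configurations are:
\begin{itemize}
\item[(i)] three points of $\mathcal Z_b$ (counted with length) on a line of $\mathcal Q_b$;
\item[(ii)] a point of $\mathcal Z_b$ at the vertex of a singular fiber.
\end{itemize}
Since $g$ is unramified over $\mathfrak d$, the scheme $\mathcal Z_b$ is reduced over singular fibers. By Step 1 we may also ask that it avoid the vertices, which are finitely many points (codimension $3$ in the family). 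For (i), the condition that three given points lie on a common line of a quadric surface has codimension $2$ in $\mathcal Q_b^3$. Varying over the one-dimensional base and using the surjectivity onto three factors, this condition is avoided for a general $\sigma$. The same holds with a double point (the length-$3$ version in Case 3), using the jet freedom.

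Then Propositions~\ref{prop:whenweakdPI}, \ref{prop:whenweakdPII} and \ref{prop:whenweakdPIII} give that each fiber is a weak del Pezzo surface with at worst one $\mathbf A_1$ singularity. In the ramified case one must also check that $\mathcal X_b$ equals the blow-up of $\mathcal Q_b$ along the length-$4$ scheme $\mathcal Z_b$. This holds because $\mathcal Z$ meets $\mathcal Q_b$ as a Cartier divisor on the smooth curve $\mathcal Z$, so blowing up commutes with restriction. For the generic fiber, the conditions in Proposition~\ref{prop:whenweakdPI} (no two points on a line and no hyperplane containing $Z$) are open, and a general deformation achieves them at a general $b$. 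Hence $\mathcal X_\eta$ is a smooth quartic del Pezzo surface.

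\textbf{Step 3: relative positivity.} By the fiberwise statement, $-K_{\mathcal X/B}$ restricted to every fiber is nef and big, so $-K_{\mathcal X}$ is $\pi$-nef and $\pi$-big. Smoothness together with the integral canonical fibers then gives a smooth wonderful model.

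The main obstacle is the transversality in Step 2, i.e.\ proving that the evaluation map to triples of points is submersive uniformly enough that the codimension $2$ line condition, as $b$ varies over a curve, can be avoided. I would first try to prescribe the values at $z_1,z_2,z_3$ using the vanishing of $\rH^1(N_{\sigma'}(-z_1-z_2-z_3-z_4))$, which is where the constant $3+2g(\mathcal Z)$ enters. Combined with an incidence-variety dimension count over $B$, this should bound the bad locus in the deformation space by codimension $\geq 1$.
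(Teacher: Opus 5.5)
Your proposal is correct and is essentially the paper's argument: the slope bound, which persists under general deformation, gives global generation of $N_\sigma(-z_1-z_2-z_3)$ and $\rH^1(N_\sigma(-z_1-\cdots-z_4))=0$, and codimension-two incidence counts over the one-dimensional base then give injectivity of $\widetilde g\circ\sigma$, immersivity at the simple branch points, and the absence of lines through length-$3$ subschemes of $\mathcal Z_b$, so that Propositions~\ref{prop:whenweakdPI}, \ref{prop:whenweakdPII} and \ref{prop:whenweakdPIII} apply fiberwise. Your extra checks fill in what the paper calls ``similar dimension-count arguments'' (namely $\mathcal X_b=\mathrm{Bl}_{\mathcal Z_b}\mathcal Q_b$ because $\mathcal Z\not\subset\mathcal Q_b$, and the open del Pezzo conditions at a general fiber); note also that avoiding the vertices needs no genericity, because $d\pi_{\mathcal Q}$ vanishes at a vertex, so a multisection through one would be ramified there, contradicting that $g$ is unramified over $\mathfrak d$.
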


\begin{proof}
Using \cite[Corollary 2.9]{LRT25}, for a general deformation $\sigma : \mathcal Z \to g^*\mathcal Q$ of $\sigma'$ and any four points $z_1, z_2, z_3, z_4$ on $\mathcal Z$, the twisted normal bundle
\[
N_\sigma(-z_1 - z_2-z_3),
\]
is globally generated and $H^1(\mathcal Z, N_\sigma(-z_1 - z_2-z_3-z_4)) = 0$. Note that the ground field in \cite{LRT25} is $\mathbb C$, but this corollary is valid over any algebraically closed field. By \cite[Proposition 3.3]{LRT25}, for any $r \leq 4$, $\sigma$ passes through $r$ general points on $g^*\mathcal Q$, and the locus of $\Sec(g^*\mathcal Q/\mathcal Z)$ parametrizing general sections passing through $r$ general points is smooth and has expected dimension. 
This shows that the composition $\widetilde{g}\circ \sigma : \mathcal Z \to \mathcal Q$ is an embedding. Indeed, we first prove that this is injective. Let $b \in B$ be a point and $z_1, \cdots, z_r$ be its preimages in $\mathcal Z$ with $r = 3$ or $4$. Then the codimension of the locus parametrizing general sections $\sigma$ with $\widetilde{g}\circ \sigma(z_1) = \widetilde{g}\circ \sigma(z_2)$ is $2$. Varying $b \in B$ is $1$-dimensional, so this shows that for a general member $\sigma$, $\widetilde{g}\circ \sigma$ is injective. The proof that it is an embedding is similar. When $g$ is unramified over $b \in B$, the composition $\widetilde{g}\circ \sigma(z_1)$ is automatically immersive at preimages of $b$ once it is injective. When $g$ is simply branched over $b \in B$,  $\widetilde{g}\circ \sigma$ will be \'etale to the image as long as $\sigma$ avoids a certain jet, and this is a codimension $2$ condition. This shows that $\widetilde{g}\circ \sigma$ is an embedding as long as $\sigma$ is general. We conclude that $\mathcal X = \mathrm{Bl}_{\mathcal Z}(\mathcal Q)$ is smooth.
Other properties follow from similar dimension-count arguments combined with Propositions~\ref{prop:whenweakdPI}, \ref{prop:whenweakdPII}, and \ref{prop:whenweakdPIII}.
Thus our assertion follows.
\end{proof}

\begin{coro}
\label{coro:examplesofwonderful}
Let $k, B, \pi_{\mathcal Q} : \mathcal Q \to B, g: \mathcal Z \to B$ as above.
Then, there exists a section $\sigma' : \mathcal Z \to g^*\mathcal Q$ of arbitrarily large height such that for a general deformation $\sigma : \mathcal Z \to g^*\mathcal Q$ of $\sigma'$, the statement of Theorem~\ref{theo:constructionofwonderful} holds.
\end{coro}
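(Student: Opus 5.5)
The plan is to combine Lemma~\ref{lemm:gluingarguments} with Theorem~\ref{theo:constructionofwonderful}. Apply Lemma~\ref{lemm:gluingarguments} with $m = 3 + M$ for an arbitrary integer $M \geq 0$. This produces a section $\sigma' : \mathcal Z \to g^*\mathcal Q$ with
\[
\mu^{\min}(N_{\sigma'}) \geq 3 + M + 2g(\mathcal Z) \geq 3 + 2g(\mathcal Z).
\]
So $\sigma'$ satisfies the hypothesis of Theorem~\ref{theo:constructionofwonderful}, and for a general deformation $\sigma$ of $\sigma'$ the conclusion of that theorem holds. What remains is to see that the height of $\sigma'$ can be made arbitrarily large.

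For the height, note that $g^*\pi_{\mathcal Q}$ is smooth along $\sigma'(\mathcal Z)$. Indeed, sections avoid the singular points of the fibers: the total space $g^*\mathcal Q$ is smooth, and a section meets each fiber transversally at a smooth point of that fiber. Hence
\[
N_{\sigma'} \cong \sigma'^*T_{g^*\mathcal Q/\mathcal Z},
\]
a rank $2$ bundle with $\deg N_{\sigma'} = \deg(-\sigma'^*K_{g^*\mathcal Q/\mathcal Z})$. It follows that
\[
\deg(-\sigma'^*K_{g^*\mathcal Q/\mathcal Z}) \geq 2\mu^{\min}(N_{\sigma'}) \geq 2(3 + M + 2g(\mathcal Z)),
\]
which tends to infinity with $M$. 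If "height" is meant on $\mathcal Q$, relative to $B$, one uses that $g$ is étale generically and that $\widetilde{g}^*K_{\mathcal Q/B} = K_{g^*\mathcal Q/\mathcal Z}$ by base change. The height of $\widetilde g\circ\sigma'$ against $-K_{\mathcal Q/B}$ then equals the same degree, so it is also unbounded.

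Two small points need checking: that the gluing construction in Lemma~\ref{lemm:gluingarguments} indeed works for every $m$, and that the height is constant in the deformation family, which holds since deformations preserve the numerical class. The main (mild) obstacle is the identification of the normal bundle with the pulled-back relative tangent bundle near possibly singular fibers. I would handle this by the smoothness-of-sections argument above; alternatively, one can bypass it by working with the open locus where $g^*\pi_{\mathcal Q}$ is smooth, which contains the image of any section.
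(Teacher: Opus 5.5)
Your proposal is correct and follows the paper's proof, which simply combines Lemma~\ref{lemm:gluingarguments} (applied with $m$ large) and Theorem~\ref{theo:constructionofwonderful}. Your justification that the height is unbounded, via $N_{\sigma'} \cong \sigma'^*T_{g^*\mathcal Q/\mathcal Z}$ and $\deg N_{\sigma'} \geq 2\mu^{\min}(N_{\sigma'})$, is valid and makes explicit a step the paper leaves implicit.
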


\begin{proof}
This follows from Theorem~\ref{theo:constructionofwonderful} and Lemma~\ref{lemm:gluingarguments}.
\end{proof}

\section{Geometry of the space of sections}
\label{sec:geometryofthespace}

The goal of this section is to understand the geometry of the space of sections for quartic del Pezzo fibrations.

\subsection{Revisiting sections on quadric surface bundles}

In this section, we recall some geometric constructions of the space of sections of quadric surface bundles as developed in \cite{HT12} and \cite{TT25}.

\subsubsection*{Sections of quadric surface bundles}
Let $k$ be a perfect field of characteristic not equal to $2$ and $B$ be a smooth projective geometrically integral curve defined over $k$. Let $$\pi_{\mathcal Q} : \mathcal Q  \subset \mathbb P(\mathcal E) \to B,$$ be a quadric surface bundle as defined in Section~\ref{subsec:quadric}.
In \cite{TT25}, Yuri Tschinkel and the author considered the space of sections inducing jets.
Let $\Sigma_B = \{ \hat{\sigma}_j\}_{j \in J}$ be the set of finitely many admissible jets $\hat{\sigma}_j$ supported at $b_j \in B$. (For the definition of admissible jets, see \cite[Definition 9]{HT06}.)
We consider the space of sections inducing $\Sigma_B$ which is a closed subscheme
\[
\Sec(\mathcal Q/B, h_{\mathcal Q}, \Sigma_B) \subset \Sec(\mathcal Q/B, h_{\mathcal Q}),
\]
parametrizing sections $\sigma : B \to \mathcal Q$ of height $h_{\mathcal Q}$ inducing $\Sigma_B$.
As discussed in Section~\ref{subsec:quadric}, we consider the relative family of lines
\[
F_1(\mathcal Q) \to C \to B,
\]
with the universal family $\mathcal U \subset\mathcal Q \times_B F_1(\mathcal Q) $.
As observed in \cite[Section 6]{TT25}, for each admissible jet $\hat{\sigma} : \Spec \, (k(b) [t]/(t^{N + 1}) )\to \mathcal Q$, we consider the Cartesian diagram
\[
\xymatrix{
\mathcal U_{\hat{\sigma}} \ar[r] \ar[d] &   \Spec \, (k(b) [t]/(t^{N + 1})) \times_B  F_1(\mathcal Q)\ar[d]^{\hat{\sigma} \times \mathrm{id}} \\
\mathcal U \ar[r] &   \mathcal Q \times_B F_1(\mathcal Q).
}
\]
When $\hat{\sigma}$ is supported at a point on $B$ such that $C \to B$ is not branched, the closed subscheme $\mathcal U_{\hat{\sigma}} \subset F_1(\mathcal Q)$ is geometrically the union of two copies of $$\Spec \, (\overline{k} [t]/(t^{N + 1})),$$ which define admissible $N$-th jets on $F_1(\mathcal Q)$. When $\hat{\sigma}$ is supported at a point on $B$ where $C \to B$ is branched, the closed subscheme $$\mathcal U_{\hat{\sigma}} \subset F_1(\mathcal Q),$$ is one copy of $\Spec \, (k(b) [t]/(t^{2N + 2}))$ which defines an admissible $(2N+1)$-th jet on $F_1(\mathcal Q)$.
Thus if $\Sigma_B$ is a set of admissible jets of $\pi_{\mathcal Q}$ of degree $$\deg \Sigma_B = \sum_{j \in J} [k(b_j):k]\mathrm{length}(\hat{\sigma}_j),$$ then this induces the set $\Sigma_C$ of admissible jets for $\pi_{F_1(\mathcal Q)}$ of degree $2\deg \Sigma_B$.
Hence the correspondence established in Section~\ref{subsec:quadric} induces the following isomorphism:
\begin{prop}[{\cite[Lemma 18]{TT25}}]
\label{prop:lemma18}
We have an isomorphism:
\[
\Sec(\mathcal Q/B, h_{\mathcal Q}, \Sigma_B)  \cong \Sec(F_1(\mathcal Q)/C, d_{F_1(\mathcal Q)}, \Sigma_C).
\]
\end{prop}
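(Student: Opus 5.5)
We prove Proposition~\ref{prop:lemma18} by promoting the isomorphism $\Sec(\mathcal Q/B, h_{\mathcal Q}) \cong \Sec(F_1(\mathcal Q)/C, d_{F_1(\mathcal Q)})$ of Section~\ref{subsec:quadric} to an isomorphism of closed subschemes. Since both sides of the desired statement are closed subschemes of these two spaces, it suffices to show two things: the morphism $\sigma \mapsto \tau$ (built from $\mathcal U_\sigma$) carries $\Sec(\mathcal Q/B, h_{\mathcal Q}, \Sigma_B)$ into $\Sec(F_1(\mathcal Q)/C, d_{F_1(\mathcal Q)}, \Sigma_C)$, and the inverse morphism carries the latter back into the former. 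We check both on $T$-valued points for an arbitrary base $T$, so that we obtain morphisms of schemes and not merely bijections on geometric points.

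\textbf{Forward direction.} Let $\sigma : B\times T \to \mathcal Q$ be a family inducing the jet $\hat\sigma_j$ at $b_j$, meaning that the restriction of $\sigma$ to $\Spec(k(b_j)[t]/(t^{N+1}))\times T$ equals $\hat\sigma_j\times T$. The section $\tau$ has image $\mathcal U_\sigma \subset B\times_B F_1(\mathcal Q) = F_1(\mathcal Q)$. Base change is transitive, so the restriction of $\mathcal U_\sigma$ over the jet neighborhood of $b_j$ is exactly $\mathcal U_{\hat\sigma_j}\times T$. We then separate two cases.
\begin{itemize}
\item If $C\to B$ is unramified over $b_j$, the preimage of $\Spec(k(b_j)[t]/(t^{N+1}))$ in $C$ is a disjoint union of two such jet neighborhoods, geometrically. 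So $\tau$ restricted to them is the pair of $N$-jets making up $\mathcal U_{\hat\sigma_j}$.
\item If $b_j$ is a branch point, the preimage in $C$ is $\Spec(k(c)[s]/(s^{2N+2}))$ with $t=s^2$ up to a unit. So $\tau$ restricted there is the $(2N+1)$-jet $\mathcal U_{\hat\sigma_j}$.
\end{itemize}
In both cases $\tau$ induces $\Sigma_C$.

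\textbf{Converse direction and main obstacle.} Let $\tau$ induce $\Sigma_C$. The associated $\sigma$ is obtained via the Weil restriction $\mathrm{Res}_{C/B}$ followed by strict transform through $\widetilde{\mathcal Q}$. Away from $\mathfrak d$ the three models agree, and Weil restriction on the jet neighborhood is the product of the two restrictions. Hence the jet of $\sigma$ is determined by the two jets of $\tau$, and these are the ones coming from $\hat\sigma_j$.

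The delicate step is at a branch point. There we must show that a section of $F_1(\mathcal Q)$ whose $(2N+1)$-jet equals $\mathcal U_{\hat\sigma_j}$ corresponds to a section of $\mathcal Q$ whose $N$-jet is $\hat\sigma_j$; the strict transform through $\widetilde{\mathcal Q}$ makes this nonobvious. We handle it with the identification already established: $\sigma \mapsto \mathcal U_\sigma$ is an isomorphism of section spaces, compatible with base change to the jet neighborhood. Working locally, the map $\hat\sigma \mapsto \mathcal U_{\hat\sigma}$, from admissible $N$-jets on $\mathcal Q$ at $b$ to admissible $(2N+1)$-jets on $F_1(\mathcal Q)$ at $c$, is injective. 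Indeed, a point of the smooth locus of a fiber is recovered as the intersection of the lines through it, and the same holds infinitesimally. Consequently the $N$-jet of $\sigma$ is forced to equal $\hat\sigma_j$.

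Given this local injectivity, the two morphisms restrict to mutually inverse morphisms between the closed subschemes, which proves the isomorphism. The degrees match, with $\deg\Sigma_C = 2\deg\Sigma_B$, as recorded before the statement.
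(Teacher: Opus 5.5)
Your strategy is the same as the paper's. The paper computes $\mathcal U_{\hat\sigma}$ by base change at unbranched and branched points and then simply asserts that the correspondence of Section~\ref{subsec:quadric} restricts to the jet-constrained loci. Your forward direction is exactly that computation. Your reduction of the converse to injectivity of the local map $\hat\sigma\mapsto\mathcal U_{\hat\sigma}$, via $\tau|_{C\times_B J}=\mathcal U_{\sigma|_J}$, correctly makes explicit what the paper leaves implicit.

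\textbf{The gap.} The problem is how you justify that injectivity at a branch point, which is exactly the step you flag as delicate. There $\mathcal Q_b$ is a quadric cone, and a non-vertex point $p$ lies on exactly one line $\ell$ of $\mathcal Q_b$. So $p$ is \emph{not} the intersection of the lines through it, already at order zero. The reduced point of $\mathcal U_{\hat\sigma}$ is just $[\ell]$, which does not record where $p$ sits on $\ell$. Hence ``the same holds infinitesimally'' has nothing to build on. The missing information lives in the odd-order terms of the $(2N+1)$-jet, not in any intersection of lines. (At unbranched points your argument is fine, since there the map is just the product decomposition $\mathbb P^1\times\mathbb P^1$.)

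\textbf{A correct argument} is a short local computation. Base change to $\overline{k}$ and complete at $b$; this is harmless when checking a monomorphism. Then $\mathcal Q$ is $xy=z^2+tw^2$, and on $C$ one has $t=s^2$. The bundle $F_1(\mathcal Q)$ is the family of lines $\{x=\lambda(z+isw),\ \lambda y=z-isw\}$ with $i^2=-1$, and $s\mapsto -s$ exchanges the two rulings. A jet of a section of the regular $\mathcal Q$ avoids the vertex, so $x$ or $y$ is a unit, and the two charts are symmetric; take $y=1$. For a $T$-family of $N$-jets $[x(t):1:z(t):w(t)]$, the scheme $\mathcal U_{\hat\sigma}$ is the graph of $\lambda(s)=z(s^2)-is\,w(s^2)\bmod s^{2N+2}$. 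Its even and odd parts return $z(t)$ and $w(t)$ modulo $t^{N+1}$. These determine $x=z^2+tw^2$ and hence $\hat\sigma$, functorially in $T$. In particular, the position of $p$ on $\ell$ is the first-order coefficient of $\tau$ at $c$. This is also why, in Classification~\ref{class3}, two points of $\mathcal Z_b$ on one line give jets agreeing only to length $1$. With this computation in place of the ``intersection of lines'' heuristic, your argument proves the proposition.
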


\subsubsection*{Abel--Jacobi mapping}

Next we analyze the geometry of the space of sections:
$$\Sec(F_1(\mathcal Q)/C, d_{F_1(\mathcal Q)}, \Sigma_C),$$ following \cite[Proposition 2]{HT12} and \cite[Section 4]{TT25}. Let $k$ be an algebraically closed field or a finite field so that $F_1(\mathcal Q) \to C$ is the projectivization of a rank $2$ locally free sheaf $\mathcal F$ on $C$, i.e., 
\[
F_1(\mathcal Q) \cong \mathbb P_C(\mathcal F).
\]
We have the well-known bijection between the set of sections and the set of surjections:
\[
\{ \tau : C \to \mathbb P_C(\mathcal F)\} \leftrightarrow \{ \mathcal F \twoheadrightarrow L\},
\]
which is given by 
\[
\tau \mapsto L := \tau^*\mathcal O_{\mathbb P_C(\mathcal F)/C}(1).
\]
We consider two projections
\[
\xymatrix{
& C \times \mathrm{Pic}^{d_{\mathbb P_C(\mathcal F)}}(C) \ar[ld]_{\varpi} \ar[rd]^{\varpi_{d_{\mathbb P_C(\mathcal F)}}}& \\
C &  & \mathrm{Pic}^{d_{\mathbb P_C(\mathcal F)}}(C) 
}
\]
and a universal family 
\[
\mathcal L_{d_{\mathbb P_C(\mathcal F)}} \to C \times \mathrm{Pic}^{d_{\mathbb P_C(\mathcal F)}}(C).
\]
There exists a constant $\mathsf c_1(\mathcal F)$ depending only on $\mathcal F$ such that assuming $d_{\mathbb P_C(\mathcal F)} \geq \mathsf c_1(\mathcal F)$, the sheaf
\[
\mathcal V_{d_{\mathbb P_C(\mathcal F)}} = \varpi_{d_{\mathbb P_C(\mathcal F)}*}(\varpi^*\mathcal F^\vee \otimes \mathcal L_{d_{\mathbb P_C(\mathcal F)}}),
\]
is a locally free sheaf of rank 
\[
2d_{\mathbb P_C(\mathcal F)} - \deg(\mathcal F) + 2(1-g(C)),
\]
and for any $L \in \Pic^{d_{\mathbb P_C(\mathcal F)}}(B_{\overline{k}})$, the sheaf 
\[
\mathcal F^\vee_{\overline{k}} \otimes L,
\]
is globally generated.
The above bijection realizes 
\[
\xymatrix{
\Sec(F_1(\mathcal Q)/C, d_{\mathbb P_C(\mathcal F)}) \ar[rd]_{\mathrm{AJ}} \ar@{^{(}->}[r] & \mathbb P(\mathcal V_{d_{\mathbb P_C(\mathcal F)}}^\vee) \ar[d] \\
& \Pic^{d_{\mathbb P_C(\mathcal F)}}(C)
}
\]
as a Zariski open subset of a projective bundle over $\Pic^{d_{\mathbb P_C(\mathcal F)}}(C)$
where $\mathrm{AJ}$ is the Abel--Jacobi mapping.
Moreover, since $\mathcal F^\vee_{\overline{k}} \otimes L$ is globally generated and $\mathcal F$ has rank $2$, we conclude that the Abel--Jacobi mapping
\[
\mathrm{AJ} : \Sec(F_1(\mathcal Q)/C, d_{\mathbb P_C(\mathcal F)}) \to \Pic^{d_{\mathbb P_C(\mathcal F)}}(C),
\]
is surjective.

Next suppose that we have a section $\tau : C \to \mathbb P_C(\mathcal F)$ of degree $d_{\mathbb P_C(\mathcal F)}$ inducing $\Sigma_C$.
The deformation theory of $\Sec(F_1(\mathcal Q)/C, d_{\mathbb P_C(\mathcal F)}, \Sigma_C)$ is controlled by the twisted normal bundle $N_\tau(-\Sigma_C)$ which has degree
\[
2d_{\mathbb P_C(\mathcal F)} - \deg(\mathcal F) - \deg (\Sigma_C).
\]
These observations imply the following:
\begin{lemm}
\label{lemm:spaceofsectionsP}
Assume that $2d_{\mathbb P_C(\mathcal F)} - \deg(\mathcal F) - \deg (\Sigma_C) \geq 2g(C)-1$. Then, we have $H^1(C, N_\tau(-\Sigma_C)) = 0$ so that $\Sec(F_1(\mathcal Q)/C, d_{\mathbb P_C(\mathcal F)}, \Sigma_C)$ is smooth and has expected dimension
\[
2d_{\mathbb P_C(\mathcal F)} - \deg(\mathcal F) - \deg (\Sigma_C) + 1 - g(C).
\]
\end{lemm}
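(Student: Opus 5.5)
The plan is to compute the normal bundle of a section $\tau : C \to \mathbb P_C(\mathcal F)$ explicitly and then apply the vanishing criterion for line bundles on curves.

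First, for a section $\tau$ of the $\mathbb P^1$-bundle $\pi_{\mathbb P_C(\mathcal F)} : \mathbb P_C(\mathcal F) \to C$, the normal bundle equals the restriction of the relative tangent bundle. So $N_\tau \cong \tau^*T_{\mathbb P_C(\mathcal F)/C}$, and this is a line bundle on $C$. We identify it via the relative Euler sequence pulled back along $\tau$. With the surjection $\mathcal F \twoheadrightarrow L$, $L = \tau^*\mathcal O(1)$, and kernel $M = \ker(\mathcal F \to L)$ of degree $\deg(\mathcal F) - d_{\mathbb P_C(\mathcal F)}$, we get $N_\tau \cong \mathcal{H}om(M, L) = M^\vee \otimes L$. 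Its degree is $2d_{\mathbb P_C(\mathcal F)} - \deg(\mathcal F)$.

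Next, we interpret the condition of inducing $\Sigma_C$. Each admissible jet in $\Sigma_C$ prescribes the first $\mathrm{length}$ terms of the Taylor expansion of $\tau$ at its support point. Following the standard argument (as in \cite{HT06} and \cite[Section 4]{TT25}), first-order deformations of $\tau$ that preserve these jets are exactly the sections of $N_\tau(-\Sigma_C)$, where $\Sigma_C$ is regarded as an effective divisor of degree $\deg(\Sigma_C)$ on $C$. Obstructions to such deformations lie in $H^1(C, N_\tau(-\Sigma_C))$. This is the one point needing care: one must check that the jet-fixing deformation problem is the relative Hom-scheme problem with the jet subscheme fixed. That holds because the jet is a closed subscheme of $C$ mapped into the fiber, and sections are determined by the morphism.

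Finally, $N_\tau(-\Sigma_C)$ is a line bundle of degree
\[
2d_{\mathbb P_C(\mathcal F)} - \deg(\mathcal F) - \deg(\Sigma_C) \geq 2g(C) - 1 > 2g(C) - 2.
\]
By Serre duality, $H^1(C, N_\tau(-\Sigma_C)) = H^0(C, \omega_C \otimes N_\tau(-\Sigma_C)^\vee)^\vee$, and this vanishes because the line bundle $\omega_C \otimes N_\tau(-\Sigma_C)^\vee$ has negative degree. Hence the deformations are unobstructed at every point, so $\Sec(F_1(\mathcal Q)/C, d_{\mathbb P_C(\mathcal F)}, \Sigma_C)$ is smooth. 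Its dimension is $h^0(N_\tau(-\Sigma_C))$, which by Riemann--Roch equals $2d_{\mathbb P_C(\mathcal F)} - \deg(\mathcal F) - \deg(\Sigma_C) + 1 - g(C)$. We would verify this over $\overline{k}$ and then descend, since smoothness and dimension are geometric properties.
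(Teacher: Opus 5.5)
Your proposal is correct and follows the same route as the paper: the jet-constrained deformations of $\tau$ are governed by the line bundle $N_\tau(-\Sigma_C)$. That bundle has degree $2d_{\mathbb P_C(\mathcal F)}-\deg(\mathcal F)-\deg(\Sigma_C)\geq 2g(C)-1$, so $H^1$ vanishes by Serre duality and Riemann--Roch gives the dimension. You simply spell out details the paper leaves implicit: the identification $N_\tau\cong \tau^*T_{\mathbb P_C(\mathcal F)/C}$ via the Euler sequence, and the fixed-subscheme deformation theory.
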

We have the exact sequence:
\[
0 \to \mathcal O \to \mathcal F^\vee \otimes L \to T_{\mathbb P_C(\mathcal F)/C}|_C \cong N_\tau \to 0.
\]
Let
\[
 \mathcal F^\vee(-\Sigma_C) \otimes L,
\]
be the kernel of
\[
 \mathcal F^\vee \otimes L \to N_\tau/N_\tau(-\Sigma_C) \to 0.
\]
Note that  $\mathcal F^\vee(-\Sigma_C)$ does not depend on the choice of $L$ and $\tau$.
A nowhere vanishing section of
\[
\rH^0(C,  \mathcal F^\vee(-\Sigma_C) \otimes L) \subset \rH^0(C,  \mathcal F^\vee \otimes L)
\]
corresponds to  $\tau : C \to \mathbb P_C(\mathcal F)$ inducing $\Sigma_C$ with $\tau^*\mathcal O_{\mathbb P_C(\mathcal F)}(1) \cong L$.

A key to our argument is the following lemma:

\begin{lemm}
\label{lemm:slope}
Assume that our ground field is an algebraically closed field of characteristic $\neq 2$.
Assume the existence of $\tau$ as above.
Let $\mathsf c_2(\mathcal F) =  \deg(\mathcal F) + 2g(C)$. If $$2d_{\mathbb P_C(\mathcal F)} -\deg(\Sigma_C)\geq \mathsf c_2(\mathcal F),$$
 then the minimal slope $\mu^{\min}(\mathcal F^\vee(-\Sigma_C) \otimes L)$ satisfies
\[
\mu^{\min}(\mathcal F^\vee(-\Sigma_C) \otimes L) \geq 0.
\]
\end{lemm}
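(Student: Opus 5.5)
We work with $W := \mathcal F^\vee(-\Sigma_C)\otimes L$, a rank $2$ locally free sheaf on $C$. Its degree is computed from $\det(\mathcal F^\vee\otimes L) = \det(\mathcal F)^{-1}\otimes L^{2}$ and the fact that the subsheaf $W\subset \mathcal F^\vee\otimes L$ has colength $\deg(\Sigma_C)$. Since $N_\tau/N_\tau(-\Sigma_C)$ is a quotient of length $\deg\Sigma_C$, we get
\[
\deg W = 2d_{\mathbb P_C(\mathcal F)} - \deg(\mathcal F) - \deg(\Sigma_C).
\]
Because $W$ has rank $2$, its minimal slope is either $\mu(W)$ (if $W$ is semistable) or $\deg(W/M)$, where $M\subset W$ is the maximal destabilizing line subbundle. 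So it suffices to show that every line bundle quotient $W\twoheadrightarrow N$ has $\deg N\ge 0$. Equivalently, we must show every saturated line subbundle $M\subset W$ has $\deg M\le \deg W$.

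The key input is the section $\tau$ itself. By construction it gives a nowhere vanishing section $s\in \rH^0(C,W)$, hence an injection $\mathcal O_C\hookrightarrow W$ whose cokernel is a line bundle $N_0$. Comparing with the exact sequence $0\to\mathcal O\to\mathcal F^\vee\otimes L\to N_\tau\to0$, we identify $N_0\cong N_\tau(-\Sigma_C)$, a line bundle of degree $\deg W$.

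Now take any saturated line subbundle $M\subset W$ and split into two cases.
\begin{itemize}
\item If $M=\mathcal O_C\cdot s$, then $W/M=N_0$ has degree $2d_{\mathbb P_C(\mathcal F)}-\deg\mathcal F-\deg\Sigma_C$. The hypothesis gives $2d-\deg\Sigma_C\ge \deg\mathcal F+2g(C)$, so this degree is at least $2g(C)\ge0$.
\item If $M\neq \mathcal O_C\cdot s$, then the composite $M\to W\to N_0$ is nonzero, so $\deg M\le \deg N_0=\deg W$. Hence $\deg(W/M)=\deg W-\deg M\ge 0$.
\end{itemize}
In either case every rank-one quotient of $W$ has nonnegative degree, so $\mu^{\min}(W)\ge 0$.

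This elementary argument is essentially complete, and the only real point to verify carefully is the identification $W/\mathcal O s\cong N_\tau(-\Sigma_C)$. For this I would check that the section $s$ of $\mathcal F^\vee\otimes L$ defining $\tau$ actually lies in $W$. That holds because $\tau$ induces $\Sigma_C$: the image of $s$ in $N_\tau$ is zero, and in particular it vanishes in $N_\tau/N_\tau(-\Sigma_C)$. The rest is a snake lemma argument on the diagram of the two short exact sequences.

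The hypothesis $2d-\deg\Sigma_C\ge \mathsf c_2(\mathcal F)$ is used only to guarantee that $\deg W\ge 0$ in the first case. The slack $2g(C)$ is not needed for this lemma; presumably it is there for later uses, such as the vanishing of $\rH^1$ after a further twist. Characteristic $\neq 2$ and algebraic closedness enter only through the existence of the setup (the identification $F_1(\mathcal Q)\cong\mathbb P_C(\mathcal F)$ and the induced jets $\Sigma_C$).
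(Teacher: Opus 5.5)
Your proposal is correct and is essentially the paper's proof: it uses the same exact sequence $0\to\mathcal O\to \mathcal F^\vee(-\Sigma_C)\otimes L\to N_\tau(-\Sigma_C)\to 0$ and the same dichotomy, according to whether the destabilizing line subbundle equals $\mathcal O$ or maps nontrivially to $N_\tau(-\Sigma_C)$, the latter giving $\deg(\mathcal L_1)\le 2d_{\mathbb P_C(\mathcal F)}-\deg(\mathcal F)-\deg(\Sigma_C)$. Your observation that only $\deg(\mathcal F^\vee(-\Sigma_C)\otimes L)\ge 0$ is used, so that the extra $2g(C)$ in the hypothesis is slack for this lemma, also matches the paper's argument.
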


\begin{proof}
When $\mathcal F^\vee(-\Sigma_C) \otimes L$ is semistable, our assertion is obvious because the degree of this bundle is 
\[
2d_{\mathbb P_C(\mathcal F)} - \deg (\mathcal F) - \deg(\Sigma_C).
\]

Suppose that $\mathcal F^\vee(-\Sigma_C) \otimes L$ is unstable.
Let $$\mathcal L_1 \subset \mathcal F^\vee(-\Sigma_C) \otimes L,$$ be the maximal destabilizing invertible sheaf and we denote its quotient by $\mathcal L_2$. Then the minimal slope $\mu^{\min}(\mathcal F^\vee(-\Sigma_C) \otimes L)$ is equal to $\deg (\mathcal L_2)$.

We have the exact sequence
\[
0 \to \mathcal O \to \mathcal F^\vee(-\Sigma_C) \otimes L \to N_\tau(-\Sigma_C) \to 0.
\]
If $\mathcal L_1 \subset \mathcal O$, then the degree of $\mathcal L_2$ is greater than or equal to 
\[
2d_{\mathbb P_C(\mathcal F)} - \deg (\mathcal F) - \deg(\Sigma_C).
\]
Thus we may assume that the homomorphism $\mathcal L_1 \to N_\tau(-\Sigma_C)$ is non-zero.
This means that 
\[
\deg(\mathcal L_1) \leq 2d_{\mathbb P_C(\mathcal F)} - \deg (\mathcal F) - \deg(\Sigma_C).
\]
Equivalently we have $\deg(\mathcal L_2 ) \geq 0$.
Thus our assertion follows.

\end{proof}


Assume that 
\[
\mu^{\min}(\mathcal F^\vee(-\Sigma_C) \otimes L) \geq 2g(C),
\]
for every $L \in \Pic^{d_{\mathbb P_C(\mathcal F)}}(C)(\overline{k}).$
By \cite[Corollary 2.9]{LRT25}, 
\begin{align*}
\rH^1(C,  \mathcal F^\vee(-\Sigma_C) \otimes L) =0,
\end{align*}
and $\mathcal F^\vee(-\Sigma_C) \otimes L$ is globally generated.
Finally, it follows from the Riemann--Roch theorem that we have
\[
\dim \rH^0(C,  \mathcal F^\vee(-\Sigma_C) \otimes L)  = 2d_{\mathbb P_C(\mathcal F)} - \deg(\mathcal F)- \deg(\Sigma_C) + 2(1-g(C)).
\]
These show that  the fibers of the Abel--Jacobi mapping
\[
\mathrm{AJ}: \Sec(F_1(\mathcal Q)/C, d_{\mathbb P_C(\mathcal F)}, \Sigma_C) \to \mathrm{Pic}^{d_{\mathbb P_C(\mathcal F)}}(C), \, \tau \mapsto L,
\]
are smooth, equidimensional, and surjective. Thus, it follows from \cite[Lemma 2.1]{DLTT25} that $\mathrm{AJ}$ is a smooth morphism and we have $$\mathrm{AJ}(\Sec(F_1(\mathcal Q)/C, d_{\mathbb P_C(\mathcal F)}, \Sigma_C)) = \mathrm{Pic}^{d_{\mathbb P_C(\mathcal F)}}(C).$$
Moreover, the sheaf
\[
\mathcal V_{d_{\mathbb P_C(\mathcal F)}, \Sigma_C} = \varpi_{d_{\mathbb P_C(\mathcal F)}*}(\varpi^*\mathcal F^\vee(-\Sigma_C) \otimes \mathcal L_{d_{\mathbb P_C(\mathcal F)}}),
\]
is a locally free sheaf of rank 
\[
2d_{\mathbb P_C(\mathcal F)} - \deg(\mathcal F)- \deg(\Sigma_C) + 2(1-g(C)),
\]
over the image $\mathrm{AJ}(\Sec(F_1(\mathcal Q)/C, d_{\mathbb P_C(\mathcal F)}, \Sigma_C))$.
Moreover, it follows from our assumption that 
\[
\mathrm{AJ}(\Sec(F_1(\mathcal Q)/C, d_{\mathbb P_C(\mathcal F)}, \Sigma_C)) = \Pic^{d_{\mathbb P_C(\mathcal F)}}(C).
\]
Combining everything together, we have
\begin{prop} 
\label{prop:spaceofsectionsforQ}
Assume that 
\[
\mu^{\min}(\mathcal F^\vee(-\Sigma_C) \otimes L) \geq 2g(C)
\] holds for every $L \in \Pic^{d_{\mathbb P_C(\mathcal F)}}(C)(\overline{k})$.
Then we have an open embedding
\[
\xymatrix{
\Sec(F_1(\mathcal Q)/C, d_{\mathbb P_C(\mathcal F)}, \Sigma_C) \ar[rd]_{\mathrm{AJ}} \ar@{^{(}->}[r] & \mathbb P(\mathcal V_{d_{\mathbb P_C(\mathcal F)}, \Sigma_C}^\vee) \ar[d] \\
&\Pic^{d_{\mathbb P_C(\mathcal F)}}(C)
}
\]
which realizes $\Sec(F_1(\mathcal Q)/C, d_{\mathbb P_C(\mathcal F)}, \Sigma_C)$ as a Zariski open subset of a projective bundle over $\Pic^{d_{\mathbb P_C(\mathcal F)}}(C)$.
\end{prop}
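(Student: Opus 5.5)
The plan is to realize the space of sections as the complement, inside the total space of a vector bundle over $\Pic^{d_{\mathbb P_C(\mathcal F)}}(C)$, of the locus of sections that vanish somewhere, and then projectivize. We use the bijection $\{\tau : C \to \mathbb P_C(\mathcal F)\} \leftrightarrow \{\mathcal F \twoheadrightarrow L\}$, which identifies $\tau$ with a nowhere vanishing section of $\mathcal F^\vee\otimes L$ up to scalars.

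\emph{Step 1: the constraint is linear.} Since $\mathcal F^\vee(-\Sigma_C)$ is independent of $L$ and $\tau$, the condition that $\tau$ induce $\Sigma_C$ becomes the condition that the corresponding nowhere vanishing section of $\mathcal F^\vee\otimes L$ lie in the subspace $\rH^0(C,\mathcal F^\vee(-\Sigma_C)\otimes L)$. This equivalence holds in families: for a $T$-family of sections, it means the section lies in the subsheaf $\varpi^*\mathcal F^\vee(-\Sigma_C)\otimes \mathcal L$. One checks this using the exact sequence $0\to\mathcal O\to\mathcal F^\vee\otimes L\to N_\tau\to 0$ and the definition of the jet condition via $N_\tau/N_\tau(-\Sigma_C)$.

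\emph{Step 2: local freeness and base change.} By hypothesis $\mu^{\min}(\mathcal F^\vee(-\Sigma_C)\otimes L)\ge 2g(C)$ for every geometric point $L$. Then \cite[Corollary 2.9]{LRT25} gives $\rH^1=0$ and global generation fiberwise, and Riemann--Roch makes $h^0$ constant, equal to $2d_{\mathbb P_C(\mathcal F)}-\deg\mathcal F-\deg\Sigma_C+2(1-g(C))$. By cohomology and base change (Grauert), $\mathcal V_{d_{\mathbb P_C(\mathcal F)},\Sigma_C}$ is locally free of that rank, and its formation commutes with base change. Hence the total space $\mathbb V(\mathcal V^\vee)$ represents pairs $(L,s)$ with $s\in \rH^0(\mathcal F^\vee(-\Sigma_C)\otimes L)$ (together with a rigidification of $\mathcal L$ along a point, to handle the Poincaré bundle; on $\mathbb P$ the ambiguity disappears). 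The projectivization $\mathbb P(\mathcal V^\vee)$ then represents pairs $(L,[s])$.

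\emph{Step 3: the open embedding.} The locus where $s$ vanishes at some point of $C$ is closed: it is the image under the proper projection of the zero scheme in $C\times\mathbb P(\mathcal V^\vee)$. Its complement $U$ is therefore open. The map $U\to\Sec(F_1(\mathcal Q)/C,d_{\mathbb P_C(\mathcal F)},\Sigma_C)$ sending $(L,[s])$ to the section defined by $\mathcal F\twoheadrightarrow L$ is inverse to $\tau\mapsto(\tau^*\mathcal O(1),[\text{induced section}])$, by the universal property of $\mathbb P_C(\mathcal F)$ applied in families. This gives the isomorphism onto $U$, and the projection to $\Pic$ is $\mathrm{AJ}$ by construction.

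\emph{Step 4: surjectivity.} Since $\mathcal F^\vee(-\Sigma_C)\otimes L$ is globally generated of rank $2$ on a curve, a general section is nowhere vanishing; the bad locus has codimension $\ge 2-1=1$ in each fiber. Every fiber of $U\to\Pic$ is thus nonempty, and $U$ is a Zariski open subset of a projective bundle over all of $\Pic^{d_{\mathbb P_C(\mathcal F)}}(C)$.

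The main obstacle is Step 1 in families: verifying scheme-theoretically, not just on points, that the jet condition for $\tau$ matches containment in the $L$-independent subsheaf $\mathcal F^\vee(-\Sigma_C)\otimes L$, especially at branch points where $\Sigma_C$ contains higher-order jets. To handle it, I would work locally on $C$ near each jet, trivializing $\mathcal F$ and $L$, and compare both conditions as equalities of truncated power series.
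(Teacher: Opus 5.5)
Your proposal is correct and follows essentially the same route as the paper. The shared ingredients are: fiberwise $\rH^1$-vanishing and global generation from \cite[Corollary 2.9]{LRT25} under the hypothesis $\mu^{\min}\geq 2g(C)$; constancy of $h^0$ by Riemann--Roch, so that $\mathcal V_{d_{\mathbb P_C(\mathcal F)},\Sigma_C}$ is locally free; the identification of sections inducing $\Sigma_C$ with nowhere vanishing sections of $\mathcal F^\vee(-\Sigma_C)\otimes L$; and global generation of a rank $2$ bundle on a curve, which gives surjectivity of $\mathrm{AJ}$. The only difference is that the paper deduces smoothness of $\mathrm{AJ}$ from its smooth equidimensional fibers via \cite[Lemma 2.1]{DLTT25}, whereas you build the open embedding directly as the complement of the closed vanishing locus, using the family-level version of Step 1 that the paper leaves implicit, and smoothness then comes for free.
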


\subsection{The space of sections on blow-up models}
\label{subsec:spaceofsectionsonblowup}

First we state our setup in this section:

\noindent
{\bf Setup}:
Let $k$ be an algebraically closed field or a finite field. We assume that the characteristic of $k$ is not equal to $2$.
Let $B$ be a geometrically integral smooth projective curve over $k$.
Let $\pi_{\mathcal Q} : \mathcal Q \to B$ be a quadric surface bundle over $B$ as in Section~\ref{subsec:quadric}.
We fix a geometrically integral smooth projective degree $4$ multisection $\mathcal Z \subset \mathcal Q$,
and let $\phi : \mathcal X \to \mathcal Q$ be the blow-up of $\mathcal Q$ along $\mathcal Z$.
We assume that the generic fiber is a smooth quartic del Pezzo surface.
This assumption implies that $\mathcal Z \to B$ is separable.
Under this setup, we study the space of sections of $\pi : \mathcal X \to B$.

\subsubsection*{The geometry of the space of sections of $\pi$}

First, we compute a basis for $N^1(\mathcal X)_{\mathbb Z}$. Let $F_{\mathcal Q}$ be a class of a general geometric fiber of $\pi_{\mathcal Q} : \mathcal Q \to B$. Let $\xi_{\mathcal Q}$ be the class of  $\mathcal O_{\mathbb P(\mathcal E)}(1)|_{\mathcal Q}$ where $\mathcal Q \subset \mathbb P(\mathcal E)$. Then, a basis for $N^1(\mathcal Q)_{\mathbb Z}$ is given by
\[
F_{\mathcal Q}, \xi_{\mathcal Q}.
\]
We denote the pullbacks of these classes to $\mathcal X$ by $F, \xi$, and let $E$ be the exceptional divisor for $\phi : \mathcal X \to \mathcal Q$. Then a basis for $N^1(\mathcal X)_{\mathbb Z}$ is given by
\[
F, \xi, E.
\]
The blow-up formula shows that the relative anticanonical divisor $-K_{\mathcal X/B}$ satisfies
\[
-K_{\mathcal X/B} \sim -K_{\mathcal Q/B} - E.
\]

Let $\alpha$ be a class of a section of $\pi : \mathcal X \to B$. This satisfies $F.\alpha = 1$ and it is specified by two numbers:
\[
h_{\mathcal Q}(\alpha) = -K_{\mathcal Q/B}.\alpha, \quad k(\alpha) = E.\alpha.
\]
These satisfy 
\[
h(\alpha) = -K_{\mathcal X/B}.\alpha = h_{\mathcal Q}(\alpha) - k(\alpha).
\]
We are interested in the space of sections of class $\alpha$:
\[
\mathrm{Sec}(\mathcal X/B, \alpha).
\]

We consider the following structure map:
let $$\mathrm{Hilb}^{[k(\alpha)]}(\mathcal Z),$$ be the Hilbert scheme of length $k(\alpha)$ zero-dimensional subschemes on $\mathcal Z$. Let $w\in \mathrm{Hilb}^{[k(\alpha)]}(\mathcal Z)$ which is supported at $\{z_j\} \subset \mathcal Z$, and assume that the multiplicity at $z_j$ is given by $m_j$. Since $\mathcal Z$ is smooth, it induces an $N_j$-th jet $\hat{\sigma}_j$ on $\mathcal Q$ where $N_j = m_j - 1$.
We denote the set of jets
\[
\{ \hat{\sigma}_j \}
\]
by $\Sigma_B(w)$. We define the Zariski open subset
\[
U_{k(\alpha)} = \{ [w] \in \mathrm{Hilb}^{[k(\alpha)]}(\mathcal Z) \, |\, \text{ $\Sigma_B(w)$ is admissible for $\pi_{\mathcal Q}$} \} \subset \mathrm{Hilb}^{[k(\alpha)]}(\mathcal Z).
\]
Then we obtain the following structure morphism:
\[
\Phi_\alpha : \mathrm{Sec}(\mathcal X/B, \alpha) \to U_{k(\alpha)}, [\sigma : B \to \mathcal X] \mapsto \phi_*(\sigma(B)\cap E).
\]
Regarding this structure morphism, we have the following result:
\begin{theo}
\label{theo:spaceofsectionsforX}
Let $C, \mathcal F, \Delta$ be as in Section~\ref{subsec:quadric}. 
Assume that we have
\[
h(\alpha) - k(\alpha) \geq  - \frac{\Delta}{2} +2g(C).
\]
Then, the space 
\[
\mathrm{Sec}(\mathcal X/B, \alpha)
\]
is of expected dimension $$h(\alpha) + 2(1-g(B)).$$ Moreover it is smooth and the structure morphism $\Phi_\alpha$ is dominant and smooth.
\end{theo}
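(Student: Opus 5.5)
The plan is to study $\Phi_\alpha$ fiber by fiber, identifying each fiber with a space of sections of $\mathcal Q$ with prescribed jets, and then to use the results of the previous subsection.

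First, for $w \in U_{k(\alpha)}$, consider the fiber $\Phi_\alpha^{-1}(w)$. A section $\sigma : B \to \mathcal X$ of class $\alpha$ with $\phi_*(\sigma(B)\cap E) = w$ corresponds, via $\phi$, to a section $\phi\circ\sigma : B \to \mathcal Q$ of height $h_{\mathcal Q}(\alpha)$. Since $\mathcal Z$ is a smooth curve, this section meets $\mathcal Z$ scheme-theoretically exactly along $w$, so it induces the jets $\Sigma_B(w)$. Conversely, a section of $\mathcal Q$ inducing $\Sigma_B(w)$ whose scheme-theoretic intersection with $\mathcal Z$ is exactly $w$ lifts uniquely to $\mathcal X$ with class $\alpha$. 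Thus $\Phi_\alpha^{-1}(w)$ is an open subscheme of $\Sec(\mathcal Q/B, h_{\mathcal Q}(\alpha), \Sigma_B(w))$, the open condition being that the section acquires no additional intersection with $\mathcal Z$. By Proposition~\ref{prop:lemma18}, this space is isomorphic to $\Sec(F_1(\mathcal Q)/C, d, \Sigma_C(w))$ with $\deg \Sigma_C(w) = 2k(\alpha)$ and $h_{\mathcal Q} = 2d - \deg(\mathcal F) - \Delta/2$. Hence
\[
2d - \deg(\mathcal F) - \deg\Sigma_C = h_{\mathcal Q}(\alpha) + \tfrac{\Delta}{2} - 2k(\alpha) = h(\alpha) - k(\alpha) + \tfrac{\Delta}{2} \geq 2g(C)
\]
by hypothesis. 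By Lemma~\ref{lemm:spaceofsectionsP}, each fiber is therefore smooth of dimension $h(\alpha)-k(\alpha)+\Delta/2+1-g(C) = h(\alpha)-k(\alpha)+2-2g(B)$, using $g(C) = 2g(B)-1+\Delta/2$.

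Second, I will control the total space through the deformation theory of $\sigma$ in $\mathcal X$. The normal bundle satisfies $N_{\sigma/\mathcal X} \cong N_{\phi\sigma/\mathcal Q}(-w)$ near $B$, because blowing up the smooth curve $\mathcal Z$ twists the normal directions along the intersection. Its degree is $h(\alpha)$. The vanishing of $H^1$ should follow from the slope bound in Lemma~\ref{lemm:slope}, transported from $C$ to $B$ via the correspondence; more concretely, $H^1(C, N_\tau(-\Sigma_C)) = 0$ pushes forward to give $H^1(B, N_{\sigma/\mathcal X}) = 0$. Then $\Sec(\mathcal X/B, \alpha)$ is smooth of dimension $h(\alpha) + 1 - g(B) + (1 - g(B)) = h(\alpha) + 2(1-g(B))$, as expected, since $N_{\sigma}$ has rank $2$.

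For dominance and smoothness of $\Phi_\alpha$, note that $U_{k(\alpha)}$ is smooth of dimension $k(\alpha)$, being an open subset of $\mathrm{Sym}^{k(\alpha)}\mathcal Z$. The fibers computed above are smooth, and their dimensions add correctly: $(h(\alpha)-k(\alpha)+2-2g(B)) + k(\alpha)$ equals the dimension of the total space. Nonemptiness of every fiber over $U_{k(\alpha)}$ follows from Lemma~\ref{lemm:slope} together with global generation: $\mathcal F^\vee(-\Sigma_C)\otimes L$ has a nowhere vanishing section for suitable $L$, after checking that the jet condition is nonempty, which I expect admissibility to guarantee. It remains to check that a general such section has no further intersection with $\mathcal Z$; this is a codimension $\geq 1$ condition by a dimension count, using that $\mathcal F^\vee(-\Sigma_C-c)\otimes L$ still has sections. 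A morphism between smooth varieties that is surjective with smooth equidimensional fibers of the right dimension is smooth, by \cite[Lemma 2.1]{DLTT25} (miracle flatness).

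The main obstacle is the $H^1$-vanishing on the total space, together with the nonemptiness of the fibers. The hypothesis only gives slope $\geq 0$ via Lemma~\ref{lemm:slope}, and this falls short of the $2g(C)$ needed in Proposition~\ref{prop:spaceofsectionsforQ}. I would first try to get smoothness directly from the tangent-space computation $T_\sigma = H^0(N_\tau(-\Sigma_C))$ plus the $k(\alpha)$ directions moving $w$. I would then use Lemma~\ref{lemm:spaceofsectionsP}, which needs only degree $\geq 2g(C)-1$, to obtain the $H^1$ vanishing.
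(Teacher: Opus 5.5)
Your fiberwise step matches the paper: your ``open subscheme of $\mathrm{Sec}(\mathcal Q/B,h_{\mathcal Q}(\alpha),\Sigma_B(w))$'' is even slightly more careful than the paper's ``isomorphic'', and the fiber dimension $h(\alpha)-k(\alpha)+2(1-g(B))$ is correct. The global part has two problems.

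\textbf{The normal bundle identification is false (but fixable).} The identity $N_{\sigma/\mathcal X}\cong N_{\phi\sigma/\mathcal Q}(-w)$ cannot hold. The right-hand side has degree $h_{\mathcal Q}(\alpha)-2k(\alpha)=h(\alpha)-k(\alpha)$, while you yourself correctly say $\deg N_{\sigma/\mathcal X}=h(\alpha)$. The sheaf $N_{\phi\sigma/\mathcal Q}(-w)$ governs deformations with $w$ fixed, i.e.\ the fiber $\Phi_\alpha^{-1}(w)$, not deformations of $\sigma$ in $\mathcal X$.

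What is true is that $d\phi$ embeds $N_{\sigma/\mathcal X}$ into $N_{\phi\sigma/\mathcal Q}$ with cokernel a cyclic $\mathcal O_w$-module of length $k(\alpha)$. Hence $N_{\phi\sigma/\mathcal Q}(-w)\subset N_{\sigma/\mathcal X}$ with torsion quotient. The fiber is smooth of dimension $\chi(N_{\phi\sigma/\mathcal Q}(-w))$, so $H^1(N_{\phi\sigma/\mathcal Q}(-w))=0$, and therefore $H^1(N_{\sigma/\mathcal X})=0$. With this correction your $H^1$ route works and is a legitimate alternative. The paper never proves $H^1$-vanishing on the total space; it only uses the standard lower bound $\dim\ge\chi(N_\sigma)=h(\alpha)+2(1-g(B))$ on every component.

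\textbf{The real gap: you aim at surjectivity instead of dominance.} You try to show that every fiber of $\Phi_\alpha$ over $U_{k(\alpha)}$ is nonempty. The theorem does not assert this, and the paper never claims it: Lemma~\ref{lemm:dimensionestimates} and Proposition~\ref{prop:dominantsmooth} only bound the dimension of $U_{k(\alpha)}\setminus\Phi_\alpha(\mathrm{Sec}(\mathcal X/B,\alpha))$. Your argument for it is also circular. Lemma~\ref{lemm:slope} presupposes a section $\tau$ inducing $\Sigma_C$. Moreover, $\mu^{\min}\ge 0$ would not produce a nowhere-vanishing section of $\mathcal F^\vee(-\Sigma_C)\otimes L$ anyway.

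The missing idea is that dominance comes for free from the dimension count. Let $M$ be an irreducible component of $\mathrm{Sec}(\mathcal X/B,\alpha)$. Then $\dim M\ge h(\alpha)+2(1-g(B))$, while the fibers of $\Phi_\alpha|_M$ have dimension at most $h(\alpha)-k(\alpha)+2(1-g(B))$. So $\dim\overline{\Phi_\alpha(M)}\ge k(\alpha)=\dim U_{k(\alpha)}$. Since $U_{k(\alpha)}$ is irreducible (open in $\mathrm{Sym}^{k(\alpha)}\mathcal Z$), $\Phi_\alpha|_M$ is dominant.

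The same count shows every component has exactly the expected dimension. Then \cite[Lemma 2.1]{DLTT25} gives smoothness of $\mathrm{Sec}(\mathcal X/B,\alpha)$ and of $\Phi_\alpha$. This is the paper's proof. Neither surjectivity nor your codimension-one argument about extra intersections with $\mathcal Z$ is needed.
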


\begin{proof}
We may assume that our ground field is an algebraically closed field.
Let $w \in U_{k(\alpha)}$ be an image of $\Phi_\alpha$. Then, the fiber $\Phi^{-1}(w)$ is isomorphic to
\[
\mathrm{Sec}(\mathcal Q/B, h_{\mathcal Q}(\alpha), \Sigma_B(w)).
\]
Indeed, the pushforward map gives
\[
\Phi^{-1}(w) \to \mathrm{Sec}(\mathcal Q/B, h_{\mathcal Q}(\alpha), \Sigma_B(w)), \sigma  \mapsto \phi\circ\sigma,
\]
and the inverse map is given by taking the strict transforms.
Thus, it follows from Proposition~\ref{prop:lemma18} and Lemma~\ref{lemm:spaceofsectionsP} that this space is smooth, and has expected dimension
\begin{align*}
&2d_{F_1(\mathcal Q)}(\alpha) - \deg(\mathcal F) - 2 \deg(\Sigma_B) + 1 - g(C)\\ &= h_{\mathcal Q}(\alpha) -2\deg(\Sigma_B) + 2 - 2g(B) = h(\alpha) - k(\alpha) + 2(1-g(B)).
\end{align*}
Here we use the equalities
\begin{align*}
&h_{\mathcal Q}(\alpha) = 2d_{F_1(\mathcal Q)}(\alpha) - \deg(\mathcal F) - \frac{\Delta}{2}, \quad h(\alpha) = h_{\mathcal Q}(\alpha) - k(\alpha)\\
& \deg(\Sigma_B(w)) = k(\alpha), \quad g(C) = 2g(B) -1 + \frac{\Delta}{2}.
\end{align*}
Since every component of $\mathrm{Sec}(\mathcal X/B, \alpha)$ has dimension $$\geq h(\alpha) + 2(1-g(B)) = h(\alpha) - k(\alpha) + 2(1-g(B)) + k(\alpha),$$ we conclude that every component of this scheme has expected dimension. 
It follows from \cite[Lemma 2.1]{DLTT25} that $\mathrm{Sec}(\mathcal X/B, \alpha)$ is smooth and $\Phi_\alpha$ is smooth. 
\end{proof}
\begin{defi}
We define
\[
\mathrm{neg}(\mathbb P(\mathcal F)/C) := \min \{ d_{\mathbb P_C(\mathcal F)} (\tau)\, | \, \tau \in \Sec(\mathbb P(\mathcal F)/C)(\overline{k})\}.
\]

\end{defi}

The following lemma is inspired by \cite[Lemma 4.11]{DLTT25}, and it is a key to our homological sieve arguments developed in Section~\ref{subsec:sieve}:
\begin{lemm}
\label{lemm:dimensionestimates}
Let $k$ be an algebraic closed field or a finite field.
We assume that $2d_{\mathbb P_C(\mathcal F)}  - 2k(\alpha) \geq \mathsf c_2(\mathcal F)$.
Then the complement
\[
U_{k(\alpha)} \setminus \Phi_\alpha (\mathrm{Sec}(\mathcal X/B, \alpha))
\]
has dimension 
\[
\leq \max\{2k(\alpha) + \deg(\mathcal F)-1 + 5g(C) - d_{\mathbb P_C(\mathcal F)} - \mathrm{neg}(\mathbb P(\mathcal F)/C), k(\alpha) - (2d_{\mathbb P_C(\mathcal F)}-2k(\alpha)) + \deg(\mathcal F)-1 + 8g(C)\}.
\]
 In particular, the codimension of this complement in $U_{k(\alpha)}$ is greater than or equal to 
 $$\min\left\{\frac{1}{2}(h(\alpha) - k(\alpha))-\frac{\deg(\mathcal F)}{2} + \frac{\Delta}{4}+1 - 5g(C) + \mathrm{neg}(\mathbb P(\mathcal F)/C), h(\alpha) - k(\alpha)  + \frac{\Delta}{2} -8g(C)+ 1\right\}.$$
\end{lemm}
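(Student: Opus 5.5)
The plan is to characterize the bad locus through the rank-two bundle $\mathcal G_w := \mathcal F^\vee(-\Sigma_C(w))$ and then bound the dimension of the set of $w$ for which $\mathcal G_w$ is too unstable. Here $\Sigma_C(w)$ is the set of jets on $C$ induced by $\Sigma_B(w)$ via Proposition~\ref{prop:lemma18}. By that proposition, together with the identification of the fibers $\Phi_\alpha^{-1}(w)\cong \Sec(\mathcal Q/B,h_{\mathcal Q}(\alpha),\Sigma_B(w))$ from the proof of Theorem~\ref{theo:spaceofsectionsforX}, a point $w\in U_{k(\alpha)}$ lies outside the image of $\Phi_\alpha$ if and only if, for every $L\in\Pic^{d}(C)$ with $d=d_{\mathbb P_C(\mathcal F)}$, every section of $\mathcal G_w\otimes L$ vanishes somewhere.

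The first step is a criterion for being in the image. If $\mu^{\min}(\mathcal G_w\otimes L)\geq 2g(C)$ for some $L$, then by \cite[Corollary 2.9]{LRT25} the bundle $\mathcal G_w\otimes L$ is globally generated. A general section of a globally generated rank-$2$ bundle on a curve is nowhere vanishing, so $w$ lies in the image. Hence every bad $w$ admits, for every $L$ of degree $d$, a quotient line bundle $\mathcal G_w\otimes L\twoheadrightarrow M$ with $\deg M<2g(C)$. Untwisting, $\mathcal G_w$ has a quotient line bundle of degree $<2g(C)-d$. Its kernel $N\subset\mathcal G_w\subset\mathcal F^\vee$ saturates to a line subbundle $N'\subset\mathcal F^\vee$. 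Dually, $N'$ gives a quotient $\mathcal F\twoheadrightarrow N'^{\vee}$, that is, a section $\tau'$ of $\mathbb P_C(\mathcal F)\to C$ with $\deg N'^\vee\geq \mathrm{neg}(\mathbb P(\mathcal F)/C)$.

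The second step is to parametrize the bad $w$ by the data $(\tau', w)$ and count dimensions. Since $\mathcal G_w\subset\mathcal F^\vee$ has colength $2k(\alpha)$, the length of $\Sigma_C(w)$ splits into two parts: the part $a$ along which the jets are tangent to $\tau'$, so that $N'(-\text{that part})\subset\mathcal G_w$, and the remainder $2k(\alpha)-a$, which cuts down the quotient. The degree condition on the destabilizing quotient then forces an inequality linking $a$, $\deg\tau'$, $d$ and $k(\alpha)$.

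This leads to two regimes.
\begin{itemize}
\item In the first regime most of $\Sigma_C(w)$ lies on $\tau'$. Then $w$ is essentially determined by $\tau'$ together with a bounded amount of free choice. The space of sections $\tau'$ of degree $e$ has dimension at most $2e-\deg\mathcal F+O(g(C))$. Combining this with the degree inequality and the lower bound $e\geq\mathrm{neg}$ should give the first bound
\[
2k(\alpha)+\deg(\mathcal F)-1+5g(C)-d_{\mathbb P_C(\mathcal F)}-\mathrm{neg}(\mathbb P(\mathcal F)/C).
\]
\item In the second regime $w$ has many free points. Here I would instead use Lemma~\ref{lemm:slope}: under the hypothesis $2d-2k(\alpha)\geq\mathsf c_2(\mathcal F)$, if some section inducing $\Sigma_C(w)$ exists at all, then $\mu^{\min}\geq 0$. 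So the degree of the destabilizing quotient lies in a bounded window $[0,2g(C))$, and counting $w$ subject to $\tau'$ having bounded degree should yield the second bound
\[
k(\alpha)-(2d-2k(\alpha))+\deg(\mathcal F)-1+8g(C).
\]
\end{itemize}
Throughout, I use that $w\mapsto\Sigma_C(w)$ is injective on geometric points, so dimensions of loci of $\Sigma_C$ bound dimensions of loci of $w$. I also use that conjugate pairs of jets over unramified points of $B$ impose conditions in pairs.

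The main obstacle is this second step: making the splitting of $\Sigma_C(w)$ relative to $\tau'$ precise, especially at the branch points of $C\to B$ where a jet of length $m$ becomes a single jet of length $2m$, and bookkeeping the $g(C)$ error terms, including the Riemann--Roch dimension of the family of $\tau'$ of given degree, so that the constants $5g(C)$ and $8g(C)$ come out. I would first try a crude incidence-variety count over $\Sec(\mathbb P_C(\mathcal F)/C,e)\times\mathrm{Hilb}(C)$ and optimize over $e$.

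The codimension statement should then be a direct substitution. We have $\dim U_{k(\alpha)}=k(\alpha)$. Substitute $2d=h_{\mathcal Q}(\alpha)+\deg\mathcal F+\Delta/2$ and $h_{\mathcal Q}=h+k$, and use $d\geq\frac12(h-k)+\ldots$ in the first term; this yields the stated minimum.
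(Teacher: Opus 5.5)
Your Step~1 is a legitimate reformulation of the paper's starting point. The saturation $N'$ of your destabilizing line subbundle, i.e.\ your section $\tau'$, is exactly the section component $D_0$ of the reducible divisors $D_0+\sum m_iP_i$ in $|\mathcal F^\vee(-\Sigma_C)\otimes L|$ that the paper works with. One repair is needed: $\mu^{\min}(V)<2g(C)$ does not produce a quotient line bundle of degree $<2g(C)$ when $V$ is semistable. Argue directly instead: if every quotient line bundle of $V$ has degree $\geq 2g(C)$, then $\mathrm{Hom}(V,K_C(p))=0$, so $\rH^1(C,V(-p))=0$ and $V$ is globally generated. Carried out precisely, Step~1 gives the following, with $d=d_{\mathbb P_C(\mathcal F)}$: if $w$ is bad, there is a section $\tau'$ of some degree $e\geq\mathrm{neg}(\mathbb P(\mathcal F)/C)$ whose total contact $t\leq 2k(\alpha)$ with $\Sigma_C(w)$ satisfies $t\geq d+e-\deg(\mathcal F)-2g(C)+1$.

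The gap is in Step~2, which is the entire content of the lemma.

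First, Lemma~\ref{lemm:slope} is not available to you. Its hypothesis is the existence of a section of degree $d$ inducing $\Sigma_C(w)$, which is precisely what fails for $w$ outside the image. Borrowing a section of another degree does not help either. From degree $d'\leq d-2g(C)$, the twisting argument of Proposition~\ref{prop:dominantsmooth} would already put $w$ in the image. Sections of higher degree only give the useless lower bound $d-d'<0$. So there is no window $[0,2g(C))$.

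Second, and more seriously, the count you describe does not reach the stated bounds. Adding $\dim\Sec(\mathbb P_C(\mathcal F)/C,e)$ to the free part of $w$ only gives $\dim\leq 2e-\deg(\mathcal F)+1+k(\alpha)-t/2$. Optimizing over $e$ under $t\leq 2k(\alpha)$ yields roughly $4k(\alpha)-2d+\deg(\mathcal F)+O(g(C))$. This exceeds both terms of the asserted maximum: for $d\approx 1.8\,k(\alpha)$ with $k(\alpha)$ large, you get about $0.4\,k(\alpha)$ against $0.2\,k(\alpha)$ and $-0.6\,k(\alpha)$.

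The missing ingredient is an estimate for \emph{non-rigid} $\tau'$. You need the contact conditions with the moving jets to cut the family of sections by the full amount $t$, so that the incidence of pairs $(\tau',w)$ has dimension about $k(\alpha)+2e-\deg(\mathcal F)+1-t$. Combined with $t\geq d+e-\deg(\mathcal F)-2g(C)+1$ and $e\leq 2k(\alpha)-d+\deg(\mathcal F)+2g(C)-1$, this gives the second term. The paper obtains this estimate from \cite[Lemma 2.10]{LRT25}, a Clifford-type bound.

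The paper treats \emph{rigid} section components separately. There are only finitely many of them, and they meet $\iota(\mathcal W)$ in finitely many points. So only the $\ell_2$ vertical components meeting $\Sigma_C$ can move. The paper bounds $\ell_2$ using $d-\ell_1-\ell_2\geq\mathrm{neg}(\mathbb P(\mathcal F)/C)$, together with the lower bound on $\ell_1$ forced by comparing $\dim\mathsf K$ with its expected dimension.

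That rigid case is the only place the $-\mathrm{neg}$ term can come from. In your first regime the bound $2e-\deg(\mathcal F)+O(g(C))$ increases with $e$, so the lower bound $e\geq\mathrm{neg}$ cannot produce the first term. The dichotomy that drives the two bounds is rigid versus non-rigid $\tau'$, not ``most of $\Sigma_C$ on $\tau'$'' versus ``many free points''.
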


\begin{proof}
We may assume that our ground field is an algebraically closed field.
Let $w \in U_{k(\alpha)}(k)$ and assume that it is not contained in $\Phi_\alpha (\mathrm{Sec}(\mathcal X/B, \alpha))$.
Under this assumption, for any $L \in \Pic^{d_{\mathbb P_C(\mathcal F)}}(C)(k)$, sections in 
\[
\rH^0(C,  \mathcal F^\vee(-\Sigma_C) \otimes L) 
\]
correspond to reducible divisors on 
\[
\pi_{\mathbb P_C(\mathcal F)} : \mathbb P_C(\mathcal F) \to C,
\]
which is the union of a section and vertical curves.
We consider the union 
\begin{align*}
&\mathsf K = \bigcup_L \mathbb P(\rH^0(C,  \mathcal F^\vee(-\Sigma_C) \otimes L) ) \subset  
\mathcal P_{d_{\mathbb P_C(\mathcal F)}},
\end{align*}
where $\mathcal P_{d_{\mathbb P_C(\mathcal F)}}$ is the space of effective Cartier divisors on $\mathbb P_C(\mathcal F)$ for the class of sections of degree $d_{\mathbb P_C(\mathcal F)}$ and $L$ runs over $\Pic^{d_{\mathbb P_C(\mathcal F)}}(C)(k)$.

Let $D = D_0 + \sum_i m_iP_i$ be a general divisor in such a locus where $D_0$ is a section and $P_i$'s are distinct vertical curves.
Let $\ell_1$ be the number of $\pi_{\mathbb P_C(\mathcal F)}$-vertical curves of $D$ not meeting $\Sigma_C$ and $\ell_2$ be the number of $\pi_{\mathbb P_C(\mathcal F)}$-vertical curves of $D$  meeting $\Sigma_C$, counted with multiplicities. 
It follows from the Riemann-Roch theorem applying to $N_{D_0/\mathbb P_C(\mathcal F)}$ that the dimension of $\mathsf K$
is at most
\[
\max\{2g(C), 2d_{\mathbb P_C(\mathcal F)} - 2\ell_1 - \ell_2 -\deg(\mathcal F) -\deg(\Sigma_C) + 1-g(C)\} + \ell_1.
\]
Now the above estimate must be greater than or equal to 
\[
2d_{\mathbb P_C(\mathcal F)} -\deg(\mathcal F) -\deg(\Sigma_C) + 1-g(C).
\]
This is only possible when
\[
2d_{\mathbb P_C(\mathcal F)} - 2\ell_1 - \ell_2 -\deg(\mathcal F) -\deg(\Sigma_C) + 1-g(C) \leq 2g(C),
\]
which implies that
\begin{equation}
\label{equation:ell1}
\ell_1 \geq 2d_{\mathbb P_C(\mathcal F)} -\deg(\mathcal F) -\deg(\Sigma_C) + 1-4g(C).
\end{equation}
We also have
\[
d_{\mathbb P_C(\mathcal F)} - \ell_1 - \ell_2 \geq \mathrm{neg}(\mathbb P(\mathcal F)/C). 
\]
Thus we obtain
\begin{equation}
\label{equation:ell2}
2k(\alpha) + \deg(\mathcal F)-1 + 4g(C) - d_{\mathbb P_C(\mathcal F)} - \mathrm{neg}(\mathbb P(\mathcal F)/C) \geq \ell_2.
\end{equation}

On the other hand, the dimension of the locus in
\begin{align*}
&\bigcup_{(w, L) \in U_{k(\alpha)} \times \Pic^{d_{\mathbb P_C(\mathcal F)}}(C)} \mathbb P(\rH^0(C,  \mathcal F^\vee(-\Sigma_C) \otimes L)),
\end{align*}
parametrizing divisors such that $\pi_{\mathbb P_C(\mathcal F)}$-vertical components have degree $\ell_1 + \ell_2$ and containing $w$ is at most
\[
2d_{\mathbb P_C(\mathcal F)} - \ell_1 - \ell_2 -\deg(\mathcal F) -2k(\alpha) + 1.
\]
because of \cite[Lemma 2.10]{LRT25} unless the section component is rigid, in the sense that $\mathrm{H}^0$ of the normal bundle is zero. 
Note that  \cite[Lemma 2.10]{LRT25} is stated in characteristic $0$, but its proof is valid in positive characteristic as well. Indeed, a key to this is Clifford's theorem for semistable bundles which can be found in, e.g., \cite{LN08} for a positive characteristic version.

Now there are only finitely many rigid sections of $\mathbb P_C(\mathcal F)$ and they intersect the image $\iota(\mathcal Z \times_BC)$ in finitely many points. Since the fiber above $w$ in the complement has dimension at least $\ell_1$, we conclude that
\begin{align*}
&\dim \,(U_{k(\alpha)} \setminus \Phi_\alpha(\mathrm{Sec}(\mathcal X/B, \alpha)) \\ &\leq \max\{\max_{\ell_2}\{g(C)+\ell_2\}, \max_{\ell_1, \ell_2}\{2d_{\mathbb P_C(\mathcal F)} - 2\ell_1 - \ell_2 -\deg(\mathcal F) -k(\alpha)  + 1\}\}.
\end{align*}
Combining with (\ref{equation:ell1}) and (\ref{equation:ell2}), we have
\begin{align*}
&\dim \, (U_{k(\alpha)}  \setminus \Phi_\alpha (\mathrm{Sec}(\mathcal X/B, \alpha))\\
&\leq \max\{ 2k(\alpha) + \deg(\mathcal F)-1 + 5g(C) - d_{\mathbb P_C(\mathcal F)} - \mathrm{neg}(\mathbb P(\mathcal F)/C), 3k(\alpha) - 2d_{\mathbb P_C(\mathcal F)} + \deg(\mathcal F)-1 + 8g(C)\}\\
&= \max\Big\{ k(\alpha)  - (d_{\mathbb P_C(\mathcal F)}-k(\alpha)-\deg(\mathcal F)+1 - 5g(C) + \mathrm{neg}(\mathbb P(\mathcal F)/C)),\\ & \qquad\qquad\qquad\qquad\qquad\qquad\qquad\qquad\qquad\qquad\qquad\qquad k(\alpha) - (h(\alpha) - k(\alpha)  + \frac{\Delta}{2} -8g(C)+ 1) \Big\}\\
&\leq  \max\Big\{ k(\alpha)  - \left(\frac{1}{2}(h(\alpha) - k(\alpha))-\frac{\deg(\mathcal F)}{2} + \frac{\Delta}{4}+1 - 5g(C) + \mathrm{neg}(\mathbb P(\mathcal F)/C)\right),\\ & \qquad\qquad\qquad\qquad\qquad\qquad\qquad\qquad\qquad\qquad\qquad\qquad k(\alpha) - (h(\alpha) - k(\alpha)  + \frac{\Delta}{2} -8g(C)+ 1) \Big\}.
\end{align*}

\end{proof}

Let $\alpha$ be a nef class of sections such that 
\[
h(\alpha) - k(\alpha) \geq -\deg (\mathcal F) -\frac{\Delta}{2} + \mathsf c_2(\mathcal F),
\]
and consider the space of sections of the class $\alpha$
\[
\mathrm{Sec}(\mathcal X/B, \alpha).
\]
We have the structure morphism
\[
\Phi_\alpha \times \mathrm{AJ} : \mathrm{Sec}(\mathcal X/B, \alpha) \to U_{k(\alpha)} \times \Pic^{d_{\mathbb P_C(\mathcal F)}}(C).
\]
\begin{prop}
\label{prop:dominantsmooth}
Assume that 
$$2d_{\mathbb P_C(\mathcal F)} - 2k(\alpha) \geq \mathsf c_2(\mathcal F) + 4g(C).$$
Then, there exists a closed subset $F \subset U_{k(\alpha)}$ such that
the image of $\Phi_\alpha\times \mathrm{AJ}$ is a Zariski open subset in $(U_{k(\alpha)}\setminus F) \times \Pic^{d_{\mathbb P_C(\mathcal F)}}(C)$, and $\Phi_\alpha \times \mathrm{AJ}$ is dominant and smooth over this image in $(U_{k(\alpha)}\setminus F) \times \Pic^{d_{\mathbb P_C(\mathcal F)}}(C)$.
Moreover one can find $F$ so that the codimension of $F$ in $U_{k(\alpha)}$ is greater than or equal to 
 $$\min\left\{\frac{1}{2}(h(\alpha) - k(\alpha))-\frac{\deg(\mathcal F)}{2} + \frac{\Delta}{4}+1 - 
 7g(C) + \mathrm{neg}(\mathbb P(\mathcal F)/C), h(\alpha) - k(\alpha)  + \frac{\Delta}{2} -12g(C)+ 1\right\}.$$
\end{prop}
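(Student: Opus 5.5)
We reduce to an algebraically closed ground field, since the closed subset $F$ we build will be Galois stable. The aim is to describe the fiber of $\Phi_\alpha\times\mathrm{AJ}$ over a pair $(w,L)$ and show it is a nonempty open subset of $\mathbb P(\rH^0(C,\mathcal F^\vee(-\Sigma_C(w))\otimes L))$ of constant dimension.

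By the proof of Theorem~\ref{theo:spaceofsectionsforX} and Proposition~\ref{prop:lemma18}, the fiber of $\Phi_\alpha$ over $w$ is $\Sec(F_1(\mathcal Q)/C, d_{\mathbb P_C(\mathcal F)}, \Sigma_C(w))$, and its AJ-fiber over $L$ is the locus of nowhere vanishing sections of $\mathcal F^\vee(-\Sigma_C(w))\otimes L$, taken up to scalars. Define $F\subset U_{k(\alpha)}$ to be the locus of $w$ for which there exists some $L\in\Pic^{d_{\mathbb P_C(\mathcal F)}}(C)$ with $\mu^{\min}(\mathcal F^\vee(-\Sigma_C(w))\otimes L)<2g(C)$. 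This locus is closed: semicontinuity of the Harder--Narasimhan polygon in families applies, and $\Pic$ is proper, so the projection of the bad locus in $U_{k(\alpha)}\times \Pic$ is closed. For $w\notin F$, the discussion before Proposition~\ref{prop:spaceofsectionsforQ} gives $\rH^1=0$, global generation, and constant $h^0$ equal to $2d_{\mathbb P_C(\mathcal F)}-\deg\mathcal F-2k(\alpha)+2(1-g(C))$. Hence $\mathcal V=\varpi_*(\cdots)$ is locally free over $(U_{k(\alpha)}\setminus F)\times\Pic$, and $\Sec(\mathcal X/B,\alpha)$ restricted there is an open subset of $\mathbb P(\mathcal V^\vee)$. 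The nowhere-vanishing condition is open, and the image of an open set under the smooth projection of a projective bundle is open. Since the fibers are smooth and equidimensional, \cite[Lemma 2.1]{DLTT25}, or simply openness in a smooth projective bundle, gives smoothness and dominance onto the image.

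The main work is the codimension bound on $F$. Suppose $w\in F$ with witness $L$. In the proof of Lemma~\ref{lemm:slope}, take the maximal destabilizing subsheaf $\mathcal L_1$, with $\deg\mathcal L_2<2g(C)$. The case $\mathcal L_1\subset\mathcal O$ is impossible under our hypothesis. So $\mathcal L_1\to N_\tau(-\Sigma_C)$ is nonzero, which forces $\mathcal L_1$ to be close in degree to $N_\tau(-\Sigma_C)$, within $2g(C)$. Twisting down by $L$, this produces a sub-line bundle $M\subset\mathcal F^\vee(-\Sigma_C(w))$, equivalently a section of $\mathbb P_C(\mathcal F)$ of unusually low degree that passes through almost all of the jets $\Sigma_C(w)$; it misses them in total length at most about $2g(C)$.

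We then bound the dimension of such $w$ exactly as in Lemma~\ref{lemm:dimensionestimates}, now with $\ell_2$ vertical contributions allowed to absorb the discrepancy. The extra slack of $2g(C)$ in the degree costs at most $2g(C)$ (respectively $4g(C)$) in each of the two terms of the bound. This is why the constants worsen from $5g(C)$ and $8g(C)$ to $7g(C)$ and $12g(C)$, and why the hypothesis is strengthened by $4g(C)$ so that Lemma~\ref{lemm:slope} still applies after this shift. The remaining cases are controlled in two ways: the finitely many rigid low-degree sections are handled by the intersection-with-$\iota(\mathcal Z\times_B C)$ argument, and non-rigid ones by \cite[Lemma 2.10]{LRT25}.

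The step we expect to be the main obstacle is this dimension count for $F$: checking that the destabilizing data genuinely reduces to low-degree sections through $w$ with a uniformly bounded defect, and tracking the $g(C)$ constants.
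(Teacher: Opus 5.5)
Your first half is fine. Take $F$ to be the locus where $\mu^{\min}(\mathcal F^\vee(-\Sigma_C(w))\otimes L)<2g(C)$; this condition does not depend on $L\in\Pic^{d_{\mathbb P_C(\mathcal F)}}(C)$, since twisting shifts all slopes equally. This gives a closed set, and off it Proposition~\ref{prop:spaceofsectionsforQ} together with \cite[Lemma 2.1]{DLTT25} gives openness of the image and smoothness, as in the paper's last step.

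The gap is the codimension bound, and it has three parts.
\begin{itemize}
\item Your reduction reruns the proof of Lemma~\ref{lemm:slope}. That proof needs a section $\tau$ of degree $d_{\mathbb P_C(\mathcal F)}$ inducing $\Sigma_C(w)$, in order to have $0\to\mathcal O\to\mathcal F^\vee(-\Sigma_C)\otimes L\to N_\tau(-\Sigma_C)\to 0$. For $w\in F\setminus\Phi_\alpha(\Sec(\mathcal X/B,\alpha))$ no such $\tau$ exists, so these $w$ are not covered (they could be absorbed via Lemma~\ref{lemm:dimensionestimates}, but you do not say so).
\item Even when $\tau$ exists, $\deg\mathcal L_1>\deg N_\tau(-\Sigma_C)-2g(C)$ does not give what you claim. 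It does not show that the section $\tau_M$ attached to the saturation $M$ of $\mathcal L_1\otimes L^{-1}$ in $\mathcal F^\vee$ misses $\Sigma_C(w)$ in length about $2g(C)$. What it controls is the intersection of $\tau_M$ with $\tau$ away from their common jets. For the defect $\delta$ along $\Sigma_C(w)$ you only get $\deg\tau_M+\delta<2k(\alpha)+\deg(\mathcal F)+2g(C)-d_{\mathbb P_C(\mathcal F)}$, which allows $\delta$ to be large.
\item The claim that the slack ``costs $2g(C)$, respectively $4g(C)$'' in the two terms of Lemma~\ref{lemm:dimensionestimates} is read off from the target constants, not derived.
\end{itemize}

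The missing idea is to shift the degree rather than the slope threshold. Let $\alpha'$ be the class with $d_{\mathbb P_C(\mathcal F)}(\alpha')=d_{\mathbb P_C(\mathcal F)}-2g(C)$ and $k(\alpha')=k(\alpha)$. Suppose $w\in\Phi_{\alpha'}(\Sec(\mathcal X/B,\alpha'))$. Apply Lemma~\ref{lemm:slope} to the corresponding section of degree $d_{\mathbb P_C(\mathcal F)}-2g(C)$; its hypothesis is exactly $2d_{\mathbb P_C(\mathcal F)}-2k(\alpha)\geq\mathsf c_2(\mathcal F)+4g(C)$. This gives $\mu^{\min}(\mathcal F^\vee(-\Sigma_C)\otimes L')\geq 0$ for some $L'$ of degree $d_{\mathbb P_C(\mathcal F)}-2g(C)$. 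Hence $\mu^{\min}(\mathcal F^\vee(-\Sigma_C)\otimes L)\geq 2g(C)$ for every $L$ of degree $d_{\mathbb P_C(\mathcal F)}$.

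So your $F$ is contained in $U_{k(\alpha)}\setminus\Phi_{\alpha'}(\Sec(\mathcal X/B,\alpha'))$, which is the paper's choice of $F$. Lemma~\ref{lemm:dimensionestimates} applied to $\alpha'$, where $h(\alpha')=h(\alpha)-4g(C)$, turns $5g(C)$ and $8g(C)$ into exactly $7g(C)$ and $12g(C)$. No new destabilizing analysis is needed.
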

\begin{proof}
We may assume that our ground field is an algebraically closed field.
First we consider the class $\alpha'$ such that $d_{\mathbb P_C(\mathcal F)}(\alpha')  = d_{\mathbb P_C(\mathcal F)} - 2g(C)$ and $k(\alpha') = k(\alpha)$. Let $F$ be
\[
U_{k(\alpha)} \setminus \Phi_{\alpha'}(\mathrm{Sec}(\mathcal X/B, \alpha')).
\]
It follows from Lemma~\ref{lemm:dimensionestimates} that the codimension of $F$ satisfies the inequality in the statement.

For any $w \in (U_{k(\alpha)} \setminus F)(k)$, it follows from Lemma~\ref{lemm:slope} that one can find $L' \in \Pic^{d_{\mathbb P_C(\mathcal F)} - 2g(C)}(C)$ such that
\[
\mu^{\min}(\mathcal F^\vee(-\Sigma_C) \otimes L') \geq 0.
\]
This means that for any $L \in \Pic^{d_{\mathbb P_C(\mathcal F)}}(C)$, we have
\[
\mu^{\min}(\mathcal F^\vee(-\Sigma_C) \otimes L) \geq 2g(C).
\]
Thus our assertion follows from Proposition~\ref{prop:spaceofsectionsforQ} and the proof of Theorem~\ref{theo:spaceofsectionsforX}.
\end{proof}

Next, we consider the universal family
\[
\mathfrak W \subset U_{k(\alpha)} \times \mathcal Z \to U_{k(\alpha)},
\]
which parametrizes $w = \Sigma_B$.
Because of admissible conditions, the pushforward to $U_{k(\alpha)} \times B$ realizes $\mathfrak W$ as a subscheme
\[
\mathfrak W \subset U_{k(\alpha)} \times B.
\]
We consider the Cartesian product
\[
\xymatrix{
\widetilde{\mathfrak W} \ar[r] \ar[d] &  \mathfrak W\times_B   \mathbb P_C(\mathcal F)\ar[d]^{\hat{\sigma} \times \mathrm{id}} \\
U_{k(\alpha)}  \times \mathcal U \ar[r] &  U_{k(\alpha)}  \times  \mathcal Q \times_B  \mathbb P_C(\mathcal F),
}
\]
and the corresponding subscheme
\[
\widetilde{\mathfrak W} \subset  U_{k(\alpha)} \times \mathbb P_C(\mathcal F) \to U_{k(\alpha)} \times C,
\]
which parametrizes $\Sigma_C$.
Moreover on $U_{k(\alpha)} \times C $, we have a homomorphism
\[
\widetilde{\varpi}^*\mathcal F^\vee  \to\widetilde{\varpi}^*\mathcal F^\vee|_{\widetilde{\mathfrak W}} \to N|_{\widetilde{\mathfrak W}},
\]
where
$
\widetilde{\varpi} : U_{k(\alpha)} \times C    \to C
$
is the projection and $N|_{\widetilde{\mathfrak W}}$ is the quotient of $\widetilde{\varpi}^*\mathcal F^\vee|_{\widetilde{\mathfrak W}}$ by the subbundle $\widetilde{\mathcal W} \subset \widetilde{\varpi}^*\mathcal F^\vee|_{\widetilde{\mathfrak W}}$ corresponding to the section $$\widetilde{\mathfrak W} \subset \widetilde{\mathfrak W} \times_C \mathbb P_C(\mathcal F), $$ via
the $\mathbb G_m$-torsor 
$$
\widetilde{\varpi}^*\mathcal F^\vee|_{\widetilde{\mathfrak W}} \setminus (\text{zero section}) \to   \widetilde{\mathfrak W} \times_C \mathbb P_C(\mathcal F).
$$
We denote the kernel of $\widetilde{\varpi}^*\mathcal F^\vee  \to N|_{\widetilde{\mathfrak W}}$ by 
\[
\widetilde{\varpi}^*\mathcal F^\vee(-\widetilde{\mathfrak W}).
\]

Here is a structure theorem for $\mathrm{Sec}(\mathcal X/B, \alpha)$:
\begin{theo}
\label{theo:spaceofsections_structure}
Let $\alpha$ be a nef class of sections such that  
\[
h(\alpha) - k(\alpha) \geq -\frac{\Delta}{2}  + 6g(C).
\]
Let $F \subset U_{k(\alpha)}$ be a closed subset from Proposition~\ref{prop:dominantsmooth}.
Then the structure morphism
\[
\Phi_\alpha \times \mathrm{AJ}|_{(\Phi_\alpha \times \mathrm{AJ}(\mathrm{Sec}(\mathcal X/B, \alpha)))\cap ((U_{k(\alpha)} \setminus F) \times \Pic^{d_{\mathbb P_C(\mathcal F)}}(C))}
\]
 is a Zariski open subset of the projective bundle 
 \[
 \mathbb P(((\widetilde{\varpi}_{d_{\mathbb P_C(\mathcal F)}})_*(q^*\widetilde{\varpi}^*\mathcal F^\vee(-\widetilde{\mathfrak W}) \otimes p^*\mathcal L_{d_{\mathbb P_C(\mathcal F)}}))^\vee)
 \]
 over $(\Phi_\alpha \times \mathrm{AJ}(\mathrm{Sec}(\mathcal X/B, \alpha)))\cap ((U_{k(\alpha)} \setminus F) \times \Pic^{d_{\mathbb P_C(\mathcal F)}}(C))$ where morphisms
 \begin{align*}
 &p: U_{k(\alpha)} \times C \times \Pic^{d_{\mathbb P_C(\mathcal F)}}(C) \to C \times \Pic^{d_{\mathbb P_C(\mathcal F)}}(C),\\
 &q : U_{k(\alpha)} \times C \times \Pic^{d_{\mathbb P_C(\mathcal F)}}(C) \to U_{k(\alpha)} \times C,\\
 &\widetilde{\varpi}_{d_{\mathbb P_C(\mathcal F)}} : U_{k(\alpha)} \times C \times \Pic^{d_{\mathbb P_C(\mathcal F)}}(C) \to U_{k(\alpha)} \times \Pic^{d_{\mathbb P_C(\mathcal F)}}(C),
 \end{align*} 
 are the projections.
 
 In particular, $\mathrm{Sec}(\mathcal X/B, \alpha)$ is geometrically irreducible
 if $h(\alpha) - k(\alpha)$ is sufficiently large.
 \end{theo}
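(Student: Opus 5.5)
The plan is to reduce the statement to a relative version of Proposition~\ref{prop:spaceofsectionsforQ} over the parameter space $U_{k(\alpha)}\setminus F$, using the fiberwise identification from the proof of Theorem~\ref{theo:spaceofsectionsforX}. First I check that the numerical hypothesis implies the one in Proposition~\ref{prop:dominantsmooth}. Since $2d_{\mathbb P_C(\mathcal F)} - \deg(\mathcal F) - \Delta/2 = h_{\mathcal Q}(\alpha) = h(\alpha)+k(\alpha)$, the condition $2d_{\mathbb P_C(\mathcal F)}-2k(\alpha)\ge \mathsf c_2(\mathcal F)+4g(C) = \deg(\mathcal F)+6g(C)$ is exactly $h(\alpha)-k(\alpha)\ge -\Delta/2+6g(C)$. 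So for every $w\in (U_{k(\alpha)}\setminus F)(\overline{k})$ and every $L\in\Pic^{d}(C)(\overline{k})$, the bundle $\mathcal F^\vee(-\Sigma_C(w))\otimes L$ has $\mu^{\min}\ge 2g(C)$. By \cite[Corollary 2.9]{LRT25} it then has vanishing $\rH^1$, is globally generated, and its $\rH^0$ has the constant dimension $2d-\deg(\mathcal F)-2k(\alpha)+2(1-g(C))$.

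Next I globalize. On $U_{k(\alpha)}\times C\times\Pic^d(C)$, consider the sheaf $\mathcal G := q^*\widetilde{\varpi}^*\mathcal F^\vee(-\widetilde{\mathfrak W})\otimes p^*\mathcal L_d$. I first check that $\widetilde{\mathfrak W}\to U_{k(\alpha)}$ is flat of constant length $2k(\alpha)$; this follows from admissibility and the jet computation preceding Proposition~\ref{prop:lemma18}. It follows that $N|_{\widetilde{\mathfrak W}}$ is locally free on $\widetilde{\mathfrak W}$, so $\mathcal G$ is flat over $U_{k(\alpha)}\times\Pic^d(C)$. Its fiber at $(w,L)$ is $\mathcal F^\vee(-\Sigma_C(w))\otimes L$, since the construction of the kernel commutes with base change. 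By cohomology and base change together with the $\rH^1$-vanishing, $\mathcal V := (\widetilde{\varpi}_d)_*\mathcal G$ is locally free of the rank above over $(U_{k(\alpha)}\setminus F)\times\Pic^d(C)$, and its formation commutes with base change.

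Then I construct the embedding. A point of $\mathbb P(\mathcal V^\vee)$ over $(w,L)$ is a line spanned by a section $s\in\rH^0(C,\mathcal F^\vee(-\Sigma_C(w))\otimes L)$. Let $\mathcal O\subset\mathbb P(\mathcal V^\vee)$ be the open locus where $s$ is nowhere vanishing. On $\mathcal O$, the section $s$ gives $\mathcal F\twoheadrightarrow L$, hence a section $\tau$ of $\mathbb P_C(\mathcal F)$ inducing $\Sigma_C(w)$. By Proposition~\ref{prop:lemma18} this gives a section of $\mathcal Q$ inducing $\Sigma_B(w)$, and its strict transform is a section of $\mathcal X$ of class $\alpha$. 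Conversely, a family of sections of $\mathcal X$ over $T$ produces $w=\Phi_\alpha$ and $L=\mathrm{AJ}$, together with the quotient $\mathcal F\to L$ twisted by a line bundle pulled back from $T$. This is precisely a $T$-point of $\mathcal O$. Doing both directions functorially in $T$ gives mutually inverse morphisms, so the preimage of $(U_{k(\alpha)}\setminus F)\times\Pic^d(C)$ in $\mathrm{Sec}(\mathcal X/B,\alpha)$ is isomorphic to $\mathcal O$, compatibly with $\Phi_\alpha\times\mathrm{AJ}$. Pointwise this recovers Proposition~\ref{prop:spaceofsectionsforQ}, and the image is open by Proposition~\ref{prop:dominantsmooth}.

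The main obstacle is making the family version of the strict transform and of the isomorphism of Proposition~\ref{prop:lemma18} work over a base $T$. One needs that a $T$-family of sections of $\mathcal Q$ inducing the family of jets $\Sigma_B(w_T)$ lifts uniquely to $\mathcal X$. I would handle this via the universal property of blow-up: the pullback of the ideal of $\mathcal Z$ to $B\times T$ is the invertible ideal cut out by $\mathfrak W_T$, which is a relative Cartier divisor because $\mathfrak W\to U$ is flat and finite over $B\times U$. The same flatness gives the base-change compatibility of $\widetilde{\mathfrak W}$ needed above.

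Irreducibility then follows from a dimension count. $U_{k(\alpha)}$ is an open subset of $\mathrm{Hilb}^{[k(\alpha)]}(\mathcal Z)\cong\mathrm{Sym}^{k(\alpha)}\mathcal Z$, which is irreducible, and $\Pic^d(C)$ is irreducible, so $\mathcal O$ is irreducible. By Theorem~\ref{theo:spaceofsectionsforX}, $\mathrm{Sec}(\mathcal X/B,\alpha)$ is smooth of pure expected dimension. The complementary piece lies over $F$, whose codimension grows with $h(\alpha)-k(\alpha)$ by Proposition~\ref{prop:dominantsmooth}, and the fiber dimensions there are bounded as in Lemma~\ref{lemm:dimensionestimates}. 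Hence for large $h(\alpha)-k(\alpha)$ that piece has dimension below the expected dimension and contains no component, so $\mathrm{Sec}(\mathcal X/B,\alpha)$ is geometrically irreducible.
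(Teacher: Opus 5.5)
Your route is the paper's. Its proof is exactly Proposition~\ref{prop:spaceofsectionsforQ}, made relative over $U_{k(\alpha)}\setminus F$, combined with Proposition~\ref{prop:dominantsmooth}. Your numerical check, the flatness of $\widetilde{\mathfrak W}$, and the base-change argument for $(\widetilde{\varpi}_{d_{\mathbb P_C(\mathcal F)}})_*$ fill in details the paper leaves implicit.

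\textbf{The gap.} One step is false as written. A nowhere vanishing $s\in\rH^0(C,\mathcal F^\vee(-\Sigma_C(w))\otimes L)$ gives a section $\sigma$ of $\mathcal Q$ that \emph{induces} $\Sigma_B(w)$, i.e.\ $\sigma^{-1}(\mathcal Z)\supseteq w$. Nothing prevents $\sigma(B)$ from meeting $\mathcal Z$ at further points, or to higher order along $w$. Since $\phi^{-1}(\mathcal Z)=E$, the strict transform then has $E$-degree $\mathrm{length}\,\sigma^{-1}(\mathcal Z)>k(\alpha)$, so it is not of class $\alpha$. Hence your map $\mathcal O\to\mathrm{Sec}(\mathcal X/B,\alpha)$ is not defined on all of $\mathcal O$, and the asserted isomorphism with $\mathcal O$ fails. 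These bad points are precisely the strata with $x_{\mathcal Z}>w$ in the paper's later stratification of $E\setminus\widetilde M_\alpha$.

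\textbf{The repair.} The fix is easy and still gives the statement.
\begin{itemize}
\item Let $\mathcal O^\circ\subset\mathcal O$ be the locus where the finite $\mathcal O$-scheme $\sigma_{\mathcal O}^{-1}(\mathcal Z)$ has fiber length exactly $k(\alpha)$. It is open by upper semicontinuity.
\item On $\mathcal O^\circ$, the closed subscheme $\sigma^{-1}(\mathcal Z)\supseteq w_{\mathcal O^\circ}$ has the same fiber lengths as the flat scheme $w_{\mathcal O^\circ}$, hence equals it by Nakayama. So it is a relative Cartier divisor, and your blow-up argument lifts the family with class $\alpha$.
\item Conversely, a family in $\mathrm{Sec}(\mathcal X/B,\alpha)$ satisfies $(\phi\circ\tilde\sigma)^{-1}(\mathcal Z)=\tilde\sigma^{-1}(E)$, which has length $k(\alpha)$, so it lands in $\mathcal O^\circ$.
\end{itemize}
This $\mathcal O^\circ$ is the Zariski open subset in the theorem.

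\textbf{Irreducibility.} The relevant input is Theorem~\ref{theo:spaceofsectionsforX}, not Lemma~\ref{lemm:dimensionestimates}, which bounds the complement of the image rather than fiber dimensions. Since $\mathrm{Sec}(\mathcal X/B,\alpha)$ and $\Phi_\alpha$ are smooth, components are disjoint and open, and each maps onto an open subset of $U_{k(\alpha)}$. Once $F$ has positive codimension, every component therefore meets the irreducible open piece over $U_{k(\alpha)}\setminus F$, so there is only one component.
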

 \begin{proof}
 This follows from Proposition~\ref{prop:spaceofsectionsforQ} and Proposition~\ref{prop:dominantsmooth}.
 
 \end{proof}
 
 When $k = \mathbb C$, the intermediate Jacobian of $\mathcal X$ is given by
 \[
 \mathrm{IJ}(\mathcal X) = \Pic^0(\mathcal Z) \times \Pic^0(C).
 \]
 Using this, we have the following corollary, which is analogous to \cite[Theorem 4.2]{Tani26}:
 \begin{coro}
 \label{coro:MRC}
 Let $k = \mathbb C$. 
 Let $\alpha$ be a nef class of sections such that 
$
h(\alpha) - k(\alpha)
$
is sufficiently large.
 Assume further that $k(\alpha) \geq 2g(\mathcal Z)-1$.
 Then, the Abel--Jacobi mapping
 \[
\mathrm{AJ}_{\mathcal X} : \mathrm{Sec}(\mathcal X/B, \alpha) \to  \mathrm{IJ}(\mathcal X)
 \]
 is an MRC fibration.
 \end{coro}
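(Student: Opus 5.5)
The plan is to exhibit $\mathrm{AJ}_{\mathcal X}$ as the composition
\[
\mathrm{Sec}(\mathcal X/B, \alpha) \xrightarrow{\Phi_\alpha \times \mathrm{AJ}} \mathrm{Hilb}^{[k(\alpha)]}(\mathcal Z) \times \Pic^{d}(C) \longrightarrow \Pic^{k(\alpha)}(\mathcal Z) \times \Pic^{d}(C),
\]
where $d = d_{\mathbb P_C(\mathcal F)}(\alpha)$ and the second arrow is the Abel map of $\mathcal Z$ times the identity. I will then show that the general fiber of this composition is rationally connected. The target is an abelian variety torsor and contains no rational curves, so the composition is then automatically the MRC fibration, once we know that $\mathrm{Sec}(\mathcal X/B,\alpha)$ is irreducible (Theorem~\ref{theo:spaceofsections_structure}).

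\textbf{Identifying the Abel--Jacobi map.} The identification with $\mathrm{AJ}_{\mathcal X}$ (up to translation and the natural isomorphism $\mathrm{IJ}(\mathcal X) \cong \Pic^0(\mathcal Z)\times \Pic^0(C)$) uses the standard description of the intermediate Jacobian of a blow-up along a curve, $\mathrm{IJ}(\mathcal X) = \mathrm{IJ}(\mathcal Q)\times J(\mathcal Z)$. The $J(\mathcal Z)$-component of the Abel--Jacobi class of $\sigma(B)$ is computed by the correspondence $\sigma(B)\mapsto \sigma(B)\cap E \mapsto \phi_*(\sigma(B)\cap E)$, which is exactly $\Phi_\alpha$. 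The $\mathrm{IJ}(\mathcal Q) = \Pic^0(C)$-component is the Abel--Jacobi map of $\phi\circ\sigma$ on $\mathcal Q$. By \cite{HT12}, that component agrees with $\tau \mapsto \tau^*\mathcal O(1)$ through the correspondence $\mathcal U$. Functoriality of Abel--Jacobi maps under correspondences gives both compatibilities.

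\textbf{Rational connectedness of the fibers.} Since $k(\alpha) \ge 2g(\mathcal Z)-1$, the Abel map $\mathrm{Hilb}^{[k(\alpha)]}(\mathcal Z) = \mathrm{Sym}^{k(\alpha)}\mathcal Z \to \Pic^{k(\alpha)}(\mathcal Z)$ is a projective bundle, with fibers $\mathbb P^{k(\alpha)-g(\mathcal Z)}$. Its restriction to $U_{k(\alpha)}\setminus F$ therefore has general fiber a dense open subset of a projective space. This uses that $U_{k(\alpha)}$ is open, dense and nonempty, and that $F$ has large codimension by Proposition~\ref{prop:dominantsmooth}; in particular $F$ cannot contain a whole fiber once $h(\alpha)-k(\alpha)$ is large. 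By Theorem~\ref{theo:spaceofsections_structure}, over $(U_{k(\alpha)}\setminus F)\times\Pic^d(C)$ the map $\Phi_\alpha\times\mathrm{AJ}$ is a Zariski open subset of a projective bundle over its image, and that image is open and dense. Hence the general fiber of the composition is birational to a projective bundle over an open subset of projective space, which is rational. The sections lying over $F$ form a subset of smaller dimension and do not affect the general fiber, since $\mathrm{Sec}(\mathcal X/B,\alpha)$ is irreducible.

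\textbf{Main obstacle.} The delicate step is the identification of the Abel--Jacobi map, specifically the compatibility of the $\Pic^0(C)$-component with the Hassett--Tschinkel description. I would rely on \cite{HT12}, where this is proved for quadric surface bundles, and transport it via $\phi_*$. The rest is formal.
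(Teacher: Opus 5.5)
Your proposal is correct and follows the paper's proof. The paper also writes $\mathrm{AJ}_{\mathcal X}$ as the composition of $\Phi_\alpha\times\mathrm{AJ}$ with the map $\rho_\alpha\colon U_{k(\alpha)}\times\Pic^{d_{\mathbb P_C(\mathcal F)}}(C)\to\Pic^0(\mathcal Z)\times\Pic^0(C)$, citing \cite[Lemma 6.1]{CTT25} and \cite[Example 10.4]{LT19} for the identification where you use \cite{HT12} and functoriality under correspondences, and then concludes from Theorem~\ref{theo:spaceofsections_structure}; your use of $k(\alpha)\geq 2g(\mathcal Z)-1$ (so the Abel map of $\mathcal Z$ is a projective bundle) and of the non-uniruled abelian target just spells out what the paper leaves implicit in that final step.
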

 \begin{proof}
   Note that we have a morphism
   \[
   \rho_\alpha : U_{k(\alpha)} \times \Pic^{d_{\mathbb P_C(\mathcal F)}}(C) \to \Pic^0(\mathcal Z) \times \Pic^0(C),
   \]
   and one can prove that the composition $\rho_\alpha \circ (\Phi_\alpha \times \mathrm{AJ})$ is the Abel--Jacobi mapping $\mathrm{AJ}_{\mathcal X}$. 
   (See the proofs of \cite[Lemma 6.1]{CTT25} and \cite[Example 10.4]{LT19} for a proof of this fact.)
   Thus, our assertion follows from  Theorem~\ref{theo:spaceofsections_structure}.
 \end{proof}

 \subsubsection*{Dimension estimates}

Next let us fix a (possibly singular) multisection $\mathcal W \subset \mathbb P_C(\mathcal F)$ of degree $4$ such that $\mathcal W$ does not contain a section.
Let $\mathcal M^{\mathsf k}$ be the parameter space of length $\mathsf k$ admissible jets on $\mathcal W$ over $C$.
Assume that $2d_{\mathbb P_C(\mathcal F)/C} - \mathsf k$ is sufficiently large.
Under this assumption for any $\tau \in \mathrm{Sec}(\mathbb P_C(\mathcal F)/C, d_{\mathbb P_C(\mathcal F)})$, the intersection number of $\tau(C)$ and $\mathcal W$ is greater than $\mathsf k$.
In particular, there are finitely many length $\mathsf k$ admissible subschemes on $\tau(C)\cap \mathcal W$ over $C$.
Let $\rho(\mathrm{Sec}(\mathbb P_C(\mathcal F)/C, d_{\mathbb P_C(\mathcal F)})) \subset \mathcal M^{\mathsf k}$ be the image of this correspondence.
Then we have
\begin{lemm}
\label{lemm:dimensionestimates3}
The image $\rho(\mathrm{Sec}(\mathbb P_C(\mathcal F)/C, d_{\mathbb P_C(\mathcal F)}))$ is a dense constructible set. Its complement has dimension at most $\max\{\mathsf k  - (d_{\mathbb P_C(\mathcal F)}-\deg(\mathcal F)+1 - 5g(C) + \mathrm{neg}(\mathbb P(\mathcal F)/C)), \mathsf k - (2d_{\mathbb P_C(\mathcal F)}-\mathsf k) + \deg(\mathcal F)-1 + 8g(C)\}.$
\end{lemm}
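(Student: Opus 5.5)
The plan is to repeat the argument of Lemma~\ref{lemm:dimensionestimates}, replacing the Hilbert scheme $U_{k(\alpha)}$ of $\mathcal Z$ by the parameter space $\mathcal M^{\mathsf k}$ of length $\mathsf k$ admissible jets on $\mathcal W$. We may assume the ground field is algebraically closed. Fix $\Sigma_C \in \mathcal M^{\mathsf k}$. The sections $\tau$ of degree $d_{\mathbb P_C(\mathcal F)}$ through $\Sigma_C$ are the nowhere-vanishing elements of $\rH^0(C, \mathcal F^\vee(-\Sigma_C)\otimes L)$, with $L$ ranging over $\Pic^{d_{\mathbb P_C(\mathcal F)}}(C)$. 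By definition, $\Sigma_C$ lies in the image $\rho(\Sec)$ exactly when some such element does not vanish anywhere; the point is that $\tau(C)\cap\mathcal W$ then contains $\Sigma_C$ as one of its finitely many admissible length $\mathsf k$ subschemes. Constructibility of the image follows from Chevalley's theorem applied to the correspondence $\{(\tau,\Sigma_C) : \Sigma_C\subset \tau(C)\cap\mathcal W\}$, which is finite over $\Sec$. Density will follow from the dimension bound, since that bound is smaller than $\dim\mathcal M^{\mathsf k}=\mathsf k$ once $2d_{\mathbb P_C(\mathcal F)}-\mathsf k$ is large.

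For the complement, take $\Sigma_C \notin \rho(\Sec)$. Then every divisor in $\bigcup_L \mathbb P(\rH^0(C,\mathcal F^\vee(-\Sigma_C)\otimes L))$ is reducible, of the form $D=D_0+\sum m_iP_i$ with $D_0$ a section and $P_i$ fibers. Let $\ell_1$ (resp.\ $\ell_2$) count, with multiplicity, the fibers not meeting (resp.\ meeting) $\Sigma_C$. I will bound the dimension of this locus using Riemann--Roch for $N_{D_0}$ twisted by the part of $\Sigma_C$ not absorbed by the fibers. Comparing with the lower bound $2d-\deg\mathcal F-\mathsf k+1-g(C)$ coming from $h^0$ of the whole bundle gives the analogue of \eqref{equation:ell1}: $\ell_1\ge 2d-\deg\mathcal F-\mathsf k+1-4g(C)$. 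Next, $d-\ell_1-\ell_2\ge \mathrm{neg}(\mathbb P(\mathcal F)/C)$ gives the analogue of \eqref{equation:ell2}: $\ell_2\le \mathsf k+\deg\mathcal F-1+4g(C)-d-\mathrm{neg}$.

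Now count in the incidence space $\{(\Sigma_C, D)\}$. Divisors whose vertical part has degree $\ell_1+\ell_2$ and which contain a jet from $\mathcal W$ form a family of dimension at most $2d-\ell_1-\ell_2-\deg\mathcal F-\mathsf k+1$. For non-rigid $D_0$ this follows from \cite[Lemma 2.10]{LRT25}, via Clifford's theorem for semistable bundles \cite{LN08}, using that $\mathcal W$ contains no section. The rigid sections are finitely many and meet $\mathcal W$ in finitely many points, so they contribute at most $g(C)+\ell_2$. Because each fiber of the incidence over a bad $\Sigma_C$ has dimension at least $\ell_1$, the bad locus has dimension at most $\max\{g(C)+\ell_2,\ 2d-2\ell_1-\ell_2-\deg\mathcal F-\mathsf k+1\}$. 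Substituting the two inequalities yields exactly $\max\{\mathsf k-(d-\deg\mathcal F+1-5g(C)+\mathrm{neg}),\ \mathsf k-(2d-\mathsf k)+\deg\mathcal F-1+8g(C)\}$, mirroring the arithmetic at the end of the proof of Lemma~\ref{lemm:dimensionestimates}.

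The step I expect to be the main obstacle is the incidence dimension bound. Jets on $\mathcal W$ over ramification points of $\mathcal W\to C$, or at singular points of $\mathcal W$, must be handled so that imposing a length $\mathsf k$ admissible jet still cuts exactly $\mathsf k$ conditions beyond the vertical part. To do this I would stratify $\mathcal M^{\mathsf k}$ by support type and apply the LRT estimate on each stratum, using the hypothesis that $\mathcal W$ contains no section to rule out the degenerate case where $D_0\subset\mathcal W$.
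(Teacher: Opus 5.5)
Your approach is the one the paper intends. The paper gives no separate proof of Lemma~\ref{lemm:dimensionestimates3}; it relies on rerunning the argument of Lemma~\ref{lemm:dimensionestimates} with $U_{k(\alpha)}$ replaced by $\mathcal M^{\mathsf k}$, which is exactly what you do. Write $d=d_{\mathbb P_C(\mathcal F)}$. Your bounds $\ell_1\ge 2d-\deg(\mathcal F)-\mathsf k+1-4g(C)$ and $\ell_2\le \mathsf k+\deg(\mathcal F)-1+4g(C)-d-\mathrm{neg}(\mathbb P(\mathcal F)/C)$, together with the rigid contribution $g(C)+\ell_2$, correctly produce the first term of the statement.

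There is, however, a bookkeeping error in the second term. In Lemma~\ref{lemm:dimensionestimates}, the $-k(\alpha)$ in $2d-2\ell_1-\ell_2-\deg(\mathcal F)-k(\alpha)+1$ arises from two sources: $-2k(\alpha)$ conditions, since there $\deg\Sigma_C=2k(\alpha)$, and $+k(\alpha)=\dim U_{k(\alpha)}$ parameters for $w$. Here $\deg\Sigma_C=\mathsf k=\dim\mathcal M^{\mathsf k}$, so these cancel. Your bound $2d-\ell_1-\ell_2-\deg(\mathcal F)-\mathsf k+1$ is right for divisors containing a \emph{fixed} $\Sigma_C$. The incidence $\{(\Sigma_C,D)\}$, however, has dimension at most $2d-\ell_1-\ell_2-\deg(\mathcal F)+1$.

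Read as a bound on all divisors containing \emph{some} jet of $\mathcal W$, your sentence is false. For small $\ell_1+\ell_2$ the section component $D_0$ alone meets $\mathcal W$ in length $>\mathsf k$, so that set has dimension about $2d-\ell_1-\ell_2-\deg(\mathcal F)+1-g(C)$.

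Subtracting the fiber dimension $\ell_1$, the bad locus has dimension at most $2d-2\ell_1-\ell_2-\deg(\mathcal F)+1$. Substituting your lower bound for $\ell_1$ and $\ell_2\ge 0$ then gives exactly $\mathsf k-(2d-\mathsf k)+\deg(\mathcal F)-1+8g(C)$. With the formula you wrote, the same substitution gives $\mathsf k-2d+\deg(\mathcal F)-1+8g(C)$ instead. So your claim that it ``yields exactly'' the stated bound is an arithmetic mismatch, and the stronger bound it would give rests on the incorrect incidence count.

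After this correction the argument goes through. For the density claim you should also justify that every component of $\mathcal M^{\mathsf k}$ has dimension $\mathsf k$, so that a complement of smaller dimension cannot contain one.
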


\section{Poschemes and stratifications}
\label{sec:poschemes}

In this section, we set up poschemes and stratifications parametrized by poschemes on the space of sections so that we can execute the inclusion-exclusion principle using the bar complex developed in \cite{DT24} and \cite{DLTT25}. We mainly follow the discussion of \cite[Section 5]{DLTT25}.

\subsection{Setup}
\label{subsec:pos_setup}


Let $k$ be a perfect field of characteristic $\neq 2, 3$.
Let us work in the setup in Section~\ref{subsec:spaceofsectionsonblowup}. We fix
\[
\pi_{\mathcal Q} : \mathcal Q \to B,\quad  \mathcal Z \subset \mathcal Q, \quad \pi : \mathcal X \to B, \quad  \phi : \mathcal X \to \mathcal Q,
\]
as before.
In this section we work under the following assumptions:
\begin{itemize}
\item $\mathcal Q, \mathcal Z, \mathcal X$ are smooth;
\item the generic fiber $\mathcal X_\eta$ is a smooth quartic del Pezzo surface; and
\item $\mathcal X$ is a wonderful model of $\mathcal X_\eta$.
\end{itemize}
As before, we consider the relative Fano variety of lines
\[
F_1(\mathcal Q) \to C \to B,
\]
and its universal family 
\[
\mathcal U \hookrightarrow \mathcal Q\times_B F_1(\mathcal Q).
\]
We consider its base change
\[
\xymatrix{
\mathcal U_{\mathcal Z} \ar[r] \ar[d] &   \mathcal Z \times_B  F_1(\mathcal Q)\ar[d] \\
\mathcal U \ar[r] &   \mathcal Q \times_B F_1(\mathcal Q).
}
\]
Then this base change admits a natural morphism
\[
\mathcal U_{\mathcal Z} \to \mathcal Z \times_B C.
\]
Under our assumption, the above morphism is a flat, finite, and birational morphism because $\mathcal Z$ avoids vertices of singular fibers of $\pi_{\mathcal Q}$.
In particular, this morphism is an isomorphism.
We denote the induced morphism $$\mathcal W \to F_1(\mathcal Q)$$ by $\iota$
where $\mathcal W = \mathcal Z\times_B C$.
This morphism is birational to the image, but not a closed embedding in general.
Let $z$ be any $0$-dimensional subscheme of $\mathcal W$.
We denote the scheme-theoretic image of $z$ via $\iota$ by
\[
\iota_\diamond z.
\]
We also denote projections
\[
\mathcal W \to \mathcal Z, \quad \mathcal W \to C, \quad \mathcal W \to B,
\]
by $p_{\mathcal Z}, p_C, p_B$ respectively.

\subsection{Poschemes and stratifications}

\subsubsection*{Poschemes}

For a smooth geometrically integral projective curve $B$ defined over $k$, we let
\[
\mathrm{Hilb}^{\mathrm{fin}}(B) = \bigsqcup_{n = 0}^\infty \mathrm{Hilb}^{[n]}(B)
\]
be the Hilbert scheme parametrizing all $0$-dimensional subschemes of $B$.
Then, we define the poscheme
\[
\mathfrak H =\{x = (x_B, x_{\mathcal Z}, x_C, \hat{1}) \in \mathrm{Hilb}^{\mathrm{fin}}(B) \times \mathrm{Hilb}^{\mathrm{fin}}(\mathcal Z)\times \mathrm{Hilb}^{\mathrm{fin}}(C)  \times\{ \hat{1}\} \, | \, \text{$x$ satisfies $(*)$}\},   
\]
where $(*)$ means that 
\[
p_{\mathcal Z \to B}^*x_B \subset x_\mathcal Z, \quad p_{C \to B}^*x_B \subset x_C, 
\]
where $p_{\mathcal Z \to B} : \mathcal Z \to B$ and $p_{C \to B} : C \to B$ are projections and the poset structure is given by the component-wise inclusion.

Also let $\widetilde{\mathfrak Q} = B  \sqcup \mathcal Z \sqcup C \to B$ and we view this scheme as a poscheme over $B$ in the following way: for any connected scheme $T \to B$, we define the poset relation on $\widetilde{\mathfrak Q}(T)$ by
\begin{align*}
 p_{\mathcal Z \to  B}\circ t < t : T  \to \mathcal Z, \quad p_{C \to  B}\circ u < u : T  \to C.
\end{align*}
We let $$\mathfrak Q = \widetilde{\mathfrak Q} \sqcup (\{\hat{1}\} \times B) \to B,$$ where $\hat{1}$ is the largest element.

Suppose that we fix an element $x \in \mathfrak H(\overline{k})$. For each $b \in B(\overline{k})$, we denote its fiber of $\widetilde{\mathfrak Q} \to B$ by $\widetilde{\mathfrak Q}_b$.
Then $x$ induces a function $\widetilde{g}_b : \widetilde{\mathfrak Q}_b(\overline{k}) \to \mathbb N$ by assigning for each $q \in \widetilde{\mathfrak Q}_b(\overline{k})$ the length of $x$ at $q$.
This satisfies the following relations: $(\clubsuit)$ for any $p < q \in \widetilde{\mathfrak Q}_b(\overline{k})$, 
\[
\begin{cases}
\mathrm{lengh}_q(p^*_{\mathcal Z \to B}(p))\widetilde{g}_b(p)\leq \widetilde{g}_b(q)& \text{ if } p \in B, q \in \mathcal Z,\\
 \mathrm{lengh}_q(p^*_{C \to B}(p))\widetilde{g}_b(p) \leq \widetilde{g}_b(q)& \text{ if } p \in B, q \in C.
\end{cases}
\]
We say $\widetilde{g}_b$ is trivial if $\widetilde{g}_b(q) = 0$ for any $q \in  \widetilde{\mathfrak Q}_b(\overline{k})$.
Then $x$ induces
\[
\widetilde{g} : B(\overline{k}) \to \bigoplus_{b \in B(\overline{k})}\mathrm{Hom}_{(\clubsuit)}(\widetilde{\mathfrak Q}_b(\overline{k}), \mathbb N),
\]
where $(\clubsuit)$ indicates that the corresponding function $\widetilde{g}_b$ satisfies $(\clubsuit)$.
Conversely, if we have such a function $\widetilde{g}$ then one can recover an element $x \in \mathfrak H(\overline{k})$.  Also sometimes we may interpret $\widetilde{g}$ as
\[
g : B(\overline{k}) \to \bigoplus_{b \in B(\overline{k})}\mathrm{Hom}_{(\clubsuit)}(\mathfrak Q_b(\overline{k}), \mathbb N \cup \{ \infty\}),
\]
by assigning $g_b(\hat{1}) = \infty$.

\subsubsection*{Saturated elements}

Next, we define saturated elements:

\begin{defi}
\label{defi:saturated}
Let $x = (x_B, x_{\mathcal Z}, x_C, \hat{1}) \in \mathfrak H(\overline{k})$.
We say $x$ is saturated if the following properties hold:
\begin{enumerate}
\item the divisor $(x_{C} - p_{C \to B}^*x_B)$ induces admissible jets on $C$ over $B$; and 
\item  the $0$-dimensional subscheme $\iota_\diamond(p_{\mathcal Z}^*x_{\mathcal Z} - p_{\mathcal Z}^*x_{\mathcal Z} \wedge p_C^*x_C)$ induces admissible jets on $F_1(\mathcal Q)$ over $C$; and
\item for any $x_{\mathcal Z}' \geq x_{\mathcal Z}$, if we have
\[
\iota^*\iota_\diamond(p_{\mathcal Z}^*x_{\mathcal Z} - p_{\mathcal Z}^*x_{\mathcal Z} \wedge p_C^*x_C) \geq p_{\mathcal Z}^*x_{\mathcal Z}' - p_{\mathcal Z}^*x_{\mathcal Z} \wedge p_C^*x_C \geq p_{\mathcal Z}^*x_{\mathcal Z} - p_{\mathcal Z}^*x_{\mathcal Z} \wedge p_C^*x_C,
\]
then $x_{\mathcal Z}' = x_{\mathcal Z}$.

\end{enumerate}
Here $\wedge$ means the schematic intersection and the difference of $0$-dimensional subschemes is defined using the ideal quotient.

Let $\mathfrak H^\circ(\overline{k})$ be the set of saturated elements.  It is invariant under the Galois action. Thus it makes sense to discuss the set of $k$-rational points $\mathfrak H^\circ(k)$. Note that in \cite{DLTT25}, $\mathfrak H^\circ(\overline{k})$ was a Zariski open subset in $\mathfrak H(\overline{k})$, and hence we treated this as an open subscheme. In the current paper, this is not the case. However, it turns out that this is not a serious issue for running the homological sieve method in \cite{DLTT25}.

\end{defi}

\begin{rema}
If the divisor $(x_{\mathcal Z} - p_{\mathcal Z \to B}^*x_B)$ induces admissible jets on $\mathcal Z$ over $B$, then $\iota_\diamond(p^*_{\mathcal Z}x_{\mathcal Z} - p_B^*x_B)$ induces admissible jets over $C$ so that $\iota_\diamond(p_{\mathcal Z}^*x_{\mathcal Z} - p_{\mathcal Z}^*x_{\mathcal Z} \wedge p_C^*x_C)$ does too. However, the converse fails in general. If we assume that $-K_{\mathcal X/B}$ is relatively ample, then for any $b \in B(\overline{k})$ there is no line on $\mathcal Q_b$ which cuts out a length $2$ subscheme of $\mathcal Z_b$. This implies that the morphism
\[
\mathcal Z\times_BC \to F_1(\mathcal Q)
\]
is a closed embedding so that $\iota^*(\iota_\diamond(p_{\mathcal Z}^*x_{\mathcal Z} - p_{\mathcal Z}^*x_{\mathcal Z} \wedge p_C^*x_C)) = p_{\mathcal Z}^*x_{\mathcal Z}- p_{\mathcal Z}^*x_{\mathcal Z} \wedge p_C^*x_C$ is always true and the converse holds. However, such smooth ample models are rather rare.
Also we should note that when $-K_{\mathcal X/B}$ is relatively ample, $\mathfrak H^\circ(\overline{k}) \subset \mathfrak H(\overline{k})$ is a Zariski open subset.
\end{rema}

In terms of the function $g$, the conditions for saturated elements are equivalent to the following conditions: 

\begin{class}
\label{class1}
Let $b \in B(\overline{k})$. 
The condition (1) is equivalent to:
\begin{itemize}
\item if $p_{C \to B}^*(b) = c_1 + c_2$ where $c_i$'s are distinct, then $g_b(b)$ is equal to one of $$ g_b(c_1), g_b(c_2)$$ and the other may be greater than or equal to $g_b(b)$; and
\item if $p_{C \to B}^*(b) = 2c_1$, then $g_b(b)$ is equal to $\lfloor g_b(c_1)/2\rfloor$.
\end{itemize}
\end{class}

\begin{class}
\label{class2}
Next we describe the conditions (2) and (3) in terms of $g$.
First we assume that $p_{C \to B}^*(b) = c_1 + c_2$ where $c_i$'s are distinct.
\begin{itemize}
\item Assume that $p_{\mathcal Z \to B}^*(b) = z_1 + z_2 + z_3 + z_4$ where $z_i$'s are distinct and that $g_b(c_1) = g_b(b)$.
We may assume that
\[
g_b(z_1) \geq g_b(z_2) \geq g_b(z_3) \geq g_b(z_4).
\]
First, assume that even after relabeling while keeping the above inequalities, there is no line in the ruling corresponding to $c_1$ on $\mathcal Q_b$ which cuts out a length $2$ subscheme $z_1 + z_2$ of $\mathcal Z_b$. Then the conditions (2) and (3) are equivalent to the fact that $x_{\mathcal Z} - p_{\mathcal Z \to B}^*x_B$ induces admissible jets on $\mathcal Z$ over $B$. This is equivalent to the following conditions:
$g_b(b)$ is equal to $g_b(z_i)$ for all $i = 2, 3, 4$ and $g_b(z_1)$ is greater than or equal to $g_b(b)$.

Next, we may assume that $z_1, z_2$ are on a line in the ruling corresponding to $c_1$ on $\mathcal Q_b$.
Then $\Spec \, \mathcal O_{\mathcal Z, z_i}$ for $i = 1, 2$ induces jets supported at the same point on the fiber $F_1(\mathcal Q)_{c_1}$, and let $n$ be the maximal length for which two jets coincide. (Note that $n$ could be $\infty$.)
In this case, the conditions (2) and (3) are
\[
g_b(z_3) = g_b(z_4)  = g_b(b),  \quad g_b(b)\leq g_b(z_2) = \min\{ g_b(z_1), g_b(c_2), n + g_b(b)\}.
\]

\item Assume that $p_{\mathcal Z \to B}^*(b) = 2z_1 + z_2 + z_3$ where $z_i$'s are distinct and that $g_b(c_1) = g_b(b)$.
We may assume that
\[
g_b(z_2) \geq g_b(z_3).
\]
First assume that even after relabeling $z_2, z_3$ while keeping the above inequalities, there is no line in the ruling corresponding to $c_1$ on $\mathcal Q_b$ which cuts out length $2$ subschemes $z_1 + z_2$ or $2z_1$ of $\mathcal Z_b$.
Then the conditions (2) and (3) are equivalent to the fact that $x_{\mathcal Z} - p_{\mathcal Z \to B}^*x_B$ induces admissible jets on $\mathcal Z$ over $B$.
This is described as $g_b(b)$ is equal to $g_b(z_1)/2, g_b(z_2), g_b(z_3)$ except one which is greater than or equal to $g_b(b)$ and $2g_b(b) \leq g_b(z_1) \leq 1 + 2g_b(b)$.

Next let us assume that $g_b(z_1)/2 \geq g_b(z_2)$ 
and that $2z_1$ is cut out by a line in the ruling corresponding to $c_1$ on $\mathcal Q_b$. The conditions (2) and (3) are that 
\begin{itemize}
\item $g_b(z_1) = 2g_b(b), 2 + 2g_b(b)$,
$
g_b(z_2) = g_b(z_3)  = g_b(b),   2g_b(b) \leq
g_b(z_1) \leq  2g_b(c_2)
$; or 
\item $g_b(z_2) = g_b(z_3) = g_b(c_2) = g_b(b)$ and $2g_b(b)\leq g_b(z_1)\leq 1+2g_b(b)$.
\end{itemize}

Next let us assume that $g_b(z_1)/2 \geq g_b(z_3)$ 
and that $z_1 + z_2$ is cut out by a line in the ruling corresponding to $c_1$ on $\mathcal Q_b$.
Then $\Spec \, \mathcal O_{\mathcal Z, z_i}$ for $i = 1, 2$ induce jets supported at the same point on the fiber $F_1(\mathcal Q)_{c_1}$, and the maximal length to which two jets coincide is $1$.
In this case, we have
\begin{itemize}
\item $
g_b(z_3)  = g_b(b),
 g_b(z_1)=  \min\{2g_b(z_2), 1 + 2g_b(b)\},
$ $g_b(c_2) \geq   \min\{g_b(z_2), g_b(z_1)/2\}$; or 
\item $g_b(z_3)  = g_b(c_2) =g_b(b)$, $g_b(b) = \min\{g_b(z_1)/2, g_b(z_2)\}$ and $g_b(z_1) = 2g_b(b)$ or $2g_b(b)+1$.
\end{itemize}

Finally let us assume that $g_b(z_3) \geq g_b(z_1)/2$ and there is a line in the ruling corresponding to $c_1$ on $\mathcal Q_b$ which cuts out a length $2$ subscheme $z_2 + z_3$ of $\mathcal Z_b$. 
Then $\Spec \, \mathcal O_{\mathcal Z, z_i}$ for $i = 2, 3$ induce jets supported at the same point on the fiber $F_1(\mathcal Q)_{c_1}$, and let $n$ be the maximal length to which two jets coincide.
In this case, the conditions are 
$
g_b(z_1) = 2g_b(c_1) = 2g_b(b),  g_b(z_2)  \geq g_b(z_3) \geq g_b(b), 
 g_b(z_3) = \min\{ g_b(z_2), g_b(c_2), n + g_b(b)\}.
$

\item Assume that $p_{\mathcal Z \to B}^*(b) = 3z_1 + z_2$ where $z_i$'s are distinct and that $g_b(c_1) = g_b(b)$.
First assume that there is no line in the ruling corresponding to $c_1$ on $\mathcal Q_b$ which cuts out length $2$ subschemes $z_1 + z_2$ or $2z_1$ of $\mathcal Z_b$.
 Then $g_b(b)$ is equal to one of $g_b(z_1)/3, g_b(z_2)$ and another may be greater than or equal to $g_b(b)$. Moreover when $g_b(z_1)/3$ is higher, we have $g_b(z_1)/3 - g_b(b) = 1/3$.

 Let us assume that $g_b(z_1)/3\geq  g_b(z_2)$ and that $2z_1$ is cut out by a line in the ruling corresponding to $c_1$ on $\mathcal Q_b$.
We have
\begin{itemize}
\item$
g_b(z_2)  = g_b(b),
$
$
g_b(z_1) =3g_b(b)  \text{ or } 2 + 3g_b(b), g_b(z_1) \leq 3g_b(c_2);
$
or 
\item $
g_b(z_2)  = g_b(b),
$ $g_b(c_2) = g_b(b)$, and $g_b(z_1) =3g_b(b)  \text{ or } 1 + 3g_b(b)$.
\end{itemize}

Finally let us assume that $z_1, z_2$ are cut out by a line in the ruling corresponding to $c_1$ on $\mathcal Q_b$.
Then $\Spec \, \mathcal O_{\mathcal Z, z_i}$ for $i = 1, 2$ induce jets supported at the same point on the fiber $F_1(\mathcal Q)_{c_1}$, and the maximal length to which two jets coincide is $1$.
In this case, we have
\begin{itemize}
\item $
g_b(z_1) = 3g_b(b) \text{ or } 1 + 3g_b(b), g_b(z_1) \leq g_b(c_2), g_b(z_1) = \min\{3g_b(z_2), 1 + 3g_b(b)\};
$
or 
\item $g_b(c_2) = g_b(b), g_b(z_1) = 3g_b(b) \text{ or } 1 + 3g_b(b)$, and $g_b(b) = \min\{g_b(z_1)/3, g_b(z_2)\}$.
\end{itemize}

\item Assume that $p_{\mathcal Z \to B}^*(b) = 2z_1 +2z_2$ where $z_i$'s are distinct and that $g_b(c_1) = g_b(b)$.
We may assume that 
\[
g_b(z_1) \geq g_b(z_2).
\]
First assume that even after relabeling, there is no line in the ruling corresponding to $c_1$ on $\mathcal Q_b$ which cuts out length $2$ subschemes $z_1 + z_2$ or $2z_1$ of $\mathcal Z_b$.
Then $g_b(b)$ is equal to $g_b(z_2)/2$ and $g_b(z_1)/2$ may be greater than or equal to $g_b(b)$. Moreover when it is higher, the difference is $1/2$.

Next assume that $2z_1$ is cut out by a line in the ruling corresponding to $c_1$ on $\mathcal Q_b$.
In this case,  we have
\begin{itemize}
\item $g_b(z_1) = 2g_b(b)$ or $2 + 2g_b(b)$, $
g_b(z_2) = 2g_b(b), 2g_b(b) \leq g_b(z_1) \leq 2g_b(c_2);
$
or 
\item $g_b(z_2) = 2g_b(b), g_b(c_2) = g_b(b)$ and $g_b(z_1) = 2g_b(b)$ or $1+ 2g_b(b)$.
\end{itemize}

Next assume that $z_1, z_2$ are cut out by a line in the ruling corresponding to $c_1$ on $\mathcal Q_b$.
In this case, we have
\begin{itemize}
\item
$
g_b(z_1) = g_b(z_2) = 2g_b(b) \text{ or } 1+ 2g_b(b),  g_b(z_1) = g_b(z_2)  \leq 2g_b(c_2);
$
or 
\item $g_b(c_2) = g_b(b)$, $2g_b(b) = g_b(z_2)$, and $g_b(z_1) = 2g_b(b) \text{ or } 1+ 2g_b(b)$.
\end{itemize}

\item Assume that $p_{\mathcal Z \to B}^*(b) = 4z_1$ and that $g_b(c_1) = g_b(b)$.
First assume that there is no line in the ruling corresponding to $c_1$ on $\mathcal Q_b$ which cuts out a length $2$ subscheme $2z_1$ of $\mathcal Z_b$.
Then $g_b(b)$ is equal to $\lfloor g_b(z_1)/4\rfloor$. Moreover, when $g_b(z_1)/4$ is greater than $g_b(b)$, then the difference is $1/4$.

Next assume that $2z_1$ is cut out by a line in the ruling corresponding to $c_1$ on $\mathcal Q_b$. In this case, we have
\begin{itemize}
\item $
g_b(z_1) = 4g_b(b) \text{ or } 2+ 4g_b(b), g_b(z_1) \leq 4g_b(c_2);
$
or 
\item $g_b(c_2)=g_b(b)$ and $g_b(z_1) = 4g_b(b) \text{ or } 1+ 4g_b(b)$.
\end{itemize}
\end{itemize}
\end{class}
\begin{class}
\label{class3}
Finally we assume that $p_{C \to B}^*(b) = 2c_1$.
The conditions (2) and (3) are described as follows:
\begin{itemize}
\item When $p_{\mathcal Z \to B}^*(b) = z_1 + z_2 + z_3 + z_4$ where $z_i$'s are distinct.
We may assume that
\[
g_b(z_1) \geq g_b(z_2) \geq g_b(z_3) \geq g_b(z_4).
\]
First assume that even after relabeling, there is no line on $\mathcal Q_b$ which cuts out a length $2$ subscheme $z_1 + z_2$ of $\mathcal Z_b$. Then the conditions (2) and (3) are equivalent to the following conditions:
$g_b(b)$ is equal to $g_b(z_i)$ for all $i = 2, 3, 4$ and $g_b(z_1)$ is greater than or equal to $g_b(b)$.

Next we may assume that $z_1, z_2$ are on a line on $\mathcal Q_b$.
Then $\Spec \, \mathcal O_{\mathcal Z, z_i}$ for $i = 1, 2$ induce jets of twice the length supported at the same point on the fiber $F_1(\mathcal Q)_{c_1}$, and it follows from Proposition~\ref{prop:lemma18} that the maximal length to which two jets coincide is $1$. 
In this case, we have 
$
g_b(z_3) = g_b(z_4) =  g_b(b), g_b(z_2)  =\min\{ g_b(z_1), \lfloor (1 + g_b(c_1))/2  \rfloor\}.
$

\item Assume that $p_{\mathcal Z \to B}^*(b) = 2z_1 + z_2 + z_3$ where $z_i$'s are distinct. 
We may assume that
\[
g_b(z_2) \geq g_b(z_3).
\]
First assume that even after relabeling $z_2, z_3$ while keeping the above inequalities, there is no line on $\mathcal Q_b$ which cuts out length $2$ subschemes $z_1 + z_2$ or $2z_1$ of $\mathcal Z_b$.
Then the conditions (2) and (3) are equivalent to the following conditions: 
we have
$g_b(b)$ is one of $g_b(z_1)/2, g_b(z_2), g_b(z_3)$ except one which is greater than or equal to $g_b(b)$. Moreover when $g_b(z_1)/2$ is higher, we have $g_b(z_1) = 1 + 2g_b(b)$.

Next let us assume that $g_b(z_1)/2 \geq g_b(z_2)$ 
and that $2z_1$ is cut out by a line on $\mathcal Q_b$.
On $\mathcal W$, there is one point $\widetilde{z}$ mapping to $z_1$. The only jets of length $1$ or $2$ at $z_1$ will induce the same jet of  length $2$ at the image of $\widetilde{z}$.
Thus we have
$
g_b(z_2) = g_b(z_3) =  g_b(b),  
g_b(z_1) = 2g_b(b) \text{ or } 2 + 2g_b(b).
$

Next let us assume that $g_b(z_1)/2 \geq g_b(z_3)$ 
and that $z_1 + z_2$ is cut out by a line  on $\mathcal Q_b$.
In this case, we have
$
g_b(z_3) =  g_b(b), g_b(z_2) \geq g_b(b),
2g_b(b)\leq g_b(z_1)  \leq 1+ 2g_b(b), 
$
  and we have 
  \begin{itemize}
 \item $\lceil g_b(z_1)/2 \rceil = g_b(z_2)$; or  
 \item
 $2g_b(z_2) > 1+2g_b(b)$ and $g_b(z_1) = 1+ 2g_b(b)$.
 \end{itemize}

Finally let us assume that $g_b(z_3) \geq g_b(z_1)/2$ and there is a line on $\mathcal Q_b$ which cuts out a length $2$ subscheme $z_2 + z_3$ of $\mathcal Z_b$.
Then $\Spec \, \mathcal O_{\mathcal Z, z_i}$ for $i = 2, 3$ induce jets of twice the length supported at the same point on the fiber $F_1(\mathcal Q)_{c_1}$, and it follows from Proposition~\ref{prop:lemma18} that the maximal length to which two jets coincide is $1$.
In this case, we have
\begin{itemize}
\item
$
g_b(z_1) = 2g_b(b)
$
and
$
2g_b(b) \leq 2g_b(z_2) = 2g_b(z_3) \leq 1 + 2g_b(b);
$
or 
\item
$
g_b(z_1) = 2g_b(b)
$
and
$2g_b(z_2) > 1+2g_b(b)$ and $g_b(z_3) =1 + 2g_b(b)$.
 \end{itemize}

\item Assume that $p_{\mathcal Z \to B}^*(b) = 3z_1 + z_2$ where $z_i$'s are distinct. First assume that there is no line on $\mathcal Q_b$ which cuts out length $2$ subschemes $z_1 + z_2$ or $2z_1$ of $\mathcal Z_b$.
 Then we have $g_b(z_1) = 3g_b(b)$ or $1 + 3g_b(b)$ and $g_b(b) = \min\{g_b(z_1)/3, g_b(z_2)\}$. 
 
 Let us assume that $g_b(z_1)/3\geq g_b(z_2)$ and  that $2z_1$ is cut out by a line  on $\mathcal Q_b$.
In this case, we have
$
g_b(z_2) = g_b(b),
g_b(z_1) = 3g_b(b) \text{ or } 2+3g_b(b).
$

Finally assume that $z_1, z_2$ are cut out by a line on $\mathcal Q_b$.
Then $\Spec \, \mathcal O_{\mathcal Z, z_i}$ for $i = 1, 2$ induce jets supported at the same point on the fiber $F_1(\mathcal Q)_{c_1}$, and the maximal length for which two jets coincide is $1$.
In this case,  we have
$
g_b(z_1) =  \min\{ 3g_b(z_2), 1+3g_b(b)\}.
$

\item Assume that $p_{\mathcal Z \to B}^*(b) = 2z_1 +2z_2$ where $z_i$'s are distinct.
We may assume that 
\[
g_b(z_1) \geq g_b(z_2).
\]
First assume that even after relabeling, there is no line on $\mathcal Q_b$ which cuts out length $2$ subschemes $z_1 + z_2$ or $2z_1$ of $\mathcal Z_b$.
Then $g_b(c_1)$ is equal to $g_b(z_2)$ and $g_b(z_1)$ may be greater than or equal to $2g_b(b)$. Moreover when it is higher, the difference is $1$.

Next assume that $z_1, z_2$ are cut out by a line on $\mathcal Q_b$.
In this case, 
we have $2g_b(b) \leq g_b(z_1)  = g_b(z_2)\leq 1 + 2g_b(b)$.

Next assume that $2z_1$ is cut out by a line on $\mathcal Q_b$.
$
g_b(z_2) = 2g_b(b), g_b(z_1) = 2g_b(b)
$ 
or $2 + 2g_b(b)$.

\item Assume that $p_{\mathcal Z \to B}^*(b) = 4z_1$.
First assume that there is no line on $\mathcal Q_b$ which cuts out a length $2$ subscheme $2z_1$ of $\mathcal Z_b$.
Then we have $4g_b(b)\leq g_b(z_1) \leq 1 + 4g_b(b)$

Next assume that $2z_1$ is cut out by a line on $\mathcal Q_b$. 
Then we have
$
g_b(z_1) = 4g_b(b)
$
or $2 + 4g_b(b)$.

\end{itemize}

\end{class}

\begin{defi}
Now we fix a saturated element $x \in \mathfrak H^\circ(\overline{k})$, and we consider the corresponding function $g$. Let $b \in B(\overline{k})$. 
We fix $c_1 \in C(\overline{k})$ above $b$, and we define the multi-valued function
\[
f_{b, c_1} : \mathbb N\cup \{ \infty\} \to \mathfrak Q_b\cap (B \sqcup \mathcal Z \sqcup C \sqcup\{\hat{1}\})(\overline{k})
\] in the following way:
\begin{itemize}
\item Assume that $p_{\mathcal Z \to B}^*(b) = z_1 + z_2 + z_3 + z_4$ where $z_i$'s are distinct.
We may assume that
$
g_b(z_1) \geq g_b(z_2) \geq g_b(z_3) \geq g_b(z_4).
$
Suppose that $p_{C \to B}^*(b) = c_1 + c_2$.
Then we define $f_{b, c_1}$ by
\[
f_{b, c_1}(n) = 
\begin{cases}
b & \text{ if $0 \leq n \leq \min\{g_b(c_1), g_b(c_2)\}$}\\
c_1 & \text{ if $\min\{g_b(c_1), g_b(c_2)\}< n \leq g_b(c_1)$}\\
z_1+z_2 & \text{ if $g_b(c_1)+ 1\leq n \leq g_b(z_2)$}\\
z_1 & \text{if $\max\{g_b(c_1), g_b(z_2) \}< n \leq g_b(z_1)$}\\
\hat{1} & \text{otherwise}.
\end{cases}
\] 
Next suppose that $p_{C \to B}^*(b) = 2c_1$.
Then we define $f_{b, c_1}$ by
\[
f_{b, c_1}(n) = 
\begin{cases}
b & \text{ if $0 \leq n \leq 2g_b(b)$}\\
 c_1 & \text{ if $2g_b(b)  < n \leq g_b(c_1)$}\\
z_1+z_2 & \text{ if $g_b(c_1)+ 1\leq n \leq 2g_b(z_2)$}\\
z_1 & \text{if $\max\{g_b(c_1), 2g_b(z_2)\} < n \leq 2g_b(z_1)$}\\
\hat{1} & \text{otherwise}.
\end{cases}
\] 

\item Assume that $p_{\mathcal Z \to B}^*(b) = 2z_1 + z_2 + z_3$ where $z_i$'s are distinct.
We may assume that $g(z_2)\geq g(z_3)$. 
Suppose that $p_{C \to B}^*(b) = c_1 + c_2$.
Assume that $g_b(z_1)/2\geq g_b(z_3)$.
Then we define $f_{b, c_1}$ by
\[
f_{b, c_1}(n) = 
\begin{cases}
b & \text{ if $0 \leq n \leq \min\{g_b(c_1), g_b(c_2)\}$}\\
c_1 & \text{ if $\min\{g_b(c_1), g_b(c_2)\} < n \leq g_b(c_1)$}\\
z_1+z_2 & \text{if $g_b(c_1) < n \leq \min\{\lceil g_b(z_1)/2 \rceil, g_b(z_2)\}$.}\\
z_1 & \text{$\max\{g_b(z_2), g_b(c_1)\} < n \leq \lceil g_b(z_1)/2\rceil$.}\\
z_2 & \text{$\max\{\lceil g_b(z_1)/2 \rceil, g_b(c_1)\} < n \leq g_b(z_2)$.}\\
\hat{1} & \text{otherwise}.
\end{cases}
\]
Finally we assume that $g_b(z_3) \geq g_b(z_1)/2$.
Then we define $f_{b, c_1}$ by
\[
f_{b, c_1}(n) = 
\begin{cases}
b & \text{ if $0 \leq n \leq \min\{g_b(c_1), g_b(c_2)\}$}\\
c_1 & \text{ if $\min\{g_b(c_1), g_b(c_2)\} < n \leq g_b(c_1)$}\\
z_2+z_3 & \text{$g_b(c_1) < n \leq g_b(z_3)$.}\\
z_2 & \text{$\max\{g_b(z_3), g_b(c_1)\} < n \leq g_b(z_2)$.}\\
\hat{1} & \text{otherwise}.
\end{cases}
\]
Next assume that $p_{C \to B}^*(b) = 2c_1$.
Assume that $g_b(z_1)/2\geq g_b(z_3)$.
Then we define $f_{b, c_1}$ by
\[
f_{b, c_1}(n) = 
\begin{cases}
b & \text{ if $0 \leq n \leq 2g_b(b) $}\\
 c_1 & \text{ if $2g_b(b)  < n \leq g_b(c_1)$}\\
z_1+z_2 & \text{$g_b(c_1) < n \leq \min\{2 \lceil g_b(z_1)/2\rceil, 2g_b(z_2)\}$.}\\
z_1 & \text{$\max\{ 2g_b(z_2), g_b(c_1)\} < n \leq g_b(z_1)$.}\\
z_2 & \text{$\max\{ 2 \lceil g_b(z_1)/2\rceil, g_b(c_1)\} < n \leq 2g_b(z_2)$.}\\
\hat{1} & \text{otherwise}.
\end{cases}
\]
Finally we assume that $g_b(z_3) \geq g_b(z_1)/2$.
Then we define $f_{b, c_1}$  by
\[
f_{b, c_1}(n) = 
\begin{cases}
b & \text{ if $0 \leq n \leq 2g_b(b) $}\\
 c_1 & \text{ if $2g_b(b) < n \leq g_b(c_1)$}\\
z_2+z_3 & \text{$g_b(c_1) < n \leq 2g_b(z_3)$.}\\
z_2 & \text{$2g_b(z_3) < n \leq 2g_b(z_2)$.}\\
\hat{1} & \text{otherwise}.
\end{cases}
\]

\item Assume that $p_{\mathcal Z \to B}^*(b) = 3z_1 + z_2$ where $z_i$'s are distinct.
Suppose that $p_{C \to B}^*(b) = c_1 + c_2$.
Then we define $f_{b, c_1}$ by
\[
f_{b, c_1}(n) = 
\begin{cases}
b & \text{ if $0 \leq n \leq \min\{g_b(c_1), g_b(c_2)\}$}\\
c_1 & \text{ if $\min\{g_b(c_1), g_b(c_2)\} < n \leq g_b(c_1)$}\\
z_1+z_2 & \text{if $g_b(c_1) < n \leq \min\{\lceil g_b(z_1)/3 \rceil, g_b(z_2)\}$}\\
z_1 & \text{if $\max\{g_b(z_2), g_b(c_1)\} < n \leq \lceil g_b(z_1)/3 \rceil$}\\
z_2 & \text{if $\max\{\lceil g_b(z_1)/3 \rceil, g_b(c_1)\} < n \leq g_b(z_2)$}\\
\hat{1} & \text{otherwise}.
\end{cases}
\]
Next assume that $p_{C \to B}^*(b) = 2c_1$.
Then we define $f_{b, c_1}$ by
\[
f_{b, c_1}(n) = 
\begin{cases}
b & \text{ if $0 \leq n \leq 2g_b(b) $}\\
 c_1 & \text{ if $2g_b(b)  < n \leq g_b(c_1)$}\\
z_1+z_2 & \text{if $g_b(c_1) < n \leq \min\{2\lceil g_b(z_1)/3 \rceil, 2g_b(z_2)\}$}\\
z_1 & \text{if $\max\{2g_b(z_2), g_b(c_1)\} < n \leq 2\lceil g_b(z_1)/3 \rceil$}\\
z_2 & \text{if $\max\{2\lceil g_b(z_1)/3 \rceil, g_b(c_1)\} < n \leq 2g_b(z_2)$}\\
\hat{1} & \text{otherwise}.
\end{cases}
\]

\item When $p_{\mathcal Z \to B}^*(b) = 2z_1 +2z_2$ where $z_i$'s are distinct.
Suppose that $p_{C \to B}^*(b) = c_1 + c_2$.
Then we define $f_{b, c_1}$ by
\[
f_{b, c_1}(n) = 
\begin{cases}
b & \text{ if $0 \leq n \leq \min\{g_b(c_1), g_b(c_2)\}$}\\
 c_1 & \text{ if $\min\{g_b(c_1), g_b(c_2)\} < n \leq g_b(c_1)$}\\
z_1+z_2 & \text{$g_b(c_1) < n \leq \min\{\lceil g_b(z_1)/2\rceil, \lceil g_b(z_2)/2\rceil\}$.}\\
z_1 & \text{$\max\{ \lceil g_b(z_2)/2\rceil, g_b(c_1)\} < n \leq \lceil g_b(z_1)/2\rceil$.}\\
z_2 & \text{$\max\{ \lceil g_b(z_1)/2\rceil, g_b(c_1)\} < n \leq \lceil g_b(z_2)/2\rceil$.}\\
\hat{1} & \text{otherwise}.
\end{cases}
\]


Next suppose that $p_{C \to B}^*(b) = 2c_1$.
Then we define $f_{b, c_1}$ by
\[
f_{b, c_1}(n) = 
\begin{cases}
b & \text{ if $0 \leq n \leq 2g_b(b)$}\\
 c_1 & \text{ if $2g_b(b) < n \leq g_b(c_1)$}\\
z_1+z_2 & \text{$g_b(c_1) < n \leq \min\{2\lceil g_b(z_1)/2\rceil, 2\lceil g_b(z_2)/2\rceil\}$.}\\
z_1 & \text{$\max\{ 2\lceil g_b(z_2)/2\rceil, g_b(c_1)\} < n \leq 2\lceil g_b(z_1)/2\rceil$.}\\
z_2 & \text{$\max\{ 2\lceil g_b(z_1)/2\rceil, g_b(c_1)\} < n \leq 2\lceil g_b(z_1)/2\rceil$.}\\
\hat{1} & \text{otherwise}.
\end{cases}
\]


\item Assume that $p_{\mathcal Z \to B}^*(b) = 4z_1$. Let us suppose that $p_{C \to B}^*(b) = c_1 + c_2$ and we may also assume that $g_b(c_1) = g_b(b)$. 
Then we define $f_{b, c_1}$ by
\[
f_{b, c_1}(n) = 
\begin{cases}
b & \text{ if $0 \leq n \leq \min\{g_b(c_1), g_b(c_2)\}$}\\
c_1 & \text{ if $\min\{g_b(c_1), g_b(c_2)\} < n \leq g_b(c_1)$}\\
z_1 & \text{if $g_b(c_1) < n \leq \lceil g_b(z_1)/4\rceil$}\\
\hat{1} & \text{otherwise}.
\end{cases}
\]
Let us suppose that $p_{C \to B}^*(b) = 2c_1$.
Then we define $f_{b, c_1}$ by
\[
f_{b, c_1}(n) = 
\begin{cases}
b & \text{ if $0 \leq n \leq 2g_b(b) $}\\
 c_1 & \text{ if $2g_b(b)  < n \leq g_b(c_1)$}\\
z_1 & \text{if $g_b(c_1) < n \leq 2\lceil g_b(z_1)/4\rceil$}\\
\hat{1} & \text{otherwise}.
\end{cases}
\]

\end{itemize}
When $p_{C \to B}^*b = c_1 + c_2$, we define $f_b= (f_{b, c_1}, f_{b, c_2})$. When $p_{C \to B}^*b = 2c_1$, we define $f_b = f_{b, c_1}$. 
We denote the set of these pairs by $\mathrm{ch}(\mathfrak Q_b(\overline{k}))$.
If we denote the subset of saturated elements supported over $b$ by $\mathfrak H^\circ_b(\overline{k})$, then the set $\mathrm{ch}(\mathfrak Q_b(\overline{k}))$ is identified with $\mathfrak H^\circ_b(\overline{k})$.

\end{defi}

Next, we define the saturation function:
\begin{defi} The saturation function is the map
\[
\mathrm{sat} : \mathfrak H(\overline{k}) \to \mathfrak H^\circ(\overline{k}),
\]
which is the left adjoint to the inclusion functor $\mathfrak H^\circ(\overline{k}) \hookrightarrow \mathfrak H(\overline{k})$. In other words, for any $x \in \mathfrak H(\overline{k})$, $\mathrm{sat}(x)$ is defined as the smallest saturated element greater than or equal to $x$. 
\end{defi}

\begin{prop}
\label{prop:saturation}
The saturation function $\mathrm{sat}$ exists. Moreover, it is Galois equivariant so that if $x$ is a $k$-rational point on $\mathfrak H(\overline{k})$, then $\mathrm{sat}(x)$ is also a $k$-rational point on $\mathfrak H^\circ(\overline{k})$.
\end{prop}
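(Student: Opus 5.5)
Existence of $\mathrm{sat}$ is a local statement over $B$: an element $x \in \mathfrak H(\overline{k})$ is a finite collection of local data $g_b$, one for each $b$ in the finite support of $x$. Each saturation condition (1)--(3) in Definition~\ref{defi:saturated} is checked fibrewise: admissibility of jets and the maximality condition (3) concern only the subschemes supported over a single $b$, and $\iota$, $p_{\mathcal Z}$, $p_C$ respect the projection to $B$. So I will construct $\mathrm{sat}(x)$ by saturating each local component $g_b$ separately and taking the product. For $b$ outside the support of $x$, the trivial $g_b$ is already saturated, so the result is again a finite element of $\mathfrak H(\overline{k})$. It therefore suffices to show that for each $b$ the poset $\mathfrak H^\circ_b(\overline{k})$ of saturated elements supported over $b$, identified with $\mathrm{ch}(\mathfrak Q_b(\overline{k}))$, has the property that every $g_b$ has a unique smallest saturated element above it.

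For this I will prove two things about $\mathfrak H^\circ_b(\overline{k})$. First, it is cofinal: any $g_b$ lies below some saturated element. To see this, raise $g_b(b)$ to $\max$ of all relevant normalized values, such as $g_b(z_i)/m_i$ and $g_b(c_j)$ (or $g_b(c_1)/2$ in the ramified case). Then pull every coordinate up to the value forced by the descriptions in Classifications~\ref{class1}, \ref{class2}, \ref{class3}. This gives an element where every jet is a full pullback $p^*x_B$ plus nothing, which is saturated. Second, it is closed under meets in the following sense: if $y, y' \geq x$ are both saturated, then there is a saturated $y'' \geq x$ with $y'' \leq y, y'$. Together with finiteness of the set of saturated elements in the interval $[x, y]$, this gives a minimum.

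The meet-closure is the main obstacle, since $\mathfrak H^\circ$ is not open and condition (3) is not obviously stable under intersection. I will try to verify it case by case using the multi-valued functions $f_{b,c_1}$. A saturated element corresponds to a ``chain'' description, where $f_{b,c_1}(n)$ records which element of $\mathfrak Q_b$ the $n$-th order jet is forced onto. The componentwise minimum of two such chains should again satisfy the defining inequalities of the relevant case in Classifications~\ref{class2} and \ref{class3}. Those inequalities are of the form ``equality with $\min\{\dots\}$'' or ``bounded by $1+ m g_b(b)$''. The ``$\min$'' type conditions are visibly preserved. For the bounded-excess conditions, I will check directly that the smaller element still lies in the allowed window, possibly after lowering a $z$-coordinate to the value dictated by the same formula.

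If some case fails, I will instead define $\mathrm{sat}(x)$ explicitly. The recipe is: take the minimal $g_b(b)$ compatible with $x$ and the classification, then define the remaining coordinates as the smallest values allowed by the formulas given that $g_b(b)$. Minimality then follows from monotonicity of those formulas in their inputs.

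Galois equivariance is formal. The conditions (1)--(3) are defined in terms of $\iota$, $\iota_\diamond$, $p_{\mathcal Z}$, $p_C$, schematic intersection and ideal quotients, all of which are defined over $k$. So $\mathfrak H^\circ(\overline{k})$ is Galois stable, as already noted after Definition~\ref{defi:saturated}, and the order is Galois invariant. Hence for $\gamma$ in the Galois group, $\gamma(\mathrm{sat}(x))$ is the smallest saturated element above $\gamma(x)$, so $\mathrm{sat}(\gamma x) = \gamma\,\mathrm{sat}(x)$. In particular, if $x$ is $k$-rational then $\mathrm{sat}(x)$ is fixed by Galois and is therefore a $k$-rational point.
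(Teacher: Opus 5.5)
Your framework is sound. The question is local over $B$. Elements $(x_B, p_{\mathcal Z\to B}^*x_B, p_{C\to B}^*x_B, \hat 1)$ with $x_B$ large are saturated and dominate $x$. Intervals in $\mathfrak H(\overline{k})$ are finite, and a nonempty downward-directed set with finite lower intervals has a least element. Your Galois-equivariance argument via uniqueness of that least element is also fine. However, given cofinality and finiteness, your meet-closure statement (for saturated $y,y'\geq x$ there is a saturated $y''\geq x$ below both) is \emph{equivalent} to the existence of $\mathrm{sat}(x)$. So the whole proposition sits in that step, and you do not prove it: you say the componentwise minimum ``should'' work, that some conditions are ``visibly preserved'', and that you ``will check'' the rest.

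Conditions (1) and (2) do pass to the componentwise meet, for structural reasons:
\begin{itemize}
\item For (1): $g_b(b)$ equals $\min\{g_b(c_1),g_b(c_2)\}$ or $\lfloor g_b(c_1)/2\rfloor$, and both commute with minima.
\item For (2): over each $c\in C$, the ideal-quotient defect $p_{\mathcal Z}^*(x_{\mathcal Z}\wedge x'_{\mathcal Z})-(\cdots)\wedge p_C^*(x_C\wedge x'_C)$ is a subscheme of the defect of $y$ or of $y'$, according to which $C$-component is smaller at $c$. Hence its $\iota_\diamond$ lies in an admissible jet.
\end{itemize}
The real difficulty is the maximality condition (3). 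It is not an inequality among values of $g_b$ alone; it involves the line configuration and the contact orders of jets identified by $\iota$. Consider a point $z$ where the meet takes its $\mathcal Z$-value from $y$ but, at some $c$ over $b$, its $C$-value from $y'$. There, an enlargement of $x_{\mathcal Z}\wedge x'_{\mathcal Z}$ with unchanged $\iota_\diamond$-image does not obviously transfer to such an enlargement of $y_{\mathcal Z}$ or $y'_{\mathcal Z}$, because $\iota^*\iota_\diamond$ of the meet's defect is not bounded by that of $y$. This may well be provable, but it needs an argument you do not give. Also, the classifications you lean on are only partly written out and contain disjunctive alternatives, so ``visibly preserved'' is not a verification.

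Your fallback does not close the gap either. ``The smallest values allowed by the formulas'' presupposes a unique smallest choice among the alternatives: which $c_i$ realizes $g_b(b)$, which $z_i$ is free, and the ``or'' cases. That uniqueness is exactly what has to be shown, and monotonicity of each individual formula says nothing about the branching.

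The paper avoids a global meet-closure statement by building $\mathrm{sat}(x)$ in stages:
\begin{enumerate}
\item $x_C^*$ is the least $x'_C\geq x_C$ making $\iota_\diamond(p_{\mathcal Z}^*x_{\mathcal Z}-p_{\mathcal Z}^*x_{\mathcal Z}\wedge p_C^*x'_C)$ admissible. This needs only closure under $\wedge$ in the $C$-coordinate with $x_{\mathcal Z}$ fixed.
\item $x_B^*$ is maximal with $p_{C\to B}^*x_B^*\leq x_C^*$.
\item $x'_{\mathcal Z}=x_{\mathcal Z}\vee p_{\mathcal Z\to B}^*x_B^*$.
\item $x_{\mathcal Z}^*$ is the maximal enlargement of $x'_{\mathcal Z}$ with the same $\iota_\diamond$-image.
\end{enumerate}
It then shows $\mathrm{sat}(x)\leq y$ coordinatewise for every saturated $y\geq x$. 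Condition (1) for $y$ gives $x_B^*\leq y_B$. Condition (3) for $y$, together with one identity checked against Classifications~\ref{class2} and~\ref{class3}, gives $x_{\mathcal Z}^*\leq y_{\mathcal Z}$.

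To keep your route, you must supply a proof that (3) survives componentwise meets, or exhibit some other saturated common lower bound.
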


\begin{proof}
We fix $x = (x_B, x_{\mathcal Z}, x_C, \hat{1}) \in \mathfrak H(\overline{k})$.
First, note that there exists $x_C' \geq x_C$ such that
\[
\iota_\diamond(p_{\mathcal Z}^*x_{\mathcal Z} - p_{\mathcal Z}^*x_{\mathcal Z}\wedge p_C^*x_C')
\] 
induces admissible jets over $C$. Let $x_C', x_C'' \geq x_C$ be two such divisors.
Then
\[
\iota_\diamond(p_{\mathcal Z}^*x_{\mathcal Z} - p_{\mathcal Z}^*x_{\mathcal Z}\wedge p_C^*x_C'\wedge p_C^*x_C'')
\] 
also induces admissible jets over $C$. So let $x_C^*\geq x_C$ be the smallest element such that 
\[
\iota_\diamond(p_{\mathcal Z}^*x_{\mathcal Z} - p_{\mathcal Z}^*x_{\mathcal Z}\wedge p_C^*x_C^*)
\] 
induces admissible jets over $C$. Next let $x_B^*\geq x_B$ be the maximal element such that
\[
p_{C \to B}^*x_B^* \leq x_C^*.
\]
Then we have
\[
\iota_\diamond((p_{\mathcal Z}^*x_{\mathcal Z}\vee p_B^*x_B^*) - (p_{\mathcal Z}^*x_{\mathcal Z}\vee p_B^*x_B^*)\wedge p_C^*x_C^*) = \iota_\diamond(p_{\mathcal Z}^*x_{\mathcal Z} - p_{\mathcal Z}^*x_{\mathcal Z}\wedge p_C^*x_C^*).
\]
Let $x_{\mathcal Z}' = x_{\mathcal Z}\vee p_{\mathcal Z \to B}^*x_B^*$.
Then let $x_{\mathcal Z}^*$ be the maximal element such that 
\[
\iota^*\iota_\diamond(p_{\mathcal Z}^*x_{\mathcal Z}' - p_{\mathcal Z}^*x_{\mathcal Z}' \wedge p_C^*x_C^*) \geq p_{\mathcal Z}^*x_{\mathcal Z}^*- p_{\mathcal Z}^*x_{\mathcal Z}^* \wedge p_C^*x_C^* \geq p_{\mathcal Z}^*x_{\mathcal Z}' - p_{\mathcal Z}^*x_{\mathcal Z}' \wedge p_C^*x_C^*.
\]
Then $\mathrm{sat}(x)$ is given by
\[
\mathrm{sat}(x) =  (x_B^*, x_{\mathcal Z}^*, x_C^*, \hat{1}).
\]
It is clear from the construction that this is saturated and Galois equivariant.

Next we prove that $\mathrm{sat}$ is the left adjoint functor.
Let $y = (y_B, y_{\mathcal Z}, y_C, \hat{1}) \geq x$ be a saturated element.
Since 
\[
p_{\mathcal Z}^*x_{\mathcal Z} - p_{\mathcal Z}^*x_{\mathcal Z}\wedge p_C^*y_C
\]
is a subscheme of 
\[
p_{\mathcal Z}^*y_{\mathcal Z} - p_{\mathcal Z}^*y_{\mathcal Z}\wedge p_C^*y_C,
\]
the subscheme
\[
\iota_{\diamond}(p_{\mathcal Z}^*x_{\mathcal Z} - p_{\mathcal Z}^*x_{\mathcal Z}\wedge p_C^*y_C)
\]
induces admissible jets over $C$. This implies that $x_C^* \leq y_C$.
Then it is clear that $x_B^*\leq y_B$ and $x_{\mathcal Z}' \leq y_{\mathcal Z}$.

We claim that $x_{\mathcal Z}^* \leq y_{\mathcal Z}$.
Once we prove this, we have $\mathrm{sat}(x) \leq y$, hence our assertion follows.
We have the inclusion
\[
p_{\mathcal Z}^*x_{\mathcal Z}^* - p_{\mathcal Z}^*x_{\mathcal Z}^*\wedge p_C^*x_C^* \subset \iota^*\iota_\diamond(p_{\mathcal Z}^*x_{\mathcal Z}' - p_{\mathcal Z}^*x_{\mathcal Z}'\wedge p_C^*x_C^*).
\]
Since we have
\[
\iota_\diamond(p_{\mathcal Z}^*x_{\mathcal Z}^* - p_{\mathcal Z}^*x_{\mathcal Z}^*\wedge p_C^*x_C^* )= \iota_\diamond(p_{\mathcal Z}^*x_{\mathcal Z}' - p_{\mathcal Z}^*x_{\mathcal Z}'\wedge p_C^*x^*_C)
\] 
and $x_C^* \leq y_C$, we conclude that
\[
\iota_\diamond(p_{\mathcal Z}^*x_{\mathcal Z}^* - p_{\mathcal Z}^*x_{\mathcal Z}^*\wedge p_C^*y_C )= \iota_\diamond(p_{\mathcal Z}^*x_{\mathcal Z}' - p_{\mathcal Z}^*x_{\mathcal Z}'\wedge p_C^*y_C).
\]
Indeed, we check this by going through Classification~\ref{class2} and \ref{class3}.
This implies that 
\[
p_{\mathcal Z}^*x_{\mathcal Z}^* - p_{\mathcal Z}^*x_{\mathcal Z}^*\wedge p_C^*y_C \leq \iota^*\iota_\diamond(p_{\mathcal Z}^*y_{\mathcal Z} - p_{\mathcal Z}^*y_{\mathcal Z}\wedge p_C^*y_C).
\]
Hence the maximality of $y_{\mathcal Z}$ shows that $y_{\mathcal Z} \geq x_{\mathcal Z}^*$. Thus our claim follows.

\end{proof}

Next we explain our notation to denote chains:
\begin{notation}
\label{notation:chains}
Let $b \in B(\overline{k})$ and for simplicity we assume that $p_{\mathcal Z \to B}^*(b) = z_1 + z_2 + z_3 + z_4$ and $p_{C \to B}^*(b) = c_1 + c_2$. Let $g_b$ be a saturated element and $f_b = (f_{b, c_1}, f_{b, c_2})$ be the corresponding pair of chains. Without loss of generality, we may assume that
\[
g_{b}(z_1) \geq g_b(z_2) \geq  g_b(z_3) = g_b(z_4) = g_b(b).
\]
Let $m_b = g_b(b)$, $m_{c_1} = g_{b}(c_1) - g_b(b)$, $m_{z_1, z_2} = \max\{0, g_b(z_2) - g_b(c_1)\}$, and $m_{z_1} = g_b(z_1) - g_b(z_2)$.
Then the chain $f_{b, c_1}$ is displayed as
\begin{align*}
&f_{b, c_1} : (b \leq \cdots \leq b \leq c_1 \leq \cdots \leq c_1 \leq z_1+z_2 \leq \cdots \leq z_1+z_2  \leq z_1 \leq \cdots \leq z_1 \leq \hat{1} \leq \hat{1} \leq \cdots)
\end{align*}
where $b$ appears in $f_{b, c_1}$ $m_b + 1$ times, $c_1$ appears in $f_{b, c_1}$ $m_{c_1}$ times, $(z_1, z_2)$ appears in $f_{b, c_1}$ $m_{z_1, z_2}$ times, and $z_1$ appears in $f_{b, c_1}$ $m_{z_1}$ times. In this situation, the chain $f_{b, c_1}$ is denoted by $m_b[b] +m_{c_1}[c_1] + m_{z_1+z_2}[z_1+z_2] +  m_{z_1}[z_1]$. We employ similar notation in other situations too.
Also we denote the initial element $f_{b, c_1}$ given by
\begin{align*}
&f_{b, c_1} : (b \leq \hat{1} \leq \hat{1} \leq \cdots)
\end{align*}
by $\hat{1}$.
\end{notation}

\subsubsection*{Essential saturated elements}

Next, we introduce essential saturated elements:

\begin{defi}
Let $f_1, f_2 \in \mathrm{ch}(\mathfrak Q_b(\overline{k}))$. As in \cite[Definition 5.6]{DLTT25}, we say $f_1 \leq f_2$ if for any $n \in \mathbb N \cup \{\infty\}$, we have $f_{2, c_i}(n) \leq f_{1, c_i}(n)$. Here we impose the poset structure by imposing $c_i < z_j+ z_{j'} < z_j$. Note that this poset structure on $\mathrm{ch}(\mathfrak Q_b(\overline{k}))$ is the same poset structure which we induce by identifying $\mathrm{ch}(\mathfrak Q_b(\overline{k}))$ with $\mathfrak H^\circ_b(\overline{k})$. (Note that $\mathfrak H^\circ$ is a poscheme.)

Next, we claim that $\mathrm{ch}(\mathfrak Q_b(\overline{k}))$ admits joins. Indeed, let us identify $f_1, f_2 \in \mathrm{ch}(\mathfrak Q_b(\overline{k}))$ with saturated elements $x_1 = (x_{1, B}, x_{1, \mathcal Z}, x_{1, C}), x_2  = (x_{2, B}, x_{2, \mathcal Z}, x_{2, C}) \in \mathfrak H^\circ_b(\overline{k})$. Then the join $f_1\vee f_2$ corresponds to the saturation of the union of $x_1, x_2$ as subschemes, i.e., $$\mathrm{sat}(x_{1, B}\cup x_{2, B} , x_{1, \mathcal Z}\cup x_{2, \mathcal Z}, x_{1, C}\cup x_{2, C}).$$
This definition is the same as the definition of joins in $\mathfrak H^\circ_b(\overline{k})$.
\end{defi}

Here is the definition of essential saturated elements:

\begin{defi}
Let $f_0 \in \mathrm{ch}(\mathfrak Q_b(\overline{k}))$ be a pair of chains. Let $\mathfrak S$ be the set of $f_s \in \mathrm{ch}(\mathfrak Q_b(\overline{k}))$ such that $f_0 \prec f_s$, i.e., $f_0 < f_s$ and there is no element $f' \in \mathrm{ch}(\mathfrak Q_b(\overline{k}))$ satisfying $f_0 < f' < f_s$.
We say the pair $f_0 \leq f$ is essential if $f$ can be obtained from $f_0$ by taking the join of some subset of $\mathfrak S$. 

Let $W \subset \mathfrak H^\circ$ be a locally closed subscheme with the reduced scheme structure. 
We say $(w \leq x) \in (W \leq \mathfrak H^\circ)$ is essential if for any point $b \in \mathrm{Supp}(w\leq x)(\overline{k})$, $f_b^w \leq f_b^x$ is essential.
\end{defi}

Let us classify essential pairs over an algebraically closed field:

\begin{exam}
\label{exam:essentialtypes}
We classify essential pairs based on case by case analysis on ramification behaviors of $\mathcal Z \to B$ and $C \to B$ as well as configurations of lines on $\mathcal Q_b$.
We demonstrate these in a few certain cases. Other cases are similar.

\noindent
{\bf Case 1} Assume that $p_{\mathcal Z \to B}^*(b) = z_1 + z_2 + z_3 + z_4$ and $p_{C \to B}^*(b) = c_1 + c_2$.

There are additional informations we need to consider, i.e., the configuration of lines on $\mathcal Q_b$ which cuts out a length $2$ subscheme of $\mathcal Z_b$. They are classified into $6$ cases as follows:
\begin{itemize}
\item there is no line;
\item there is one line containing $z_1$ and $z_2$;
\item there are two lines such that one contains $z_1$ and $z_2$ and the other contains $z_3$ and $z_4$;
\item there are two lines such that one contains $z_1$ and $z_2$ and the other contains $z_1$ and $z_3$;
\item there are three lines such that the first one contains $z_1$ and $z_2$, the second one contain $z_3$ and $z_4$, and the third line contains $z_1$ and $z_3$; and
\item there are four lines such that the first one contains $z_1$ and $z_2$, the second one contain $z_3$ and $z_4$, the third line contains $z_1$ and $z_3$, and the fourth line contains $z_2$ and $z_4$.
\end{itemize}
We demonstrate the classification of essential saturated pairs in certain cases. 

Suppose that there is no line.
First let us assume that $f_0 = \hat{1}$. Then $f \in \mathrm{ch}(\mathfrak Q_b(\overline{k}))$ such that $f_0 \prec f$ is listed as follows
\[
([z_i], [z_i]) \, (i = 1, \cdots, 4), \quad ([c_1], \hat{1}), \quad (\hat{1}, [c_2]).
\]
Then the list of $f$ such that $f_0 \leq f$ is essential additionally consists of
\[
\hat{1}, \quad ([b], [b]), \quad ([c_1], [z_i]), \quad ([z_i], [c_2]) \, (i = 1, \cdots, 4).
\]

Next let us assume that $f_0 = (m_{z_i}[z_i], m_{z_i}[z_i])$ with $m_{z_i} \geq 1$.
Then $f \in \mathrm{ch}(\mathfrak Q_b(\overline{k}))$ such that $f_0 \prec f$ is listed as follows
\[
((m_{z_i}+1)[z_i], (m_{z_i}+1)[z_i]), \quad ([c_1] + (m_{z_i}-1)[z_i], m_{z_i}[z_i]), \quad (m_{z_i}[z_i], [c_2] + (m_{z_i}-1)[z_i]).
\]
Then the list of $f$ such that $f_0 \leq f$ is essential additionally consists of
\begin{align*}
&\hat{1}, \quad ([b] + (m_{z_i}-1)[z_i], [b]+ (m_{z_i}-1)[z_i]),\\
&([c_1] + m_{z_i}[z_i], (m_{z_i} + 1)[z_i]), \quad ((m_{z_i}+1)[z_i], [c_2]+ m_{z_i}[z_i]), \quad ([b] + m_{z_i}[z_i], [b]+ m_{z_i}[z_i]).
\end{align*}

Next let us analyze in the extreme case.
Suppose that there are four lines.
First let us assume that $f_0 = \hat{1}$. Then $f \in \mathrm{ch}(\mathfrak Q_b(\overline{k}))$ such that $f_0 \prec f$ is listed as follows
\[
([z_i], [z_i]) \, (i = 1, \cdots, 4), \quad ([c_1], \hat{1}), \quad (\hat{1}, [c_2]).
\]
Then the list of $f$ such that $f_0 \leq f$ is essential additionally consists of
\[
\hat{1}, \quad ([b], [b]), \quad ([c_1], [z_1+z_3]), \quad ([c_1], [z_2+z_4]) \quad ([z_1+ z_2], [c_2]), \quad [z_3+ z_4, [c_2]).
\]
Next let us assume that $f_0 = (m_{z_1}[z_1], m_{z_1}[z_1])$ with $m_{z_1} \geq 1$.
Then $f \in \mathrm{ch}(\mathfrak Q_b(\overline{k}))$ such that $f_0 \prec f$ is listed as follows
\begin{align*}
&((m_{z_1}+1)[z_1], (m_{z_1}+1)[z_1]), \quad ([c_1] + (m_{z_1}-1)[z_1], [z_1+ z_3]+ (m_{z_1}-1)[z_1]), \\ &([z_1+ z_2]+(m_{z_1}-1)[z_1], [c_2] + (m_{z_1}-1)[z_1]).
\end{align*}
Then the list of $f$ such that $f_0 \leq f$ is essential additionally consists of
\begin{align*}
&\hat{1}, \quad ([b] + (m_{z_1}-1)[z_1], [b]+ (m_{z_1}-1)[z_1]),\\
&([c_1] + m_{z_1}[z_1], [z_1+ z_3]+m_{z_1}[z_1]), \quad ([z_1+ z_2]+m_{z_1}[z_1], [c_2]+ m_{z_1}[z_1]), \quad ([b] + m_{z_1}[z_1], [b]+ m_{z_1}[z_1]).
\end{align*}

\noindent
{\bf Case 2} Assume that $p_{\mathcal Z \to B}^*(b) = 2z_1 + z_2 + z_3$ and $p_{C \to B}^*(b) = c_1 + c_2$.

First we give the classification of configurations of lines $\mathcal Q_b$ which cuts out a length $2$ subscheme of $\mathcal Z_b$:
\begin{itemize}
\item There is no line;
\item There is one line which cuts out $2z_1$;
\item There is one line which cuts out $z_1 + z_2$;
\item There is one line which cuts out $z_2 + z_3$;
\item There are two lines which cuts out $2z_1$ and $z_1 + z_2$ respectively;
\item There are two lines which cuts out $2z_1$ and $z_2 + z_3$ respectively;
\item There are two lines which cuts out $z_1 + z_2$ and $z_2 + z_3$ respectively;
\item There are two lines which cuts out $z_1 + z_2$ and $z_1 + z_3$ respectively; and
\item There are three lines which cuts out $2z_1$, $z_1 + z_2$, and $z_1 + z_3$.
\end{itemize}

Again we demonstrate the classification of essential saturated pairs in certain cases. 

Suppose that there is no line.
First let us assume that $f_0 = \hat{1}$. Then $f \in \mathrm{ch}(\mathfrak Q_b(\overline{k}))$ such that $f_0 \prec f$ is listed as follows
\[
([z_i],[z_i]) \, (i = 1, \cdots, 3), \quad ([c_1], \hat{1}), \quad (\hat{1}, [c_2]).
\]
Then the list of $f$ such that $f_0 \leq f$ is essential additionally consists of
\[
\hat{1}, \quad ([b], [b]),  \quad ([c_1], [z_i]) \, (i = 1, \cdots, 3), \quad ([z_i], [c_2]) \, (i = 1, \cdots, 3).
\]

Next let us assume that $f_0 = (m_{z_i}[z_i], m_{z_i}[z_i])$ with $m_{z_i} \geq 1$ for $i = 2, 3$.
Then $f \in \mathrm{ch}(\mathfrak Q_b(\overline{k}))$ such that $f_0 \prec f$ is listed as follows
\[
((m_{z_i}+1)[z_i], (m_{z_i}+1)[z_i]), \quad  ([c_1], m_{z_i}[z_i]), \quad  (m_{z_i}[z_i], [c_2]).
\]
Then the list of $f$ such that $f_0 \leq f$ is essential additionally consists of
\begin{align*}
&\hat{1}, \quad ([b] + (m_{z_i}-1)[z_i], [b]+ (m_{z_i}-1)[z_i]),\\
&([c_1]+ m_{z_i}[z_i], (m_{z_i}+1)[z_i]), \quad ((m_{z_i}+1)[z_i], [c_2]+ m_{z_i}[z_i]), \quad ([b] + m_{z_i}[z_i], [b]+ m_{z_i}[z_i]).
\end{align*}

Next let us assume that $f_0 = ([z_1], [z_1])$.
Then $f \in \mathrm{ch}(\mathfrak Q_b(\overline{k}))$ such that $f_0 \prec f$ is listed as follows
\[
  ([c_1], [z_1]), \quad ([z_1], [c_2]).
\]
Then the list of $f$ such that $f_0 \leq f$ is essential additionally consists of
\[
\hat{1}, \quad ([b], [b]).
\]

Suppose that there is one line which cuts out $2z_1$.
In this case, the poset of essential saturated pairs becomes completely the same as the case of no lines. Other cases are similar so we omit them.

\noindent
{\bf Case 3} Assume that $p_{\mathcal Z \to B}^*(b) = z_1 + z_2 + z_3 + z_4$ and $p_{C \to B}^*(b) = 2c_1$.
The configurations of lines which cut out a length $2$ subscheme of $\mathcal Z_b$ is the following:
\begin{itemize}
\item There is no line;
\item There is one line which cuts out $z_1 + z_2$; and
\item There are two lines which cut out $z_1 + z_2$ and $z_3 + z_4$ respectively.
\end{itemize}

Suppose there is no line.
First let us assume that $f_0 = \hat{1}$. Then $f \in \mathrm{ch}(\mathfrak Q_b(\overline{k}))$ such that $f_0 \prec f$ is listed as follows
\[
2[z_i] \, (i = 1, \cdots, 4), \quad  [c_1].
\]
Then the list of $f$ such that $f_0 \leq f$ is essential additionally consists of
\[
\hat{1}, \quad 2[b], \quad , [c_1] + [z_i] \, (i = 1, \cdots, 4).
\]

Next let us assume that $f_0 = 2m_{z_i}[z_i]$ with $m_{z_i} \geq 1$.
Then $f \in \mathrm{ch}(\mathfrak Q_b(\overline{k}))$ such that $f_0 \prec f$ is listed as follows
\[
(2m_{z_i}+2)[z_i], \quad  [c_1] + (2m_{z_i}-1)[z_i].
\]
Then the list of $f$ such that $f_0 \leq f$ is essential additionally consists of
\[
\hat{1},\quad [c_1] + (2m_{z_i}+1)[z_i].
\]

Next we consider the extreme case, i.e., there are two lines.
First let us assume that $f_0 = \hat{1}$. Then $f \in \mathrm{ch}(\mathfrak Q_b(\overline{k}))$ such that $f_0 \prec f$ is listed as follows
\[
[z_1 + z_2] + [z_1], \quad, [z_1 + z_2] + [z_2], \quad [z_3 + z_4] + [z_3], \quad, [z_2 + z_4] + [z_4] \quad  [c_1].
\]
Then the list of $f$ such that $f_0 \leq f$ is essential additionally consists of
\[
\hat{1}, \quad 2[b], \quad [c_1] + [z_1 + z_2], \quad [c_1] + [z_3 + z_4].
\]

Next let us assume that $f_0 = [z_1 + z_2] + (2m_{z_1}-1)[z_1]$ with $m_{z_1} \geq 1$.
Then $f \in \mathrm{ch}(\mathfrak Q_b(\overline{k}))$ such that $f_0 \prec f$ is listed as follows
\[
[z_1 + z_2] +(2m_{z_i}+1)[z_i], \quad  [c_1] + [z_1 + z_2] +2m_{z_i}[z_i].
\]
Then the list of $f$ such that $f_0 \leq f$ is essential additionally consists of
\[
\hat{1},\quad [c_1] + [z_1 + z_2] + 2m_{z_i}[z_i].
\]

\noindent
{\bf Case 4} Assume that $p_{\mathcal Z \to B}^*(b) = 2z_1 + z_2 + z_3$ and $p_{C \to B}^*(b) = 2c_1$.

Here is the classification of configurations of lines which cut out a length $2$ subscheme:
\begin{itemize}
\item There is no line;
\item There is one line which cuts out $2z_1$;
\item There is one line which cuts out $z_1 + z_2$;
\item There is one line which cuts out $z_2 + z_3$;
\item There are two lines which cut out $2z_1$ and $z_2 + z_3$ respectively.
\end{itemize}

We only consider the case that there is no line.
First let us assume that $f_0 = \hat{1}$. Then $f \in \mathrm{ch}(\mathfrak Q_b(\overline{k}))$ such that $f_0 \prec f$ is listed as follows
\[
2[z_1], \quad 2[z_i] \, (i = 2, 3).
\]
Then the list of $f$ such that $f_0 \leq f$ is essential additionally consists of
\[
\hat{1}, \quad 2[b],  \quad [c_1] + [z_i] \, (i = 1, 2, 3).
\]

Next let us assume that $f_0 = 2m_{z_i}[z_i]$ with $m_{z_i} \geq 1$ for $i = 2, 3$.
Then $f \in \mathrm{ch}(\mathfrak Q_b(\overline{k}))$ such that $f_0 \prec f$ is listed as follows
\[
2(m_{z_i}+1)[z_i], \quad  [c_1] + (2m_{z_i}-1)[z_i].
\]
Then the list of $f$ such that $f_0 \leq f$ is essential additionally consists of
\[
\hat{1}, \quad [c_1] + (2m_{z_i}+1)[z_i].
\]

Next let us assume that $f_0 = 2[z_1]$.
Then $f \in \mathrm{ch}(\mathfrak Q_b(\overline{k}))$ such that $f_0 \prec f$ is listed as follows
\[
 [c_1] + [z_1].
\]
Then the list of $f$ such that $f_0 \leq f$ is essential additionally consists of
\[
\hat{1}.
\]

\end{exam}

\subsubsection*{Combinatorial functions}

Next, we introduce combinatorial functions:

\begin{defi}
Suppose that we have a function $\zeta : \mathfrak Q(\overline{k}) \to \mathbb N$ such that $\zeta(\hat{1}) = 0$. Furthermore, assume that for each $b \in B(\overline{k})$, we extend this function as
\[
\zeta : \mathrm{ch}(\mathfrak Q_b(\overline{k})) \to \mathbb N,
\]
such that $\zeta(\hat{1})=0$.
Let $x \in \mathfrak H^\circ(\overline{k})$ be a saturated element and we consider the corresponding function
\[
B(\overline{k}) \to \bigsqcup_{b} \mathrm{ch}(\mathfrak Q_b(\overline{k})), \quad b \mapsto f_b.
\]
We define $\zeta(x)$ by
\[
\zeta(x) = \sum_{b \in B(\overline{k})} \zeta(f_b).
\]
Again this is well-defined because for all but finitely many $b$, $f_b = \hat{1}$ and $\zeta(\hat{1}) = 0$.
We further extend $\zeta$ to $\zeta: \mathfrak H(\overline{k}) \to \mathbb N$ by composing with $\mathrm{sat} : \mathfrak H(\overline{k}) \to \mathfrak H^\circ(\overline{k})$.

Finally we extend to pairs $(w\leq x)$ by $\zeta(w \leq x) = \zeta(x) - \zeta(w)$.
\end{defi}

\begin{exam}

We define 
\[
m_{B} :  \mathrm{ch}(\widetilde{\mathfrak Q}_b(\overline{k})) \to \mathbb N
\]
in the following way: when $p_{C \to B}^*b = c_1 + c_2$, let $m_B(f_b)$ be $2$ times the number which counts how many times $b$ appears in $f_{b, c_1}$. Note that this does not depend on the choice of $c_1$. When $p_{C \to B}^*b = 2c_1$, let $m_B(f_b)$ be the number which counts how many times $b$ appears in $f_{b, c_1}$.

Next we define 
\[
m_{C} :  \mathrm{ch}(\widetilde{\mathfrak Q}_b(\overline{k})) \to \mathbb N
\]
in the following way: when $p_{C \to B}^*b = c_1 + c_2$, let $m_C(f_b)$ be 
the sum of the number which counts how many times $c_1$ appears in $f_{b, c_1}$ and the number which counts how many times $c_2$ appears in $f_{b, c_2}$.
When $p_{C \to B}^*b = 2c_1$, let $m_C(f_b)$ be the number which counts how many times $c_1$ appears in $f_{b, c_1}$.

Finally we define 
\[
m_{\mathcal Z} :  \mathrm{ch}(\widetilde{\mathfrak Q}_b(\overline{k})) \to \mathbb N
\]
in the following way: when $p_{C \to B}^*b = c_1 + c_2$, let $m_{\mathcal Z}(f_b)$ be 
the sum of the number which counts how many times $z$ (including the form of $z+ z'$ counted with the multiplicity $1$) appears in $f_{b, c_1}$ and the number which counts how many times $z$ appears in $f_{b, c_2}$.
When $p_{C \to B}^*b = 2c_1$, let $m_{\mathcal Z}(f_b)$ be the number which counts how many times $z$ appears in $f_{b, c_1}$.

We extend these functions as
\[
m_{B}, m_{C}, m_{\mathcal Z} : \mathfrak H^\circ(\overline{k}) \to \mathbb N.
\]
Note that when $x = (x_B, x_{\mathcal Z}, x_C)$ is a saturated element, $m_{\mathcal Z}(x)$ is equal to the length of
\[
\iota_{\diamond}(p_{\mathcal Z}^*x_{\mathcal Z}-p_{\mathcal Z}^*x_{\mathcal Z}\wedge p_C^*x_C).
\]
\end{exam}

\begin{exam}
For any $b \in B(\overline{k})$ and $f \in \mathrm{ch}(\mathfrak Q_b(\overline{k}))$, we define
\[
\mathrm{rank}(f)
\]
to be the length of the maximal chain from $\hat{1}$ to $f$. Note that this definition is well-defined and independent of a choice of maximal chains. We extend it to 
\[
\mathrm{rank} : \mathfrak H^\circ(\overline{k}) \to \mathbb N.
\]
For any $x \in \mathfrak H^\circ(\overline{k})$, the value $\mathrm{rank}(x)$ is equal to the cohomological dimension of the nerve $N([\hat{1}, x] \cap \mathfrak H^\circ(\overline{k}))$.
Also note that we have
\[
 \mathrm{rank}(x) = \frac{3}{2}m_B(x) + \frac{1}{2}m_{\mathcal Z}(x) + \frac{3}{2}m_C(x).
\]

\end{exam}

\begin{exam}
For any $x \in \mathfrak H^\circ(\overline{k})$, $|\mathrm{Supp}(x)|$ is the number of $b \in B(\overline{k})$ such that $f_b$ is non-trivial.
\end{exam}

\begin{exam}
\label{exam:codimension}
We define
\[
\gamma :\mathfrak H^\circ(\overline{k}) \to \mathbb N
\]
by
\[
\gamma(f) = 2m_B(f) + m_{\mathcal Z}(f) + 2m_C(f).
\]
This
is interpreted as the expected codimension $\gamma(x)$ of the incidence condition imposed by $x \in \mathfrak H^\circ(\overline{k})$.

\end{exam}

\subsubsection*{Combinatorial stratifications}

Next, we introduce the stratification of $\mathfrak H$.
Let $$B = \bigsqcup_{(\mathsf a, \mathsf b)} B_{\mathsf a, \mathsf b},$$ where $\mathsf a = (1, 1, 1, 1), (2, 1, 1), (3, 1), (2, 2), (4)$ and $\mathsf b = (1, 1), (2)$
and $B_{\mathsf a, \mathsf b}$ is the set of points on $B$ such that the ramification behavior of $\mathcal Z \to B$ over those points is given by $\mathsf a$ and the ramification behavior of $C \to B$ over those points is given by $\mathsf b$. 

For each $\mathsf a, \mathsf b$, the poset  $\mathfrak Q_b(\overline{k})$ with the stratification by $B\sqcup \mathcal Z \sqcup C \sqcup \{\hat{1}\}$ does not depend on $b \in B_{\mathsf a, \mathsf b}(\overline{k})$, and we denote this abstract poset with the stratification by $\mathfrak Q_{\mathsf a, \mathsf b}$. When $\mathsf a = (2, 1, 1)$ or $(3, 1)$, the unique point with a multiplicity is distinguished.

We add an additional piece of information to this abstract poset. Let $b \in B(\overline{k})$ and we consider the configuration $\Gamma_b$ of lines on $\mathcal Q_{b}$ which cut out length $2$ subschemes of $\mathcal Z_b$.
The configuration $\Gamma_b$ encodes which a length $2$ subscheme of $\mathcal Z_b$ each line contains.
Suppose that we have an automorphism $t \in \mathrm{Aut}(\mathfrak Q_b(\overline{k}))$ of the poset $\mathfrak Q_b(\overline{k})$ with the stratification. Then $t$ induces a map from one configuration to another, and let $\mathfrak R_{\mathsf a, \mathsf b}$ be the set of isomorphism classes of configurations of lines for $\mathfrak Q_{\mathsf a, \mathsf b}$. The configuration without any line will be denoted by $\emptyset$. Let $\mathsf d \in \mathfrak R_{\mathsf a, \mathsf b}$ and let $B_{\mathsf a, \mathsf b, \mathsf d}(\overline{k})$ be the set of points $b \in B_{\overline{k}}$ such that 
\[
(\mathfrak Q_b(\overline{k}), \Gamma_b) \cong (\mathfrak Q_{\mathsf a, \mathsf b}, \Gamma_{\mathsf d}).
\]
Note that $B_{\mathsf a, \mathsf b, \mathsf d}(\overline{k})$ is Galois invariant so it descent as $B_{\mathsf a, \mathsf b, \mathsf d}$ over $k$.
Let $B^\circ$ be $B_{(1, 1, 1, 1), (1, 1), \emptyset}$ and $(\mathsf a, \mathsf b, \mathsf d) \neq ((1, 1, 1, 1), (1, 1), \emptyset)$, we denote the cardinality of $B_{\mathsf a, \mathsf b, \mathsf d}(\overline{k})$ by $r_{\mathsf a, \mathsf b, \mathsf d}$. We denote $\mathfrak Q_{(1, 1, 1, 1), (1, 1), \emptyset}$ by $\mathfrak Q_{\mathrm{gen}}$. 

\begin{defi}
A combinatorial type $T$ is a multiset
\[
\{g_1, \cdots, g_r\} \sqcup \bigsqcup_{(\mathsf a, \mathsf b, \mathsf d) \neq ((1, 1, 1, 1), (1, 1), \emptyset)} \{g^{\mathsf a, \mathsf b, \mathsf d}_1, \cdots, g^{\mathsf a, \mathsf b, \mathsf d}_{r_{\mathsf a, \mathsf b, \mathsf d}}\} ,
\]
where 
\[
g_i \in \mathrm{Hom}_{(\clubsuit)}(\mathfrak Q_{\mathrm{gen}}, \mathbb N \cup \{\infty\}), \quad g_i^{\mathsf a, \mathsf b, \mathsf d} \in \mathrm{Hom}_{(\clubsuit)}(\mathfrak Q_{\mathsf a, \mathsf b, \mathsf d}, \mathbb N \cup \{\infty\}),
\]
and $g_i$'s are non-trivial. However, $g_i^{\mathsf a, \mathsf b, \mathsf d}$ could be trivial.

Let
\begin{align*}
&T = \{g_1, \cdots, g_r\} \sqcup \bigsqcup_{(\mathsf a, \mathsf b, \mathsf d) \neq ((1, 1, 1, 1), (1, 1), \emptyset} \{g^{\mathsf a, \mathsf b, \mathsf d}_1, \cdots, g^{\mathsf a, \mathsf b, \mathsf d}_{r_{\mathsf a, \mathsf b, \mathsf d}}\},\\
&T' = \{h_1, \cdots, h_s\} \sqcup \bigsqcup_{(\mathsf a, \mathsf b, \mathsf d) \neq ((1, 1, 1, 1), (1, 1), \emptyset)} \{h^{\mathsf a, \mathsf b, \mathsf d}_1, \cdots, h^{\mathsf a, \mathsf b, \mathsf d}_{r_{\mathsf a, \mathsf b, \mathsf d}}\}
\end{align*}
be two combinatorial types.
We say two combinatorial types $T$ and $T'$ are equivalent if $r = s$, and there exist permutations $\sigma \in \mathfrak S_r$, $\sigma^{\mathsf a, \mathsf b, \mathsf d} \in \mathfrak S_{r_{\mathsf a, \mathsf b, \mathsf d}}$, and automorphisms of posets with stratifications and configurations of lines
\[
t_i \in \mathrm{Aut}(\mathfrak Q_{\mathrm{gen}}, \Gamma_\emptyset)\, (i = 1, \cdots, r), \quad t_i^{\mathsf a, \mathsf b, \mathsf d} \in \mathrm{Aut}(\mathfrak Q_{\mathsf a, \mathsf b, \mathsf d}, \Gamma_{\mathsf d}) \, (i = 1, \cdots, r_{\mathsf a, \mathsf b, \mathsf d})
\]
such that we have 
\[
g_{\sigma(i)} \circ t_i = h_i, \quad g^{\mathsf a, \mathsf b, \mathsf d}_{\sigma^{\mathsf a, \mathsf b, \mathsf d}(i)}\circ t_i^{\mathsf a, \mathsf b, \mathsf d} = h^{\mathsf a, \mathsf b, \mathsf d}_i.
\]
When $\mathsf a = (2, 1, 1)$ or $(3, 1)$, we insist that $t_i^{\mathsf a, \mathsf b, \mathsf d}$ preserves the distinguished point. We denote $T \equiv T'$ when two are equivalent.

Let $T$ be the above combinatorial type.
Let $x \in \mathfrak H(\overline{k})$.
Let $b_1, \cdots, b_r$ be the support of $x$ on $B^\circ(\overline{k})$ and we also denote $B_{\mathsf a, \mathsf b, \mathsf d}(\overline{k}) = \{ b_1^{\mathsf a, \mathsf b, \mathsf d}, \cdots, b_{r_{\mathsf a, \mathsf b, \mathsf d}}^{\mathsf a, \mathsf b, \mathsf d}\}$. Let 
\[
g_{b_1}, \cdots, g_{b_r}, \quad g_{b_{1}^{\mathsf a, \mathsf b, \mathsf d}}, \cdots, g_{b_{r_{\mathsf a, \mathsf b, \mathsf d}}^{\mathsf a, \mathsf b, \mathsf d}},
\]
be the functions corresponding to $x$. We say $x$ has combinatorial type $T$ if there are permutations $\sigma \in \mathfrak S_r, \sigma^{\mathsf a, \mathsf b, \mathsf d} \in \mathfrak S_{r_{\mathsf a, \mathsf b, \mathsf d}}$ and isomorphisms of posets with stratifications
\[
t_i : (\mathfrak Q_{b_i}(\overline{k}), \Gamma_{b_i}) \cong (\mathfrak Q_{\mathrm{gen}}, \Gamma_\emptyset) \quad t_i^{\mathsf a, \mathsf b, \mathsf d} : (\mathfrak Q_{b_i}(\overline{k}), \Gamma_{b_i}) \cong (\mathfrak Q_{\mathsf a, \mathsf b, \mathsf d}, \Gamma_{\mathsf d}),
\]
such that we have
\[
g_{b_i} = g_{\sigma(i)}\circ t_i, \quad g_{b_i^{\mathsf a, \mathsf b, \mathsf d}} = g^{\mathsf a, \mathsf b, \mathsf d}_{\sigma^{\mathsf a, \mathsf b, \mathsf d}(i)}\circ t^{\mathsf a, \mathsf b, \mathsf d}_i.
\]
When $\mathsf a = (2, 1, 1)$ or $(3, 1)$, we insist that $t_i^{\mathsf a, \mathsf b, \mathsf d}$ preserves the distinguished point.

We say that a combinatorial type $T$ is saturated if every $g_i$ and $g_i^{\mathsf a, \mathsf b, \mathsf d}$ are saturated in the sense of Definition~\ref{defi:saturated}.
\end{defi}

\begin{defi}
Let $T$ be a combinatorial type. Let $\mathcal N_{T, \overline{k}} \subset \mathfrak H_{\overline{k}}$ be a Zariski locally closed subset such that $\mathcal N_{T, \overline{k}}(\overline{k})$ consists of $x \in \mathfrak H(\overline{k})$ of combinatorial type $T$.
Since this set is Galois-invariant, it descends to $\mathcal N_T \subset \mathfrak H$ defined over $k$.
Note that we have 
\[
\mathcal N_T \cap \mathcal N_{T'} \neq \emptyset \iff \mathcal N_T = \mathcal N_{T'}\iff T \equiv T'.
\]
Thus $\mathcal N_T$ depends only on the equivalence class of $T$.
We have a stratification into locally closed subsets:
\[
\mathfrak H = \bigsqcup_T \mathcal N_T,
\]
where $T$ runs over representatives of equivalence classes of combinatorial types.
\end{defi}

It is clear from the definition that we have
\[
\mathcal N_T(\overline{k})\cap \mathfrak H^\circ(\overline{k}) \neq \emptyset \iff \mathcal N_T(\overline{k}) \subset \mathfrak H^\circ(\overline{k}) \iff \text{$T$ is saturated.}
\]
In particular, we have also a stratification into locally closed subsets:
\[
\mathfrak H^\circ(\overline{k}) = \bigsqcup_{T: \text{saturated}} \mathcal N_T(\overline{k}).
\]
\begin{defi}
Let
\begin{align*}
&T = \{g_1, \cdots, g_r\} \sqcup \bigsqcup_{(\mathsf a, \mathsf b, \mathsf d) \neq ((1, 1, 1, 1), (1, 1), \emptyset)} \{g^{\mathsf a, \mathsf b, \mathsf d}_1, \cdots, g^{\mathsf a, \mathsf b, \mathsf d}_{r_{\mathsf a, \mathsf b, \mathsf d}}\},\\
&T' = \{h_1, \cdots, h_s\} \sqcup \bigsqcup_{(\mathsf a, \mathsf b, \mathsf d) \neq ((1, 1, 1, 1), (1, 1), \emptyset)} \{h^{\mathsf a, \mathsf b, \mathsf d}_1, \cdots, h^{\mathsf a, \mathsf b, \mathsf d}_{r_{\mathsf a, \mathsf b, \mathsf d}}\}
\end{align*}
be two combinatorial types. We say $T \geq T'$ if there exist an injective map $\sigma : \{1, \cdots, s\} \to \{1, \cdots, r\}$, permutations $\sigma^{\mathsf a, \mathsf b, \mathsf d} \in \mathfrak S_{r_{\mathsf a, \mathsf b, \mathsf d}}$, and automorphisms
\[
t_i \in \mathrm{Aut}(\mathfrak Q_{\mathrm{gen}}, \Gamma_\emptyset)\, (i = 1, \cdots, s), \quad t_i^{\mathsf a, \mathsf b, \mathsf d} \in \mathrm{Aut}(\mathfrak Q_{\mathsf a, \mathsf b, \mathsf d}, \Gamma_{\mathsf d}) \, (i = 1, \cdots, r_{\mathsf a, \mathsf b, \mathsf d})
\]
such that we have
\[
g_{\sigma(i)} \circ t_i \geq h_i, \quad g^{\mathsf a, \mathsf b, \mathsf d}_{\sigma^{\mathsf a, \mathsf b, \mathsf d}(i)} \circ t^{\mathsf a, \mathsf b, \mathsf d}_i \geq h^{\mathsf a, \mathsf b, \mathsf d}_i.
\]
Note that this defines a poset relation on the set of equivalence classes of combinatorial types.

We say $T \geq_+ T'$ if there exist a map
\[
\sigma : \{1, \cdots, r\} \to \{1, \cdots, s\} \sqcup \bigsqcup_{\mathsf a, \mathsf b, \mathsf d} \{1^{\mathsf a, \mathsf b, \mathsf d}, \cdots, r_{\mathsf a, \mathsf b, \mathsf d}^{\mathsf a, \mathsf b, \mathsf d}\},
\]
and permutations $\sigma_{\mathsf a, \mathsf b, \mathsf d}\in \mathfrak S_{r_{\mathsf a, \mathsf b, \mathsf d}}$ such that 
\begin{enumerate}
\item the image of $\sigma$ contains $\{1, \cdots, s\}$; 
\item for any $j \in \{1, \cdots, s\}$ and $i \in \sigma^{-1}(j)$, we have an automorphism
\[
t_{i, j} \in \mathrm{Aut}(\mathfrak Q_{\mathrm{gen}}, \Gamma_\emptyset);
\]
\item for $j \in \{1^{\mathsf a, \mathsf b, \mathsf d}, \cdots, r_{\mathsf a, \mathsf b, \mathsf d}^{\mathsf a, \mathsf b, \mathsf d}\}$, we have an automorphism
\[
t_{j}^{\mathsf a, \mathsf b, \mathsf d} \in \mathrm{Aut}(\mathfrak Q_{\mathsf a, \mathsf b, \mathsf d}, \Gamma_{\mathsf d});
\]
\item for $j \in \{1^{\mathsf a, \mathsf b, \mathsf d}, \cdots, r_{\mathsf a, \mathsf b, \mathsf d}^{\mathsf a, \mathsf b, \mathsf d}\}$ and $i \in \sigma^{-1}(j)$, we have a surjective homomorphism of posets with stratifications
\[
t_{i, j}^{\mathsf a, \mathsf b, \mathsf d} : \mathfrak Q_{\mathrm{gen}} \to \mathfrak Q_{\mathsf a, \mathsf b, \mathsf d},
\]
\end{enumerate}
such that the following properties hold:
\begin{itemize}
\item when $j \in \{1^{\mathsf a, \mathsf b, \mathsf d}, \cdots, r_{\mathsf a, \mathsf b, \mathsf d}^{\mathsf a, \mathsf b, \mathsf d}\}$ and $i \in \sigma^{-1}(j)$, $t_{i, j}^{\mathsf a, \mathsf b, \mathsf d}$ is bijective counted with multiplicities, i.e., if $q \in \mathfrak Q_{\mathsf a, \mathsf b, \mathsf d}$ is an element with a multiplicity $e$, then $(t_{i, j}^{\mathsf a, \mathsf b, \mathsf d})^{-1}(q)$ consists of $e$ elements;
\item when $j \in \{1, \cdots, s\}$, we have
\[
\sum_{i \in \sigma^{-1}(j)}g_{i} \circ t_{i, j} \geq h_j;
\]
and
\item when $j \in \{1^{\mathsf a, \mathsf b, \mathsf d}, \cdots, r_{\mathsf a, \mathsf b, \mathsf d}^{\mathsf a, \mathsf b, \mathsf d}\}$, we have for any $q \in \mathfrak Q_{\mathsf a, \mathsf b, \mathsf d}$
\[
\sum_{i \in \sigma^{-1}(j)}\sum_{p \in (t_{i, j}^{\mathsf a, \mathsf b, \mathsf d})^{-1}(q)}g_{i}(p) + h_{\sigma_{\mathsf a, \mathsf b, \mathsf d}^{-1}(j)} \circ t_j^{\mathsf a, \mathsf b, \mathsf d}(q) \geq h_j (q).
\]
\end{itemize}

\end{defi}

With this definition, the set
\[
\bigcup_{T' \leq_+T }\mathcal N_{T'}
\]
is closed in $\mathfrak H$ and is downward-closed with respect to the poset structure of $\mathfrak H$.

\begin{defi}
Let $T$ be a saturated combinatorial type. We define
\[
\mathcal S_T = \bigcup_{T', \, \mathrm{sat}(T') \equiv T} \mathcal N_{T'}.
\]
\end{defi}
Note that as a topological space, we have
\[
\mathcal S_T = \bigsqcup_{T', \, \mathrm{sat}(T') \equiv T} \mathcal N_{T'},
\]
where $T'$ runs over representatives of equivalence classes of combinatorial types. Indeed, this is because if $T_1, T_2$ satisfies $\mathrm{sat}(T_1) \equiv \mathrm{sat}(T_2)$, then we have
\[
|\mathrm{Supp}(T_1)\cap B^\circ| = |\mathrm{Supp}(T_2)\cap B^\circ|,
\]
so we have 
\[
\mathcal N_{T_1} \cap \overline{\mathcal N}_{T_2} = \emptyset.
\]
We define the relation $\leq_{+, \mathrm{sat}}$ as the transitive closure of the following relation on the set of saturated combinatorial types:
\[
T_1 \prec_{+, \mathrm{sat}} T_2 \text{ if there exist types $T_1', T_2'$ such that $T_1'\leq_+T_2'$ and $\mathrm{sat}(T_i') = T_i$ for $i = 1, 2$.}
\]
This defines a poset relation on the set of equivalence classes of saturated combinatorial types and the saturation function from the set of combinatorial types to the set of saturated combinatorial types is a homomorphism with respect to $\leq_+$ and $\leq_{+, \mathrm{sat}}$. See \cite[Proposition 4.6]{DT24} for more details.

Let $T$ be a saturated combinatorial type. We define
\[
\mathcal Z_T = \bigcup_{T': \text{ saturated},\,  T' \leq_{+, \mathrm{sat}} T} \mathcal S_{T'} = \bigcup_{\widetilde{T}, \, \mathrm{sat}(\widetilde{T}) \leq_{+, \mathrm{sat}} T} \mathcal N_{\widetilde{T}}.
\]
Then $\mathcal Z_T$ is closed and downward-closed in $\mathfrak H$. We also have
\[
\mathcal Z_{T_1}\subset \mathcal Z_{T_2} \iff T_1 \leq_{+, \mathrm{sat}} T_2,
\]
and 
\[
\mathcal Z_T \setminus \bigcup_{T' <_{+, \mathrm{sat}} T} \mathcal Z_{T'} = \mathcal S_T.
\]

\subsubsection*{Relative variants}

We extend the definitions in this section to the poscheme
\[
(\mathfrak H \leq \mathfrak H)
\]
consisting of pairs $(w\leq x)$ where $w, x \in \mathfrak H$.
A combinatorial type $T$ is a multiset
\[
\{g_1^w\leq g_1^x, \cdots, g_r^w\leq g_r^x\} \sqcup \bigsqcup_{(\mathsf a, \mathsf b, \mathsf d) \neq ((1, 1, 1, 1), (1, 1), \emptyset)} \{g^{w, \mathsf a, \mathsf b, \mathsf d}_1\leq g^{x, \mathsf a, \mathsf b, \mathsf d}_1, \cdots, g^{w, \mathsf a, \mathsf b, \mathsf d}_{r_{\mathsf a, \mathsf b, \mathsf d}} \leq g^{x, \mathsf a, \mathsf b, \mathsf d}_{r_{\mathsf a, \mathsf b, \mathsf d}}\} .
\]
Two combinatorial types
\begin{align*}
&T =  \{g_1^w\leq g_1^x, \cdots, g_r^w\leq g_r^x\} \sqcup \bigsqcup_{(\mathsf a, \mathsf b, \mathsf d) \neq ((1, 1, 1, 1), (1, 1), \emptyset)} \{g^{w, \mathsf a, \mathsf b, \mathsf d}_1\leq g^{x, \mathsf a, \mathsf b, \mathsf d}_1, \cdots, g^{w, \mathsf a, \mathsf b, \mathsf d}_{r_{\mathsf a, \mathsf b, \mathsf d}} \leq g^{x, \mathsf a, \mathsf b, \mathsf d}_{r_{\mathsf a, \mathsf b, \mathsf d}}\},\\
&T' =  \{h_1^w\leq h_1^x, \cdots, h_s^w\leq h_s^x\} \sqcup \bigsqcup_{(\mathsf a, \mathsf b, \mathsf d) \neq ((1, 1, 1, 1), (1, 1), \emptyset)} \{h^{w, \mathsf a, \mathsf b, \mathsf d}_1\leq h^{x, \mathsf a, \mathsf b, \mathsf d}_1, \cdots, h^{w, \mathsf a, \mathsf b, \mathsf d}_{r_{\mathsf a, \mathsf b, \mathsf d}} \leq h^{x, \mathsf a, \mathsf b, \mathsf d}_{r_{\mathsf a, \mathsf b, \mathsf d}}\}
\end{align*}
are equivalent if $r = s$, and there exist permutations $\sigma \in \mathfrak S_r$, $\sigma^{\mathsf a, \mathsf b, \mathsf d} \in \mathfrak S_{r_{\mathsf a, \mathsf b, \mathsf d}}$, and automorphisms of posets with stratifications
\[
t_i \in \mathrm{Aut}(\mathfrak Q_{\mathrm{gen}}, \Gamma_\emptyset)\, (i = 1, \cdots, r), \quad t_i^{\mathsf a, \mathsf b, \mathsf d} \in \mathrm{Aut}(\mathfrak Q_{\mathsf a, \mathsf b, \mathsf d}, \Gamma_{\mathsf d}) \, (i = 1, \cdots, r_{\mathsf a, \mathsf b, \mathsf d}),
\]
such that we have 
\[
g_{\sigma(i)}^* \circ t_i = h_i^*, \quad g^{*, \mathsf a, \mathsf b, \mathsf d}_{\sigma^{\mathsf a, \mathsf b, \mathsf d}(i)}\circ t_i^{\mathsf a, \mathsf b, \mathsf d} = h^{*, \mathsf a, \mathsf b, \mathsf d}_i,
\]
where $* = w, x$.

For each equivalence class of a combinatorial type $T$, we have a Zariski locally closed subset $\mathcal N_T \subset (\mathfrak H \leq \mathfrak H)$. A combinatorial type $T$ is saturated if each component is saturated. We also have $\leq, \leq_{+}, \leq_{+, \mathrm{sat}}$ well-defined.

Let $W \subset \mathfrak H$ be a Zariski locally closed subset which is a finite union of $\mathcal N_{T''}$ where $T''$ is a combinatorial type of $\mathfrak H$.
Then
\[
\mathcal Z_T = \left(\bigcup_{T', \, \mathrm{sat}(T')\leq_{+, \mathrm{sat}} T} \mathcal N_{T'}\right)\cap (W\leq \mathfrak H)
\]
is closed and downward closed in $(W\leq \mathfrak H)$.

\subsection{Stratifications for the spaces of sections}
\label{subsec:stratificationforspaceofsections}
Let $k$ be an algebraically closed field or a finite field.
We work in the setup established in Section~\ref{subsec:pos_setup}: we fix
\[
\pi_{\mathcal Q} : \mathcal Q \to B,\quad  \mathcal Z \subset \mathcal Q, \quad \pi : \mathcal X \to B, \quad  \phi : \mathcal X \to \mathcal Q,
\]
as before.

Let $\alpha \in \Nef_1(\mathcal X)_{\mathrm{sec}, \mathbb Z}$ be an integral nef class and we consider the space of sections
\[
M_\alpha = \Sec(\mathcal X/B, \alpha).
\]
In Section~\ref{subsec:spaceofsectionsonblowup}, we realize $M_\alpha$ as a Zariski open subset
\[
M_\alpha \subset \mathrm{Proj}\left(\mathrm{Sym}\left(((\widetilde{\varpi}_{d_{\mathbb P_C(\mathcal F)}})_*(q^*\widetilde{\varpi}^*\mathcal F^\vee(-\widetilde{\mathfrak W}) \otimes p^*\mathcal L_{d_{\mathbb P_C(\mathcal F)}}))^\vee\right)\right).
\]
We define $E \to U_{k(\alpha)} \times \Pic^{d_{\mathbb P_C(\mathcal F)}}(C)$ to be
\[
E = \Spec\left(\mathrm{Sym}\left(((\widetilde{\varpi}_{d_{\mathbb P_C(\mathcal F)}})_*(q^*\widetilde{\varpi}^*\mathcal F^\vee(-\widetilde{\mathfrak W}) \otimes p^*\mathcal L_{d_{\mathbb P_C(\mathcal F)}}))^\vee\right)\right) \to U_{k(\alpha)} \times \Pic^{d_{\mathbb P_C(\mathcal F)}}(C).
\]
Note that the fiber $E_{w, L}$ at a general $(w, L) \in U_{k(\alpha)}(\overline{k}) \times \Pic^{d_{\mathbb P_C(\mathcal F)}}(C)(\overline{k})$ is identified with the vector space
\[
\rH^0(C_{\overline{k}},  \mathcal F^\vee(-\Sigma_C(w)) \otimes L).
\]
We have a $\mathbb G_m$-torsor
\[
E \setminus (\text{zero section}) \to \mathrm{Proj}\left(\mathrm{Sym}\left(((\widetilde{\varpi}_{d_{\mathbb P_C(\mathcal F)}})_*(q^*\widetilde{\varpi}^*\mathcal F^\vee(-\widetilde{\mathfrak W}) \otimes p^*\mathcal L_{d_{\mathbb P_C(\mathcal F)}}))^\vee\right)\right),
\]
and we consider the induced $\mathbb G_m$-torsor
\[
\widetilde{M}_\alpha \to M_\alpha,
\]
where $\widetilde{M}_\alpha \subset E$ is a Zariski open subset.

Our goal is now to apply the inclusion-exclusion principle to the complement
\[
E \setminus \widetilde{M}_\alpha.
\]
To this end, we stratify this complement using the poscheme $\mathfrak H$.
First we embed $U_{k(\alpha)}$ to $\mathfrak H$ by
\[
U_{k(\alpha)} \hookrightarrow \mathfrak H, \, w \mapsto x = (x_B = \emptyset, x_{\mathcal Z} = w, x_C = \emptyset, \hat{1}).
\]
We denote the base change of $E \to U_{k(\alpha)} \times \Pic^{d_{\mathbb P_C(\mathcal F)}}(C)$ via $$(U_{k(\alpha)} < \mathfrak H) \times\Pic^{d_{\mathbb P_C(\mathcal F)}}(C) \to  U_{k(\alpha)} \times \Pic^{d_{\mathbb P_C(\mathcal F)}}(C)$$ by $E'$.
We consider universal divisors by 
\begin{align*}
&\mathcal D_B \subset B \times (U_{k(\alpha)} < \mathfrak H)\times\Pic^{d_{\mathbb P_C(\mathcal F)}}(C) ,\\
& \mathcal D_{\mathcal Z} \subset \mathcal Z \times(U_{k(\alpha)} < \mathfrak H)\times\Pic^{d_{\mathbb P_C(\mathcal F)}}(C), \\
& \mathcal D_{C} \subset C \times (U_{k(\alpha)} < \mathfrak H)\times\Pic^{d_{\mathbb P_C(\mathcal F)}}(C),
\end{align*}
which are the pullbacks of the following universal families of divisors:
\begin{align*}
&\mathcal D_B' \subset B \times \mathrm{Hilb}^{\mathrm{fin}}(B),\quad \mathcal D_{\mathcal Z}' \subset \mathcal Z \times \mathrm{Hilb}^{\mathrm{fin}}(\mathcal Z), \quad \mathcal D_{C}' \subset C \times \mathrm{Hilb}^{\mathrm{fin}}(C).
\end{align*}
They satisfy 
\[
p_{\mathcal Z \to B}^*\mathcal D_B \subset \mathcal D_{\mathcal Z}, \quad p_{C \to B}^*\mathcal D_B \subset \mathcal D_{C}.
\]
We also denote the pullback of $q^*\widetilde{\varpi}^*\mathcal F^\vee(-\widetilde{\mathfrak W}) \otimes p^*\mathcal L_{d_{\mathbb P_C(\mathcal F)}}$ over $C \times \Pic^{d_{\mathbb P_C(\mathcal F)}}(C)$ via $$C \times (U_{k(\alpha)} < \mathfrak H)\times\Pic^{d_{\mathbb P_C(\mathcal F)}}(C) \to C \times \Pic^{d_{\mathbb P_C(\mathcal F)}}(C),$$ by $\mathfrak F$.
Then we have the restriction maps
\[
E' \to \Gamma(p_{C \to B}^*\mathcal D_B, \mathfrak F) \times \Gamma(p_{\mathcal Z}^*\mathcal D_Z, p_{C}^*\mathfrak F) \times \Gamma(\mathcal D_C, \mathfrak F),
\]
where the middle map is induced by the composition of the pullback via $p_C : \mathcal W \to C$ and the restriction to $p_{\mathcal Z}^*\mathcal D_{\mathcal Z}$.
Then the stratification $\mathsf Z \subset (U_{k(\alpha)} < \mathfrak H)\times_{U_{k(\alpha)}} E$ is defined as the fiber product
\[
\xymatrix{
\mathsf Z \ar[r] \ar[d]& \Gamma(p_{C \to B}^*\mathcal D_B, (0 \text{-section})) \times \Gamma(p_{\mathcal Z}^*\mathcal D_{\mathcal Z}, \mathfrak{Z}) \times \Gamma(\mathcal D_C,  (0 \text{-section})) \ar[d] 
\\
E' \ar[r] & \Gamma(p_{C \to B}^*\mathcal D_B, \mathfrak F) \times \Gamma(p_{\mathcal Z}^*\mathcal D_Z, p_{C}^*\mathfrak F) \times \Gamma(\mathcal D_C, \mathfrak F),
}
\]
where the line bundle $\mathfrak Z \subset p_{C}^*\mathfrak F$ is induced by the morphism
\[
\iota : \mathcal W = \mathcal Z \times_B C \to \mathbb P_C(\mathcal F).
\]
For each $(w < x, L) \in ((U_{k(\alpha)} < \mathfrak H) \times \Pic^{d_{\mathbb P_C(\mathcal F)}}(C))(\overline{k})$, the fiber $\mathsf Z_{w < x, L}$ is a linear subspace of $E_{w, L}$.
It is easy to verify the following lemma:
\begin{lemm}
\label{lemm:propforstratification}
Let $(w < x, L), (w < x_1, L), (w < x_2, L), \in ((U_{k(\alpha)} < \mathfrak H) \times \Pic^{d_{\mathbb P_C(\mathcal F)}}(C))(\overline{k})$. Then the following properties hold:
\begin{itemize}
\item if $x_1\leq x_2$, then we have $\mathsf Z_{w < x_1, L} \supset \mathsf Z_{w < x_2, L}$;
\item we have $\mathsf Z_{w < x, L} = \mathsf Z_{w < \mathrm{sat}(x), L}$; and
\item we have $\mathsf Z_{w < x_1\vee x_2, L} = \mathsf Z_{w < x_1, L}\cap \mathsf Z_{w < x_2, L}$.
\end{itemize}
\end{lemm}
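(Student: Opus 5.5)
The plan is to make the fiber $\mathsf Z_{w<x,L}$ explicit as a subspace of $E_{w,L}$ and then check each property pointwise on $C$. A section $s \in E_{w,L} = \rH^0(C_{\overline{k}}, \mathcal F^\vee(-\Sigma_C(w))\otimes L)$ lies in $\mathsf Z_{w<x,L}$ exactly when three conditions hold. First, $s$ vanishes on $p_{C\to B}^*x_B$. Second, $p_C^*s$ restricted to $p_{\mathcal Z}^*x_{\mathcal Z}$ takes values in the line subbundle $\mathfrak Z = \iota^*\mathcal O(-1)$. Third, $s$ vanishes on $x_C$. Since $p_{C\to B}^*x_B\subset x_C$ by $(*)$, the first condition is implied by the third. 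So $\mathsf Z_{w<x,L}$ is the intersection of two subspaces:
\[
\{s : s|_{x_C}=0\}, \qquad \{s : p_C^*s|_{p_{\mathcal Z}^*x_{\mathcal Z}} \in \mathfrak Z\}.
\]

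\emph{First bullet (monotonicity).} If $x_1\le x_2$ componentwise, then every subscheme in the conditions for $x_2$ contains the corresponding one for $x_1$. Restriction to a larger subscheme gives stronger vanishing and stronger incidence conditions, so $\mathsf Z_{w<x_2,L}\subset \mathsf Z_{w<x_1,L}$.

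\emph{Third bullet (joins).} For $0$-dimensional subschemes, a section vanishes on $x_{1,C}\cup x_{2,C}$ if and only if it vanishes on both pieces, because the ideal of the union is the intersection of the ideals. The same holds for the condition of lying in $\mathfrak Z$ along a union. This gives $\mathsf Z_{w<x_1\cup x_2,L}=\mathsf Z_{w<x_1,L}\cap\mathsf Z_{w<x_2,L}$, where $x_1\cup x_2$ is the componentwise union. The join is $x_1\vee x_2=\mathrm{sat}(x_1\cup x_2)$. I also need to check that $x_1 \vee x_2$ really means this saturation in $(U_{k(\alpha)}<\mathfrak H)$; in $\mathfrak H$ itself the componentwise union is already the join. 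In either case the third bullet reduces to the second.

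\emph{Second bullet (saturation) — the main obstacle.} Since $x\le \mathrm{sat}(x)$, the first bullet gives one inclusion. For the reverse, take $s\in\mathsf Z_{w<x,L}$. I will follow the steps of the construction of $\mathrm{sat}(x)$ in Proposition~\ref{prop:saturation} and show that each enlargement imposes no new condition on $s$.

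\textbf{Step 1: enlarging $x_C$ to $x_C^*$.} On $\mathcal W$ consider $p_C^*s$ on $p_{\mathcal Z}^*x_{\mathcal Z}$. On the part lying over $x_C$ it vanishes. On the remainder $p_{\mathcal Z}^*x_{\mathcal Z}-p_{\mathcal Z}^*x_{\mathcal Z}\wedge p_C^*x_C$ it lies in $\mathfrak Z$, i.e. it is the pullback of a section of $\mathcal O(-1)$ along $\iota$, or equivalently of a section of the tautological line along $\iota_\diamond(\cdot)$. Two cases arise.
\begin{itemize}
\item If two jets of $p_{\mathcal Z}^*x_{\mathcal Z}$ land at the same point of $\mathbb P_C(\mathcal F)$ over $c$, or a jet is non-admissible, then $s$ must lie in two distinct lines $\mathfrak Z$ at that point. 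This forces $s$ to vanish to the corresponding order at $c$.
\item In the ramified case, an $N$-jet becomes a $(2N+1)$-jet, as in Proposition~\ref{prop:lemma18}, and the same forcing occurs.
\end{itemize}
This is exactly the enlargement $x_C\mapsto x_C^*$, so $s$ already vanishes on $x_C^*$.

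\textbf{Step 2: enlarging $x_B$ to $x_B^*$.} The condition for $x_B^*$ follows from vanishing on $x_C^*\supset p_{C\to B}^*x_B^*$.

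\textbf{Step 3: enlarging $x_{\mathcal Z}$ to $x_{\mathcal Z}^*$.} A section of $\mathcal O(-1)$ along $\iota$ determined on $\iota_\diamond(\cdot)$ automatically lies in $\mathfrak Z$ on $\iota^*\iota_\diamond(\cdot)$. Therefore the membership condition on the larger subscheme is implied.

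This last step is where I expect the difficulty. The map $\iota$ is not a closed embedding, and in the branched case the jet lengths double. The cleanest route is to verify the local claim case by case, using the local models in Classifications~\ref{class1}, \ref{class2} and \ref{class3}. As in the proof of Proposition~\ref{prop:saturation}, I will reduce to a single $b$ and a single point of $C$ over $b$.
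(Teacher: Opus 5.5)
Your proposal is correct and follows essentially the same route as the paper, which also treats the first and third bullets as immediate. Like you, it proves $\mathsf Z_{w<x,L}=\mathsf Z_{w<\mathrm{sat}(x),L}$ by walking through the construction in Proposition~\ref{prop:saturation}: a non-admissible $\iota_\diamond(\cdot)$ over $c_1$ forces vanishing at $c_1$, giving $x_C\mapsto x_C^*$, and then the steps $x_B\mapsto x_B^*$, $x_{\mathcal Z}\mapsto x_{\mathcal Z}'=x_{\mathcal Z}\vee p_{\mathcal Z\to B}^*x_B^*$ and $x_{\mathcal Z}'\mapsto x_{\mathcal Z}^*$ impose nothing new.

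Your Step 3 does not need a case-by-case check. After dividing by a local equation of $x_C^*$, the incidence condition on a residual subscheme $R\subset\mathcal W$ says that $\iota^*\bar s$ vanishes on $R$, where $\bar s$ is the image of the section in the quotient of $\pi_{\mathbb P_C(\mathcal F)}^*(\mathcal F^\vee\otimes L)$ by the tautological subline. This is equivalent to $\bar s|_{\iota_\diamond R}=0$, which gives vanishing on $\iota^*\iota_\diamond R$ for free. The step to $x_{\mathcal Z}'$, which you skipped, is also trivial since $p_{\mathcal Z}^*p_{\mathcal Z\to B}^*x_B^*=p_C^*p_{C\to B}^*x_B^*\subset p_C^*x_C^*$.
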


\begin{proof}
The only non-trivial statement is that $\mathsf Z_{w < x, L} = \mathsf Z_{w < \mathrm{sat}(x), L}$.
We prove this using the notation from the proof of Proposition~\ref{prop:saturation}.
If 
\[
\iota_\diamond(p_{\mathcal Z}^*x_{\mathcal Z} - p_{\mathcal Z}^*x_{\mathcal Z}\wedge p_C^*x_C)
\]
does not induce an admissible jet at $c_1 \in C(\overline{k})$, then any section satisfying the incidence condition imposed by $\iota_\diamond(p_{\mathcal Z}^*x_{\mathcal Z} - p_{\mathcal Z}^*x_{\mathcal Z}\wedge p_C^*x_C)$ will vanish at $c_1$. This observation shows that we have
\[
\mathsf Z_{w < x, L} = \mathsf Z_{w < (x_B, x_{\mathcal Z}, x_C^*), L}. 
\]
Then it is easy to show that 
\[
\mathsf Z_{w < (x_B, x_{\mathcal Z}, x_C^*), L} = \mathsf Z_{w < (x_B^*, x_{\mathcal Z}, x_C^*), L}
= \mathsf Z_{w < (x_B^*, x_{\mathcal Z}', x_C^*), L} = \mathsf Z_{w < \mathrm{sat}(x), L}. 
\]
Thus our assertion follows.
\end{proof}

\section{Homological sieve}

In this section, we introduce the bar complexes and discuss the homological sieve to compare the cohomology of the space over $U_{k(\alpha)}$ with the cohomology of the bar complex. Those concepts in the context of the space of maps were originally introduced over $\mathbb C$ in \cite{DT24} and extended to arbitrary perfect fields in \cite{DLTT25}. Our discussion runs parallel to \cite[Sections 7 and 8]{DLTT25}, and in many cases, the proofs are identical. In that situation, we only state the theorems and omit the proofs.

\subsection{The bar complex}
\label{subsec:barcomplex}

In this section, we closely follow the exposition of \cite[Section 7]{DLTT25}.
Let $k$ be a perfect field.
We work in the setup established in Section~\ref{sec:poschemes} and use the notation established there.
Note that $U_{k(\alpha)} \subset \mathfrak H$ is a locally closed subset which is a finite union of $\mathcal N_{T'}$ where $T'$ is a saturated combinatorial type of $\mathfrak H$.
Let $P \subset (U_{k(\alpha)} < \mathfrak H)$ be a closed and downward closed union of locally closed subsets $\mathcal S_T$ for finitely many saturated combinatorial types $T$ of $(U_{k(\alpha)} < \mathfrak H)$. Fix a Zariski open subset 
\[
U \subset U_{k(\alpha)} \times \Pic^{d_{\mathbb P_C(\mathcal F)}}(C).
\]
Let $(U < \mathfrak H)$ and $P_U\to U$ be the base changes of 
\[
(U_{k(\alpha)} < \mathfrak H) \to U_{k(\alpha)}, \quad P \subset (U_{k(\alpha)} < \mathfrak H) \to U_{k(\alpha)}
\]
via 
\[
U \subset U_{k(\alpha)} \times \Pic^{d_{\mathbb P_C(\mathcal F)}}(C) \to U_{k(\alpha)}.
\]
Suppose that we have a flat separated morphism $E \to U$ of finite type with geometrically irreducible fibers. Assume that this admits a stratification
\[
\mathsf Z \subset (U < \mathfrak H)\times_{U} E.
\]
We assume that this satisfies the properties of Lemma~\ref{lemm:propforstratification}.

Let $\Delta$ be the category of non-empty finite subsets of $\mathbb N$ with non-decreasing maps.

\begin{defi}
The bar complex $B(P_U, \mathsf Z_{P_U})$ is a simplicial scheme which is a contravariant functor from $\Delta$ to the category of separated schemes and assigns $[n] = \{ 0, \cdots, n\}$ to the scheme
\[
\{ ((w, L), w < x_0 \leq \cdots \leq x_n \in \mathfrak H, z \in \mathsf Z_{w < x_n, L}) \, | \, (w < x_i) \in P\},
\]
with the augmentation morphism $\epsilon : B(P_U, \mathsf Z_{P_U}) \to E$ given by
\[
((w, L), w < x_0 \leq \cdots \leq x_n, z) \mapsto (w, L, z) \in E_{w, L}.
\]
See \cite[Section 2.3]{DLTT25} and references therein for simplicial schemes and related terminology.
\end{defi}

\subsubsection*{Spectral sequence}

Let $T$ be a saturated type of $P$ and we consider $\mathsf Z|_{U \times_{U_{k(\alpha)}} \mathcal N_T}$. We further consider the associated nerve:
\[
\epsilon_T : (N(-\infty, \mathsf Z|_{U \times_{U_{k(\alpha)}} \mathcal S_T}) < \mathsf Z|_{U \times_{U_{k(\alpha)}} \mathcal N_T}) \to \mathsf Z|_{U \times_{U_{k(\alpha)}} \mathcal N_T},
\]
which is defined as the simplicial scheme over $\mathsf Z|_{U \times_{U_{k(\alpha)}} \mathcal N_T}$ whose $n$-th simplex is given by
\[
\{(L, w < x_0 \leq \cdots \leq x_n < y, z) \, |\, (w, L) \in U, (w<y) \in \mathcal N_T, (w < x_n) \in \mathcal Z_T \setminus \mathcal S_T, z \in Z_{w < y, L}\},
\]
and the augmentation morphism $\epsilon_T$ is given by
\[
(L, w < x_0 \leq \cdots \leq x_n < y, z) \mapsto (L, w < y, z).
\]
Let $D^b(\mathsf Z|_{U \times_{U_{k(\alpha)}} \mathcal N_T})$ be the bounded derived category of $\mathbb Z_\ell$-constructible sheaves in the pro-\'etale topology.
Let $\mu'(T)[2] \in D^b(\mathsf Z|_{U \times_{U_{k(\alpha)}} \mathcal N_T})$ be the cone to form the distinguished triangle
\[
\underline{\mathbb Z}_\ell \to R\epsilon_{T*}\epsilon_T^*\underline{\mathbb Z}_\ell \to \mu'(T)[2] \to \underline{\mathbb Z}_\ell[1].
\]
Then we have the following spectral sequence:

\begin{theo}[{\cite[Theorem 7.3]{DLTT25}}]
\label{theo:Theorem7.3DLTT}
There is a spectral sequence:
\[
\bigoplus_{i + j = n}E_1^{i, j} = \bigoplus_{T: \textnormal{ essential types of $P$}}\rH^n_{\textnormal{\'et}, c}((\mathsf Z|_{U \times_{U_{k(\alpha)}} \mathcal N_T})_{\overline{k}}, \mu'(T)[1]) \implies\rH^n_{\textnormal{\'et}, c}(B(P_U, \mathsf Z_{P_U})_{\overline{k}}, \mathbb Z_\ell).
\]
\end{theo}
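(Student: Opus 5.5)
The plan is to follow the proof of \cite[Theorem 7.3]{DLTT25} essentially verbatim, replacing its poscheme with $(U<\mathfrak H)$ and its saturated locus with the set $\mathfrak H^\circ(\overline{k})$ of Definition~\ref{defi:saturated}. The first step is to filter the bar complex $B(P_U,\mathsf Z_{P_U})$ by the closed, downward closed subsets $\mathcal Z_T\cap P$. The saturated combinatorial types $T$ of $P$ are finite in number, and by construction $\mathcal Z_T\setminus\bigcup_{T'<_{+,\mathrm{sat}}T}\mathcal Z_{T'}=\mathcal S_T$. So after choosing a linear extension of $\leq_{+,\mathrm{sat}}$ we obtain a finite filtration of $B(P_U,\mathsf Z_{P_U})$ by closed simplicial subschemes, namely those chains whose top element $x_n$ lies in $\mathcal Z_T$. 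The associated graded pieces are the simplicial schemes of chains $w<x_0\leq\cdots\leq x_n$ with $x_n\in\mathcal S_T$. The spectral sequence of a filtered complex for compactly supported cohomology then reduces the theorem to identifying the compactly supported cohomology of each graded piece.

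The second step is to compute each graded piece. Let $y=\mathrm{sat}(x_n)$. Lemma~\ref{lemm:propforstratification} gives $\mathsf Z_{w<x_n,L}=\mathsf Z_{w<y,L}$, and $(w<y)\in\mathcal N_T$. The graded piece therefore maps to $\mathsf Z|_{U\times_{U_{k(\alpha)}}\mathcal N_T}$. Its fiber over a point $(L,w<y,z)$ is the nerve of the subposet of $[w,y]\cap P$ consisting of elements whose saturation is $y$, joined with the chains lying in $\mathcal Z_T\setminus\mathcal S_T$. This is exactly the cone description: the fiber is the cone of $N(-\infty,\mathsf Z|_{\mathcal S_T})\to\mathrm{pt}$, which gives the complex $\mu'(T)[1]$ after the shift in the triangle. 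The point I need here is that $\{x\leq y:\mathrm{sat}(x)=y\}$ has contractible nerve when nonempty, because it has a top element $y$. Using the adjunction of Proposition~\ref{prop:saturation}, the simplicial scheme of chains with saturation exactly $y$ is homotopy equivalent, relative to the base, to the nerve of $(-\infty,y)\cap\mathcal Z_T\setminus\mathcal S_T$ coned off. Proper base change over the locally closed strata $\mathcal N_T$, which is legitimate since everything is a finite union of $\mathcal N_{T'}$, then identifies the $E_1$-term with $\rH_{\textnormal{\'et},c}^*(\mathsf Z|_{\mathcal N_T},\mu'(T)[1])$.

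The third step is to discard the non-essential types. For a saturated type that is not essential, the open interval from $w$ to $y$ in $\mathfrak H^\circ$, taken fiberwise over $b\in\mathrm{Supp}$, is a lattice with joins (the joins of $\mathrm{ch}(\mathfrak Q_b)$ defined via $\mathrm{sat}$). If $y$ is not a join of atoms over $w$, the crosscut theorem shows that the reduced cohomology of the interval vanishes, so $\mu'(T)=0$ stalkwise. I expect this step to be the main obstacle. In \cite{DLTT25} the set $\mathfrak H^\circ$ is open, whereas here it is only a union of strata $\mathcal N_T$. I must therefore check that the chains involved in $\epsilon_T$ form a constructible family whose stalks are computed by the combinatorial lattice $[w,y]\cap\mathfrak H^\circ_b(\overline{k})$ uniformly along each $\mathcal N_T$. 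I would verify this using Classifications~\ref{class2}--\ref{class3} and the fact that the configuration datum $\mathsf d$ is constant along $B_{\mathsf a,\mathsf b,\mathsf d}$. This makes the interval poset locally constant on $\mathcal N_T$, so that the vanishing can be applied stalkwise.
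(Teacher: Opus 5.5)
Your proposal is correct and takes the same route as the paper, whose proof simply asserts that the proof of \cite[Theorem 7.3]{DLTT25} carries over verbatim; your filtration by the closed, downward-closed sets $\mathcal Z_T$, your identification of the graded pieces over $\mathsf Z|_{U\times_{U_{k(\alpha)}}\mathcal N_T}$ via $\mathrm{sat}$ and Lemma~\ref{lemm:propforstratification}, and your crosscut vanishing for non-essential types are a faithful unpacking of that argument. Two small precisions. First, the fiber over $(w<y)$ is really the pair $(N((w,y]),N(L_y))$ with $L_y=\{x: w<x<y\}\cap(\mathcal Z_T\setminus\mathcal S_T)$; its first term is a cone with apex $y$, and it is not a join with $N(\{x:\mathrm{sat}(x)=y\})$. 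Second, in your third step you should first use the $\mathrm{sat}$ adjunction to replace $N(L_y)$, whose elements need not be saturated, by the open interval $(w,y)$ in $\mathfrak H^\circ$, and only then invoke the crosscut theorem.
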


\begin{proof}
The proof is identical to that of \cite[Theorem 7.3]{DLTT25}, so we omit it.
\end{proof}

\subsubsection*{The main theorem on the bar complex}

We closely follow the exposition of \cite[Section 7.3]{DLTT25}. Suppose that we have a codimension function 
\[
\gamma : (U_{k(\alpha)} < \mathfrak H) \to \mathbb N.
\]
(For our applications, this $\gamma$ is the one defined in Example~\ref{exam:codimension}.)
We say that $\mathsf Z_{w < x, L}$ is unobstructed if its codimension in $E_{w, L}$ is given by $\gamma(w < x) = \gamma(x) - \gamma(w)$.
We define $\mathrm{Supp}(w < x) = \mathrm{Supp}(x-w)$, and we also consider $\mathrm{Supp}(w < x) \cap B^\circ$ consisting of divisors supported over $B^\circ$. For any saturated combinatorial type $T$, we let $\kappa(T)$ to be
\[
\kappa(T) = 2\gamma(T) -\mathrm{rank} (T) -2 |\mathrm{Supp}(T)\cap B^\circ|.
\]
This is viewed as the expected cohomological codimension of the contribution of $\mathsf Z|_{U\times_{U_{k(\alpha)}} \mathcal N_T}$.
We consider the evaluation map $\mathsf Z_{P_U} \to E$ and denote its image by $\mathrm{im}(\mathsf Z_{P_U} \to E)$.

The following theorem was proved over $\mathbb C$ in the language of simplicial spaces in \cite[Theorem 5.9]{DT24} and it was extended to arbitrary perfect fields in \cite[Theorem 7.5]{DLTT25}:

\begin{theo}[{\cite[Theorem 7.5]{DLTT25}}]
\label{theo:Theorem7.5DLTT}
Let $I \in \mathbb Q_{\geq 0}$. Assume that 
\begin{enumerate}
\item $P \to U_{k(\alpha)}$ is proper and $P$ is downward closed;
\item for every $(w < x, L) \in P_U(\overline{k})$ with $x$ being saturated and every $y \in \mathfrak H^\circ(\overline{k})$ such that $x \prec y$, $\mathsf Z_{w < x, L}$ and $\mathsf Z_{w < y, L}$ are unobstructed; and
\item $P$ contains all $\mathcal S_T$ such that $\kappa(T) \leq I$.
\end{enumerate}
Then the map on compactly supported \'etale cohomology induced by
\[
B(P_U, \mathsf Z_{P_U}) \to \mathrm{im}(\mathsf Z_{P_U} \to E),
\]
is cohomology connected in codimension $\lfloor I \rfloor+2$, i.e., for any $i > 2 \dim \, E -\lfloor I \rfloor -2$, the homomorphism
\[
\rH^i_{\textnormal{\'et}, c}( \mathrm{im}(\mathsf Z_{P_U} \to E)_{\overline{k}}, \mathbb Z_\ell) \to \rH^i_{\textnormal{\'et}, c}( B(P_U, \mathsf Z_{P_U})_{\overline{k}}, \mathbb Z_\ell)
\]
is an isomorphism, and for $i =2 \dim \, E -\lfloor I \rfloor -2$, it is a surjection.

\end{theo}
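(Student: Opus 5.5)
The plan is to follow the proof of \cite[Theorem 7.5]{DLTT25} (itself modelled on \cite[Theorem 5.9]{DT24}), feeding in the spectral sequence of Theorem~\ref{theo:Theorem7.3DLTT}. The one point needing care is that $\mathfrak H^\circ$ is no longer open in $\mathfrak H$; we replace it with the stratification by saturated combinatorial types and the properties of Lemma~\ref{lemm:propforstratification}.

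\textbf{Step 1: acyclic fibers.} Factor the augmentation through its image, $\epsilon: B(P_U,\mathsf Z_{P_U})\to \mathrm{im}(\mathsf Z_{P_U}\to E)$, and compare fiberwise via proper base change; this is where hypothesis (1) is used. Fix a geometric point $(w,L,z)$ of the image. The fiber of the geometric realization is the nerve of the poset $P_{w,L,z}=\{x \mid (w<x)\in P,\ z\in \mathsf Z_{w<x,L}\}$.

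\textbf{Step 2: the fiber poset has a top.} By Lemma~\ref{lemm:propforstratification}, $z\in\mathsf Z_{w<x,L}$ depends only on $\mathrm{sat}(x)$, and the set is closed under joins. Hence $P_{w,L,z}$ contains a largest saturated element (the join of all its elements) whenever that join lies in $P$. So when the fiber is ``complete'', its nerve is contractible. The comparison therefore fails only at points $z$ whose maximal incidence element $y$ lies outside $P$.

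\textbf{Step 3: bound the bad locus.} By hypothesis (3), every such $y$ has a combinatorial type $T$ with $\kappa(T)>I$. Use the spectral sequence of Theorem~\ref{theo:Theorem7.3DLTT}: the $E_1$ terms are compactly supported cohomology of $\mathsf Z|_{\mathcal N_T}$ with coefficients in $\mu'(T)$.

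For the coefficients, the cone $\mu'(T)$ is the reduced cohomology of the open interval in the saturated poset, computed chain-wise via the identification $\mathrm{ch}(\mathfrak Q_b)\cong\mathfrak H^\circ_b$. By the essential-pair classification (Example~\ref{exam:essentialtypes}), it vanishes unless $T$ is essential, and it is then concentrated in degree $\mathrm{rank}(T)$.

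For the dimension, hypothesis (2) (unobstructedness for $x$ and its covers $y$) gives $\dim \mathsf Z|_{\mathcal N_T}\le \dim E-\gamma(T)+|\mathrm{Supp}(T)\cap B^\circ|$. The support term accounts for moving the points over $B^\circ$; the special strata $B_{\mathsf a,\mathsf b,\mathsf d}$ are finite and contribute no moduli.

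Combining the two, the contribution sits in degrees at most $2\dim E-\kappa(T)-2$. Since $\kappa(T)>I$, this is at most $2\dim E-\lfloor I\rfloor-3$, which gives the isomorphism above $2\dim E-\lfloor I\rfloor-2$ and the surjection at that degree.

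\textbf{Main obstacle.} The hard step is showing that the open-interval cohomology of $\mathfrak H^\circ$ is concentrated in degree $\mathrm{rank}$ and supported on essential pairs. This requires the interval to be a product of local lattices with joins, i.e. that saturation is a left adjoint (Proposition~\ref{prop:saturation}). I would first verify that each local poset $\mathrm{ch}(\mathfrak Q_b(\overline k))$ is a distributive or geometric lattice, case by case, and then apply the crosscut theorem.
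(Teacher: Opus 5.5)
The paper simply defers to \cite[Theorem 7.5]{DLTT25}. Your Steps 1--2 have the right shape: proper base change, fibers equal to nerves of $P_{w,L,z}$, and contractibility when the fiber poset has a maximum. The gap is in Step 3. You apply hypothesis (2) to the \emph{maximal incidence element} $y$ of a bad point, but (2) only asserts unobstructedness for elements of $P$ and their saturated \emph{covers}. The maximal incidence element can lie arbitrarily far above $P$; for $z=0$ every element of $\mathfrak H$ is incident, so it does not even exist. Moreover any $y$ with $\gamma(w<y)>\dim E_{w,L}$ is necessarily obstructed. So the estimate $\dim \mathsf Z|_{\mathcal N_T}\le \dim E-\gamma(T)+|\mathrm{Supp}(T)\cap B^\circ|$ is not available for the types you stratify by.

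What is missing is a reduction to the first layer outside $P$. Suppose $P_{w,L,z}$ has no maximum. Choose a saturated $x\in P_{w,L,z}$ of maximal rank, a saturated $x'\in P_{w,L,z}$ with $x'\not\le x$, and a cover $x\prec y'\le x\vee x'$. Then $z\in \mathsf Z_{w<x\vee x',L}\subset \mathsf Z_{w<y',L}$ by Lemma~\ref{lemm:propforstratification}, and $y'\notin P$ by maximality of $\mathrm{rank}(x)$. Hence (2) applies to $y'$, (3) gives $\kappa(y')>I$, and every saturated element of $P_{w,L,z}$ has rank at most $\mathrm{rank}(y')-1$. This places the non-acyclic locus inside the union of $\mathsf Z|_{\mathcal N_{T'}}$ over the finitely many types $T'$ of such covers, with fiber cohomology in degrees $\le \mathrm{rank}(T')-2$. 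That is exactly what yields the bound $2\dim E-\kappa(T')-2$.

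Two other ingredients you rely on are misplaced. The spectral sequence of Theorem~\ref{theo:Theorem7.3DLTT} is indexed by essential types \emph{of} $P$, so it never sees a type outside $P$. Also, the fiber over a bad point is the nerve of the truncated poset $P\cap(-\infty,y]$, not the open interval whose cohomology defines $\mu'(T)$.

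Only an \emph{upper} bound on the degrees of the fiber cohomology is needed, and that comes for free. $P$ is a union of strata $\mathcal S_T$, hence stable under $\mathrm{sat}$. Since $x\le \mathrm{sat}(x)$ and $\mathsf Z_{w<x,L}=\mathsf Z_{w<\mathrm{sat}(x),L}$, the closure operator $\mathrm{sat}$ gives a homotopy equivalence between the nerve of $P_{w,L,z}$ and that of its saturated part, whose chains are bounded by the rank. So your ``main obstacle'' (concentration in one degree, support on essential pairs) is not needed for this theorem.

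The route you propose for it would also fail, because the local posets are not semimodular. In Case~1 of Example~\ref{exam:essentialtypes} with no lines, the atoms $([z_1],[z_1])$ and $([z_2],[z_2])$ have join $([b],[b])$ of rank $3$. Indeed, saturating $x_{\mathcal Z}=z_1+z_2$ forces $x_C\ge c_1+c_2$, hence $x_B\ge b$. So $\mathrm{ch}(\mathfrak Q_b(\overline{k}))$ is neither geometric nor distributive.
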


\begin{proof}
The proof of \cite[Theorem 7.5]{DLTT25} applies without any modification.
\end{proof}

\subsubsection*{The cohomological virtual bar complex}
We follow the exposition of \cite[Section 7.4]{DLTT25}.
Let us work in the setup in Section~\ref{subsec:stratificationforspaceofsections}.
We have $E \to U_{k(\alpha)} \times \Pic^d(C)$ which is generically a vector bundle and $\mathsf Z_{w < x, L}$ is a subspace of $E_{w, L}$. In this section, we construct the cohomological virtual bar complex which encodes how the bar complex behaves if every linear subspace has expected dimension.

Let $U \subset U_{k(\alpha)} \times \Pic^d(C)$ be a Zariski open subset.
Let $T$ be a saturated combinatorial type of $(U_k < \mathfrak H)$.
We consider the following nerve:
\[
(N(-\infty, U\times_{U_{k(\alpha)}} \mathcal S_T), U\times_{U_{k(\alpha)}} \mathcal N_T)
\]
is the simplicial scheme over $U\times_{U_{k(\alpha)}} \mathcal N_T$,
whose $n$-th simplex is given by
\[
\{(L, w < x_0 \leq \cdots \leq x_n <y) \,|\, (w, L) \in U, (w< y) \in \mathcal N_T, (w < x_n) \not\in \mathcal S_T\},
\]
with the augmentation morphism $\hat{\epsilon}_T : (N(-\infty, U\times_{U_k} \mathcal S_T), U\times_{U_{k(\alpha)}} \mathcal N_T) \to U\times_{U_{k(\alpha)}} \mathcal N_T$ given by
\[
(L, w < x_0 \leq \cdots \leq x_n <y) \mapsto (L, w< y).
\]
We define the complex $\mu(T)[2] \in D^b(U\times_{U_{k(\alpha)}} \mathcal N_T)$ by the distinguished triangle
\[
\underline{\mathbb Z}_\ell \to R\hat{\epsilon}_{T*}\hat{\epsilon}_T^*\underline{\mathbb Z}_\ell \to \mu(T)[2] \to \underline{\mathbb Z}_\ell[1].
\]
We denote the operation $(d)[2d]$ by $\langle d\rangle$.
\begin{defi}
We define the cohomological virtual bar complex $\mathcal A_{h_{\mathcal Q}, k(\alpha)}(U)$ by
\begin{align*}
\mathcal A_{h_{\mathcal Q}, k(\alpha)}(U) & = \bigoplus_{T: \text{ essential type}}\mathcal A_{h_{\mathcal Q}, k(\alpha)}(U, T) \\
&= \bigoplus_T\bigoplus_i \rH^i_{\text{\'et}, c}((U\times_{U_{k(\alpha)}}\mathcal N_T)_{\overline{k}}, \mu(T)[1]\langle -n_{h_{\mathcal Q}, k(\alpha), T}\rangle \otimes \mathbb Q_\ell),
\end{align*}
where $n_{h_{\mathcal Q}, k(\alpha), T} = h_{\mathcal Q} + 3-2g(B)-g(C) -2k(\alpha) - \gamma(T)$.
When $U=  U_{k(\alpha)} \times \Pic^d(C)$, we write $\mathcal A_{h_{\mathcal Q}, k(\alpha)}= \mathcal A_{h_{\mathcal Q}, k(\alpha)}(U)$.
\end{defi}

Let us assume that $k = \mathbb F_q$ and $U = U_{k(\alpha)} \times \Pic^d(C)$. Then we have
\begin{prop}[{\cite[Proposition 7.8]{DLTT25}}]
\label{prop:Proposition7.8DLTT}
Let $T$ be an essential type. Then we have
\[
\sum_i (-1)^i \mathrm{Tr}(\mathrm{Frob}\curvearrowright \mathcal A^i_{h_{\mathcal Q}, k(\alpha)}(T)) = - \#\Pic^0(C)(k)q^{h_{\mathcal Q} +3-2g(B)-g(C)} \sum_{(w < x) \in \mathcal N(T)(k)} \mu_k(w, x)q^{-\gamma(x)},
\]
where $\mu_k$ is the M\"obius function for the poset $\mathfrak H^\circ(k)$. See \cite[Definition 7.6]{DLTT25} for its definition.
\end{prop}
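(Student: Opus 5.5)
The plan is to follow the proof of \cite[Proposition 7.8]{DLTT25} and apply the Grothendieck--Lefschetz trace formula stratum by stratum. First I unwind the definition of $\mathcal A_{h_{\mathcal Q}, k(\alpha)}(T)$. Since $U = U_{k(\alpha)} \times \Pic^d(C)$, we have
\[
U\times_{U_{k(\alpha)}}\mathcal N_T = \mathcal N_T \times \Pic^d(C),
\]
and both the nerve defining $\mu(T)$ and the augmentation $\hat{\epsilon}_T$ are pulled back from $\mathcal N_T$. By the Künneth formula, the compactly supported cohomology therefore splits as
\[
\rH^*_{\textnormal{\'et},c}(\mathcal N_{T,\overline{k}}, \mu(T)[1]) \otimes \rH^*_{\textnormal{\'et},c}(\Pic^d(C)_{\overline{k}}).
\]
The Tate twist $\langle -n\rangle$ multiplies every Frobenius eigenvalue by $q^{n}$ and does not change parity. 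The trace formula gives the factor $\#\Pic^d(C)(k)$, which equals $\#\Pic^0(C)(k)$ because $C$ has a rational divisor class of every degree over a finite field by Lang's theorem. The twist contributes $q^{h_{\mathcal Q}+3-2g(B)-g(C)-2k(\alpha)-\gamma(T)}$. The shift $[1]$ contributes an overall sign $-1$.

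Next I compute the pointwise trace. The Lefschetz trace formula on $\mathcal N_T$ reduces the remaining factor to $\sum_{y}\mathrm{Tr}(\mathrm{Frob}_y, \mu(T)_y)$, summed over $(w<y)\in\mathcal N_T(k)$. The stalk of $R\hat{\epsilon}_{T*}\hat{\epsilon}_T^*\mathbb Z_\ell$ at a $k$-point $(w<y)$ is the cohomology of the geometric nerve of the open interval $(w,y)$ inside $\mathfrak H^\circ(\overline{k})$, with Frobenius acting by permuting $\overline{k}$-points. By the standard argument of \cite[Section 7]{DLTT25}, namely Quillen's theorem together with the fact that the Lefschetz number of a poset automorphism equals the Euler characteristic of its fixed subposet, the trace of Frobenius on the reduced cohomology of this nerve equals $\mu_k(w,y)$, up to the sign fixed by the shift $[2]$. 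The sum runs over $k$-rational saturated elements, which makes sense by Proposition~\ref{prop:saturation}.

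Two inputs are needed at this step. First, restricting to saturated elements does not change the homotopy type. This follows because $\mathrm{sat}$ is a left adjoint that is Galois equivariant (Proposition~\ref{prop:saturation}), so it gives a homotopy equivalence of nerves compatible with Frobenius, and the same holds after passing to fixed points. Second, within an essential type the interval $(w,y)$ has the structure used in \cite{DLTT25}: joins exist in $\mathrm{ch}(\mathfrak Q_b(\overline{k}))$, and the interval decomposes as a product over the support points $b$.

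Finally I reconcile the exponents. The codimension function satisfies $\gamma(w<x)=\gamma(x)-\gamma(w)$, and $w\in U_{k(\alpha)}$ has $\gamma(w)=m_{\mathcal Z}(w)=2k(\alpha)$, since its image in $\mathbb P_C(\mathcal F)$ has length $2k(\alpha)$. Hence $-2k(\alpha)-\gamma(T)=-\gamma(x)$, and this recovers the stated exponent of $q$ together with the overall minus sign.

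The main obstacle is the identification of the stalkwise Frobenius trace with $\mu_k(w,x)$, because $\mathfrak H^\circ$ is not an open subscheme here. My first attempt would be to check that on each stratum $\mathcal N_T$ the family of intervals is étale-locally constant; this should follow because the combinatorial type fixes the poset, including the configuration of lines. Once that holds, the argument of \cite{DLTT25} carries over verbatim.
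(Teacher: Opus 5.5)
Your proposal is correct and follows the same route as the paper, which simply defers to the proof of \cite[Proposition 7.8]{DLTT25}. The ingredients match: Künneth over $\Pic^{d}(C)$ together with Lang's theorem, the twist $\langle -n_{h_{\mathcal Q},k(\alpha),T}\rangle$ and the shift $[1]$ producing the factor $-q^{n}$, the stalkwise Frobenius trace of $\mu(T)$ identified with $\mu_k(w,x)$ via the Galois-equivariant saturation adjunction and the fixed-subposet Lefschetz count, and $\gamma(w)=2k(\alpha)$ to turn $-2k(\alpha)-\gamma(T)$ into $-\gamma(x)$. One small correction: the shift $[2]$ introduces no sign, so the stalk trace is exactly $\mu_k(w,x)$ and the overall minus comes solely from $[1]$, which agrees with your final bookkeeping.
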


\begin{proof}
The proof is similar to that of \cite[Proposition 7.8]{DLTT25}.
\end{proof}

\subsubsection*{Convergence.}
In this section, we work over an algebraic closure $\overline{k}$ of a perfect field $k$.
First let us describe $\mathcal N_T$ when $T$ is an essential saturated type of $(U_{k(\alpha)} \leq \mathfrak H)$. Recall that for $\mathsf a = (1, 1, 1, 1), (2, 1, 1), (3, 1), (2, 2), (4)$, $\mathsf b = (1, 1), (2)$, and $\mathsf d \in \mathfrak R_{\mathsf a, \mathsf b}$
we introduce the abstract poset with the stratification and the configuration of lines
\[
(\mathfrak Q_{\mathsf a, \mathsf b, \mathsf d}, \Gamma_{\mathsf d}),
\]
and we denote $\mathfrak Q_{\mathrm{gen}} = \mathfrak Q_{(1, 1, 1, 1), (1, 1), \emptyset}$.
Note that when $\mathsf a = (2, 1, 1)$ or $(3, 1)$, the element with a multiplicity is distinguished. 

Also recall that in Example~\ref{exam:essentialtypes}, we list possible essential pairs.
For each $\mathsf a, \mathsf b, \mathsf d$ with $(\mathsf a, \mathsf b, \mathsf d) \neq ((1, 1, 1, 1), (1, 1), \emptyset)$, we list representatives of equivalence classes of essential pairs as
\begin{align*}
&\mathsf g_1^{\mathsf a, \mathsf b, \mathsf d} = (\hat{1}\leq g_1^{x, \mathsf a, \mathsf b, \mathsf d}), \quad \cdots, \quad \mathsf g_{s_{\mathsf a, \mathsf b, \mathsf d}}^{\mathsf a, \mathsf b, \mathsf d} = (\hat{1}\leq g_{s_{\mathsf a, \mathsf b, \mathsf d}}^{x, \mathsf a, \mathsf b, \mathsf d})\\
&\mathsf g_1^{\mathsf a, \mathsf b, \mathsf d, m} = (g_{\mathsf b, z, m}^w\leq g_1^{x, \mathsf a, \mathsf b, \mathsf d, m}), \quad \cdots, \quad \mathsf g_{s_{\mathsf a, \mathsf b, \mathsf d,  m}}^{\mathsf a, \mathsf b, \mathsf d, m}  = (g_{\mathsf b, z, m}^w\leq g_{s_{\mathsf a, \mathsf b, \mathsf d,  m}}^{x, \mathsf a, \mathsf b, \mathsf d, m})\\
&\mathsf g_1^{\mathsf a, \mathsf b, \mathsf d, \mathrm{ram}} = (g_{\mathsf b, z', \mathrm{ram}}^w\leq g_1^{x, \mathsf a, \mathsf b, \mathsf d, \mathrm{ram}}), \quad \cdots, \quad \mathsf g_{s_{\mathsf a, \mathsf b, \mathsf d,  \mathrm{ram}}}^{\mathsf a, \mathsf b, \mathsf d, \mathrm{ram}}  = (g_{\mathsf b, z', \mathrm{ram}}^w\leq g_{s_{\mathsf a, \mathsf b, \mathsf d,  \mathrm{ram}}}^{x, \mathsf a, \mathsf b, \mathsf d, \mathrm{ram}})
\end{align*}
where $m$ is a positive integer, $z \in \mathcal Z$ is unramified over $B$, $z' \in \mathcal Z$ is ramified over $B$, $g_{\mathsf b, z, m}^w$ is induced by $m[z] \in U_{m}$, and $g_{\mathsf b, z', \mathrm{ram}}^w$ is induced by $z' \in U_1$.
When $\mathsf a = (1, 1, 1, 1), \mathsf b = (1, 1), \mathsf d = \emptyset$, we list them as
\begin{align*}
&\mathsf g_1 = (\hat{1} < ([z], [z])), \quad \mathsf g_2  = (\hat{1} <  (\hat{1}, [c])), \quad \mathsf g_3 = (\hat{1} < ([z], [c])), \quad \mathsf g_4 = (\hat{1} < ([b], [b]))\\
& \mathsf g_1^m = ((m[z], m[z]) \leq (m[z], m[z])), \quad \mathsf g_2^m = ((m[z], m[z]) < ((m+1)[z], (m+1)[z])),\\ &\mathsf g_3^m = ((m[z], m[z]) < (m[z], [c] + (m-1)[z])), \quad
 \mathsf g_4^m = ((m[z], m[z]) < ([b] + (m-1)[z], [b])),\\ & \mathsf g_5^m = ((m[z], m[z]) < ((m+1)[z], [c] + m[z])),
\quad\mathsf g_6^m = ((m[z], m[z]) < ([b] + m[z], [b]+ m[z])),
\end{align*}
where $m$ is a positive integer. (Here we intentionally exclude $\mathsf g_0 = (\hat{1}\leq\hat{1})$.)
Let $\mathcal Z^\circ, C^\circ, \mathcal W^\circ$ be the preimages of $B^\circ$.
Then let $\mathcal D_i^\circ, \mathcal D_i^{m\circ}$ be
\begin{align*}
&\mathcal D_1^\circ = \mathcal Z^\circ_{\overline{k}}, \quad   \mathcal D_2^\circ = C^\circ_{\overline{k}}, \quad  \mathcal D_3^\circ = \mathcal W_{\overline{k}}^\circ, \quad  \mathcal D_4^\circ = B^\circ_{\overline{k}} \\
&  \mathcal D_1^{m\circ} = \mathcal Z^\circ_{\overline{k}}, \quad \mathcal D_2^{m\circ} = \mathcal Z^\circ_{\overline{k}}, \quad \mathcal D_3^{m\circ} = \mathcal W_{\overline{k}}^\circ, \quad \mathcal D_4^{m\circ} = \mathcal Z^\circ_{\overline{k}}, \quad \mathcal D_5^{m\circ} = \mathcal W_{\overline{k}}^\circ,\quad \mathcal D_6^{m\circ} = \mathcal Z^\circ_{\overline{k}}.
\end{align*}

Let us fix an essential combinatorial type $T$ of $(U_k < \mathfrak H)$.
For $\mathsf a, \mathsf b, \mathsf d$, $m \geq 0$, and $i$, let $n_{\mathsf a, \mathsf b, \mathsf d, m, i}$ (resp. $n_{\mathsf a, \mathsf b, \mathsf d, \mathrm{ram}, i}$) be the multiplicity of $\mathsf g_i^{\mathsf a, \mathsf b, \mathsf d, m}$ (resp. $\mathsf g_i^{\mathsf a, \mathsf b, \mathsf d, \mathrm{ram}}$) in $T$. When $\mathsf a = (1, 1, 1, 1), \mathsf b =(1, 1), \mathsf d = \emptyset$, we simply denote them by $n_{m, i}$. Then $\mathcal N_T$ is the disjoint union of the open subset of the form of
\[
\mathrm{Conf}^\circ\left(\prod_{m = 0}^{k(\alpha)} \prod_i (\mathcal D_i^{m\circ})^{n_{m, i}} \right)/\left(\prod_m \prod_i \mathfrak S_{n_{m, i}}\right),
\]
where $\mathrm{Conf}^\circ$ denotes the big Zariski open subset such that every coordinates are distinct when we take the pushforwards to $B^\circ$.
Note that when $(\mathsf a, \mathsf b, \mathsf d) \neq ((1, 1, 1, 1), (1, 1), \emptyset)$, we have
\begin{equation}
\label{equation:particition}
r_{\mathsf a, \mathsf b, \mathsf d} = \sum_{m = 0}^{k(\alpha)} \sum_i n_{\mathsf a, \mathsf b, \mathsf d, m, i} + \sum_i n_{\mathsf a, \mathsf b, \mathsf d, \mathrm{ram}, i}.
\end{equation}
Let $p(r_{\mathsf a, \mathsf b, \mathsf d}| (n_{\mathsf a, \mathsf b, \mathsf d, m, i})_{m, i} \cup (n_{\mathsf a, \mathsf b, \mathsf d, \mathrm{ram}, i})_i)$ be the number of partitions of $\{1, \cdots, r_{\mathsf a, \mathsf b, \mathsf d}\}$ into subsets of cardinalities given by (\ref{equation:particition}).
Note that since $r_{\mathsf a, \mathsf b, \mathsf d}$ is fixed, such a number is uniformly bounded depending only on $\mathcal X_{\overline{k}}$.
Then the number of the above components in $\mathcal N_T$ is given by
\begin{align*}
\mathsf C(T) : =&\prod_{(\mathsf a, \mathsf b, \mathsf d) \neq ((1, 1, 1, 1), (1, 1), \emptyset)} \Bigg( p(r_{\mathsf a, \mathsf b, \mathsf d}| (n_{\mathsf a, \mathsf b, \mathsf d, m, i})_{m, i}) \cup (n_{\mathsf a, \mathsf b, \mathsf d, \mathrm{ram}, i})_i) \\
&\times \prod_{m = 0}^{k(\alpha)} \prod_i \left(\frac{\#\mathrm{Aut}(\mathfrak Q_{\mathsf a, \mathsf b, \mathsf d}, \Gamma_{\mathsf a, \mathsf b, \mathsf d})}{\# \mathrm{Stab}(\mathsf g_i^{\mathsf a, \mathsf b,\mathsf d, m})} \right)^{n_{\mathsf a, \mathsf b,\mathsf d, m, i}} \times\prod_i \left(\frac{\#\mathrm{Aut}(\mathfrak Q_{\mathsf a, \mathsf b, \mathsf d}, \Gamma_{\mathsf a, \mathsf b, \mathsf d})}{\# \mathrm{Stab}(\mathsf g_i^{\mathsf a, \mathsf b,\mathsf d, \mathrm{ram}})} \right)^{n_{\mathsf a, \mathsf b,\mathsf d, \mathrm{ram}, i}}\Bigg),
\end{align*}
where $\mathrm{Stab}(\mathsf g_i^{\mathsf a, \mathsf b,\mathsf d, m})$ is the stabilizer of $\mathsf g_i^{\mathsf a, \mathsf b,\mathsf d, m}$.
In particular, there is a uniform constant $\mathsf C_1$ depending only on $\mathcal X_{\overline{k}}$ such that the above number is bounded by $\mathsf C_1$.

From now on, we follow the discussion of \cite[Section 7.5]{DLTT25}.
Let $k = \mathbb F_q$.
Let $\alpha = (h_{\mathcal Q}, k(\alpha))$ and recall that we defined
\[
\mathcal A_\alpha = \bigoplus_{T: \text{ essential types}} \bigoplus_i \rH^i_{\text{\'et}, c}((\mathcal N_T)_{\overline{k}}, \mu(T)[1]\langle -n_{h_{\mathcal Q}, k(\alpha), T}\rangle \otimes \mathbb Q_\ell).
\]
We would like to bound the $\mathsf L^1$-trace of the Frobenius of the truncation into cohomological degrees $\leq 2h_{\mathcal Q} - 2k(\alpha) + 6-4g(B)-2g(C)-I$. Our goal is to show that the $\mathsf L^1$-trace of this truncation is $o(q^{h_{\mathcal Q}-k(\alpha)+ 3-2g(B)-g(C)})$.
To this end, we consider the following twist
\[
\mathcal A_\alpha' = \bigoplus_{T: \text{ essential types}} \bigoplus_i \rH^i_{\text{\'et}, c}((\mathcal N_T)_{\overline{k}}, \mu(T)[|\mathrm{Supp}(T)| + r]\langle k(T) + \gamma (T) \rangle \otimes \mathbb Q_\ell),
\]
where $k(T) = k(\alpha)$ and $r = \sum_{(\mathsf a, \mathsf b, \mathsf d) \neq ((1, 1, 1, 1), (1, 1), \emptyset)}r_{\mathsf a, \mathsf b, \mathsf d}$. It therefore suffices to show that the $\mathsf L^1$-trace of the Frobenius of the truncation of $\mathcal A_\alpha'$ to cohomological degrees $\leq -I -|\mathrm{Supp}(T)\cap B^\circ|- r +1$ is bounded as $o(1)$.

Following \cite[Section 7.5]{DLTT25}, we rename the $T$-component as $\mathcal C_{T}$ and we consider the bigraded vector space
\[
\mathcal C = \bigoplus_T \mathcal C_T,
\]
where $T$ runs over all saturated types of $(\mathfrak H \leq \mathfrak H)$. (When $T$ is not essential or does not take the form of combinatorial types of $(U_{k(\alpha)} < \mathfrak H)$, we simply assign zero.) Here the first grading is given by $k(T) + |\mathrm{Supp}(T)\cap B^\circ| + r$ and the second grading is given by the homological grading.

We have the locally constant pro-\'etale sheaf on $\mathcal N_T$:
\[
\mathbb H^*(\mu(T)[|\mathrm{Supp}(T)\cap B^\circ| + r]):= \bigoplus_i\mathbb H^i(\mu(T)[|\mathrm{Supp}(T)\cap B^\circ| + r])
\]
and this pullbacks to the constant local system
\begin{align*}
&\left(\bigotimes_{m = 0}^{k(T)} \bigotimes_i \rH^*(\mu(\mathsf g_i^m)[1])^{\otimes n_{m, i}} \right) \\&\otimes \left(\bigotimes_{(\mathsf a, \mathsf b, \mathsf d) \neq ((1, 1, 1, 1), (1, 1), \emptyset)} \left(\bigotimes_{m = 0}^{k(T)}\bigotimes_i \rH^*(\mu(\mathsf g_i^{\mathsf a, \mathsf b, \mathsf d, m})[1])^{\otimes n_{\mathsf a, \mathsf b, \mathsf d, m, i}}  \otimes \bigotimes_{i} \rH^*(\mu(\mathsf g_i^{\mathsf a, \mathsf b,\mathsf d, \mathrm{ram}})[1])^{\otimes n_{\mathsf a, \mathsf b,\mathsf d, \mathrm{ram}, i}}\right)\right)
\end{align*}
on $\mathrm{Conf}^\circ\left(\prod_{m = 0}^{k(\alpha)} \prod_i (\mathcal D_i^{m\circ})^{n_{m, i}} \right)$, where the first term is $\prod_m \prod_i \mathfrak S_{n_{m, i}}$-equivariant with respect to the Koszul sign rule.
Hence by Galois descent, 
\begin{align*}
&\rH^*_{\text{\'et}, c}((\mathcal N_T)_{\overline{k}}, \mu(T)[|\mathrm{Supp}(T)\cap B^\circ| + r]\langle k(T) + \gamma (T) \rangle \otimes \mathbb Q_\ell)
\end{align*}
is the direct sum of $\mathsf C(T)$ copies of
\begin{align*}
&= \left(\rH^*_{\text{\'et}, c}\left(\mathrm{Conf}^\circ\left(\prod_{m = 0}^{k(\alpha)} \prod_i (\mathcal D_i^{m\circ})^{n_{m, i}} \right)\right)\otimes\left(\bigotimes_{m = 0}^{k(T)} \bigotimes_i \rH^*(\mu(\mathsf g_i^m)[1]\langle \gamma(\mathsf g_i^m) + k(\mathsf g_i^m) \rangle)^{\otimes n_{m, i}} \right)\right)^{\prod_m \prod_i \mathfrak S_{n_{m, i}} }\\
&\qquad \qquad \otimes \Bigg(\bigotimes_{(\mathsf a, \mathsf b, \mathsf d) \neq ((1, 1, 1, 1), (1, 1), \emptyset)} \Bigg(\bigotimes_{m = 0}^{k(T)}\bigotimes_i \rH^*(\mu(\mathsf g_i^{\mathsf a, \mathsf b,\mathsf d, m})[1]\langle \gamma(\mathsf g_i^{\mathsf a, \mathsf b,\mathsf d,  m}) + k(\mathsf g_i^{\mathsf a, \mathsf b, \mathsf d, m})\rangle)^{\otimes n_{\mathsf a, \mathsf b, \mathsf d, m, i}}  \\
&\qquad \qquad\qquad \qquad\otimes \bigotimes_{i} \rH^*(\mu(\mathsf g_i^{\mathsf a, \mathsf b, \mathsf d, \mathrm{ram}})[1]\langle \gamma(\mathsf g_i^{\mathsf a, \mathsf b, \mathsf d, \mathrm{ram}}) + k(\mathsf g_i^{\mathsf a, \mathsf b, \mathsf d, \mathrm{ram}})\rangle)^{\otimes n_{\mathsf a, \mathsf b, \mathsf d, \mathrm{ram}, i}}\Bigg)\Bigg).
\end{align*}
For an essential pair $(w \leq x)$, we define
\[
L(w\leq x) :=\rH^*(\mu(w\leq x)[1]\langle \gamma(w\leq x) + k(w) \rangle).
\]
For each $(\mathsf a, \mathsf b, \mathsf d)$, let $\mathcal J_{\mathsf a, \mathsf b, \mathsf d}$ be the set of representatives of equivalence classes of essential types listed before. We let $\mathcal J = \mathcal J_{(1, 1, 1, 1), (1, 1), \emptyset}$ and $\mathcal J' = \sqcup_{(\mathsf a, \mathsf b, \mathsf d) \neq ((1, 1, 1, 1), (1, 1), \emptyset)}\mathcal J_{\mathsf a, \mathsf b, \mathsf d}$. Recall that we exclude $(\hat{1}\leq\hat{1})$ from $\mathcal J$.
When $(w \leq x) \in \mathcal J\sqcup \mathcal J'$, the first grading of $L(w \leq x)$ is in degree $k(w) + 1$.

Then the $\mathsf L^1$-trace of $\mathcal C$ is bounded by $\mathsf C_1 \times $ the $\mathsf L^1$-trace of  
\begin{align*}
&\Bigg (\bigoplus_{n : \mathcal J \to \mathbb N: \text{ fin. supported}}\Bigg(\rH^*_{\text{\'et}, c}\Bigg(\mathrm{Conf}^\circ\Bigg(\prod_{j \in \mathcal J}(\mathcal D_j^{\circ})^{n(j)}\Bigg), \mathbb Q_\ell\Bigg)\otimes \bigotimes_{j \in \mathcal J} L(j)^{\otimes n(j)} \Bigg)_{\prod_{j \in \mathcal J} \mathfrak S_{n(j)}}\\
&\otimes \Bigg(\bigoplus_{n' : \mathcal J' \to \mathbb N: \text{ fin. supported}}\bigotimes_{j \in \mathcal J'} L'(j)^{\otimes n'(j)} \Bigg),
\end{align*}
where $n'$ runs over functions $n' : \mathcal J' \to \mathbb N$ such that $\sum_{j \in \mathcal J_{\mathsf a, \mathsf b}} n'(j) = r_{\mathsf a, \mathsf b}$.
Let $\rho_j : \mathcal D_j^\circ \to B^\circ$ be the projection and we define the sheaf
\[
\mathfrak L = \bigoplus_{j \in \mathcal J} \rho_{j*}\underline{\mathbb Q}_\ell \otimes L(j), \quad \mathfrak L_{\mathsf a, \mathsf b, \mathsf d}=  \bigoplus_{j' \in \mathcal J_{\mathsf a, \mathsf b, \mathsf d}} L'(j').
\]
We also let
\[
\mathfrak C  =  \bigoplus_{m \geq 0} \mathfrak C_{\bullet, \bullet, m} = \bigoplus_{ m \geq 0}\rH^*_{\text{\'et}, c}(\mathrm{Conf}_m(B^\circ), \mathfrak L^{\boxtimes m})_{\mathfrak S_m}.
\]
Then
$\mathcal C$
 is isomorphic to
 \[
 \mathfrak C \otimes \bigotimes_{(\mathsf a, \mathsf b, \mathsf d) \neq ((1, 1,1, 1), (1, 1), \emptyset)} \mathfrak L_{\mathsf a, \mathsf b, \mathsf d}^{\otimes r_{\mathsf a, \mathsf b, \mathsf d}}.
 \]
Thus it suffices to verify the assumptions of Proposition~\ref{prop:keyestimate}.
To this end, we need to understand 
\[
|\mathrm{Frob}, \rH_{\textnormal{\'et}, c}^j(B^\circ_{\overline{k}},  \mathfrak L_{\overline{k}, n, i})|.
\]
First of all, $\mathfrak L_{n, i}$ is concentrated in the first grading $\geq 1$.
Then because of twisting $\langle \gamma(w\leq x) + k(w) \rangle$, $L(w\leq x)$ has the maximal cohomological degree $-2$ and the minimal cohomological degree $-2(\gamma(w\leq x) + k(w))$.
Due to finite possibilities of poset structures of essential pairs as Example~\ref{exam:essentialtypes} and twisting $\langle \gamma(w\leq x) + k(w) \rangle$, there is some uniform constant $E$ depending only on $B^\circ_{\overline{k}}$ such that 
\[
|\mathrm{Frob}, \rH_{\textnormal{\'et}, c}^j(B^\circ_{\overline{k}},  \mathfrak L_{\overline{k}, n, i})|\leq Eq^{j/2+i/2}.
\]
Similarly, there is a uniform constant $E_2$ such that we have
\[
|\mathrm{Frob}, \bigotimes_{(\mathsf a, \mathsf b, \mathsf d) \neq ((1, 1,1, 1), (1, 1), \emptyset)} \mathfrak L_{\mathsf a, \mathsf b,\mathsf d, n, i} | \leq E_2q^{i/2}.
\]
Hence Proposition~\ref{prop:keyestimate} shows that we have the following theorem:
\begin{theo}
\label{theo:keyestimate}
Let $\alpha = (h_{\mathcal Q}, k(\alpha))$.
There exists a uniform constant $E$ depending only on $\mathcal X_{\overline{k}}$ such that assuming $q > 2^{22}E^2$, we have
\[
\left|\mathrm{Frob}, \bigoplus_{i \leq  2h_{\mathcal Q} - 2k(\alpha) + 6-4g(B)-2g(C) -I} \mathcal A_\alpha^i\right| = O(q^{h_{\mathcal Q} - k(\alpha) + 3-2g(B)-g(C)}(k(\alpha) + 1)(4E)^{k(\alpha)}4^Iq^{-I/2 + 1/2}),
\]
where the implied constant is depending only on $\mathcal X_{\overline{k}}$.
\end{theo}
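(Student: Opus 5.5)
The plan is to deduce Theorem~\ref{theo:keyestimate} directly from Proposition~\ref{prop:keyestimate}, after renormalizing $\mathcal A_\alpha$ into the twisted complex $\mathcal A'_\alpha$ and identifying it with $\mathcal C$ as above. The first step is bookkeeping. The twist $\langle -n_{h_{\mathcal Q},k(\alpha),T}\rangle$ is exchanged for $\langle k(T)+\gamma(T)\rangle$ together with the shift by $|\mathrm{Supp}(T)\cap B^\circ|+r$. This multiplies all Frobenius weights by the common factor $q^{h_{\mathcal Q}-k(\alpha)+3-2g(B)-g(C)}$, up to the uniformly bounded shift coming from $r$, which depends only on $\mathcal X_{\overline{k}}$. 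Under this renormalization, the truncation to degrees $\leq 2h_{\mathcal Q}-2k(\alpha)+6-4g(B)-2g(C)-I$ becomes the truncation of $\mathcal C$ to degrees $i\leq -I-m+\mathsf c_1$, where $m=|\mathrm{Supp}(T)\cap B^\circ|$ and $\mathsf c_1$ is a constant depending only on $r$ (essentially $\mathsf c_1=1-r$ up to normalization). The first grading is then $n=k(T)+m+r$, so that $m\leq n\leq m+\mathsf k$ with $\mathsf k$ comparable to $k(\alpha)$, possibly plus the bounded $r$.

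The second step uses the decomposition already established. $\mathcal N_T$ is a union of at most $\mathsf C_1$ components. Each component is a configuration space over $B^\circ$ with the constant local system $\bigotimes L(j)^{\otimes n(j)}$, tensored with the factors $\mathfrak L_{\mathsf a,\mathsf b,\mathsf d}$ coming from the finitely many special points. This gives $\mathcal C\cong\mathfrak C\otimes\bigotimes\mathfrak L_{\mathsf a,\mathsf b,\mathsf d}^{\otimes r_{\mathsf a,\mathsf b,\mathsf d}}$. In Proposition~\ref{prop:keyestimate}, I take $Y=B^\circ$, $\mathfrak L$ as defined above, and $\mathfrak L'$ equal to the finite tensor product of the $\mathfrak L_{\mathsf a,\mathsf b,\mathsf d}$.

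The third step, which I expect to be the main obstacle, is verifying the hypotheses of Proposition~\ref{prop:keyestimate} uniformly in $k(\alpha)$ and $m$. The concentration hypothesis should follow from the description of essential pairs in Example~\ref{exam:essentialtypes}. Each $\mu(w\leq x)$ is the reduced cohomology of an interval in a poset of bounded rank that only depends on the local type. This yields $\mathfrak L$ concentrated in first grading $\geq1$ and cohomological degrees in $[-2(\gamma+k(w)),-2]$. The delicate point is that essential pairs with $w=m[z]$ for unbounded $m$ occur. Still, the intervals $[w,x]$ are isomorphic for all $m\geq1$ (the families $\mathsf g_i^m$), so $\rH^*(\mu)$ is independent of $m$ up to twist, and only finitely many local systems occur up to the grading shift by $k(w)$. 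The weight bounds $|\mathrm{Frob},\rH^j_c(B^\circ_{\overline{k}},\mathfrak L_{n,i})|\leq Eq^{j/2+i/2}$ then follow from Deligne's bounds applied to $\rho_{j*}\mathbb Q_\ell$ on the curves $\mathcal Z^\circ,C^\circ,\mathcal W^\circ,B^\circ$. Here $\dim\rH^j_c$ is bounded by the genera of these curves and the number of punctures, and the Tate twist $\langle d\rangle$ contributes exactly $q^{i/2}$ on the degree-$i$ part. The same argument gives $E_2$ for $\mathfrak L'$. I take $E$ to be the maximum of these constants, which depend only on $\mathcal X_{\overline{k}}$.

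Finally, I apply Proposition~\ref{prop:keyestimate} with $q>2^{22}E^2$. This bounds the $\mathsf L^1$-trace of the truncated $\mathcal C$ by $O((\mathsf k+1)(4E)^{\mathsf k}4^{I-\mathsf c_1}q^{-I/2+\mathsf c_1/2})$. Multiplying by $\mathsf C_1$ and undoing the twist gives the stated bound, with the bounded constants $4^{-\mathsf c_1}$ and the $r$-shift absorbed into the implied constant, and with $q^{1/2}$ arising from $\mathsf c_1\leq1$.
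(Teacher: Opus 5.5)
Your proposal is correct and follows essentially the same route as the paper. Both arguments retwist $\mathcal A_\alpha$ to $\mathcal A'_\alpha$, so that the truncation becomes $i\le -I-|\mathrm{Supp}(T)\cap B^\circ|-r+1$, identify the result up to the factor $\mathsf C_1$ with $\mathfrak C\otimes\bigotimes\mathfrak L_{\mathsf a,\mathsf b,\mathsf d}^{\otimes r_{\mathsf a,\mathsf b,\mathsf d}}$, check the concentration and weight hypotheses using the finitely many interval types of Example~\ref{exam:essentialtypes}, and then apply Proposition~\ref{prop:keyestimate} with $\mathsf k=k(\alpha)+r$ and $\mathsf c_1=1$.
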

\subsection{Homological sieve}
\label{subsec:sieve}

Here we closely follow the exposition of \cite[Section 8.3]{DLTT25}.
We work in the setup established in Section~\ref{subsec:pos_setup}.
Let $\alpha$ be a nef class of sections on $\mathcal X$ and we define
\[
h(\alpha) = -K_{\mathcal X/B}.\alpha, \quad h_{\mathcal Q}(\alpha) = -\phi^*K_{\mathcal Q/B}.\alpha, \quad k(\alpha) = E.\alpha.
\]
Note that we have the relation $h(\alpha) = h_{\mathcal Q}(\alpha) - k(\alpha)$.
We also define $I \in \mathbb Q_{\geq 0}$ by 
\[
I = \frac{1}{10}(h(\alpha) - k(\alpha)  - \mathsf c_3(\mathcal F)),
\]
where $\mathsf c_3(\mathcal F)= \max\{\deg(\mathcal F) - \frac{\Delta}{2}-2 + 14g(C) - 2\mathrm{neg}(\mathbb P(\mathcal F)/C),  - \frac{\Delta}{2} +12g(C)- 1\}$.
We assume that $I$ is sufficiently large compared to a fixed constant only depending on $\mathcal X_{\overline{k}}$.

For $(w < x) \in (U_{k(\alpha)} <  \mathfrak H)(\overline{k})$, we have
\begin{align*}
&\gamma(w < x) = 2m_B(x) + m_{\mathcal Z}(x) + 2m_C(x)  - 2k(\alpha).
\end{align*}
We let $P \subset (U_{k(\alpha)} < \mathfrak H)(\overline{k})$ be the subposet consisting of $(w < x)$ such that $\gamma(w < x) \leq 2I$.
It follows from the following lemma that $P$ is proper over $U_{k(\alpha)}$:
\begin{lemm}
For any positive constant $\mathsf t$, the set of $x \in \mathfrak H$ such that $\gamma(x) \leq \mathsf t$ is closed and downward closed in $\mathfrak H$.
\end{lemm}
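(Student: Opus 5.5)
We must show two things: that $\{x \in \mathfrak H : \gamma(x) \leq \mathsf t\}$ is downward closed, and that it is Zariski closed. Recall $\gamma(x) = 2m_B(x) + m_{\mathcal Z}(x) + 2m_C(x)$ is defined through $\mathrm{sat}(x)$, which is computed fiberwise over $B$. So the plan is to verify both properties locally over each $b \in B(\overline{k})$ and then sum over the support.

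\emph{Downward closedness.} It suffices to show that $\gamma$ is monotone on $\mathfrak H(\overline{k})$: $y \leq x$ implies $\gamma(y) \leq \gamma(x)$. Since $\mathrm{sat}$ is a left adjoint (Proposition~\ref{prop:saturation}), it is order preserving, so $\mathrm{sat}(y) \leq \mathrm{sat}(x)$. It then remains to check monotonicity of $\gamma$ on saturated elements, one fiber $\mathrm{ch}(\mathfrak Q_b(\overline{k}))$ at a time. Here I would use the identity $\mathrm{rank} = \tfrac32 m_B + \tfrac12 m_{\mathcal Z} + \tfrac32 m_C$ together with the chain description. Each covering relation $f_0 \prec f$ in $\mathrm{ch}(\mathfrak Q_b(\overline{k}))$ increases $m_B$, $m_C$ and $m_{\mathcal Z}$ by nonnegative amounts. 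This can be checked directly from the lists of covers in Example~\ref{exam:essentialtypes}: a cover adds one further $[z]$, or replaces a $[z]$ by a $[c]$, or a $[c]$ by a $[b]$, and the weights satisfy $2 \geq 1$. Hence each cover increases $\gamma$ strictly, and $\gamma$ is monotone.

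\emph{Closedness.} I would use the stratification $\mathfrak H = \bigsqcup_T \mathcal N_T$. The value $\gamma$ is constant on each $\mathcal N_T$, since it depends only on $\mathrm{sat}(T)$. Because $\gamma \geq m_{\mathcal Z} + m_B + m_C$ controls the lengths of $x_B$, $x_{\mathcal Z}$ and $x_C$ (saturation only enlarges $x$), only finitely many equivalence classes of types satisfy $\gamma(T) \leq \mathsf t$. The set in question is therefore a finite union of strata $\mathcal N_T$, and I would show it equals the finite union of the closed sets $\bigcup_{T' \leq_+ T} \mathcal N_{T'}$, which were noted above to be closed. For this, I would check that if $T' \leq_+ T$ then $\gamma(T') \leq \gamma(T)$. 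Specialization in $\mathfrak H$ means points colliding in $B$, or moving into special loci $B_{\mathsf a, \mathsf b, \mathsf d}$. Under such a specialization the lengths are additive, the subschemes only grow after pushing forward, and saturation of the limit dominates the limit of saturations. Hence $\gamma$ can only increase under specialization. This is the upper semicontinuity statement $\gamma(\lim x_t) \geq \gamma(x_t)$.

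The main obstacle is this semicontinuity when points collide onto a branch point of $C \to B$ or $\mathcal Z \to B$, or onto a point where lines cut out length $2$ subschemes. There $m_B$ is normalized differently (doubled for split fibers, not doubled for ramified ones), and the saturation rules of Classifications~\ref{class2} and \ref{class3} change. I would handle it by comparing the total lengths directly: $2m_B + 2m_C$ should equal twice the length of $x_C$ after saturation, and $m_{\mathcal Z}$ equals the length of $\iota_\diamond(\cdots)$. Both quantities are lower semicontinuous lengths of flat limits of subschemes of $C$ and of $F_1(\mathcal Q)$. If the identity $2m_B + 2m_C = 2\,\mathrm{length}(x_C^*)$ holds in all cases, closedness follows at once. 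Otherwise, I would fall back on a case check over the list in Example~\ref{exam:essentialtypes}.
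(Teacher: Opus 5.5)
Your reductions are the right ones. Downward closedness follows from $\mathrm{sat}$ being order preserving, together with the fact that every cover in $\mathrm{ch}(\mathfrak Q_b(\overline{k}))$ raises the weighted sum $\gamma$ by at least one. This is not true of each of $m_B,m_{\mathcal Z},m_C$ separately, as you first say: $([z],[z])\prec([c_1],[z])$ lowers $m_{\mathcal Z}$ from $2$ to $1$. Closedness reduces to the implication $T'\leq_+T\Rightarrow\gamma(T')\leq\gamma(T)$.

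\textbf{The gap.} Your justification of that implication runs in the wrong direction. You argue that $\gamma$ can only \emph{increase} under specialization, i.e.\ $\gamma(\lim x_t)\geq\gamma(x_t)$. If that were true, $\{\gamma\leq\mathsf t\}$ would be open, not closed. What is needed is $\gamma(\lim x_t)\leq\gamma(x_t)$, and your inequality is in fact false.

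Take $x_t=(\emptyset,z_1(t)+z_2(t)+z_3(t),\emptyset)$ with the three points over three distinct points of $B^\circ$. Then $x_t$ is saturated with local type $([z],[z])$ at three points, and $\gamma(x_t)=6$. Let the points tend to three distinct points of one fibre $\mathcal Z_b$ with $b\in B^\circ$. Over each $c_i$ the limit gives three distinct points of $F_1(\mathcal Q)$. Saturation therefore forces $x_C^*=c_1+c_2$ and $x_B^*=b$, so $\mathrm{sat}(x_0)=([b],[b])$ with $\gamma=4$.

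Your heuristic that the saturation of the limit dominates the limit of the saturations could at best bound $\gamma(x_0)$ from below, and it also fails at points with lines. Let $z_1(t)+z_2(t)$ lie in a single moving fibre and degenerate to two points on a line of the $c_1$-ruling. The limit of the saturations is $([b],[b])$, whereas $\mathrm{sat}(x_0)=([z_1+z_2],[c_2])$, and $\gamma$ drops from $4$ to $3$.

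\textbf{The fallback.} Your identity is correct: for saturated $x$ one has $m_B+m_C=\mathrm{length}(x_C)$, so $\gamma(x)=2\,\mathrm{length}(\mathrm{sat}(x)_C)+\mathrm{length}\,\iota_\diamond(\cdots)$. But closedness does not follow ``at once'', because $\mathrm{length}(\mathrm{sat}(x)_C)$ is not lower semicontinuous; in the example above it jumps from $0$ to $2$.

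The missing idea is a compensation between the two terms. For arbitrary $x\in\mathfrak H$ put $\widetilde\gamma(x)=2\,\mathrm{length}(x_C)+\mathrm{length}\,\iota_\diamond(p_{\mathcal Z}^*x_{\mathcal Z}-p_{\mathcal Z}^*x_{\mathcal Z}\wedge p_C^*x_C)$. Two things have to be shown.
\begin{enumerate}
\item[(a)] $\widetilde\gamma(\mathrm{sat}(x))\leq\widetilde\gamma(x)$. The expected reason: each unit that saturation adds to $x_C$ over a point $c$ is forced by a non-admissibility of $\iota_\diamond(\cdots)$ over $c$. This is either two disagreeing branches, or a branch tangent to the fibre over a branch point of $\mathcal Z\to B$. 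Removing that unit should lower the $\iota_\diamond$-length by at least $2$, and the later saturation steps leave $x_C^*$ and $\iota_\diamond(\cdots)$ unchanged.
\item[(b)] $\widetilde\gamma$ does not increase under specialization. Here $\mathrm{length}(x_C)$ is constant on components of $\mathfrak H$, and the ideal quotient and its scheme-theoretic image at the limit should sit inside the flat limits of the generic ones.
\end{enumerate}
With $y_t=\mathrm{sat}(x_t)$ and $y_0$ its flat limit, these give
\[
\gamma(x_0)\leq\gamma(y_0)\leq\widetilde\gamma(y_0)\leq\widetilde\gamma(y_t)=\gamma(x_t).
\]
This compensation argument, or an equivalent case check of specializations against Classifications~\ref{class1}--\ref{class3}, is what your proposal lacks. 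As written, the closedness half is not proved.
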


\begin{proof}
 A proof is similar to that of \cite[Lemma 8.4]{DLTT25}. We omit it.

\end{proof}

Thus $P$ is proper over $U_{k(\alpha)}$. We constructed the structure morphism
\[
\Phi_\alpha \times \mathrm{AJ}: M_\alpha \to U_{k(\alpha)} \times \Pic^{d_{\mathbb P_C(\mathcal F)}}(C).
\]
Let $F_1$ be the closed subset of Proposition~\ref{prop:dominantsmooth}.
It follows from Proposition~\ref{prop:dominantsmooth} that the codimension of $F_1$ is greater than or equal to $5I$.

Let $(w < x ) \in P(\overline{k})$ be a saturated element and $(w < y)$ be a saturated element such that $x \prec y$. We consider
\[
\mathsf Z_{w < y, L}.
\]
We define
\[
d_y = d_{\mathbb P_C(\mathcal F)} - m_B(y) - m_C(y), \quad h_{\mathcal Q, y} = 2d_y  - \mathrm{deg}(\mathcal F) - \frac{\Delta}{2}, \quad k_y = m_{\mathcal Z}(y).
\]
We also define
\[
v = \iota_{\diamond}(p_{\mathcal Z}^*y_{\mathcal Z} - p_{\mathcal Z}^*y_{\mathcal Z}\wedge p_{C}^*y_{C})
\]
which induces admissible jets over $C$ of length $k_y$.
We also define
\[
\mathsf L_{y, L} = L(-y_C).
\]
Let $E_{v, \mathsf L_{y, L}}$ be the subspace of elements in $E_{\mathsf L_{y, L}}$ satisfying the incidence condition imposed by $v$.
The expected dimension of $E_{v, \mathsf L_{y, L}}$ is given by 
\begin{align*}
&h_{\mathcal Q, y} +\frac{\Delta}{2}- k_y + 2(1-g(C)) = h_{\mathcal Q}(\alpha) + \frac{\Delta}{2} - \gamma(y)+ 2(1-g(C))  = \dim E_{w, L} - \gamma(w < y) \\
&\geq 8I  + \mathsf c_3(\mathcal F)+\frac{\Delta}{2} -2g(C).
\end{align*}
Moreover, $\mathsf Z_{w < y, L}$ is identified with $E_{v, \mathsf L_{y, L}}$.
When $y$ varies in $\mathcal N_{T'}$,
the dimension of the locus of $y$ is at most $m_B(y)/2  + k_y/2 + m_C(y)$. Using the argument of Lemmas~\ref{lemm:dimensionestimates}, Proposition~\ref{prop:dominantsmooth}, and Lemma~\ref{lemm:dimensionestimates3} the dimension of $y$ such that $\mathsf Z_{w < y, L}$ does not have expected dimension for some $L$ is at most
\begin{align*}
&\max\{m_B(y)/2  + k_y + m_C(y) - (d_y-\deg(\mathcal F)+1 - 7g(C) + \mathrm{neg}(\mathbb P(\mathcal F)/C)),\\
& m_B(y)/2 + m_C(y) + k_y/2 -  (h_{\mathcal Q, y} - k_y  - \mathsf c_3(\mathcal F))\}\\
&\leq \max\{\gamma(y)/2 - ((h_{\mathcal Q, y} - k_y - \mathsf c_3(\mathcal F))/2), \gamma(y)/2 - 8I-2\}\\
&\leq \max\{\gamma(y)/2 - 4I+1), \gamma(y)/2 - 8I+2\}  \leq k(\alpha)   -3I+3 .
\end{align*}
Thus the codimension of the loci of $w$ such that there exists $w < y$ and $L$ whose corresponding $\mathsf Z_{w < y, L}$ does not have expected dimension is at least $3I-3$.

Let $F_2$ be the union of Galois orbits of the closures of such loci over $\overline{k}$ where the combinatorial type $T$ of $x$ runs over all combinatorial types of $P$. Then the codimension of $F_2$ in $U_{k(\alpha)}$ is greater than or equal to $3I-3$. 
Let $F = F_1 \cup F_2$.

Next, we analyze $\kappa(T)$ as defined in Section~\ref{subsec:barcomplex}.
\begin{lemm}
We have $\gamma(T) \leq 2 \mathrm{rank}(T) \leq 2\kappa(T)$.
\end{lemm}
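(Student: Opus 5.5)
Both inequalities are additive over the support of $T$, so I would check them one point $b$ at a time. Each of $\gamma$, $\mathrm{rank}$, $m_B$, $m_C$, $m_{\mathcal Z}$ is a sum over $b \in \mathrm{Supp}(T)$ of a local contribution computed from the pair of chains $f_b$. The term $|\mathrm{Supp}(T)\cap B^\circ|$ is also a sum, of $1$ over the nontrivial points lying over $B^\circ$. For the pair $(w<x)$ the quantities are differences $\gamma(x)-\gamma(w)$, and similarly for the others. So it suffices to prove, for each $b$ and each essential local pair $f^w_b \leq f^x_b$ from Example~\ref{exam:essentialtypes}, the two inequalities
\[
\gamma_b \leq 2\,\mathrm{rank}_b, \qquad \mathrm{rank}_b \leq 2\gamma_b - \mathrm{rank}_b - 2\delta_b ,
\]
where $\delta_b = 1$ if $b \in B^\circ$ and the local pair is nontrivial, and $\delta_b = 0$ otherwise. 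The second inequality is the same as $\mathrm{rank}_b + \delta_b \leq \gamma_b$.

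For the first inequality I would use the formulas $\gamma = 2m_B + m_{\mathcal Z} + 2m_C$ and $\mathrm{rank} = \tfrac32 m_B + \tfrac12 m_{\mathcal Z} + \tfrac32 m_C$. These give
\[
2\,\mathrm{rank} - \gamma = m_B + m_C \geq 0 .
\]
This holds locally, and also for pairs, provided $m_B$ and $m_C$ are monotone along $w \leq x$. Monotonicity is immediate from the component-wise inclusion order on $\mathfrak H$ together with the explicit description of the chains $f_{b,c_i}$.

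For the second inequality, the same formulas give
\[
\gamma - \mathrm{rank} = \tfrac12 m_B + \tfrac12 m_{\mathcal Z} + \tfrac12 m_C ,
\]
so I need $\tfrac12(\Delta m_B + \Delta m_{\mathcal Z} + \Delta m_C) \geq \delta_b$. At any point $b$ where the pair is strictly nontrivial, the increment in $(m_B, m_{\mathcal Z}, m_C)$ is nonzero. The issue is whether the increments can sum to less than $2$ at a generic point.

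I would go through the essential list for $\mathfrak Q_{\mathrm{gen}}$, that is $\mathsf g_1,\dots,\mathsf g_4$ and $\mathsf g_1^m,\dots,\mathsf g_6^m$, and compute the increments with the counting conventions of the Examples. The factor $2$ in $m_B$ when $b$ is unramified in $C$ helps here. For instance $\mathsf g_1 = (\hat 1<([z],[z]))$ contributes $m_{\mathcal Z}=2$, which gives $1 \geq 1$. Similarly $\mathsf g_2$ contributes $m_C=1$, and I need to check this case carefully. The ramified strata $(\mathsf a,\mathsf b,\mathsf d)\neq$ generic have $\delta_b=0$, so there it suffices that the increments are nonnegative.

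The main obstacle I expect is that some essential pairs, such as $\mathsf g_2$ or $\mathsf g_3^m$, may give only $\Delta m_C = 1$ with nothing else. In that case the naive bound fails by $\tfrac12$, and I would have to re-examine the exact counting convention for $m_C$, in particular whether $c$ appears in both chains and so is counted twice. I would also use the fact that the relative pairs $\mathsf g_i^m$ carry the extra shift in $\gamma(w<x)=\gamma(x)-\gamma(w)$, which includes $2k(\alpha)$. So the hard part is a careful, uniform case check of the essential list against the normalization of $m_C$ and $\mathrm{rank}$, rather than any conceptual step.
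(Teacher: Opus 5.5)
Your reduction to a single point $b$ and your derivation of $\gamma\le 2\,\mathrm{rank}$ from $2\,\mathrm{rank}-\gamma=m_B+m_C$ match the paper, which calls that inequality clear. The second inequality, however, is left unproved. The cases you flag are genuine failures of your criterion $\Delta m_B+\Delta m_{\mathcal Z}+\Delta m_C\ge 2\delta_b$, not bookkeeping issues.

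For $\mathsf g_2=(\hat 1<(\hat 1,[c]))$, the point $c$ occurs only in its own chain, so the definitions force $m_C=1$ and $m_B=m_{\mathcal Z}=0$, and $w$ is trivial at $b$. Your criterion then reads $1\ge 2$; equivalently, the closed formulas give $\mathrm{rank}=\frac{3}{2}>\kappa=\frac{1}{2}$. For $\mathsf g_3^m=((m[z],m[z])<(m[z],[c]+(m-1)[z]))$ the increments are $\Delta m_B=0$, $\Delta m_C=1$, $\Delta m_{\mathcal Z}=-1$, so the criterion reads $0\ge 2\delta_b$. Neither of your repairs changes these numbers. Recounting $m_C$ gives nothing new, and the $2k(\alpha)$ shift is just $\gamma(w)$, which is already built into the increments.

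Two ideas are missing.

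First, $\mathrm{rank}$ is by definition the length of a maximal chain from $\hat 1$, hence an integer. Since $(\hat 1,[c])$ covers $\hat 1$, its rank is $1$; the value $\frac{3}{2}$ shows the closed formula $\frac{3}{2}m_B+\frac{1}{2}m_{\mathcal Z}+\frac{3}{2}m_C$ is not valid at such atoms. The paper instead inducts along covering relations $x_1\prec x_2$. Rank rises by exactly $1$ at each step, so $\Delta\kappa=2\Delta\gamma-1-2\Delta|\mathrm{Supp}\cap B^\circ|$. One therefore needs $\gamma$ to rise by at least $1$ at every step, and by at least $2$ when a new point of $B^\circ$ enters. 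The latter holds because every atom $([z],[z])$, $([c_1],\hat 1)$, $(\hat 1,[c_2])$ has $\gamma=2$. For $\mathsf g_2$ this gives $\kappa=2\cdot 2-1-2=1=\mathrm{rank}$.

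This step must be run with the defining expression $\kappa=2\gamma-\mathrm{rank}-2|\mathrm{Supp}\cap B^\circ|$. The closed expression $\frac{5}{2}m_B+\frac{3}{2}m_{\mathcal Z}+\frac{5}{2}m_C-2|\mathrm{Supp}\cap B^\circ|$ displayed in the paper's proof would again give an increment of $\frac{1}{2}$ at $\hat 1\prec(\hat 1,[c])$. For the same reason, your first inequality should also be argued along covers. There it amounts to $\gamma$ rising by at most $2$ per covering step.

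Second, for pairs at a point already in $\mathrm{Supp}(w)$, such as $\mathsf g_3^m$, the point $b$ must not be counted in $|\mathrm{Supp}(T)\cap B^\circ|$. This is the only reading under which the inequality, and the paper's cover-by-cover step, holds at $\mathsf g_3^m$. It is also the reading that matches the dimension count in the sieve section, where $w$ is already accounted for by the term $2k(\alpha)$ and $2|\mathrm{Supp}(T)\cap B^\circ|$ measures the additional moving points of $\mathcal N_T$ over $U_{k(\alpha)}$. With it, $\kappa(\mathsf g_3^m)=2\cdot 1-1-0=1=\mathrm{rank}$. So your $\delta_b$ should equal $1$ only when $b\in B^\circ$, $w$ is trivial at $b$, and $x$ is not.
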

\begin{proof}
We prove $\mathrm{rank}(T) \leq \kappa(T)$.
We fix $b \in B(\overline{k})$. It suffices to show that for any $(f_b < f_b')$, we have
\[
\mathrm{rank}(f_b < f_b') \leq \kappa(f_b < f_b').
\]
As mentioned before, we have
\[
 \mathrm{rank}(f_b < f_b') = \frac{3}{2}m_B(f_b < f_b') + \frac{1}{2}m_{\mathcal Z}(f_b < f_b') + \frac{3}{2}m_C(f_b < f_b').
\]
Hence we have
\[
  \kappa (f_b < f_b') = \frac{5}{2}m_B(f_b < f_b') + \frac{3}{2}m_{\mathcal Z}(f_b < f_b') +  \frac{5}{2}m_C(f_b < f_b')  - 2|\mathrm{Supp}(f_b < f_b')\cap B^\circ|.
\]
Using this one can show that if we have $x_1 \prec x_2$, then we have $\kappa(x_2)\geq \kappa(x_1) +1$.
Thus our assertion follows by induction.
The other inequality is clear.
\end{proof}

We conclude that
\[
\gamma(T)  \leq 2\kappa(T).
\]
In particular, if $\kappa(T) \leq I$, then $\gamma(T) \leq 2I$. Thus $T$ is a combinatorial type of $P$.

We assume that our ground field $k$ is $\mathbb F_q$.
Let $\widetilde{M}_\alpha \to M_\alpha$ be the $\mathbb G_m$-torsor as before.
Then we have
\[
\#\widetilde{M}_\alpha(k) = (q-1)\#M_{\alpha}(k).
\]
Thus our goal is to understand $\#\widetilde{M}_\alpha(k)$.
By the Grothendieck--Lefschetz trace formula, we have
\[
\#\widetilde{M}_\alpha(k) = \sum_i (-1)^i\mathrm{Tr}(\mathrm{Frob}\curvearrowright \rH^i_{\text{\'et}, c}(\widetilde{M}_{\alpha, \overline{k}}, \mathbb Q_\ell)).
\]
By combining Corollary~\ref{coro:FengHaseLiu}, Deligne's estimates, and Leray's spectral sequence,
we have
\begin{equation}
\label{equation:error1}
\left| \sum_{i < 2h(\alpha) +6 -4g(B)  - I }(-1)^i\mathrm{Tr}(\mathrm{Frob}\curvearrowright \rH^i_{\text{\'et}, c}(\widetilde{M}_{\alpha, \overline{k}}, \mathbb Q_\ell))\right| = O(q^{h(\alpha)+3-2g(B)  -I/2 }\mathsf C_1^{h(\alpha)}),
\end{equation}
where $\mathsf C_1$ is a constant depending only on $\mathcal X_{\overline{k}}$ and the implied constant is independent of $q, \alpha, I$.

Let $U = (U_{k(\alpha)}\setminus F) \times \Pic^{d_{\mathbb P_C(\mathcal F)}}(C)$. The codimension of $F$ in $U_{k(\alpha)}$ is greater than or equal to $3I-3$, and $\widetilde{M}_\alpha$ is flat over $U_{k(\alpha)}$ assuming $I$ is sufficiently large, so the inclusion
\[
\widetilde{M}_\alpha|_U \hookrightarrow \widetilde{M}_\alpha
\]
induces isomorphisms on \'etale cohomology with compact support in codimension $\leq 5I$. This implies that assuming $I$ is sufficiently large, we have
\begin{align*}
&\sum_{i \geq 2h(\alpha) +6 -4g(B) - I }(-1)^i\mathrm{Tr}(\mathrm{Frob}\curvearrowright \rH^i_{\text{\'et}, c}(\widetilde{M}_{\alpha, \overline{k}}, \mathbb Q_\ell))\\&= \sum_{i \geq 2h(\alpha) +6 -4g(B) - I }(-1)^i\mathrm{Tr}(\mathrm{Frob}\curvearrowright \rH^i_{\text{\'et}, c}((\widetilde{M}_{\alpha}|_U)_{\overline{k}}, \mathbb Q_\ell)).
\end{align*}
Also we have
\[
E|_U = \widetilde{M}_\alpha|_U \sqcup \mathrm{im}(\mathsf Z_{P_U} \to E|_U).
\]
It follows from (\ref{equation:error1}) that we have
\begin{align*}
&\sum_{i \geq 2h(\alpha) +6 -4g(B) - I}(-1)^i\mathrm{Tr}(\mathrm{Frob}\curvearrowright \rH^i_{\text{\'et}, c}((\widetilde{M}_{\alpha}|_U)_{\overline{k}}, \mathbb Q_\ell))\\
& = \sum_{i \geq 2h(\alpha)+6 -4g(B)  - I }(-1)^i\mathrm{Tr}(\mathrm{Frob}\curvearrowright \rH^i_{\text{\'et}, c}((E|_U)_{\overline{k}}, \mathbb Q_\ell))\\
&- \sum_{i \geq 2h(\alpha)+6 -4g(B)- I }(-1)^i\mathrm{Tr}(\mathrm{Frob}\curvearrowright \rH^i_{\text{\'et}, c}(\mathrm{im}(\mathsf Z_{P_U} \to E|_U)_{\overline{k}}, \mathbb Q_\ell))+ O(q^{h(\alpha) + 3 -2g(B)  -I/2 +1}\mathsf C_1^{h(\alpha)}).
\end{align*}
We analyze 
\[
\sum_{i \geq 2h(\alpha) +6 -4g(B) - I }(-1)^i\mathrm{Tr}(\mathrm{Frob}\curvearrowright \rH^i_{\text{\'et}, c}(\mathrm{im}(\mathsf Z_{P_U} \to E|_U)_{\overline{k}}, \mathbb Q_\ell)).
\]
By Theorem~\ref{theo:Theorem7.5DLTT}, this is equal to
\[
\sum_{i \geq 2h(\alpha) +6 -4g(B)- I }(-1)^i\mathrm{Tr}(\mathrm{Frob}\curvearrowright \rH^i_{\text{\'et}, c}(B(P_U, \mathsf Z_{P_U})_{\overline{k}}, \mathbb Q_\ell)).
\]
It follows from Theorem~\ref{theo:Theorem7.3DLTT} and \cite[Lemma 2.3]{DLTT25} that we have
\begin{align*}
&\Big|\sum_{i \geq 2h(\alpha) +6 -4g(B)- I }(-1)^i\mathrm{Tr}(\mathrm{Frob}\curvearrowright \rH^i_{\text{\'et}, c}(B(P_U, \mathsf Z_{P_U})_{\overline{k}}, \mathbb Q_\ell)) -\\
&\sum_{T : \textnormal{ess. type of $P$}} \sum_{i \geq 2h(\alpha) +6 -4g(B) - I }(-1)^i\mathrm{Tr}(\mathrm{Frob}\curvearrowright \rH^i_{\text{\'et}, c}((\mathsf Z|_{U\times_{U_{k(\alpha)}} \mathcal N_T})_{\overline{k}}, \mu'(T)[1]\otimes \mathbb Q_\ell))\Big| \leq N,
\end{align*}
where $N$ is the $\mathsf L^1$-trace of
\[
\bigoplus_{T : \textnormal{ess. type of $P$}} \rH^{\lceil2h(\alpha)+6 -4g(B)  - I \rceil}_{\text{\'et}, c}((\mathsf Z|_{U\times_{U_{k(\alpha)}} \mathcal N_T})_{\overline{k}}, \mu'(T)[1]\otimes \mathbb Q_\ell).
\]
Next we bound this $N$. Since $\mathsf Z$ has expected codimension over $U$, for $$i \geq 2h(\alpha) +6 -4g(B) - I ,$$ we have
\[
\rH^{i}_{\text{\'et}, c}((\mathsf Z|_{U\times_{U_{k(\alpha)}} \mathcal N_T})_{\overline{k}}, \mu'(T)[1]\otimes \mathbb Q_\ell) \cong \rH^{i}_{\text{\'et}, c}((U\times_{U_{k(\alpha)}} \mathcal N_T)_{\overline{k}}, \mu(T)[1]\langle -n_{h(\alpha), k(\alpha), T}\rangle\otimes \mathbb Q_\ell)
\]
where $n_{h(\alpha), k(\alpha), T} = h(\alpha)+ 3-2g(B)-g(C) -k(\alpha) - \gamma(T)$.
The cohomological dimension of 
\[
\rH^{i}_{\text{\'et}, c}((F\times_{U_{k(\alpha)}} \mathcal N_T)_{\overline{k}}, \mu(T)[1]\langle -n_{h(\alpha), k(\alpha), T}\rangle\otimes \mathbb Q_\ell)
\]
is at most
\begin{align*}
&2k(\alpha) + 2g(C) -4I  + 2|\mathrm{Supp}(T)\cap B^\circ| + \mathrm{rank}(T) + 2n_{h(\alpha), k(\alpha), T}\\
&\leq 2h(\alpha) - \kappa(T) -4I +6-4g(B).
\end{align*}
Thus, assuming $I$ is sufficiently large, for $i \geq 2h(\alpha) +6 -4g(B) - I$, we have
\[
\rH^{i}_{\text{\'et}, c}((U\times_{U_{k(\alpha)}} \mathcal N_T)_{\overline{k}}, \mu(T)[1]\langle -n_{h(\alpha), k(\alpha), T}\rangle\otimes \mathbb Q_\ell) \cong \rH^{i}_{\text{\'et}, c}((\mathcal N_T)_{\overline{k}}, \mu(T)[1]\langle -n_{h(\alpha), k(\alpha), T}\rangle\otimes \mathbb Q_\ell).
\]
Thus using Theorem~\ref{theo:keyestimate}, we have
\begin{prop}
There exists some uniform constant $\mathsf c_4$ depending only on $\mathcal F$ such that assuming $q > 2^{22}E^2$,
for any constant $\mathsf C_2 > 4E$, 
\[
N = O(q^{h(\alpha) + \mathsf c_4}(q/16)^{-I/2}\mathsf C_2^{h(\alpha)}).
\]
\end{prop}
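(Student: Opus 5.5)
The bound on $N$ is obtained by identifying $N$ with the $\mathsf L^1$-trace of a single cohomological degree of the cohomological virtual bar complex, and then invoking Theorem~\ref{theo:keyestimate}.

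First, by the two isomorphisms established just before the statement, for each essential type $T$ of $P$ and $i=\lceil 2h(\alpha)+6-4g(B)-I\rceil$ we have
\[
\rH^{i}_{\textnormal{\'et}, c}((\mathsf Z|_{U\times_{U_{k(\alpha)}} \mathcal N_T})_{\overline{k}}, \mu'(T)[1]\otimes \mathbb Q_\ell) \cong \rH^{i}_{\textnormal{\'et}, c}((\mathcal N_T)_{\overline{k}}, \mu(T)[1]\langle -n_{h(\alpha), k(\alpha), T}\rangle\otimes \mathbb Q_\ell).
\]
The first identification uses that $\mathsf Z$ has expected codimension over $U$, so it is a vector bundle of rank $n_{h(\alpha),k(\alpha),T}$. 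The second uses that removing $F\times_{U_{k(\alpha)}}\mathcal N_T$ does not affect this range, by the dimension count giving $2h(\alpha)-\kappa(T)-4I+6-4g(B)$. Summing over $T$ shows that $N$ is bounded by the $\mathsf L^1$-trace of one cohomological degree of $\mathcal A_\alpha$, after we match indices.

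Next, we convert $n_{h(\alpha),k(\alpha),T}$ into the normalization $n_{h_{\mathcal Q},k(\alpha),T}=h_{\mathcal Q}+3-2g(B)-g(C)-2k(\alpha)-\gamma(T)$ used in the definition of $\mathcal A$. Since $h(\alpha)=h_{\mathcal Q}(\alpha)-k(\alpha)$, the two twists differ only by a bounded shift involving $g(C)$. Hence degree $\lceil 2h(\alpha)+6-4g(B)-I\rceil$ corresponds to degree
\[
\le 2h_{\mathcal Q}-2k(\alpha)+6-4g(B)-2g(C)-I',
\]
where $I'=I-\mathsf c$ for a constant $\mathsf c$ depending only on $\mathcal F$ (through $g(C)$ and $\Delta$). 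Since the degree is a single one, it lies inside the truncation handled by Theorem~\ref{theo:keyestimate}. Applying that theorem with $I'$ and $q>2^{22}E^2$ gives
\[
N = O\big(q^{h_{\mathcal Q}-k(\alpha)+3-2g(B)-g(C)}(k(\alpha)+1)(4E)^{k(\alpha)}4^{I'}q^{-I'/2+1/2}\big).
\]
The factor $q^{h_{\mathcal Q}-k(\alpha)}$ must be rewritten in terms of $h(\alpha)$; the bookkeeping between these normalizations is absorbed into $\mathsf c_4$.

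Finally we simplify. We have $4^{I'}q^{-I'/2}=(q/16)^{-I'/2}$, and $(q/16)^{\mathsf c/2}$ is absorbed into $q^{\mathsf c_4}$. The term $q^{1/2}$ and the genus terms are also absorbed. For the exponential factor, $k(\alpha)\le h(\alpha)$ up to a bounded constant, since $k(\alpha)=E.\alpha$ and $h(\alpha)$ control each other on nef section classes via Proposition~\ref{prop:conetheorem}. Hence $(k(\alpha)+1)(4E)^{k(\alpha)}=O(\mathsf C_2^{h(\alpha)})$ for any $\mathsf C_2>4E$, because the polynomial factor is dominated by $(\mathsf C_2/4E)^{h(\alpha)}$.

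The main obstacle is the exponent bookkeeping. The normalization of $\mathcal A_\alpha$ uses $h_{\mathcal Q}$ and subtracts $2k(\alpha)$, while the truncation is phrased through $h(\alpha)$. I would check carefully that the leading exponent becomes $h(\alpha)$ plus a constant depending only on $\mathcal F$, and not $h(\alpha)+k(\alpha)$. If an extra $k(\alpha)$ survives, I would instead reuse the shift $\langle k(T)+\gamma(T)\rangle$ in the twisted complex $\mathcal A'_\alpha$, which already absorbs it.
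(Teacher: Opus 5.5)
Your route is the paper's own: the two isomorphisms established just before the statement identify $N$ with a direct summand of a single cohomological degree of $\mathcal A_\alpha$, and Theorem~\ref{theo:keyestimate} is then applied. The exponent bookkeeping you were worried about is in fact exact. Since $h(\alpha)=h_{\mathcal Q}(\alpha)-k(\alpha)$, the twists coincide identically, $n_{h(\alpha),k(\alpha),T}=n_{h_{\mathcal Q},k(\alpha),T}$; they do not differ by a shift. Likewise $2h_{\mathcal Q}-2k(\alpha)=2h(\alpha)$ and $q^{h_{\mathcal Q}-k(\alpha)}=q^{h(\alpha)}$, so no extra $k(\alpha)$ survives. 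The only adjustment needed is $I'=I-2g(C)-1$, which absorbs the $-2g(C)$ in the truncation range of Theorem~\ref{theo:keyestimate} together with the ceiling. Its cost, a factor $(q/16)^{g(C)+1/2}$, goes into $q^{\mathsf c_4}$.

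The step that fails as written is your justification of $k(\alpha)\le h(\alpha)+O(1)$ via Proposition~\ref{prop:conetheorem}. On $X$, the nef cone of curves is cut out by $E.\beta\ge 0$ and $(4H-3E).\beta\ge 0$. Writing $h=-K_X.\beta$ and $k=E.\beta$, we have $H.\beta=(h+k)/2$, so the second condition reads $2h-k\ge 0$. Hence nef section classes only satisfy $0\le k(\alpha)\le 2h(\alpha)+O(1)$, and the whole range occurs. Indeed, the region $k(\alpha)>h(\alpha)$ is exactly what the paper later treats through the second contraction $\phi_2$. With only this bound, $(4E)^{k(\alpha)}$ can be as large as $(4E)^{2h(\alpha)}$, and you would need $\mathsf C_2>(4E)^2$ rather than $\mathsf C_2>4E$.

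The correct reason is the standing hypothesis of this subsection. There $I=\frac{1}{10}\bigl(h(\alpha)-k(\alpha)-\mathsf c_3(\mathcal F)\bigr)$ is assumed sufficiently large, which forces $k(\alpha)<h(\alpha)$. It follows that $(k(\alpha)+1)(4E)^{k(\alpha)}\le (h(\alpha)+1)(4E)^{h(\alpha)}=O(\mathsf C_2^{h(\alpha)})$ for every $\mathsf C_2>4E$. With that substitution your argument goes through.
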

Hence, we conclude that
\begin{align*}
&\sum_{i \geq 2h(\alpha)  +6 -4g(B)- I }(-1)^i\mathrm{Tr}(\mathrm{Frob}\curvearrowright \rH^i_{\text{\'et}, c}(B(P_U, \mathsf Z_{P_U})_{\overline{k}}, \mathbb Q_\ell))\\
&=\sum_{T : \textnormal{ess. type of $P$}} \sum_{i \geq 2h(\alpha) +6 -4g(B) - I }(-1)^i\mathrm{Tr}(\mathrm{Frob}\curvearrowright \rH^i_{\text{\'et}, c}(( \mathcal N_T)_{\overline{k}}, \mu(T)[1]\langle -n_{h(\alpha), k(\alpha), T}\rangle\otimes \mathbb Q_\ell))\\
&\qquad \qquad\qquad\qquad\qquad \qquad\qquad\qquad + O(q^{h(\alpha)  + \max\{4-2g(B), \mathsf c_4\}}(q/16)^{-I/2}\max\{\mathsf C_1, \mathsf C_2\}^{h(\alpha)}).
\end{align*}
When $T$ is an essential type which is not a type of $P$, we have $\kappa(T) > I$.
This implies that the cohomological dimension of 
\[
\rH^i_{\text{\'et}, c}(( \mathcal N_T)_{\overline{k}}, \mu(T)[1]\langle -n_{h(\alpha), k(\alpha), T}\rangle\otimes \mathbb Q_\ell))
\]
is less than
\[
2h(\alpha)+6 -4g(B)  - I.
\]
Hence we have
\begin{align*}
&\sum_{T : \textnormal{ess. type of $P$}} \sum_{i \geq 2h(\alpha) +6 -4g(B)- I }(-1)^i\mathrm{Tr}(\mathrm{Frob}\curvearrowright \rH^i_{\text{\'et}, c}(( \mathcal N_T)_{\overline{k}}, \mu(T)[1]\langle -n_{h(\alpha), k(\alpha), T}\rangle\otimes \mathbb Q_\ell))\\
&=\sum_{T : \textnormal{ess. type}} \sum_{i \geq 2h(\alpha) +6 -4g(B) - I }(-1)^i\mathrm{Tr}(\mathrm{Frob}\curvearrowright \rH^i_{\text{\'et}, c}(( \mathcal N_T)_{\overline{k}}, \mu(T)[1]\langle -n_{h(\alpha), k(\alpha), T}\rangle\otimes \mathbb Q_\ell)).
\end{align*}
Using Theorem~\ref{theo:keyestimate} again, we obtain
\begin{align*}
&\sum_{T : \textnormal{ess. type}} \sum_{i \geq 2h(\alpha) +6 -4g(B)- I }(-1)^i\mathrm{Tr}(\mathrm{Frob}\curvearrowright \rH^i_{\text{\'et}, c}(( \mathcal N_T)_{\overline{k}}, \mu(T)[1]\langle -n_{h(\alpha), k(\alpha), T}\rangle\otimes \mathbb Q_\ell))\\
&=\sum_{T : \textnormal{ess. type}} \sum_{i}(-1)^i\mathrm{Tr}(\mathrm{Frob}\curvearrowright \rH^i_{\text{\'et}, c}(( \mathcal N_T)_{\overline{k}}, \mu(T)[1]\langle -n_{h(\alpha), k(\alpha), T}\rangle\otimes \mathbb Q_\ell))\\
&\qquad\qquad\qquad\qquad+ O(q^{h(\alpha)  + \max\{4-2g(B), \mathsf c_4\}}(q/16)^{-I/2}\max\{\mathsf C_1, \mathsf C_2\}^{h(\alpha)})\\
& = - \#\Pic^0(C)(k)q^{h_{\mathcal Q}(\alpha) +3-2g(B)-g(C)} \sum_{(w < x) \in (U_{k(\alpha)}(k) < \mathfrak H^\circ(k))} \mu_k(w, x)q^{-\gamma(x)}\\
&\qquad\qquad\qquad\qquad+ O(q^{h(\alpha)    + \max\{4-2g(B), \mathsf c_4\}}(q/16)^{-I/2}\max\{\mathsf C_1, \mathsf C_2\}^{h(\alpha)}),
\end{align*}
where we use Proposition~\ref{prop:Proposition7.8DLTT} for the last equality.

Similarly, one can prove that
\begin{align*}
& \sum_{i \geq 2h(\alpha) +6-4g(B) - I }(-1)^i\mathrm{Tr}(\mathrm{Frob}\curvearrowright \rH^i_{\text{\'et}, c}((E|_U)_{\overline{k}}, \mathbb Q_\ell))\\
& = \#\Pic^0(C)(k)q^{h_{\mathcal Q}(\alpha) +3-2g(B)-g(C)} \sum_{w \in U_{k(\alpha)}(k)} q^{-\gamma(w)}+ O(q^{h(\alpha)   + \max\{4-2g(B), \mathsf c_4\}}(q/16)^{-I/2}\max\{\mathsf C_1, \mathsf C_2\}^{h(\alpha)}).
\end{align*}
Thus we conclude
\begin{theo}
\label{theo:homologicalsieve}
There exist constants $\mathsf C_* > 0$ and $\mathsf c_*$ depending only on $B_{\overline{k}}$ and $\mathcal X_{\overline{k}}$ such that assuming $I$ is sufficiently large and $q > 2^{22}E^2$,
we have
\begin{align*}
&\#\widetilde{M}_\alpha(k) = \\
&\#\Pic^0(C)(k)q^{h_{\mathcal Q}(\alpha) +3-2g(B)-g(C)} \sum_{(w \leq x) \in (U_{k(\alpha)}(k) \leq \mathfrak H^\circ(k))} \mu_k(w, x)q^{-\gamma(x)}
+ O(q^{h(\alpha)  + \mathsf c_*}(q/16)^{-I/2}\mathsf C_*^{h(\alpha)}),
\end{align*}
where the implied constant is independent of $q, \alpha, I$.
\end{theo}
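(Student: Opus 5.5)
The plan is to assemble the chain of estimates developed just before the statement into a single count for $\#\widetilde{M}_\alpha(k)$.

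\emph{Step 1: split the trace formula.} By the Grothendieck--Lefschetz trace formula, $\#\widetilde{M}_\alpha(k)$ is the alternating sum of Frobenius traces on $\rH^i_{\textnormal{\'et},c}(\widetilde{M}_{\alpha,\overline{k}},\mathbb Q_\ell)$. I will cut this sum at $i = 2h(\alpha)+6-4g(B)-I$.

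\emph{Step 2: low degrees.} The part below the cut is bounded by (\ref{equation:error1}). This uses Corollary~\ref{coro:FengHaseLiu} and Deligne's weight bounds, and gives an error $O(q^{h(\alpha)+3-2g(B)-I/2}\mathsf C_1^{h(\alpha)})$.

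\emph{Step 3: high degrees, reduction to the bar complex.}
\begin{enumerate}
\item Replace $\widetilde{M}_\alpha$ by $\widetilde{M}_\alpha|_U$, where $U=(U_{k(\alpha)}\setminus F)\times\Pic^{d_{\mathbb P_C(\mathcal F)}}(C)$ and $F=F_1\cup F_2$. Since $F$ has codimension $\geq 3I-3$ and $\widetilde{M}_\alpha$ is flat over $U_{k(\alpha)}$, this replacement does not change top-degree compactly supported cohomology.
\item Use the decomposition $E|_U=\widetilde{M}_\alpha|_U\sqcup \mathrm{im}(\mathsf Z_{P_U}\to E|_U)$. The excision long exact sequence expresses the high-degree traces as (traces on $E|_U$) minus (traces on the image), plus one boundary term of the same order as Step 2.
\item Replace the image by the bar complex $B(P_U,\mathsf Z_{P_U})$ via Theorem~\ref{theo:Theorem7.5DLTT}. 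Its hypotheses hold for the following reasons:
\begin{itemize}
\item $P$ is proper and downward closed by the lemma on $\gamma\le\mathsf t$;
\item unobstructedness holds off $F_2$;
\item the lemma $\gamma(T)\le 2\kappa(T)$ shows that every $T$ with $\kappa(T)\le I$ lies in $P$.
\end{itemize}
\end{enumerate}

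\emph{Step 4: expand the bar complex by types.} Apply Theorem~\ref{theo:Theorem7.3DLTT}. Its $E_1$ page is a sum over essential types $T$ of $P$; the truncation boundary term $N$ is controlled by \cite[Lemma 2.3]{DLTT25}.
\begin{itemize}
\item Since $\mathsf Z$ has expected codimension over $U$, each term with $\mu'(T)$ becomes the Tate-twisted cohomology of $U\times_{U_{k(\alpha)}}\mathcal N_T$ with $\mu(T)$.
\item Put $F$ back: the $F$-part has cohomological dimension at most $2h(\alpha)-\kappa(T)-4I+6-4g(B)$, so it does not contribute in high degrees.
\item Types not in $P$ satisfy $\kappa(T)>I$, so they too land below the cut.
\end{itemize}

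\emph{Step 5: remove the truncation and identify the main term.} Removing the truncation entirely costs an $\mathsf L^1$-trace bounded by Theorem~\ref{theo:keyestimate}; this is where $q>2^{22}E^2$ is needed. Proposition~\ref{prop:Proposition7.8DLTT} then turns the full sum into
\[
-\#\Pic^0(C)(k)\,q^{h_{\mathcal Q}(\alpha)+3-2g(B)-g(C)}\sum_{w<x}\mu_k(w,x)\,q^{-\gamma(x)}.
\]
The same argument applied to $E|_U$ yields the $w=x$ terms $\sum_w q^{-\gamma(w)}$. Subtracting the bar-complex contribution from the $E|_U$ contribution combines the strict pairs and the diagonal into the single sum over $(w\le x)$.

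\emph{Step 6: absorb the errors.} The errors are all of the form $q^{h(\alpha)+c}(q/16)^{-I/2}\mathsf C^{h(\alpha)}$: the term $(k(\alpha)+1)(4E)^{k(\alpha)}$ is absorbed by enlarging $\mathsf C$, and the shift in $q^{-I/2}$ versus $(q/16)^{-I/2}$ by the factor $4^I$. Taking maxima of the constants gives $\mathsf c_*$ and $\mathsf C_*$.

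The main obstacle is Step 4. One must check that the $\mathsf L^1$-trace bound $N$, and the equality of the $\mu'$ and $\mu$ cohomologies in degrees $\ge 2h(\alpha)+6-4g(B)-I$, hold uniformly over essential types. I expect this to rest on the codimension bounds for $F$ (at least $5I$ for $F_1$ and $3I-3$ for $F_2$) beating $I$, which is why $I$ must be sufficiently large. The fact that $\mathfrak H^\circ$ is not an open subscheme is harmless, since only its $\overline{k}$-points and strata enter the argument.
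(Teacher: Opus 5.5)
Your proposal is correct and follows the paper's argument in the same order. You cut the trace formula at $2h(\alpha)+6-4g(B)-I$, bound the low degrees by (\ref{equation:error1}), pass to $U=(U_{k(\alpha)}\setminus F)\times\Pic^{d_{\mathbb P_C(\mathcal F)}}(C)$, replace the image of $\mathsf Z_{P_U}$ by the bar complex via Theorem~\ref{theo:Theorem7.5DLTT}, expand by essential types via Theorem~\ref{theo:Theorem7.3DLTT}, and evaluate with Theorem~\ref{theo:keyestimate} and Proposition~\ref{prop:Proposition7.8DLTT}, with the $E|_U$ term supplying the diagonal $w=x$.

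One small attribution point: \cite[Lemma 2.3]{DLTT25} only reduces the truncation error to the $\mathsf L^1$-trace $N$. The paper then bounds $N$ itself by Theorem~\ref{theo:keyestimate}, after exactly the $\mu'\to\mu$ and $F$-removal identifications you list.
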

\subsection{The virtual height zeta function}
\label{subsec:heightzeta}

The goal of this section is to establish the asymptotic of $\#\Sec(\mathcal X/B, \alpha)(\mathbb F_q)$.

\subsubsection*{Peyre's constants}
Here we recall the definition of Peyre's constants in our setting. Let $k = \mathbb F_q$ of characteristic $\neq 2$ and $B$ be a geometrically integral smooth projective curve defined over $k$. Let $K(B)$ be the function field of $B$ and let $Q$ be a non-split smooth quadric surface defined over $K(B)$. Let $Z \in Q$ be a degree $4$ closed point such that $Z_{\overline{k}}$ remains integral. Let $\phi : X \to Q$ be the blow-up of $Q$ along $Z$. We assume that $X$ is a smooth quartic del Pezzo surface over $K(B)$.

Let $\pi : \mathcal X \to B$ be a smooth wonderful model of $X$. 
We are interested in $\Pic(X_{K(B)^s})$ where $K(B)^s$ denotes the separable closure of $K(B)$. Note that this decomposes 
\[
\Pic(X_{K(B)^s}) = \mathsf M_e\oplus \mathsf M_p,
\]
as Galois modules where $\mathsf M_e$ is the module spanned by exceptional divisors of $\phi_{K(B)^s}$ and $\mathsf M_p$ is the pullback of $\Pic(Q_{K(B)^s})$.

We define local convergence factors to be for any $b \in |B|$, 
\[
\lambda_b = \det(1 - q_b^{-1}\mathrm{Fr}_{\overline{k(b)}})^{-1}
\]
where $\mathrm{Fr}_{\overline{k(b)}}$ is the geometric Frobenius acting on $\Pic(\mathcal X_{\overline{b}})\otimes \mathbb Q$. We also define
\[
\lambda_{e, b} = 
\det(1 - q_b^{-1}(\mathrm{Fr}_{\overline{k(b)}} \curvearrowright \mathsf M_{e, b}\otimes \mathbb Q))^{-1}
\]
and
\[
\lambda_{p, b} = 
\det(1 - q_b^{-1}(\mathrm{Fr}_{\overline{k(b)}} \curvearrowright \mathsf M_{p, b} \otimes \mathbb Q))^{-1}
\]
where $\Pic(\mathcal X_{\overline{b}}) = \mathsf M_{e, b} \oplus \mathsf M_{p, b}$ is the similar decomposition as before.
By the definition, we have $\lambda_b = \lambda_{e, b}\lambda_{p, b}$.
We also define
\begin{align*}
&L_{ *}(1, \Pic(X_{K(B)^s})\otimes \mathbb Q) = \lim_{t \to 1} (1-t)^2\prod_{b \in |B|}  \det(1 - q_b^{-1}t^{|b|}\mathrm{Fr}_{\overline{k(b)}})^{-1}\\
&L_{*}(1, M_e\otimes \mathbb Q) = \lim_{t \to 1} (1-t)\prod_{b \in |B|} \det(1 - q_b^{-1}t^{|b|}(\mathrm{Fr}_{\overline{k(b)}} \curvearrowright \mathsf M_{e, b}\otimes \mathbb Q))^{-1}\\
&L_{*}(1, M_p\otimes \mathbb Q) = \lim_{t \to 1} (1-t)\prod_{b \in |B|}  \det(1 - q_b^{-1}t^{|b|}(\mathrm{Fr}_{\overline{k(b)}} \curvearrowright \mathsf M_{p, b}\otimes \mathbb Q))^{-1}.
\end{align*}
Again note that we have $$L_{*}(1, \Pic(X_{K(B)^s})\otimes \mathbb Q) = L_{*}(1, M_e\otimes \mathbb Q)L_{*}(1, M_p\otimes \mathbb Q).$$
We also define
\begin{align*}
&L(t, M_e\otimes \mathbb Q) = \prod_{b \in |B|}  \det(1 - q_b^{-1}t^{|b|}(\mathrm{Fr}_{\overline{k(b)}} \curvearrowright \mathsf M_{e, b}\otimes \mathbb Q))^{-1}\\
&L(t, M_p\otimes \mathbb Q) = \prod_{b \in |B|}  \det(1 - q_b^{-1}t^{|b|}(\mathrm{Fr}_{\overline{k(b)}} \curvearrowright \mathsf M_{p, b}\otimes \mathbb Q))^{-1}.
\end{align*}
Note that $L(t, M_p\otimes \mathbb Q)$ coincides with the Hasse--Weil zeta function $\mathsf Z_C(q^{-1}t)$ of $C$, defined by
\[
\mathsf Z_C(q^{-1}t) = \prod_{c \in |C|}(1-(q^{-1}t)^{|c|})^{-1}.
\]
From this, we can conclude that
\begin{equation}
\label{equation:Hasse--Weil}
L_{*}(1, M_p\otimes \mathbb Q) = \frac{\#\Pic^0(C)(k)}{q^{g(C)}(1-q^{-1})}.
\end{equation}

The following constant is the main ingredient of Peyre's constant:

\begin{defi}
We define the Tamagawa number of $\pi : \mathcal X \to B$ by
\[
\tau_{-K_{\mathcal X/B}}(\mathcal X) =q^{2(1-g(B))} L_{*}(1, \Pic(X_{K(B)^s})\otimes \mathbb Q )\prod_{b \in |B|}\lambda_b^{-1}\#\mathcal X_b^{\mathrm{sm}}(k(b)),
\]
where $\mathcal X_b^{\mathrm{sm}}$ is the smooth locus of $\mathcal X_b$.
This Euler product absolutely converges. See \cite{Peyre} or \cite{CLT10} for more details.
\end{defi}

An important observation is
\begin{lemm}
\label{lemm:flopsTamagawa}
Let $\pi : \mathcal X \to B$ and $\pi' : \mathcal X' \to B$ be two smooth models of $X$.
Assume that two models are connected by a sequence of $\pi$-relative flops. Then we have
\[
\tau_{-K_{\mathcal X/B}}(\mathcal X) = \tau_{-K_{\mathcal X'/B}}(\mathcal X').
\]
\end{lemm}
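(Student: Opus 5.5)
The plan is to compare the Tamagawa numbers factor by factor. The definition is
\[
\tau_{-K_{\mathcal X/B}}(\mathcal X) =q^{2(1-g(B))} L_{*}(1, \Pic(X_{K(B)^s})\otimes \mathbb Q )\prod_{b \in |B|}\lambda_b^{-1}\#\mathcal X_b^{\mathrm{sm}}(k(b)).
\]
The first two factors depend only on the generic fiber $X$ and on $B$, so they agree for $\mathcal X$ and $\mathcal X'$. The leading power of $q$ agrees because it depends only on $g(B)$, and $L_*$ only involves the Galois module $\Pic(X_{K(B)^s})$. The possible difference lies in the local factors $\lambda_b^{-1}\#\mathcal X_b^{\mathrm{sm}}(k(b))$. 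A relative flop over $B$ is an isomorphism away from finitely many fibers. By induction on the number of flops, it suffices to treat a single flop $\psi:\mathcal X\dashrightarrow\mathcal X'$ and to show that for every closed point $b$ the local factors agree.

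For such a $b$, I will argue geometrically over $\overline{k(b)}$, keeping track of the Frobenius action. The flopping curves are $K$-trivial. By Corollary~\ref{coro:floppingcurve} and the argument in Proposition~\ref{prop:Dflops}, they are contained in fibers. Their strict transforms are $(-2)$-curves on the minimal resolutions of those fibers, so on the fiber they lie in the singular locus or are $K$-trivial curves. Again by the proof of Proposition~\ref{prop:Dflops}, the fibers $\mathcal X_b$ and $\mathcal X'_b$ are normal Gorenstein canonical del Pezzo surfaces with a common resolution $Y$, and $\rho^*K_{\mathcal X_b}=\widetilde\rho^*K_{\mathcal X'_b}$. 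Both $\rho$ and $\widetilde\rho$ are then crepant birational morphisms from $Y$ to a normal surface. Each factors through the minimal resolution, and the minimal resolution is unique, so both fibers have the same minimal resolution $\widetilde S$ with its Frobenius action. Each fiber is obtained from $\widetilde S$ by contracting a Frobenius-stable set of $(-2)$-curves.

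The key step, and the one I expect to be the main obstacle, is showing that the point counts of the smooth loci together with the $\lambda_b$ produce the same quantity. I would rewrite $\#\mathcal X_b^{\mathrm{sm}}(k(b))$ as $\#\widetilde S(k(b))$ minus the points on the $(-2)$-curves contracted by $\mathcal X_b$. I would rewrite $\lambda_b$ as $\det(1-q_b^{-1}\mathrm{Fr}\mid \Pic(\mathcal X_{\overline b}))^{-1}$, where $\Pic(\mathcal X_{\overline b})$ is the orthogonal complement of those curves in $\Pic(\widetilde S)$. Then $\lambda_b^{-1}\#\mathcal X_b^{\rm sm}$ should equal $\lambda_{\widetilde S}^{-1}\#\widetilde S(k(b))$ times a correction determined by the contracted configuration.

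There is a complication here. The contracted configurations of $\mathcal X_b$ and $\mathcal X'_b$ can differ, since a flop exchanges which curves are contracted. What is really needed is the fact that for a smooth total space the quantity $\#\mathcal X_b^{\rm sm}(k(b))q_b^{-2}$ is the $p$-adic (motivic) volume of the fiber. I would therefore take a cleaner route: Batyrev–Kontsevich invariance of $p$-adic integrals. Because $\psi$ is crepant and both total spaces are smooth, the sets of integral points and the canonical measures agree. The local Tamagawa measure $\int_{\mathcal X(\mathcal O_b)}\omega$ equals $\#\mathcal X_b^{\rm sm}(k(b))q_b^{-2}$ by Hensel's lemma, and the same holds for $\mathcal X'$. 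The change of variables formula on a common resolution, with identical relative canonical divisors, shows that these local volumes coincide. The convergence factors $\lambda_b$ are defined using $\Pic(\mathcal X_{\overline b})$. I would check that for a smooth total space with integral fibers, $\Pic(\mathcal X_{\overline b})\otimes\mathbb Q$ agrees as a Galois module with $\Pic(X_{K(B)^s})\otimes\mathbb Q$ restricted to the decomposition group, because the restriction map from the total space is an isomorphism rationally. This makes $\lambda_b$ depend only on $X$, so it is the same for both models. Multiplying the local volumes gives the equality of the Tamagawa numbers.
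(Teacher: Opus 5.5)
Your final route is the paper's argument, carried out place by place. The paper writes $\tau_{-K_{\mathcal X/B}}(\mathcal X)$ as the volume of $X(\mathbb A_{K(B)})$ for the Tamagawa measure of the adelic metric induced by $\mathcal O(-K_{\mathcal X/B})$. It then notes that on a common resolution $-f^*K_{\mathcal X/B}\sim -g^*K_{\mathcal X'/B}$, so both models give the same metric. Your change of variables on a common resolution is the same observation. Your Hensel computation of the local volumes and your separate treatment of $\lambda_b$ make explicit what the paper absorbs into its citation of Peyre and Chambert-Loir--Tschinkel. The preliminary comparison of point counts on $\mathcal X_b^{\mathrm{sm}}$ can simply be dropped.

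One claim is wrong as stated and needs correcting. $\Pic(\mathcal X_{\overline b})\otimes\mathbb Q$ is not $\Pic(X_{K(B)^s})\otimes\mathbb Q$ with the decomposition-group action; when inertia acts nontrivially, Frobenius does not even have a well-defined action on the latter. For example, take a fiber with one $\mathbf A_1$ point in a smooth total space (locally $xy=z^2+t$). There inertia acts by the reflection in the vanishing $(-2)$-class, and $\Pic(\mathcal X_{\overline b})$ has rank $5$, not $6$.

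The correct statement is $\Pic(\mathcal X_{\overline b})\otimes\mathbb Q\cong(\Pic(X_{K(B)^s})\otimes\mathbb Q)^{I_b}$, Frobenius-equivariantly. Work over $R=\overline{k(b)}[[t]]$:
\begin{itemize}
\item Restriction $\Pic(\mathcal X_R)\to\Pic(\mathcal X_{\overline b})$ is an isomorphism, because $H^1$ and $H^2$ of the structure sheaf of the fiber vanish.
\item Restriction $\Pic(\mathcal X_R)\to\Pic(X_{\overline{k(b)}((t))})$ is an isomorphism, because $\mathcal X_R$ is regular and its special fiber is integral, hence principal.
\item $\Pic(X_{\overline{k(b)}((t))})$ equals the $I_b$-invariants, because $\overline{k(b)}((t))$ has trivial Brauer group.
\end{itemize}
Integrality of fibers passes to $\mathcal X'$, since flops are isomorphisms in codimension one. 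With this correction $\lambda_b$ is Peyre's local convergence factor, depends only on $X$, and your argument is complete.

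For the local volume you also need that no $\mathcal O_b$-point of the regular model reduces to a singular point $x$ of $\mathcal X_b$. This holds because $t\in\mathfrak m_x^2$ at such a point, so $t$ could not pull back to a uniformizer. This is what makes $\#\mathcal X_b^{\mathrm{sm}}(k(b))q_b^{-2}$ the full local volume.
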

\begin{proof}
As explained in \cite{Peyre} and \cite{CLT10}, $\mathcal O(-K_{\mathcal X/B})$ defines an adelic metric on $\omega_X^{-1}$, and this defines the Tamagawa measure $\tau_X$ on the adelic space $X(\mathbb A_{K(B)})$. Using this, we have
\[
\tau_{-K_{\mathcal X/B}}(\mathcal X)  = \int_{X(\mathbb A_{K(B)})} \mathrm d\tau_X.
\]
Since two models are connected by a sequence of flops, we have a common resolution
\[
\xymatrix{
& \widetilde{\mathcal X} \ar[ld]_f \ar[rd]^g& \\
\mathcal X & & \mathcal X'
}
\]
such that we have $-f^*K_{\mathcal X/B} \sim -g^*K_{\mathcal X'/B}$.
Moreover, one may assume that $\widetilde{\mathcal X}$ is an integral model of $X$.
Thus two divisors define the same adelic metric. Thus our assertion follows.
\end{proof}

For a later application, we will need the following definition as well:

\begin{defi}
    Let $\pi : \mathcal X \to B$ be a smooth wonderful model of $X$.
    To the vector space of real $1$-cycles $N_1(\mathcal X_\eta)$, we assign the Lebesgue measure such that the fundamental domain of the lattice $N_1(\mathcal X_\eta) \cap N_1(\mathcal X)_{\mathbb Z}$
    has volume equal to $1$. Let $\mathsf C \subset \mathrm{Nef}_1(\mathcal X_\eta) \subset N_1(\mathcal X_\eta)$ be a closed cone. We define the alpha constant of $\mathsf C$ as
    \[
    \alpha(\mathcal X_\eta, \mathsf C) = \left( \dim \, N_1(\mathcal X_\eta) \right) . \mathrm{vol}(\{\alpha \in \mathsf C \, | \, -K_{\mathcal X/B}.\alpha \leq 1\}).
    \]
    \end{defi}
    
    We should note that in our setting, Peyre's constant is
    \[
     \alpha(\mathcal X_\eta, \Nef_1(\mathcal X_\eta))\tau_{-K_{\mathcal X/B}}(\mathcal X).
    \]
    
    \subsubsection*{The virtual height zeta function}
We closely follow the exposition of \cite[Section 8.4]{DLTT25}.
By Theorem~\ref{theo:homologicalsieve}, our goal is to understand the limiting behavior of
\[
(\#\Pic^0(C)(k))q^{h_{\mathcal Q}(\alpha) +3-2g(B)-g(C)} \sum_{(w \leq x) \in (U_{k(\alpha)}(k) \leq \mathfrak H^\circ(k))} \mu_k(w, x)q^{-\gamma(x)},
\]
as $h_{\mathcal Q}(\alpha), k(\alpha) \to \infty$. To this end, we consider the following virtual height zeta function:
\[
\mathsf Z(t) = \sum_{\mathsf k = 0}^\infty q^{\mathsf k}\left(\sum_{(w \leq x) \in (U_{\mathsf k}(k) \leq \mathfrak H^\circ(k))} \mu_k(w, x)q^{-\gamma(x)}\right)t^{\mathsf k}.
\]
It follows from \cite[Lemma 7.7(1)]{DLTT25} that the above zeta function becomes the Euler product:
\[
\mathsf Z(t) = \prod_{b \in |B|}\left( \sum_{\mathsf k = 0}^\infty (qt)^{\mathsf k|b|} \sum_{(w \leq x) \in (U_{\mathsf k|b|}(k(b)) \leq \mathfrak H^\circ(k(b)))_b} \mu_{k(b)}(w, x)q^{-\gamma(x)|b|}\right),
\]
where $(U_{\mathsf k|b|, }(k(b)) \leq \mathfrak H^\circ(k(b)))_b$ is the subposet consisting of elements supported over $b \in B$. When $t = 1$,  the $q$-expansion of each Euler factor looks
\[
1 + \sum_{w \in (U_{|b|})_b(k(b))} q^{-|b|} + \cdots .
\]
This shows that for any $\delta > 0$, the above zeta function absolutely converges when $|t| \leq q^{-\delta}$. Moreover we have the following proposition:

\begin{prop}
\label{prop:Tamagawanumber}
There exists $\delta > 0$ such that
the following function
\[
L(t, \mathsf M_e\otimes \mathbb Q)^{-1}\mathsf Z(t)
\]
absolutely converges when $|t|\leq q^\delta$.
Moreover, we have
\[
\lim_{t \to 1}(1-t)\mathsf Z(t) = L_{*}(1, M_e\otimes \mathbb Q) \prod_{b \in |B|}\lambda_b^{-1}\frac{\#\mathcal X_b^{\mathrm{sm}}(k(b))}{q^{2|b|}}.
\]
\end{prop}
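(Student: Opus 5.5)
Since $\mathsf Z(t)$ is an Euler product by \cite[Lemma 7.7(1)]{DLTT25}, I will compare it with $L(t,\mathsf M_e\otimes\mathbb Q)$ factor by factor. For $b\in|B|$ write $\mathsf Z_b(t)$ for the local factor and $L_b(t)=\det(1-q_b^{-1}t^{|b|}\mathrm{Fr}\curvearrowright \mathsf M_{e,b}\otimes\mathbb Q)^{-1}$. The plan has two parts. First, show that $\mathsf Z_b(t)L_b(t)^{-1}=1+O(q^{-(1+\delta')|b|})$ uniformly for $|t|\le q^{\delta}$ and all but finitely many $b$; this gives absolute convergence of $L(t,\mathsf M_e)^{-1}\mathsf Z(t)$ on $|t|\le q^\delta$. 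Second, evaluate at $t=1$ and show that each local factor $\mathsf Z_b(1)$ equals $\lambda_{p,b}^{-1}\#\mathcal X_b^{\mathrm{sm}}(k(b))q^{-2|b|}$, so that $\mathsf Z_b(1)L_b(1)^{-1}=\lambda_b^{-1}\#\mathcal X_b^{\mathrm{sm}}(k(b))/q^{2|b|}$. Since $L(t,\mathsf M_e)$ has a simple pole at $t=1$ with residue $L_*(1,M_e\otimes\mathbb Q)$ (the Galois module $\mathsf M_e$ is the permutation module on the four exceptional curves, which is transitive because $Z_{\overline k}$ is integral), multiplying by $(1-t)$ and letting $t\to1$ gives the stated limit.

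For the local computation I will work fibrewise. By the M\"obius inversion underlying Proposition~\ref{prop:Proposition7.8DLTT} and Lemma~\ref{lemm:propforstratification}, the inner sum $\sum_{w\le x}\mu_k(w,x)q^{-\gamma(x)}$ at a single point $b$ counts, with the expected codimension weights, the local conditions a section satisfies at $b$. More precisely, it is the $q$-adic measure of the set of $\mathcal O_b$-points of $\mathcal Q$ (equivalently, of jets via Proposition~\ref{prop:lemma18}) that meet $\mathcal Z$ with prescribed contact order $\mathsf k$ and are saturated, i.e.\ do not factor through the excluded loci encoded by $x_B,x_C$. Summing over $\mathsf k$ with weight $(qt)^{\mathsf k|b|}$ at $t=1$ therefore reconstructs the measure of $\mathcal X(\mathcal O_b)$, computed through $\mathcal Q(\mathcal O_b)$ and the lifting along the exceptional divisor. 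By Hensel's lemma this measure is $\#\mathcal X_b^{\mathrm{sm}}(k(b))/q^{2|b|}$. The factor $\lambda_{p,b}^{-1}$ comes from the $m_C$ and $m_B$ incidence conditions: the $c$-conditions give $\prod_{c\mid b}(1-q^{-|c|})$, which is the local factor of $\mathsf Z_C(q^{-1}t)^{-1}$. This accounts for the global factor $\#\Pic^0(C)(k)q^{-g(C)}(1-q^{-1})^{-1}$ via (\ref{equation:Hasse--Weil}).

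For the convergence at a generic $b\in B^\circ$, I will expand $\mathsf Z_b(t)$ to first order. The term $\#(U_{|b|})_b(k(b))\,q^{-|b|}t^{|b|}$ counts the $k(b)$-points of $\mathcal Z$ over $b$, and this equals the trace of Frobenius on $\mathsf M_{e,b}$, which is exactly the linear term of $L_b(t)$. The contributions from $([c],\cdot)$, $([b],[b])$ and higher chains carry $\gamma\ge 2$ with $t$-degree at most $1$, so each is $O(q^{-2|b|+\delta|b|})$. For the finitely many bad $b$, I will only need each factor to be finite, which holds since $\gamma$ grows linearly along chains (via $\gamma\le 2\kappa$).

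The main obstacle is the exact identification of $\mathsf Z_b(1)$ at the bad points. These are the points where $\mathcal Z\to B$ or $C\to B$ ramifies, or where lines on $\mathcal Q_b$ cut out length-$2$ subschemes of $\mathcal Z_b$. There the saturated poset is the intricate one of Classifications~\ref{class2}--\ref{class3}, and $\mathcal X_b$ may be singular. I would handle this uniformly rather than case by case. The plan is to prove that the inclusion--exclusion sum equals the measure of those $\mathcal O_b$-points of $\mathcal Q$ whose strict transform lands in $\mathcal X^{\mathrm{sm}}$, using that the saturation in Definition~\ref{defi:saturated} is designed precisely so that $\mathsf Z_{w<x,L}=\mathsf Z_{w<\mathrm{sat}(x),L}$. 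That measure is then computed by Hensel's lemma. Only as a fallback would I verify the identity through the explicit case list in Example~\ref{exam:essentialtypes}.
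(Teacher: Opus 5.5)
Your global structure is the paper's. You write $\mathsf Z(t)$ as an Euler product and match the first nontrivial term $q^{-|b|}\#\mathcal Z_b(k(b))$ of each factor with the trace of Frobenius on the permutation module $\mathsf M_{e,b}$; this gives absolute convergence of $L(t,\mathsf M_e\otimes\mathbb Q)^{-1}\mathsf Z(t)$. You then reduce the limit to the local identity $\mathsf Z_b(1)=\lambda_{p,b}^{-1}\#\mathcal X_b^{\mathrm{sm}}(k(b))/q^{2|b|}$.

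You differ from the paper in how that identity is proved. The paper computes directly: by \cite[Lemma 7.7(2)]{DLTT25} only essential pairs contribute, and it sums $\mu_{k(b)}(w,x)q^{-\gamma(x)|b|}$ over the list in Example~\ref{exam:essentialtypes}. It displays only the split and non-split unramified cases and calls the rest similar; this is your fallback. Your primary route reads the M\"obius sum as an $\mathfrak m_b$-adic density, inserts the blow-up Jacobian $q_b^{\mathsf k}$, and applies Hensel on the regular model. That is a genuinely different argument. It would handle every ramification type and line configuration at once, and it explains why the Euler factors are the Tamagawa factors.

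As written, however, your density bookkeeping is wrong in one place, and the crux is not supplied.

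\begin{itemize}
\item \textbf{The source of $\lambda_{p,b}^{-1}$.} The M\"obius sum is a Haar measure on local sections $s$ of $\mathcal F^\vee\otimes L$ over $\widehat{\mathcal O}_c$, $c\mid b$, not a measure on $\mathcal Q(\mathcal O_b)$. Vanishing at $c$ has codimension $2$, so the $c$-conditions contribute $\prod_{c\mid b}(1-q^{-2|c|})$, not $\prod_{c\mid b}(1-q^{-|c|})$. Already for $r=0$ in the split case the local factor is $(1-q^{-2|b|})^2=\lambda_{p,b}^{-1}\#\mathcal X_b(k(b))/q^{2|b|}$. The factor $\lambda_{p,b}^{-1}=\prod_{c\mid b}(1-q^{-|c|})$ is instead the volume of the fibres $\prod_{c\mid b}\widehat{\mathcal O}_c^{\times}$ of the map from nowhere-vanishing local sections to $\mathcal Q(\mathcal O_b)$. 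At ramified $b$ you must also check that $\mathbb P^1(\widehat{\mathcal O}_c)\cong\mathcal Q(\mathcal O_b)$ matches the model measures.
\item \textbf{The crux.} The property $\mathsf Z_{w<x,L}=\mathsf Z_{w<\mathrm{sat}(x),L}$ is not what makes the density argument work. You need two facts.
\begin{enumerate}
\item[(i)] For every saturated $x\geq w$ over $b$, the local incidence condition has measure exactly $q^{-\gamma(x)|b|}$. This is where conditions (1)--(2) of Definition~\ref{defi:saturated} enter: the residual jets must be curvilinear over $C$.
\item[(ii)] For almost every nowhere-vanishing $s$, the saturated $x$ with $s\in\mathsf Z_x$ form a finite join-closed set. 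Its maximum is $(\emptyset,w_\sigma,\emptyset)$, where $w_\sigma$ is the exact contact scheme of the associated $\mathcal O_b$-point with $\mathcal Z$. Vanishing $s$ have a maximum with $x_C\neq\emptyset$ and so drop out. This uses condition (3) and a local form of Proposition~\ref{prop:lemma18}.
\end{enumerate}
Fact (ii) is exactly where a non-embedded $\iota$ (a line of $\mathcal Q_b$ through two points of $\mathcal Z_b$) and ramification of $\mathcal Z\to B$ or $C\to B$ cause trouble. Proving (i)--(ii) uniformly needs the same local geometry the paper encodes in Classifications~\ref{class1}--\ref{class3}. Your route therefore buys uniformity and explanation rather than a shortcut, and until it is carried out, your fallback (the paper's proof) is what actually carries the argument.
\item \textbf{Minor slip.} For finiteness of the bad factors on $|t|\le q^{\delta}$, the relevant bound is $\gamma(x)\ge\gamma(w)$, which grows like $2\mathsf k$, together with polynomially many essential pairs per $\mathsf k$. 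The inequality $\gamma\le 2\kappa$ points the wrong way.
\end{itemize}
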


\begin{proof}
Since the $q$-expansion of $L(t, \mathsf M_e\otimes \mathbb Q)$ looks
\[
1 + \sum_{w \in \mathcal Z_b(k(b))} q^{-|b|} + \cdots,
\]
and we have
\[
\sum_{w \in \mathcal Z_b(k(b))} q^{-|b|} = \sum_{w \in (U_{|b|})_b(k(b))} q^{-|b|},
\]
the first assertion is clear. 

For the second assertion, note that we have
\begin{align*}
\lim_{t \to 1}(1-t)\mathsf Z(t) & = \lim_{t \to 1}(1-t)L(t, M_e\otimes \mathbb Q) L(t, M_e\otimes \mathbb Q)^{-1}\mathsf Z(t)\\ 
&=L_{*}(1, M_e\otimes \mathbb Q) \prod_{b \in |B|}\lambda_{e, b}^{-1} \left( \sum_{\mathsf k = 0}^\infty q^{\mathsf k|b|} \sum_{(w \leq x) \in (U_{\mathsf k|b|}(k(b)) \leq \mathfrak H^\circ(k(b)))_b} \mu_{k(b)}(w, x)q^{-\gamma(x)|b|}\right).
\end{align*}
Thus it suffices to verify that for any $b \in |B|$, we have
\[
\sum_{\mathsf k = 0}^\infty q^{\mathsf k|b|} \sum_{(w \leq x) \in (U_{\mathsf k|b|}(k(b)) \leq \mathfrak H^\circ(k(b)))_b} \mu_{k(b)}(w, x)q^{-\gamma(x)|b|}= \lambda_{p, b}^{-1}\frac{\#\mathcal X_b^{\mathrm{sm}}(k(b))}{q^{2|b|}}.
\]
Let us demonstrate this computation when $\mathcal Z$ and $C$ are unramified over $b \in B$.
First let us assume that $C$ is unramified and splits over $b$.
Let $r = \#\mathcal Z_b(k(b))$. 
Then we have
\[
\#\mathcal X_b(k(b)) = q^{2|b|}+ (2+r)q^{|b|} + 1.
\]
On the other hand, we have 
\[
\lambda_{p, b}^{-1} = (1-q^{-|b|})^2.
\]
Hence we conclude that
\[
\lambda_{p, b}^{-1} \frac{\#\mathcal X_b(k(b)) }{q^{2|b|}} = 1 + rq^{-|b|} - 2(1 + r)q^{-2|b|} + rq^{-3|b|} + q^{-4|b|}.
\]
On the other hand, by \cite[Lemma 7.7 (2)]{DLTT25}, $\mu_{k(b)}(w, x)$ is only non-zero when $(w \leq x)$ is essential. Using the description in Example~\ref{exam:essentialtypes}, we have
\[
\sum_{\mathsf k = 0}^\infty q^{\mathsf k|b|} \sum_{(w \leq x) \in (U_{\mathsf k|b|}(k(b)) \leq \mathfrak H^\circ(k(b)))_b} \mu_{k(b)}(w, x)q^{-\gamma(x)|b|}= 1 + rq^{-|b|} - 2(1 + r)q^{-2|b|} + rq^{-3|b|} + q^{-4|b|}.
\]

Next assume that $C$ is non-split over $b$.
Then we have
\[
\#\mathcal X_b(k(b)) = q^{2|b|} + rq^{|b|} + 1.
\]
On the other hand, we have 
\[
\lambda_{p, b}^{-1} = 1-q^{-2|b|}.
\]
Hence we conclude that
\[
\lambda_{p, b}^{-1} \frac{\#\mathcal X_b(k(b)) }{q^{2|b|}} = 1 + rq^{-|b|}  - rq^{-3|b|} - q^{-4|b|}.
\]
Using the description in Example~\ref{exam:essentialtypes}, we have
\[
\sum_{\mathsf k = 0}^\infty q^{\mathsf k|b|} \sum_{(w \leq x) \in (U_{\mathsf k|b|}(k(b)) \leq \mathfrak H^\circ(k(b)))_b} \mu_{k(b)}(w, x)q^{-\gamma(x)|b|}= 1 + rq^{-|b|}  - rq^{-3|b|} - q^{-4|b|}.
\]
Other cases are similar. 
\end{proof}

As a corollary, we have the following:

\begin{coro}
\label{coro:tamagawa}
There exists $\eta > 0$ depending only on $\mathcal X_{\overline{k}}$ such that
\[
\frac{\#\Pic^0(C)(k)q^{\mathsf k+2-2g(B)-g(C)}}{1-q^{-1}}\left(\sum_{(w \leq x) \in (U_{\mathsf k}(k) \leq \mathfrak H^\circ(k))} \mu_k(w, x)q^{-\gamma(x)}\right) =  \tau_{-K_{\mathcal X/B}}(\mathcal X) + O(q^{-\eta \mathsf k})
\]
\end{coro}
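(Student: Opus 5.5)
The plan is to read off the coefficient asymptotics of $\mathsf Z(t)$ from Proposition~\ref{prop:Tamagawanumber} by a standard Tauberian-free argument for rational-type generating functions. Write
\[
a_{\mathsf k} = q^{\mathsf k}\sum_{(w \leq x) \in (U_{\mathsf k}(k) \leq \mathfrak H^\circ(k))} \mu_k(w, x)q^{-\gamma(x)},
\]
so that $\mathsf Z(t) = \sum_{\mathsf k} a_{\mathsf k} t^{\mathsf k}$. Factor $\mathsf Z(t) = L(t, \mathsf M_e\otimes \mathbb Q)\cdot G(t)$ with $G(t) = L(t, \mathsf M_e\otimes\mathbb Q)^{-1}\mathsf Z(t)$. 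By Proposition~\ref{prop:Tamagawanumber}, $G$ is holomorphic on $|t| \leq q^{\delta}$.

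Since $\mathsf M_e$ is the permutation module attached to the four exceptional curves and $Z_{\overline{k}}$ is integral, the Galois action on $\mathsf M_e \otimes \mathbb Q$ is transitive. Hence $L(t, \mathsf M_e\otimes\mathbb Q)$ is, up to the substitution $t\mapsto q^{-1}t$, the Hasse--Weil zeta function of the smooth curve $\mathcal Z$: the local factor at $b$ is $\prod_{z \mapsto b}(1-(q^{-1}t)^{|z|})^{-1}$. By the Weil conjectures this is $P(q^{-1}t)/((1-q^{-1}t)(1-t))$, with $P$ a polynomial whose roots have absolute value $q^{1/2}$ in $q^{-1}t$, i.e.\ $|t| = q^{1/2}$ in $t$. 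So $\mathsf Z(t) = H(t)/(1-t)$ with $H$ holomorphic on $|t| \leq q^{\delta'}$ for $\delta' = \min\{\delta, 1/2\}>0$ (the pole at $t=q$ lies outside this disc).

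By Cauchy's estimate on the circle $|t| = q^{\delta'/2}$, the coefficients $h_{\mathsf k}$ of $H$ are $O(q^{-\delta'\mathsf k/2})$. Therefore
\[
a_{\mathsf k} = \sum_{j\leq \mathsf k} h_j = H(1) - \sum_{j>\mathsf k}h_j = H(1) + O(q^{-\eta\mathsf k}),
\]
and $H(1) = \lim_{t\to 1}(1-t)\mathsf Z(t)$. Here one must check that the implied constant depends only on $\mathcal X_{\overline{k}}$. This uniformity follows because the bound on $G$ and the number of Weil roots are controlled by the bounded set of local types $(\mathsf a,\mathsf b,\mathsf d)$ and by $g(\mathcal Z)$. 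One also needs $q$ large enough that $\delta$ from Proposition~\ref{prop:Tamagawanumber} is uniform. I expect this uniformity, rather than the formal manipulation, to be the main point to verify.

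Finally, substitute the value of $H(1)$ from Proposition~\ref{prop:Tamagawanumber} and use (\ref{equation:Hasse--Weil}):
\[
\frac{\#\Pic^0(C)(k)q^{2-2g(B)-g(C)}}{1-q^{-1}}\,L_*(1,\mathsf M_e\otimes\mathbb Q)\prod_b\lambda_b^{-1}\frac{\#\mathcal X_b^{\mathrm{sm}}(k(b))}{q^{2|b|}}
= q^{2(1-g(B))}L_*(1,\mathsf M_p\otimes\mathbb Q)L_*(1,\mathsf M_e\otimes\mathbb Q)\prod_b\lambda_b^{-1}\frac{\#\mathcal X_b^{\mathrm{sm}}(k(b))}{q^{2|b|}}.
\]
Using $L_* = L_*(\mathsf M_e)L_*(\mathsf M_p)$, this matches $\tau_{-K_{\mathcal X/B}}(\mathcal X)$ once the normalization $q^{-2|b|}$ in the local factors is absorbed. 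That absorption is the normalization convention of Peyre's Tamagawa measure, so I will check it against the definition, possibly adjusting the $q^{2(1-g(B))}$ factor.
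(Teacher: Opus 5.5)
Your proposal is correct and is essentially the paper's argument: the paper only says to argue as in \cite[Proposition 8.8]{DLTT25} using (\ref{equation:Hasse--Weil}) and Proposition~\ref{prop:Tamagawanumber}. That amounts to your steps: factor $\mathsf Z(t)=L(t,\mathsf M_e\otimes\mathbb Q)G(t)$, read off the coefficients from the simple pole at $t=1$, and rewrite the prefactor as $q^{2(1-g(B))}L_*(1,\mathsf M_p\otimes\mathbb Q)$. You do not need to adjust $q^{2(1-g(B))}$: the displayed definition of $\tau_{-K_{\mathcal X/B}}(\mathcal X)$ omits the local normalization $q^{-2|b|}$, but its Euler product could not converge without it, so your final expression is exactly the intended Tamagawa number.
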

\begin{proof}
One may argue as \cite[Proposition 8.8]{DLTT25} using (\ref{equation:Hasse--Weil}) and Proposition~\ref{prop:Tamagawanumber}.
\end{proof}

As a conclusion, we have the following theorem
\begin{theo}
\label{theo:allhegiht}
We have
\[
\frac{\#M_{\alpha}(k)}{q^{h(\alpha)}} = \tau_{-K_{\mathcal X/B}}(\mathcal X) + O((q/16)^{-I/2}q^{ \mathsf c_*}\mathsf C_*^{h(\alpha)}) + O(q^{-\eta  k(\alpha)}).
\]
\end{theo}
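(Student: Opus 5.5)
The plan is to combine Theorem~\ref{theo:homologicalsieve} with Corollary~\ref{coro:tamagawa}, passing from the $\mathbb G_m$-torsor $\widetilde{M}_\alpha$ back to $M_\alpha$. First, since $\widetilde{M}_\alpha\to M_\alpha$ is a $\mathbb G_m$-torsor, Hilbert 90 gives
\[
\#\widetilde{M}_\alpha(k)=(q-1)\#M_\alpha(k).
\]
Hence dividing Theorem~\ref{theo:homologicalsieve} by $(q-1)q^{h(\alpha)}$ yields
\[
\frac{\#M_\alpha(k)}{q^{h(\alpha)}}=\frac{\#\Pic^0(C)(k)\,q^{h_{\mathcal Q}(\alpha)-h(\alpha)+2-2g(B)-g(C)}}{1-q^{-1}}\sum_{(w\le x)}\mu_k(w,x)q^{-\gamma(x)}+O\bigl(q^{\mathsf c_*-1}(q/16)^{-I/2}\mathsf C_*^{h(\alpha)}\bigr).
\]
Next I use the relation $h(\alpha)=h_{\mathcal Q}(\alpha)-k(\alpha)$. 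This gives $q^{h_{\mathcal Q}(\alpha)-h(\alpha)}=q^{k(\alpha)}$, so the main term is exactly the left side of Corollary~\ref{coro:tamagawa} with $\mathsf k=k(\alpha)$. That corollary converts it to $\tau_{-K_{\mathcal X/B}}(\mathcal X)+O(q^{-\eta k(\alpha)})$, which gives both error terms of the statement; the constant $\mathsf c_*$ can absorb the shift by $-1$.

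The hypotheses of Theorem~\ref{theo:homologicalsieve} must also be checked. These are that $I$ is sufficiently large and that $q>2^{22}E^2$. Here $I=\tfrac1{10}(h(\alpha)-k(\alpha)-\mathsf c_3(\mathcal F))$. When $I$ is bounded, the claimed estimate is vacuous once the implied constant is enlarged, because $(q/16)^{-I/2}q^{\mathsf c_*}\mathsf C_*^{h(\alpha)}$ is then bounded below. This uses the crude upper bound $\#M_\alpha(k)\ll q^{h(\alpha)+2-2g(B)}\mathsf C^{h(\alpha)}$. I expect to get that bound from Theorem~\ref{theo:spaceofsectionsforX} or from Corollary~\ref{coro:FengHaseLiu} together with the Lang--Weil and Deligne bounds, reading the estimate as an $O$-bound with constant depending only on $\mathcal X_{\overline k}$. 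The condition on $q$ is presumably a standing assumption carried over from the earlier sections. Finally, the two nonnegativity and dimension conditions on $\alpha$ that are needed for $M_\alpha$ to have the projective-bundle description follow once $I$ is large, since they hold when $h(\alpha)-k(\alpha)$ is large.

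The main obstacle is the small-$I$ regime, where the sieve theorem gives nothing. There I would fall back on the Betti-number bound of Corollary~\ref{coro:FengHaseLiu} to ensure the error term dominates trivially. Everything else is bookkeeping of exponents, in particular matching $q^{3-2g(B)-g(C)}/(q-1)$ with $q^{2-2g(B)-g(C)}/(1-q^{-1})$.
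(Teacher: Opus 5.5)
Your proposal is correct and is exactly the paper's proof, which combines Theorem~\ref{theo:homologicalsieve} with Corollary~\ref{coro:tamagawa} at $\mathsf k=k(\alpha)$. As you do, one divides by $(q-1)q^{h(\alpha)}$ using $\#\widetilde{M}_\alpha(k)=(q-1)\#M_\alpha(k)$ and $h_{\mathcal Q}(\alpha)-h(\alpha)=k(\alpha)$, and your exponent bookkeeping is right. The paper reads the statement under the standing hypotheses of Theorem~\ref{theo:homologicalsieve} ($I$ sufficiently large, $q>2^{22}E^2$), so your separate treatment of the small-$I$ regime is not needed.
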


\begin{proof}
This follows from Theorem~\ref{theo:homologicalsieve} and Corollary~\ref{coro:tamagawa}.
\end{proof}

\section{Main results}

Our goal in this section is to prove Theorem~\ref{theo:main}.
Let $k$ be $\mathbb F_q$ of characteristic $\geq 7$ and $B$ be a smooth geometrically integral projective curve over $k$.

Let $X$ be a smooth quartic del Pezzo surface as defined in the setup of Section~\ref{subsec:wonderful}.
This means that $X$ admits a birational morphism $\phi_1 : X \to Q_1$ to a non-split smooth quadric surface $Q_1$ over $K(B)$.

Let $\pi : \mathcal X \to B$ be a smooth wonderful model of $X$ over $B$.
Let $E_1$ be the flat closure of the exceptional divisor of $\phi$.
Let $\alpha$ be a nef class of sections on $\mathcal X$. We denote
\[
h(\alpha) = -K_{\mathcal X/B}.\alpha, \quad k(\alpha) = E_1.\alpha.
\]
Let $D_1 \subset \mathcal X$ be the flat closure of the pullback of a general hyperplane section on $Q_1$. Then there exists an integer $n_1$ such that we have
\[
-K_{\mathcal X/B} \equiv 2D_1 + n_1[\mathcal X_b] -E_1.
\]

By Proposition~\ref{prop:Dflops}, Corollary~\ref{coro:Dflopsoverk}, and Corollary~\ref{coro:blowupofquadric}, there exists a sequence of flops
\[
\psi_1 : \mathcal X \dashrightarrow \mathcal X_1,
\]
such that $\mathcal X_1$ admits a birational morphism $\phi_1 : \mathcal X_1 \to \mathcal Q_1$ to a smooth quadric bundle over $B$ which is a blow-up of a smooth multisection of degree $4$ $\mathcal Z_1 \subset \mathcal Q_1$. Let $\widetilde{D}_1$ and $\widetilde{E}_1$ be the strict transforms of $D_1$ and $E_1$ on $\mathcal X_1$ respectively.

Let $\sigma : B \to \mathcal X$ be a section of class $\alpha$ and $\sigma_1 : B \to \mathcal X_1$ be its strict transform. We denote invariants by
\[
h_1(\sigma_1), h_{\mathcal Q_1}(\sigma_1), d_{F_1(\mathcal Q_1)}(\sigma_1), k_1(\sigma_1)
\]
as before. Since $\psi_1$ is a sequence of flops, we have $h(\alpha) = h_1(\sigma_1)$. We also have the following lemma:
\begin{lemm}
\label{lemm:flopsdifference}
There exists a constant $\mathsf e_1$ depending only on $\mathcal X_{\overline{k}}$ such that for any nef class $\alpha$ of section, $\sigma : B \to \mathcal X$ a section of class $\alpha$ and $\sigma_1 : B \to \mathcal X_1$ its strict transform, we have
\[
|k(\alpha) - k_1(\sigma_1)| \leq \mathsf e_1.
\]
\end{lemm}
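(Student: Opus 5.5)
Here $k_1(\sigma_1)=\widetilde{E}_1.\sigma_1$, where $\widetilde{E}_1\subset\mathcal X_1$ is the strict transform of $E_1$. The plan is to compare the intersection numbers $E_1.\sigma(B)$ on $\mathcal X$ and $\widetilde{E}_1.\sigma_1(B)$ on $\mathcal X_1$ through a common resolution of the flops. Since $\psi_1$ is a composition of finitely many $D_1$-flops over $B$, each an isomorphism in codimension one whose flopping locus is a finite union of vertical curves, there is a smooth common resolution
\[
\xymatrix{
& \widetilde{\mathcal X} \ar[ld]_f \ar[rd]^g & \\
\mathcal X \ar@{-->}[rr]_{\psi_1} & & \mathcal X_1
}
\]
with $f,g$ isomorphisms away from finitely many fibers of $\pi$. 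We write
\[
f^*E_1 = g^*\widetilde{E}_1 + G,
\]
where $G$ is a $\mathbb Q$-divisor supported on the $f$- and $g$-exceptional locus. The strict transforms of $E_1$ agree on $\widetilde{\mathcal X}$, so $G$ is exceptional for both maps, and hence vertical, i.e. contained in finitely many fibers $\mathcal X_{b_1},\dots,\mathcal X_{b_s}$. Since these data depend only on $\mathcal X_{\overline{k}}$ and the flop sequence, we may work over $\overline{k}$.

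Let $\widetilde{\sigma}:B\to\widetilde{\mathcal X}$ be the strict transform of $\sigma$. Then
\[
k(\alpha)-k_1(\sigma_1) = f^*E_1.\widetilde{\sigma}-g^*\widetilde{E}_1.\widetilde{\sigma} = G.\widetilde{\sigma}.
\]
It therefore suffices to bound $|G.\widetilde{\sigma}|$ uniformly. Write $G=\sum_j a_jG_j$ with finitely many prime vertical divisors $G_j$. A section meets each fiber $\widetilde{\mathcal X}_{b_i}$ with total intersection multiplicity $\widetilde{\mathcal X}_{b_i}.\widetilde{\sigma}=1$. Writing $\widetilde{\mathcal X}_{b_i}=\sum m_{ij}G_j$ with $m_{ij}\ge 1$, we get $0\le G_j.\widetilde{\sigma}\le 1$ for every component $G_j$ of the fiber, because the curve $\widetilde{\sigma}(B)$ is not contained in any $G_j$. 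Hence
\[
|G.\widetilde{\sigma}|\le \sum_j|a_j| =: \mathsf e_1,
\]
which depends only on the resolution, hence only on $\mathcal X_{\overline{k}}$.

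The main obstacle is making sure the argument needs no nefness and no control of where $\sigma$ meets the flopping curves. The identity $f^*E_1.\widetilde\sigma=E_1.\sigma$ holds by the projection formula, since $f_*\widetilde\sigma(B)=\sigma(B)$, and the same holds for $g$. The only remaining point is that $G$ is vertical. This follows because $\psi_1$ restricts to an isomorphism on the generic fiber, as the MMP in Proposition~\ref{prop:Dflops} does not modify $\mathcal X_\eta$. Galois descent (Corollary~\ref{coro:Dflopsoverk}) is not needed, since the bound is geometric.
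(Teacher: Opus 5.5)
Your proof is correct, but it takes a somewhat different route from the paper's.

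\textbf{The paper's route.} It first reduces to the hyperplane class. It uses $-K_{\mathcal X/B}\equiv 2D_1+n_1[\mathcal X_b]-E_1$, the analogous relation for the strict transforms on $\mathcal X_1$, and $h(\alpha)=h_1(\sigma_1)$ (crepancy of flops). This reduces the lemma to bounding $|D_1.\alpha-\deg\sigma_1^*\widetilde D_1|$. Since $\psi_1$ is a sequence of $D_1$-flops, the negativity lemma on a common resolution gives $p^*D_1\sim q^*\widetilde D_1+E'$ with $E'\geq 0$ exceptional. The paper then stops with ``our claim follows from this,'' leaving the uniform bound on $E'.\widetilde\sigma$ implicit.

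\textbf{Your route.} You compare $E_1$ and $\widetilde E_1$ directly. The discrepancy $G=f^*E_1-g^*\widetilde E_1$ is exceptional for both maps because $\psi_1$ is an isomorphism in codimension one. It is vertical because $\psi_1$ is an isomorphism on the generic fiber. The fiber-degree argument ($\widetilde{\mathcal X}_{b_i}.\widetilde\sigma=1$ and every $G_j.\widetilde\sigma\geq 0$) then gives the explicit constant $\sum_j|a_j|$.

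\textbf{What each buys.} Your argument needs neither the anticanonical relation, nor $h=h_1$, nor the negativity lemma; effectivity of $G$ is irrelevant for a two-sided bound. It also supplies the concluding step the paper omits. The paper's route yields slightly more: from $E'\geq 0$ one gets the one-sided inequality $k(\alpha)-k_1(\sigma_1)=2E'.\widetilde\sigma\geq 0$. That extra information is not used later, since only the two-sided bound enters the comparison of counting functions in the proof of Theorem~\ref{theo:main}.
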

\begin{proof}
It suffices to show that 
\[
|D_1.\alpha - \deg \sigma_1^*\widetilde{D}_1|
\]
is uniformly bounded.
Let
\[
\xymatrix{
& \mathcal W \ar[rd]^q\ar[ld]_p& \\
\mathcal X & & \mathcal X_1
}
\]
be a common smooth resolution.
Then by negativity lemma, there exists an effective exceptional divisor $E'$ such that
\[
p^*D_1 \sim q^*\widetilde{D}_1 + E'.
\]
Our claim follows from this.
\end{proof}

Finally we prove our main theorem:

\begin{proof}[Proof of Theorem~\ref{theo:main}]
We define the functional $\ell : \Nef_1(\mathcal X) \to \mathbb R$ by
\[
\ell(\alpha) =\frac{1}{20} \max\{ \min\{h(\alpha)- k(\alpha), k(\alpha)\}, \min\{2k(\alpha) - 2h(\alpha), 3h(\alpha) - 2k(\alpha) \}\}.
\]
Let $\epsilon$ be a small positive rational number.

Recall that in the introduction we define the counting function as
\[
\mathsf N(\mathbb F_{q}, B, \mathcal X, \ell, \epsilon, d) = \sum_{\substack{\alpha \in \Nef_1(\mathcal X)_{\mathrm{sec}, \epsilon, \mathbb Z}, \\-K_{\mathcal X/B}.\alpha \leq m(\pi) + r(\pi)d}} \#\Sec(\mathcal X/B, \alpha)(\mathbb F_{q}).
\]
To this end, we consider the following counting function:
\begin{align*}
\mathsf N_1(B, \mathcal X, -K_{\mathcal X/B}, \epsilon, m(\pi) + r(\pi)d)= \sum_{\substack{\alpha \in \mathrm{Nef}_1(\mathcal X)_{\mathrm{sec}, \mathbb Z}, \\ h(\alpha) \leq m(\pi) + r(\pi)d, \\ h(\alpha) - k(\alpha)\geq 20\epsilon h(\alpha), \\ k(\alpha) \geq 20\epsilon h(\alpha)}} \#\mathrm{Sec}(\mathcal X/B, \alpha) (k),
\end{align*}
and
\begin{align*}
\mathsf N(B, \mathcal X_1, -K_{\mathcal X_1/B}, \epsilon, \mathsf c, m(\pi) + r(\pi)d)= \sum_{\substack{\alpha \in \mathrm{Nef}_1(\mathcal X_1)_{\mathrm{sec}, \, \mathbb Z}, \\ h_1(\alpha) \leq m(\pi) + r(\pi)d, \\ h_1(\alpha) - k_1(\alpha)\geq  20\epsilon h_1(\alpha) + \mathsf c, \\ k_1(\alpha) \geq 20\epsilon h_1(\alpha) + \mathsf c}} \#\mathrm{Sec}(\mathcal X_1/B, \alpha) (k).
\end{align*}
It follows from Lemma~\ref{lemm:flopsdifference} that we have
\begin{align*}
&\mathsf N(B, \mathcal X_1, -K_{\mathcal X_1/B}, \epsilon,  \mathsf e_1, m(\pi) + r(\pi)d) \\&\leq \mathsf N_1(B, \mathcal X, -K_{\mathcal X/B}, \epsilon, m(\pi) + r(\pi)d) \leq \mathsf N(B, \mathcal X_1, -K_{\mathcal X_1/B}, \epsilon,  -\mathsf e_1, m(\pi) + r(\pi)d).
\end{align*}

Next it follows from Lemma~\ref{lemm:anotherbirational} that there is another birational morphism to a non-split smooth quadric surface $\phi_2 : X \to Q_2$. Let $D_2$ be the flat closure of the pullback of a hyperplane section on $Q_2$. By Lemma~\ref{lemm:anotherbirational}, there exists some integer $m$ that
\[
D_2 \equiv 3D_1 - 2E_1 + m[\mathcal X_b].
\]
Let $E_2$ be the flat closure of the exceptional divisor of $\phi_2$.
Then there exists an integer $n_2$ such that
\[
-K_{\mathcal X/B} \equiv 2D_2 + n_2[\mathcal X_b] - E_2.
\]
These imply that there exists an integer $m'$ such that 
\[
-K_{\mathcal X/B} - E_2 \equiv  E_1 + K_{\mathcal X/B} + m'[\mathcal X_b].
\]
Applying Proposition~\ref{prop:Dflops}, Corollary~\ref{coro:Dflopsoverk}, and Corollary~\ref{coro:blowupofquadric}, we obtain a sequence of flops $\psi_2 : \mathcal X \dashrightarrow \mathcal X_2$ and a birational morphism $\phi_2 : \mathcal X_2 \to \mathcal Q_2$.
Let us define
\begin{align*}
\mathsf N_2(B, \mathcal X, -K_{\mathcal X/B}, \epsilon,  m(\pi) + r(\pi)d)= \sum_{\substack{\alpha \in \mathrm{Nef}_1(\mathcal X)_{\mathrm{sec}, \, \mathbb Z}, \\ h(\alpha) \leq m(\pi) + r(\pi)d, \\  h(\alpha) - k(\alpha)\leq -20\epsilon h(\alpha) , \\3h(\alpha) - 2k(\alpha) \geq 20\epsilon h(\alpha)}} \#\mathrm{Sec}(\mathcal X/B, \alpha) (k).
\end{align*}
Also we define
\begin{align*}
&\mathsf N(B, \mathcal X_2, -K_{\mathcal X_2/B}, \epsilon, \mathsf c, m(\pi) + r(\pi)d)= \sum_{\substack{\alpha \in \mathrm{Nef}_1(\mathcal X_2)_{\mathrm{sec}, \, \mathbb Z}, \\ h_2(\alpha) \leq m(\pi) + r(\pi)d, \\ h_2(\alpha) - k_2(\alpha)\geq 20\epsilon h_2(\alpha) + \mathsf c, \\ k_2(\alpha) \geq 20\epsilon h_2(\alpha) + \mathsf c}} \#\mathrm{Sec}(\mathcal X_2/B, \alpha) (k).
\end{align*}
Then we have
\begin{align*}
&\mathsf N(B, \mathcal X_2, -K_{\mathcal X_2/B}, \epsilon,  \mathsf e_2 + m', m(\pi) + r(\pi)d) \\&\leq \mathsf N_2(B, \mathcal X, -K_{\mathcal X/B}, \epsilon, m(\pi) + r(\pi)d) \leq \mathsf N(B, \mathcal X_2, -K_{\mathcal X_2/B}, \epsilon, -\mathsf e_2 + m', m(\pi) + r(\pi)d),
\end{align*}
and
\[
\mathsf N(\mathbb F_{q}, B, \mathcal X, \ell, \epsilon, d) = \mathsf N_1(B, \mathcal X, -K_{\mathcal X/B}, \epsilon, m(\pi) + r(\pi)d) + \mathsf N_2(B, \mathcal X, -K_{\mathcal X/B}, \epsilon, m(\pi) + r(\pi)d).
\]
Thus it suffices to consider the asymptotic of 
\[
\mathsf N(B, \mathcal X_1, -K_{\mathcal X_1/B}, \epsilon, \mathsf c, m(\pi) + r(\pi)d).
\]
To this end, let
\[
\Nef_1(\mathcal X_1)_{\mathrm{sec}, \epsilon, 1} = \left\{ \alpha \in \Nef_1(\mathcal X_1)_{\mathrm{sec}} \, \Big | \, h_1(\alpha)- k_1(\alpha) \geq 20\epsilon h_1(\alpha) + \mathsf c, \quad k_1(\alpha) \geq \epsilon 20h_1(\alpha) + \mathsf c\right\},
\]
and
\[
\Nef_1(\mathcal X_\eta)_{\epsilon, 1} = \left\{ \beta \in \Nef_1(\mathcal X_\eta) \, \Big | \,\frac{1}{20} \min\{(h(\beta)- k(\beta)), k(\beta)\} \geq \epsilon h(\beta)\right \}.
\]
Let $\mathsf C_*, \mathsf c_*$ be constants coming from Theorem~\ref{theo:homologicalsieve}.
We assume that $\epsilon$ is sufficiently small and $q^\epsilon > 16^\epsilon C_*$.
Using Theorem~\ref{theo:allhegiht}, we have
\[
\frac{\#\mathrm{Sec}(\mathcal X_1/B, \alpha)(k)}{q^{h(\alpha)}} = \tau_{-K_{\mathcal X/B}}(\mathcal X) + O((q/16)^{-\epsilon h(\alpha)}\mathsf C_*^{h(\alpha)}) + O(q^{-20\eta\epsilon h(\alpha)}).
\]
Then combining the proof of Proposition~\ref{prop:conetheorem}, the counting argument of \cite[Theorem 1.2]{DLTT25}, and the above asymptotic, we show that 
\[
\mathsf N(B, \mathcal X_1, -K_{\mathcal X_1/B}, \epsilon, \mathsf c, m(\pi) + r(\pi)d) \sim (1-q^{-1})^{-1} \alpha(\mathcal X_\eta, \Nef_1(\mathcal X_\eta)_{\epsilon, 1}) \tau_{-K_{\mathcal X_1/B}}(\mathcal X_1)q^{m(\pi) + r(\pi)d}(r(\pi)d),
\]
as $d \to \infty$. Thus our assertion follows from Lemma~\ref{lemm:flopsTamagawa}.
\end{proof}

\bigskip

\bibliographystyle{alpha}
\bibliography{stability}

\end{document}